\newcommand{\LU}{L_U^{-1}}
\newcommand{\cLU}{\mathcal L_U}
\newcommand{\p}{\partial}
\newcommand{\eps}{\epsilon}
\newcommand{\cD}{\mathcal D}
\newcommand{\cR}{\mathcal R}
\newcommand{\Uapp}{U^{\text{app}}}
\newcommand{\uapp}{u^{\text{app}}}
\newcommand{\be}{\begin{equation}}
\newcommand{\ee}{\end{equation}}
\newcommand{\ba}{\begin{aligned}}
\newcommand{\ea}{\end{aligned}}
\newcommand{\R}{\mathbb R}
\newcommand{\N}{\mathbb N}
\newcommand{\tb}{\tilde b}
\newcommand{\uF}{\underline{F}}
\newcommand{\uW}{\underline{W}}
\newcommand{\cN}{\mathcal N}
\newcommand{\lot}{\mathrm{l.o.t.}}
\newcommand{\cC}{\mathcal C}
\numberwithin{equation}{section}
\newtheorem{theorem}{Theorem}
\newtheorem{lemma}{Lemma}[section]
\newtheorem{proposition}[lemma]{Proposition}
\newtheorem{remark}[lemma]{Remark}
\newtheorem{corollary}[lemma]{Corollary}
\title{Separation for the stationary Prandtl equation}
\author{Anne-Laure Dalibard\footnote{Sorbonne Université, Université Paris-Diderot SPC, CNRS,  Laboratoire Jacques-Louis Lions, LJLL, F-75005 Paris} \and Nader Masmoudi\footnote{Department of mathematics, New York University in Abu Dhabi, Saadyiat Island, Abu Dhabi, UAE. E-mail:
		masmoudi@cims.nyu.edu}}
\date{\today}
\begin{document}

\bibliographystyle{amsplain}
\maketitle
\begin{abstract}
In this paper, we prove that separation occurs for the stationary Prandtl equation, in the case of adverse pressure gradient, for a large class of boundary data at $x=0$.
We justify the Goldstein singularity: more precisely, we prove that under suitable assumptions on the boundary data at $x=0$, there exists $x^*>0$ such that $\p_y u_{y=0}(x)\sim C \sqrt{x^* -x}$ as  $x\to x^*$ for some positive constant $C$, where $u$ is the solution of the stationary Prandtl equation in the domain $\{0<x<x^*,\ y>0\}$. Our proof relies on three main ingredients: the computation of a ``stable'' approximate solution, using modulation theory arguments;  a new formulation of the Prandtl equation, for which we derive energy estimates, relying heavily on the structure of the equation; and maximum principle techniques to handle nonlinear terms.

\end{abstract}

\tableofcontents
\section{Introduction}

One of the main open problems in the mathematical analysis of fluid flows  is 
the understanding of the inviscid limit in the presence of boundaries.  In the case of a fixed 
bounded domain, it is an open problem to know whether 
solutions to the Navier-Stokes system  with no slip boundary condition 
(zero Dirichlet  boundary condition)  do   converge
to a solution to the Euler system  when the viscosity 
goes to zero.  The main problem here comes from the fact that 
we cannot impose a no slip boundary  condition for the Euler system.   
To recover 
a zero Dirichlet condition, Prandtl proposed to introduce a
boundary layer \cite{Prandtl04} in a small neighborhood of the boundary
in which viscous effects are still present. It turns out that  the system that 
governs the flow in this  small neighborhood, namely the Prandtl system 
has many mathematical difficulties.  One of the outcome is that 
the justification of the approximation of the  Navier-Stokes system  by  
the Euler system  in 
the interior and the  Prandtl system  in a boundary layer is still mainly open.  
We refer to Sammartino and Caflisch \cite{SC98a,SC98b} 
for this justification   in the analytic case. There is  also a well known 
convergence criterion  due  to Kato \cite{Kato84} that states 
that the convergence from Navier-Stokes to Euler holds 
as long as there is no viscous dissipation in a small 
layer around the boundary
(see also \cite{Masmoudi98arma}).

Let us  also mention that 
when the no slip  boundary condition is replaced by a  Navier type condition or 
an inflow  condition,  
the situation gets much better: 
Bardos \cite{Bardos72} proved  that the convergence holds for some 
special type of boundary condition (vorticity equal 
to zero on the boundary)  which does not require the 
construction of any boundary layer.  For  Navier boundary conditions, a boundary layer 
can be constructed and controlled 
(see for instance   \cite{CMR98,Xin,Kelliher06,IS11,BC11,KTW11,MR12,MR16arma}).  

We are interested in the present paper in the stationary version of the Prandtl equation, namely
\be\label{P-general}
\ba
u u_x + v u_y - u_{yy}=-\frac{dp_E(x)}{dx},\quad x>0, \ y>0,\\
u_x + v_y=0,\quad x>0, \ y>0,\\
u_{|x=0}=u_0,\quad u_{|y=0}=0, \ \lim_{y\to \infty} u(x,y)= u_E (x),
\ea
\ee
where $y=0$ stands for the rigid wall, $x$ (resp. $y$) is the tangential (resp. normal) variable to the wall. The functions $u_E$, $p_E$ are given by the outer flow: more precisely $u_E$ (resp. $p_E$) is the trace at the boundary of the   tangential velocity (resp. of the pressure) of a flow satisfying the Euler equations. The functions $u_E, p_E$ are linked by the relation
$$
u_E u_E' = -\frac{dp_E(x)}{dx}.
$$

Existence results for \eqref{P-general} were first obtained by Oleinik  (see \cite[Theorem 2.1.1]{OS}). Indeed, as long as $u$ is positive (i.e. when there is no recirculation within the boundary layer), \eqref{P-general} can be considered as a non-local transport-diffusion equation in which the tangential variable $x$ plays the role of ``time". The function $u_0$, which is the input flow, is then considered as an ``initial data". However, this point of view breaks down as soon as $u$ takes negative values. Physical experiments and numerical simulations show that such behavior may occur; in this case, the boundary layer seems to detach itself from the boundary. This phenomenon is therefore referred to as ``boundary layer separation" (see figure \ref{fig:stewartson}).
\begin{figure}
	\includegraphics[width=\textwidth]{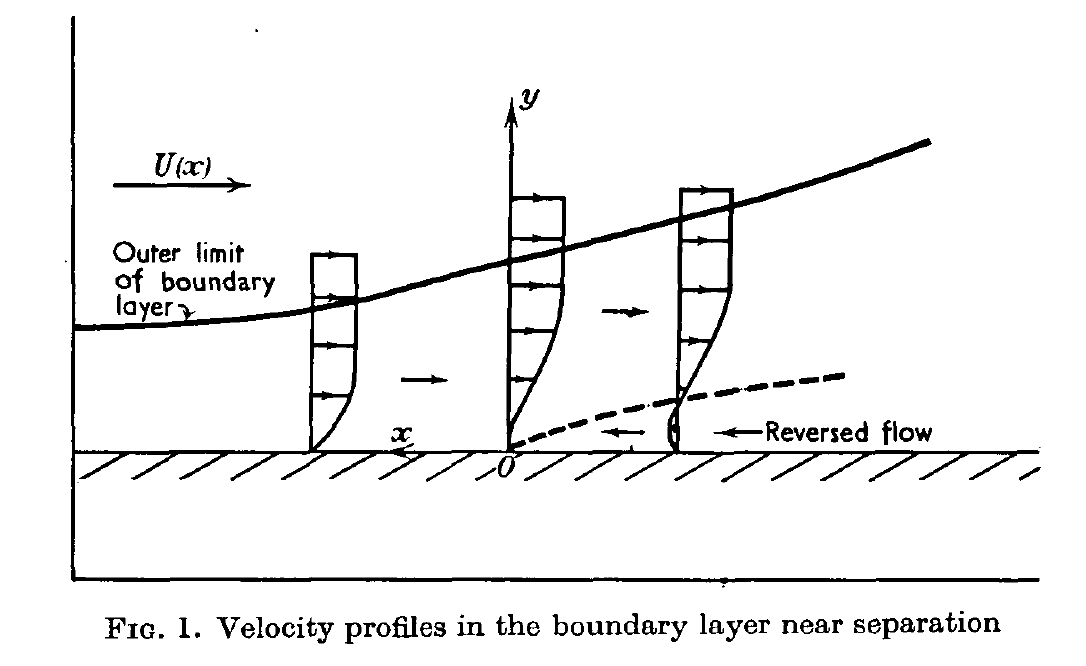}
	\caption{From Stewartson \cite{Stewartson}.}
\label{fig:stewartson}
\end{figure}

The goal of this paper is to prove that separation does occur for the stationary Prandtl model \eqref{P-general}, and to give a quantitative description of the solution close to (but on the left of) the separation point. In particular, we will justify rigorously the ``Goldstein singularity" (see \cite{Goldstein}). Note that a shorter version of this work was published in \cite{DM16}.

\subsection{Setting of the problem and state of the art}

The first mathematical study of the stationary Prandtl equation was performed by Oleinik (see \cite{OS}):

\begin{proposition}[Oleinik]\label{prop:Oleinik}
	Let $\alpha>0$, $X\in ]0, + \infty]$. Let $u_0\in    \mathcal C^{2,\alpha}_b(\R)$ such that $u_0(0)=0$, $u_0'(0)>0$, $\lim_{y\to \infty}u_0(y)=u_E(0)>0$, and such that $u_0(y)>0$ for $y>0$. Assume that $dp_E/dx\in \mathcal C^1([0,X])$, and that for $y\ll1$ the following compatibility condition is satisfied
	\be\label{compatibilite}
	u_0''(y) - \frac{dp_E(0)}{dx}=O(y^2).
	\ee
	
	Then there exists $x^*\leq X$ such that equation \eqref{P-general} admits a solution $u\in \mathcal C^1([0, x^*[\times \R_+)$ enjoying the following properties:
	\begin{itemize}
		\item Regularity: $u$ is bounded and continuous in $[0, x^*]\times \R_+$, $\p_y u, \p_y^2 u$ are bounded and continuous in  $[0, x^*[\times \R_+$, and $\p_x u$, $v$ and $\p_y v$ are locally bounded and continuous in $[0, x^*[\times \R_+$;
		
		\item Non-degeneracy: $u(x,y)>0$ for all $y>0$ $x\in [0, x^*[$, and for all $\bar x<x^*$ there exists $y_0>0$, $m>0$ such that $\p_y u(x,y)\geq m$ for all $(x,y)\in [0,\bar x]\times [0,y_0]$.
		
		\item Sufficient condition for global solutions: if $\frac{dp_E(x)}{dx}\leq 0$, then the solution is global, i.e. $x^*=X$.
	\end{itemize}
	
\end{proposition}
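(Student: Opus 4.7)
My plan is to follow the classical Crocco transformation strategy of Oleinik. Under the hypotheses, $y\mapsto u_0(y)$ is strictly increasing from $0$ to $u_E(0)$, and one expects the same monotonicity to persist for $u(x,\cdot)$ on a nontrivial interval $[0,x^*[$. This suggests introducing new independent variables $\xi = x$, $\eta = u(x,y)$, together with the new unknown $w(\xi,\eta) := \p_y u(x,y)$. A chain-rule computation transforms \eqref{P-general} into the quasilinear parabolic equation
\be\label{crocco}
\eta\, \p_\xi w - w^2 \p_\eta^2 w = \frac{dp_E(\xi)}{dx}\, \p_\eta w, \quad \xi \in (0,X),\ \eta \in (0, u_E(\xi)),
\ee
with boundary conditions $w(\xi, u_E(\xi)) = 0$ and $(w\, \p_\eta w)(\xi, 0) = dp_E(\xi)/dx$, and initial datum $w_0(\eta) = u_0'(u_0^{-1}(\eta))$. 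The compatibility assumption \eqref{compatibilite} translates precisely into $\cC^{2,\alpha}$ regularity of $w_0$ up to $\eta=0$.

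The first step is to construct a local-in-$\xi$ classical solution of \eqref{crocco}. The equation is doubly degenerate: the time-like coefficient $\eta$ vanishes at $\eta=0$, and the diffusion coefficient $w^2$ vanishes at $\eta = u_E(\xi)$. I would regularize by replacing $\eta$ with $\eta+\delta$ and $w^2$ with $w^2+\delta$, solve the resulting uniformly parabolic problem by Schauder theory, and derive $\delta$-uniform $\cC^{2,\alpha}$ estimates via the maximum principle applied to $w^\delta$, $\p_\eta w^\delta$, and $\p_\xi w^\delta$. The key a priori bounds are an upper bound on $w^\delta$ via a suitable affine-in-$\eta$ supersolution, and a lower bound $w^\delta(\xi,0) > 0$ on a short interval $[0,x^*)$ propagating the positivity of $u_0'(0)$. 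Controlling the double degeneracy is the main technical difficulty, and is exactly the reason why \eqref{compatibilite} is imposed.

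The second step is to invert the Crocco transformation. For $\xi < x^*$, the positivity $w>0$ on $[0, u_E(\xi))$ ensures that $y\mapsto u(\xi,y)$ is a strictly increasing bijection, so one recovers $u(x,y)$ on $[0,x^*[\,\times \R_+$ and then $v(x,y) = -\int_0^y \p_x u(x,y')\, dy'$ from incompressibility. The claimed regularity follows by differentiating the inverse map, using $\p_\eta y = 1/w$. For the non-degeneracy assertion near the wall, the uniform lower bound $w(\xi, 0) \geq m > 0$ on $[0, \bar x]$ yields, by continuity, $w(\xi, \eta) \geq m/2$ on $[0,\bar x]\times [0, \eta_0]$ for $\eta_0$ small, and the Crocco inverse maps this strip onto $[0, \bar x]\times [0, y_0]$ for some $y_0>0$.

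Finally, for global existence under $dp_E/dx \leq 0$, one observes that the boundary condition reads $\p_\eta(w^2)|_{\eta=0} = 2\, dp_E/dx \leq 0$; combining this with the parabolic structure of \eqref{crocco} and a maximum-principle argument for $w$ (or equivalently for $w^2 + 2p_E$), one derives a uniform lower bound $w(\xi, 0) \geq c_0 > 0$ on $[0, X]$, with $c_0$ depending only on $\inf w_0$ and $p_E$. This prevents the Crocco framework from breaking down, whence $x^* = X$. The adverse case $dp_E/dx > 0$, in which $w(\xi, 0)$ may reach zero in finite $\xi$ — the Goldstein singularity — is precisely the subject of the present paper.
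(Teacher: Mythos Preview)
The paper does not prove this proposition at all: it is stated as a result of Oleinik and referenced to \cite[Theorem 2.1.1]{OS}. So there is no ``paper's own proof'' to compare against; your sketch stands on its own as a plausible route to the result.

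That said, it is worth noting that your approach differs from the one in \cite{OS} (and from the transformation the present paper relies on throughout Section~\ref{sec:infty}). You use the Crocco transformation $(\xi,\eta)=(x,u)$ with unknown $w=u_y$; Oleinik's proof, and the paper's subsequent analysis, use instead the von~Mises transformation $\phi=\int_0^y u$, $w=u^2$, which converts \eqref{P-general} into the local degenerate parabolic equation $w_x-\sqrt{w}\,\p_\phi^2 w=-2\,dp_E/dx$ (cf.\ \eqref{vM-original}). Both changes of variables serve the same purpose --- turning the nonlocal Prandtl system into a scalar quasilinear parabolic PDE on which maximum-principle arguments apply --- but the degeneracies sit in different places. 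In von~Mises variables the diffusion coefficient $\sqrt{w}$ vanishes only at the wall $\phi=0$, and the boundary condition there is Dirichlet ($w=0$) with a compatibility condition on $\p_\phi w$; in your Crocco setting the degeneracy is double, with the time-like coefficient $\eta$ vanishing at the wall and the diffusion $w^2$ vanishing at the outer edge. Your outline handles this correctly, and the Crocco route is a perfectly classical alternative; the von~Mises route is arguably a bit cleaner for the lower-bound-on-$w(\cdot,0)$ argument that gives global existence when $dp_E/dx\le 0$, which is perhaps why Oleinik and the present authors prefer it.
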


In this paper, we are interested in the case where the solution of \eqref{P-general} is not global: more precisely, we consider the equation \eqref{P-general} with $dp_E/dx=1$, i.e.
\be\label{Prandtl}\tag{P}\ba
u u_x + v u_y - u_{yy}=-1,\quad x\in (0, x_0), \ y>0,\\
u_x + v_y=0,\quad x\in (0, x_0), \ y>0,\\
u_{|x=0}=u_0,\quad u_{|y=0}=0, \ \lim_{y\to \infty} u(x,y)=u_E (x),\ea
\ee
with $u_E(x)=\sqrt{2(x_0-x)}$, for some $x_0>0$, and $u_0$ satisfies the assumptions of  Proposition \ref{prop:Oleinik}. Hence it is known that local solutions (in $x$) of \eqref{Prandtl} do exist. However, heuristically, it can be expected that the negative source term will diminish the value of the tangential velocity $u$, and that there might exist a point $x^*$ beyond which  the result of Proposition \ref{prop:Oleinik} cannot be used to extend the solution. More precisely, it can be checked easily that the compatibility condition \eqref{compatibilite} is propagated by equation \eqref{Prandtl}. As a consequence, we have $x^*<x_0$ if and only if one of the following two conditions is satisfied:
\begin{enumerate}[(i)]
	\item  $u_y(x^*,0)=0$;
	\item There exists $y^*>0$ such that $u(x^*,y^*)=0$.
\end{enumerate}

In order to simplify the mathematical analysis, we will work with solutions of \eqref{Prandtl} that are increasing in $y$. This property is propagated by the equation, and ensures that situation (ii) above never occurs. Consequently, for solutions which are increasing in $y$, we have $x^*<x_0$ if and only if
\be\label{separation}
\frac{\p u}{\p y}(x^*,0)=0.
\ee
In the Physics literature (see for instance the seminal work of Goldstein \cite{Goldstein}, followed by the one of Stewartson \cite{Stewartson}), this condition is used as a characterization of the  ``separation point''. 

The first computational works on this subject go back to  Goldstein \cite{Goldstein} and Landau \cite[Chapter 4, \S 40]{Landau}. In particular, Goldstein uses an asymptotic expansion in self-similar variables to compute the profile of the singularity close to the separation point. These computations are later extended by Stewartson \cite{Stewartson}. However, these calculations are formal; furthermore, some of the coefficients of the asymptotic expansion cannot be computed by either method. Independently, Landau proposes another characterization of the separation point, and gives an argument suggesting that $\p_y u_{|y=0}\sim \sqrt{x^*-x}$ close to the separation point.

On the other hand, in the paper \cite{E-prandtl} Weinan E announces a result obtained in collaboration with Luis Caffarelli. This result states, under some structural assumption on the initial data, that the existence time $x^*$ of the solutions of \eqref{Prandtl} in the sense of Oleinik is finite, and that the family $u_\mu(x,y):=\frac{1}{\sqrt{\mu}} u(\mu (x^*-x), \mu^{1/4} y)$ is compact in $\mathcal C(\R_+^2)$.
Moreover, the author states two technical Lemmas playing a key role in the proof. However, to the best of our knowledge, the complete proof of this result was never published.

Let us also mention recent works by Guo and Nguyen \cite{GN} and by Iyer \cite{Iyer1,Iyer2,Iyer3}, in which the authors justify the Prandtl expansion either over a moving plate or over a rotating disk. Note that in these two cases, the velocity of the boundary layer on the boundary is non zero, which somehow prevents recirculation and separation. 

In the time-dependent framework, boundary layer separation has also been tackled recently by Kukavica, Vicol and Wang \cite{kukavica2017van}, extending computations by Engquist and E \cite{EE}: starting from an analytic initial data, for a specific Euler flow, the authors prove that some Sobolev norm blows up in finite time. This is known as the van Dommelen and Shen singularity. Note that in this time-dependent context, separation is defined as the apparition of a singular behaviour, which is a somewhat  different notion from the one we are describing in the present paper. This is related to the bad mathematical properties of the time-dependent Prandtl equation, which is known to be locally well-posed in analytic or Gevrey spaces \cite{SC98a,LCS,KV,KMVW,GVM}, but ill-posed in Sobolev spaces \cite{Grenier00,GVM}.

\subsection{Main result}

Our main result states that for a suitable class of initial data $u_0$, the maximal existence ``time''  $x^*>0$ of the solution given by Oleinik's Theorem is finite: in other words, setting
$$
\lambda(x):= \p_y u_{|y=0},
$$
there exists $x^*\in ]0, + \infty[$ such that $\lim_{x\to x^*} \lambda(x)=0$. Furthermore, for this class of initial data, we are able to quantify the rate of cancellation of $\lambda(x)$. 

Let us now explicit our assumptions on the initial data $u_0$:
\begin{enumerate}[(H1)]
\item $u_0\in \mathcal C^7(\R_+)$, $u_0$ is increasing in $y$ and $\lambda_0:= u_0'(0)>0$;
\item There exists a constant $C_0>0$ such that 
$$
\ba
\forall y \geq 0,\quad -C_0\inf (y^2, 1)\leq u_0''(y)-1\leq 0,\\
C_0^{-1}\leq -u_0^{(4)}(0)\leq C_0,\\
\|u_0\|_{W^{7,\infty}} \leq C_0.
\ea
$$
\item $u_0=\uapp_0 + v_0$, where
\begin{eqnarray*}
\uapp_0&=& \lambda_0 y + \frac{y^2}{2} + u_0^{(4)}(0) \frac{y^4}{4!}\\&& - c_7  (u_0^{(4)}(0))^2 \frac{y^7}{\lambda_0} + c_{10}(u_0^{(4)}(0))^3\frac{y^{10}}{\lambda_0^2} + c_{11} (u_0^{(4)}(0))^3\frac{y^{11}}{\lambda_0^3}\quad \text{for } y \leq \lambda_0^{3/7},\\
|\uapp_0|&\leq &C_0 \quad \text{for } y \geq \lambda_0^{3/7},
\end{eqnarray*}
and
$$
|v_0|\leq C_0\left(\lambda_0^{-\frac{3}{2}} (\lambda_0y^7 + c_8 y^8) + \lambda_0^{-2} y^{10} + \lambda_0^{-3} y^{11}\right) \quad \text{for } y \leq \lambda_0^{3/7}.
$$
In the expressions above, the constants $c_i$ are universal and can be computed explicitely.

\end{enumerate}

\begin{remark}
\begin{itemize}
\item These assumptions are actually not optimal: in fact, condition (H3) merely ensures that some energy-like quantities are small enough. However, the actual condition we need is complicated to state at this stage: we refer to the statement of Theorem \ref{thm:rescaled}, in  rescaled variables, for a less stringent condition. 

\item Notice that $|v_0|\ll \uapp_0$ if $\lambda_0\ll 1$: the term $v_0$ is the initial data for the corrector term $v=u-\uapp$. The main issue of the paper is to have a good control of $v$ close to $y=0$.

\item The monotony assumption  on $u_0$ ensures that separation occurs at $y=0$. The monotony is preserved by the Prandtl equation for $x>0$.

\item Notice that we prescribe the Taylor expansion of $u_0$ up to order 7. In other words, we impose a high order compatibility condition on the initial data, because we need to derive estimates on derivatives of $u$.

\end{itemize}

\end{remark}

\begin{theorem}
\label{thm:main}
Consider the Prandtl equation with adverse pressure gradient \eqref{Prandtl} and with an initial data $u_0\in \mathcal C^7(\R_+)$ satisfying (H1)-(H3). Then for any $\eta>0$, $C_0>0$, there exists $\epsilon_0>0$ such that if $\lambda_0<\epsilon_0$, the ``existence time'' $x^*$ is finite, and $x^*=O(\lambda_0^2)$. Furthermore, setting $\lambda(x):=\p_y u_{|y=0}(x)$, there exists a constant $C>0$, depending on $u_0$, such that
$$
\lambda(x)\sim C \sqrt{x^*-x}\quad \text{as } x\to x^*.
$$

\end{theorem}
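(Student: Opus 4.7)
The plan is to implement the three-ingredient strategy announced in the abstract, viewing $\lambda(x):=\p_y u_{|y=0}(x)$ as a modulation parameter to be determined self-consistently. \emph{First}, I would construct a one-parameter family of approximate solutions $\uapp(x,y) = \Uapp(x,y;\lambda(\cdot))$ built from the Taylor expansion of any genuine solution of \eqref{Prandtl} at $y=0$. Using $u(x,0)=v(x,0)=0$ together with successive $y$-differentiations of \eqref{Prandtl}, one finds that the coefficients of $y^k$ are explicit algebraic functions of $\lambda,\lambda',\lambda'',\ldots$ (for instance, $\p_y^2 u(x,0)=1$, $\p_y^3 u(x,0)=0$, and $\p_y^4 u(x,0)=\lambda\lambda'$, and so on). The monomials $y,y^2,y^4,y^7,y^{10},y^{11}$ imposed in (H3) are exactly the leading non-vanishing terms, with the coefficients $c_7,c_{10},c_{11}$ chosen so that $\uapp$ solves \eqref{Prandtl} modulo a small remainder whose solvability/orthogonality condition on a suitable kernel yields the \emph{modulation ODE} for $\lambda$.

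\emph{Second}, I would analyze the modulation ODE. The leading balance between $-dp_E/dx=-1$ and the transport term applied to $\uapp\sim \lambda y + y^2/2$ should give an ODE of the form $\lambda\lambda' = -c$ (plus subleading corrections), with $c>0$. Integration yields $\lambda(x)^2 = \lambda_0^2 - 2cx + \text{l.o.t.}$, producing a finite separation point $x^* = \lambda_0^2/(2c) + o(\lambda_0^2)$ — hence $x^*=O(\lambda_0^2)$ — and directly the desired asymptotic $\lambda(x)\sim C\sqrt{x^*-x}$ as $x\to x^*$, with $C$ determined by the full initial data through the higher-order terms.

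\emph{Third}, and this is where the main work lies, one must control the remainder $v:=u-\uapp$ globally on $[0,x^*[$, uniformly as $\lambda\to 0$. I would pass to self-similar rescaled variables (roughly $s\sim -\log(x^*-x)$ and $Y\sim y/\lambda^\alpha$ for an exponent $\alpha$ suggested by the Goldstein/E--Caffarelli scaling $u_\mu(x,y) = \mu^{-1/2} u(\mu x, \mu^{1/4} y)$), so that the problem becomes posed on an essentially fixed domain. Because raw Prandtl estimates lose derivatives, the ``new formulation'' alluded to in the abstract is indispensable: I would differentiate the equation and work with an unknown like $\p_y u$, or perform a Crocco-type change of coordinates using $u$ as a vertical variable (legitimate thanks to the monotonicity $\p_y u>0$ from (H1)), and then derive weighted $L^2$ energy estimates at several regularity levels. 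The weights must be tuned — and the tuning is precisely the purpose of the algebraic coefficients $c_7,c_{10},c_{11}$ in $\uapp_0$ — so that coercivity constants remain bounded below as $\lambda\to 0$ and the transport-diffusion integrations by parts produce sign-definite boundary contributions.

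\emph{Finally}, since energy estimates will not suffice to close the nonlinear terms in the ``sublayer'' $\{y\ll \lambda^{1/2}\}$ where $\uapp$ is very small, I would combine them with maximum-principle/comparison arguments to control $v$ and its first derivatives pointwise near $y=0$ and, crucially, to propagate the monotonicity $\p_y u\geq 0$ up to $x^*$ (ruling out case (ii) of the Oleinik alternative). A standard bootstrap on a maximal interval $[0,\bar x]$ where the energy and pointwise bounds hold, combined with a continuity argument showing $\bar x$ cannot be strictly smaller than the separation time given by the modulation ODE, then yields the theorem. The main obstacle I expect is the joint closure of energy and maximum-principle estimates despite the degeneracy of all coercivity/comparison constants as $\lambda \to 0$; the careful algebraic tuning of $\uapp$ and of the rescaling exponents is exactly what makes this closure possible.
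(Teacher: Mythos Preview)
Your broad architecture (modulation/approximate solution, energy estimates on the remainder, maximum principle, bootstrap) matches the paper, but several load-bearing specifics are either wrong or missing.

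\textbf{The modulation ODE and its role.} Your claim that the balance between the pressure $-1$ and the transport term on $\lambda y + y^2/2$ produces $\lambda\lambda' = -c$ is not the mechanism: at leading order the pressure is balanced by diffusion ($u_{yy}(x,0)=1$). What is true is the identity $\p_y^4 u(x,0)=\lambda\lambda'$, and in rescaled variables with $b:=-2\lambda_x\lambda^3$ the sought modulation law is $b_s+b^2\approx 0$ (equivalently $(\lambda\lambda')_x\approx 0$). Crucially, this is \emph{not} derived a priori and then ``imposed'': it is \emph{the} quantity one must control. The paper's approximate solution is designed so that $V:=U-\Uapp = -C(b_s+b^2)Y^7+O(Y^8)$ near $Y=0$; hence one needs a (semi\nobreakdash-)norm $\cN$ with $\cN(V)\gtrsim |b_s+b^2|$, which forces working at essentially seven derivatives (realised via two applications of the nonlocal operator $\cLU=\LU\p_Y^2$). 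Proving $\int s^{3+2\eta}\cN(V)^2\,ds<\infty$ is the heart of the argument, and your proposal does not identify this energy--modulation coupling.

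\textbf{The rescaled time and the ``new formulation''.} Your guess $s\sim -\log(x^*-x)$ is wrong: the paper takes $ds/dx=\lambda^{-4}$, which with $\lambda\sim (x^*-x)^{1/2}$ gives $s\sim (x^*-x)^{-1}$ and hence $b\sim 1/s$; this algebraic (not logarithmic) law is what makes the decay rates close. More importantly, you conflate two distinct reformulations. The Crocco/von Mises change ($\psi=\int_0^Y U$, $W=U^2$) is used \emph{only} for the maximum-principle estimates on $U_{YY}$. The energy estimates use a different, genuinely new structure: inverting $L_UW:=UW-U_Y\int_0^YW$ one rewrites the equation as $\p_sU - bU + \tfrac{b}{2}Y\p_YU - \LU(\p_Y^2U-1)=0$, a transport equation with the nonlocal diffusion $\cLU$. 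The coercivity of $-\int(\p_Y^2\LU f)\,f\,w$ via a sharp weighted Hardy inequality (with a carefully computed constant $<1$) is the key technical step, and neither differentiating in $y$ nor Crocco variables would produce it. Without this operator and its commutator calculus with $\p_s+\tfrac{b}{2}Y\p_Y$, the closure you describe would not go through.
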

The proof of Theorem \ref{thm:main} relies on several ingredients: the first step is to perform a self-similar change of variables, using $\lambda(x)$ as a scaling factor. Then the issue is to control the variations of $\lambda$, or more precisely, of $b:=-2 \lambda_x \lambda^3$. The method thanks to which we construct an approximate solution and find the ideal ODE on $b$ is inspired from the theory of modulation of variables, which was initiated formally by Zakharov and Shabat (see \cite{ZS} and the presentation in the book  by Sulem and Sulem \cite{SulemSulem}) and rigorously applied by Merle and Rapha\"el to blow-up phenomena in the nonlinear Schr\"odinger equation \cite{MerleRaphael1,MerleRaphael2}.

Once the approximate solution is constructed, the whole problem amounts to controlling the remainder $v$. To that end, we exhibit a  transport-diffusion structure of equation \eqref{Prandtl} (or of its rescaled version, see equation \eqref{eq:U-LU}). Let us emphasize that this structure, to our knowledge, is entirely new. We perform energy estimates that rely strongly on the structure of the equation. In order to handle nonlinearities, we will also need to control $u$ in $L^\infty$. Therefore we derive pointwise estimates on $u$ and its derivatives by constructing sub and super-solutions and using the maximum principle. 

Let us point out that in order to carry these estimates, we will use three different versions of the equation. The first one is merely a rescaling of equation \eqref{Prandtl} (see \eqref{rescaled}). It will be used to compute explicitly the approximate solution and find the ODE on $b$. The second one is a transport equation with a non local diffusion term (see \eqref{eq:V-2}). Its purpose is to perform energy estimates, and the major difficulty will be to find good coercivity inequalities on the diffusion. Eventually, we will use a change of variables to transform \eqref{rescaled} into a nonlinear transport diffusion equation of porous medium type (see \eqref{VM}). This last form was already used by Oleinik in \cite{OS} and will be suitable for the maximum principle and will help us prove the $L^\infty$ estimates

In the next section, we present our scheme of proof and state our main intermediate results. The reader that is not interested in the technical details of the proof can focus on section 2, that gives an overall idea of the main arguments involved.
The third section is devoted to the construction of sub and super solutions. In section \ref{sec:proof-statements-algebraic}, we introduce several tools that play an important role in the energy estimates: coercivity of the diffusion term, commutator Lemma, computation of the remainder... Eventually, we prove the energy estimates in section \ref{sec:proof-statements-energy}.

\begin{remark}
	Our result actually gives much more information on $u$: in fact, we construct an approximate solution $\uapp$, which contains the main order terms in the Taylor expansion of $u$, and we control $v=u-\uapp$. As a corollary, we find that  the sequence of functions $(u^\mu)_{\mu>0}$ from the statement of Luis Caffarelli and Weinan E converges towards $z^2/2$ in the zone $z\leq \mu^{-1/12} \xi^{1/6}$, $\xi\lesssim 1$ (see Remark \ref{rem:ECaf} for more details). Hence our result holds under more stringent assumptions on the initial data, but on the other hand it gives a much more quantitative and precise description of the asymptotic behaviour.

\end{remark}

\section{Strategy of proof}

\subsection{Self-similar change of variables}

Let us first recall that equation  \eqref{Prandtl} has a scaling invariance: indeed, if $(u,v)$ is a solution of  \eqref{Prandtl}, then for any $\mu>0$, the couple $(u_\mu, v_\mu)$ defined by
$$
u_\mu= \frac{1}{\sqrt{\mu}} u(\mu x, \mu^{1/4} y),\quad
v_\mu= \mu^{1/4} v(\mu x, \mu^{1/4} y),
$$
is still a solution of  \eqref{Prandtl}. This scaling invariance has been used by Goldstein \cite{Goldstein} and Stewartson \cite{Stewartson} to compute exact solutions of  \eqref{Prandtl} close to the separation point. These special solutions were sought as infinite series in some rescaled variables.

In the present article, the idea is to perform a change of variables which relies on this scaling invariance and 
which depends on the solution itself.  It  incorporates information on the ``separation rate'', i.e. on the speed of cancellation of $\p_y u_{|y=0}$. This type of idea was used  by Franck Merle and Pierre Raphaël in the context of singularity analysis for the nonlinear Schrödinger equation \cite{MerleRaphael1,MerleRaphael2}. More precisely, define
$$
\lambda(x):=\p_y u_{|y=0}\quad \text{and} \quad Y= \frac{y}{\lambda (x)}.
$$
We also change the tangential variable and define the variable $s$ by
\be\label{def:x-s}
 \frac{ds}{dx}=\frac{1}{\lambda^4(x)}.
\ee
Then the new unknown function is
\be\label{change-var}
U(s,Y):=\lambda^{-2}(x(s)) u(x(s), \lambda(x(s)) Y).
\ee
It can be easily checked that $U$ is a solution of the equation
\be\label{rescaled}
 UU_s - U_Y\int_0^Y U_s-b U^2 + \frac{3b}{2} U_Y \int_0^Y U - U_{Y Y}=-1,
\ee 
where
\be\label{def:b}
b=-2\lambda_x \lambda^3= - 2 \frac{\lambda_s}{\lambda}.
\ee
The boundary conditions become 
\be\label{cond:U-1}U_{|Y=0}=0,\quad \lim_{Y\to \infty} U(s,Y)= U_\infty(s),\ee
 where $ U_\infty$ satisfies $ U_\infty  U'_\infty - b U_\infty^2=-1$. Moreover, thanks to the definition of  $\lambda$, we have 
 \be \label{cond:U-2}\p_Y U_{|Y=0}=1.\ee

 From now on, we will work with equation \eqref{rescaled} only. The goal is to construct an approximate solution of \eqref{rescaled},  together with $b(s)$ and $\lambda(s)$,  having  nice stability properties as $s\to\infty$. Note that the limit $s\to \infty$  corresponds to the limit $x\to x^*$ in the original variables. As we will see in the next paragraph, the stability properties of the approximate solution are intimately connected to the asymptotic law of $b$ as $s\to \infty$. 
 Eventually, the asymptotic behavior of $b$ will dictate the rate of cancellation of $\lambda$ close to $x=x^*$. We  prove that the behavior $b(s)\sim s^{-1}$ is stable. This asymptotic law corresponds to the separation rate announced in  Theorem \ref{thm:main}, namely $\lambda(x)\sim C \sqrt{x^*-x}$.
 
 In the next paragraphs, we explain how we construct the approximate solution, and which energy estimates are used to prove its stability. We deal with nonlinearities in the equation by using the maximum principle, together with Sobolev embeddings. Let us recall that we will in fact use three different forms of equation \eqref{rescaled}:
 \begin{itemize}
\item Due to its polynomial form, equation \eqref{rescaled} itself is very useful to construct the approximate solution and find the correct asymptotic law for $b$;
\item In order to perform energy estimates, we will transform \eqref{rescaled} into a transport-diffusion equation (with a non-local diffusion term), see \eqref{eq:U-LU} and \eqref{eq:V-2};
\item Eventually, in order to use the maximum principle, we rely on a third version of \eqref{rescaled}, that uses von Mises variables. The equation then becomes a nonlinear local transport-diffusion equation.
 \end{itemize}

\subsection{Construction of an approximate solution}

The heuristic idea behind the construction of stable approximate solutions is the following: we look for an approximate solution $\Uapp$ of \eqref{rescaled} with a remainder as small as possible. In particular, the remainder for $\Uapp$ should have the lowest possible growth at infinity. This implies  that the function $\Uapp$ itself should have the  lowest possible growth as $Y\to \infty$, as we shall see in a moment. As in the work of Merle and Raphaël in the context of the nonlinear Schrödinger equation, this low growth condition has an immediate impact on the asymptotic behavior of the function $b$.

We decompose the definition of approximate solutions into three zones: the main zone goes from $0$ to $s^\alpha$, for some $\alpha>0$ to be defined later on. In this zone, we compute a Taylor expansion of $U(s,Y)$ for $Y$ close to zero, and we try to push the expansion as far as possible, which amounts to the ``low growth condition'' explained above. In the second zone, we only keep the largest term in the Taylor expansion, namely ${Y^2/2}$. It can be checked that  $Y^2/2$ is a stationary solution of \eqref{rescaled}. This stationary solution corresponds to a solution of \eqref{Prandtl} which is independent of $x$ and scaling invariant, namely $(x,y)\mapsto y^2/2$. In the third zone, we connect $Y^2/2$ to an asymptotic profile $\Uapp_\infty(s)$. Notice that if $b(s)=s^{-1} + O(s^{-\eta-1})$ for some $\eta>0$, then $U_\infty(s)=s+1+o(1)$, and therefore we also take $\Uapp_\infty(s)\sim s$.

Throughout this paragraph, we will rely on the polynomial form on the rescaled Prandtl equation, namely \eqref{rescaled}.

\vskip2mm
$\bullet$\textit{ Taylor expansion of $U$ for $Y$ close to zero:}

Let us first recall that thanks to the change of variables \eqref{change-var}, we have
$$
U(s,0)=0,\quad \p_Y U(s,0)=1.
$$
It then follows from \eqref{rescaled} that
$$
\p_{YY} U(s,0)=1.
$$
The first terms of the Taylor expansion of $U$ for $Y$ close to zero are therefore $Y + \frac{Y^2}{2}$.

The first natural idea is to define a sequence of polynomials in $Y$ with coefficients depending on $s$ thanks to the induction relation
\be\label{recurrence-1}
\begin{aligned}
U_1(s,Y):=Y + \frac{Y^2}{2},\\
\p_{YY} (U_{N+1}-U_N):=1 + U_N\p_s U_N- \p_Y U_N\int_0^Y \p_s U_N - b U_N^2 + \frac{3b}{2} \p_Y U_N \int_0^Y U_N-\p_{YY } U_N.
\end{aligned}
\ee
We obtain easily that
\be\label{def:U2}
U_2(s,Y):=Y + \frac{Y^2}{2}-a_4 b Y^4,\quad \text{with } a_4=\frac{1}{48}.
\ee
Let us now compute the error terms generated by $U_2$. We have
\begin{eqnarray*}
&&U_2\p_s U_2- \p_Y U_2\int_0^Y \p_s U_2 - b U_2^2 + \frac{3b}{2} \p_Y U_2 \int_0^Y U_2 - \p_{YY} U_2 + 1\\
&=&-a_4\left( \frac{4}{5} b_s + \frac{13}{10} b^2\right)Y^5 - \frac{3}{10}a_4 \left(b_s + {b^2}\right) Y^6+a_4^2 \frac{b}{5} \left(b_s + {b^2}\right)Y^8.
\end{eqnarray*}
Let us recall that we expect that $b(s)= O(s^{-1})$ as $s\to \infty$. Therefore the coefficient of the last term in the right-hand side is one order of magnitude smaller than the first two terms. We thus focus on the comparison between the first two terms in the right-hand side.
As explained above, the goal is to choose the approximate solution with the smallest growth at infinity. Note that the remainder term $ \left(b_s + {b^2}\right) Y^6$ would yield in $U_3$ a term proportional to $ \left(b_s + {b^2}\right) Y^8$, whereas the remainder term $\left( \frac{4}{5} b_s + \frac{13}{10} b^2\right)Y^5$  would yield  a term proportional to $\left( \frac{4}{5} b_s + \frac{13}{10} b^2\right)Y^7$. Consequently, we choose to ``cancel out'' the term $ \left(b_s + {b^2}\right) Y^8$ in $U_3$. In other words, in the induction formula \eqref{recurrence-1} defining the sequence $(U_N)_{N\geq 1}$, we replace every occurrence of  $b_s$ by $-b^2$. The polynomial $U_3$ is therefore defined by
$$
U_3(s,Y):= Y + \frac{Y^2}{2} - a_4 b Y^4 - a_7 b^2 Y^7,\quad \text{with }a_7=\frac{1}{84} a_4.
$$
It follows that for $Y\ll 1$,
\be\label{V-0}\ba
U(s,Y)\simeq U_3(s,Y) + V_3(s,Y),\\
\text{where} 
\quad
V_3(s,Y)=-a_7\frac{8}{5}(b_s+b^2) Y^7 - a_4\frac{3}{10\times 7\times 8}\left(b_s + {b^2}\right) Y^8 + O(Y^{10}).\ea\ee

\begin{remark}
In the work of   Merle and  Rapha\"{e}l, the choice of the parameters $\lambda$ and $b$ stems from orthogonality properties of the quantity $U-\Uapp$ on some well chosen functions. 
 In the present case, these orthogonality properties can be seen as a cancellation at high enough order of $U-\Uapp$ at $Y=0$.

\end{remark}

For technical reasons, it is necessary to push further the expansion of $U$. We thus compute $U_4$. We find that
\begin{eqnarray*}
&&U_3\p_s U_3- \p_Y U_3\int_0^Y \p_s U_3 - b U_3^2 + \frac{3b}{2} \p_Y U_3 \int_0^Y U_3 - \p_{YY} U_3 + 1\\
&=&(b_s+ b^2)\left[-\frac{4}{5}a_4 Y^5 - \frac{3}{10} a_4 Y^6 - \frac{a_4}{60} b Y^8 -\frac{3}{4} a_7 b Y^9 + \frac{3}{5} a_4 a_7 b^2 Y^{11} + \frac{1}{4} a_7^2 b^3 Y^{14}\right]\\
&&-\frac{27}{16} a_7 b^3 Y^8 - \frac{3}{16} a_7 b^3 Y^9 + \frac{11}{16}a_4 a_7 b^4 Y^{11} + \frac{3}{8}a_7^2
 b^5 Y^{14}.\end{eqnarray*}

It follows that
$$
U_4=U_3 - a_{10} b^3 Y^{10} - a_{11} b^3 Y^{11} + a_{13} b^4 Y^{13} + a_{16} b^5 Y^{16},
$$
where 
\be\label{def:a10a11}
a_{10}=\frac{27}{16\times 90} a_7,\ a_{11}=\frac{3}{16\times 110} a_7 ,\ a_{13}=\frac{11 a_4 a_7}{13\times 12\times 16},\ a_{16}=\frac{a_7^2}{16\times 5\times 8}.
\ee
\vskip2mm

$\bullet$ \textit{Definition of the approximate solution:}

We now define the approximate solution $\Uapp$ in the following way: let $\Theta\in \mathcal C^2(\R_+)$  be such that $\Theta(\xi)= \frac{\xi^2}{2}$ for $\xi\leq c_0$ for some $c_0>0$, $\Theta$ strictly increasing, and $\Theta(\xi)\to 1$ as $\xi\to \infty$. Let $\chi \in \mathcal C^\infty_0(\R_+)$  be such that $\chi\equiv 1$ in a neighbourhood of zero. We take
\be\label{def:Uapp}
\Uapp(s,Y):=\chi\left(\frac{Y}{s^{2/7}}\right) \left[ Y- a_4 b Y^4 - a_7 b^2 Y^7 -a_{10} b^3 Y^{10} - a_{11} b^3 Y^{11}\right] + \frac{1}{b} \Theta \left( \sqrt{b} Y\right).
\ee
Notice that $\Uapp\simeq U_4$ as long as $Y\lesssim s^{2/7}$ (the highest order terms have been removed, mainly because they do not lower the size of the remainder while making the computations heavier), and that $\Uapp \to \frac{1}{b}$ as $Y\to \infty$. Therefore we do not require that $\Uapp - U(s,Y)\to 0$ as $Y\to \infty$. But this is not an issue, since we will measure the distance between $U$ and $\Uapp$ in weighted Sobolev spaces, with weights decreasing polynomially (with a large power) after $s^{\beta}$, for some $\beta<{2/7}$.

\begin{remark}
The zone after which we cut-off the first part of the approximate solution is irrelevant: we could have used any cut-off $\chi(\cdot/s^\alpha)$ as long as $\alpha \in ]1/4, 1/3[$. The choice $\alpha=2/7$ simplifies some of the statements on $\Uapp$ since it ensures that $Y$ and $-a_4 b Y^4$ are the largest terms in $Y- a_4 b Y^4 - a_7 b^2 Y^7 -a_{10} b^3 Y^{10} - a_{11} b^3 Y^{11}$.

\end{remark}

We also set, in the rest of the paper, $V:= U-\Uapp$. The computations above and in particular \eqref{V-0} show that
$$
V(s,Y)= -a_7\frac{8}{5}(b_s+b^2) Y^7 - a_4\frac{3}{10\times 7\times 8}\left(b_s + {b^2}\right) Y^8 + O(Y^{10})\quad \text{for }0<Y\ll 1.
$$
In particular, let $\cN$ be a (semi-)norm on functions $W\in\mathcal C^8(\R_+)$ such that $W=O(Y^7)$ for $Y$ close to zero. Assume that there exists a constant $C_\cN$ such that
$$
\cN(W)\geq C_\cN |\p_Y^7 W_{|Y=0}|.
$$
Then $\cN(U-\Uapp)\geq C_\cN |b_s+b^2|$. Therefore the goal of the paper is to use the structure of the equation \eqref{rescaled} in order to find a semi-norm $\cN$ which satisfies the assumptions above, and to prove that
$$
\cN(U-\Uapp)\leq C s^{-2-\eta},
$$
for some positive constant $\eta$ and for $s$ sufficiently large, or alternatively, that
\[
\int_{s_0}^\infty s^{3+2\eta} \cN(U(s)-\Uapp(s))\:ds<+\infty.
\]
Indeed, we have the following result:
\begin{lemma}
Let $s_0:= b_0^{-1}$.
Assume that the variables $x,s$ and the parameters $\lambda, b$ are related by the formulas \eqref{def:x-s}, \eqref{def:b} with the initial conditions $\lambda_{|s=s_0}= \lambda_0$, $b_{|s=s_0}= b_0= s_0^{-1}$, and that there exists a constant $c_0$ such that
$$
c_0^{-1}\leq \frac{b_0}{\lambda_0^2} \leq c_0.
$$

Assume furthermore that there exist   constants  $\eta>0$ and $\eps\in (0,1)$ such  that for all  $s\geq s_0$,
\be\label{hyp:b-mod-rate}
\ba
\int_{s_0}^{\infty} s^{3+2\eta} |b_s+b^2|^2\:ds<\infty,\\
\frac{1-\eps}{s}\leq b(s)\leq \frac{1+\eps}{s}.\ea
\ee

Then there exists $x^*>0$ such that $\lambda(x)\to 0$ as $x\to x^*$. Furthermore, if $\lambda_0\ll 1$, then $x^*= O(\lambda_0^2)$ and
there  exists a constant $C$ such that
$$
\lambda(x)\sim C (x^*-x)^{1/2}\quad \text{as }x\to x^*.
$$

\label{lem:modulation}
\end{lemma}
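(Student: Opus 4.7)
The plan is to work in the rescaled $s$-variable, leverage the ODE structure to prove the sharp asymptotic $sb(s)\to 1$ as $s\to\infty$, and then translate this back to obtain the claimed behavior for $\lambda(x)$.

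First, I would integrate $b=-2\lambda_s/\lambda$ to obtain the exact formula $\lambda^2(s)=\lambda_0^2\exp\bigl(-\int_{s_0}^s b(\sigma)\,d\sigma\bigr)$, so all information about $\lambda$ is encoded in $b$. The key step is then to sharpen the pointwise bound $(1-\eps)/s\leq b\leq (1+\eps)/s$ into a precise asymptotic by using the weighted $L^2$ hypothesis on $b_s+b^2$. Differentiating $1/b$ gives $(1/b)_s=1-(b_s+b^2)/b^2$, and integrating from $s_0$ with $b(s_0)=1/s_0$ yields
\[
\frac{1}{b(s)}=s-R(s),\qquad R(s):=\int_{s_0}^s\frac{b_\sigma+b^2}{b^2}\,d\sigma.
\]
Bounding $1/b^2\leq \sigma^2/(1-\eps)^2$ and splitting $\sigma^2=\sigma^{(1-2\eta)/2}\cdot\sigma^{(3+2\eta)/2}$ under Cauchy--Schwarz against \eqref{hyp:b-mod-rate} gives $|R(s)|\lesssim\bigl(\int_{s_0}^s\sigma^{1-2\eta}\,d\sigma\bigr)^{1/2}$; hence $R(s)=O(s^{1-\eta})$ when $\eta<1$ and $R(s)=O(1)$ when $\eta>1$. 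In every case $R(s)/s\to 0$, so
\[
G(s):=sb(s)-1=\frac{R(s)}{s-R(s)}=O(s^{-\delta})\qquad\text{for some }\delta\in\bigl(0,\min(\eta,1)\bigr).
\]

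With $b=(1+G)/\sigma$, the integral in the formula for $\lambda^2$ becomes $\int_{s_0}^s b\,d\sigma=\ln(s/s_0)+\int_{s_0}^s G(\sigma)/\sigma\,d\sigma$; since $|G/\sigma|\lesssim \sigma^{-1-\delta}$ is integrable at infinity, the quantity
\[
K(s):=s\,\lambda^2(s)=s_0\lambda_0^2\exp\Bigl(-\int_{s_0}^s\frac{G(\sigma)}{\sigma}\,d\sigma\Bigr)
\]
converges to a strictly positive limit $K_\infty$ as $s\to\infty$. In particular $\lambda^4(s)=K(s)^2/s^2\lesssim 1/s^2$, so the change of variable $dx/ds=\lambda^4$ yields a finite $x^*=\int_{s_0}^\infty\lambda^4\,d\sigma$; combining with $s_0\asymp\lambda_0^{-2}$ (from $c_0^{-1}\leq b_0/\lambda_0^2\leq c_0$) produces $x^*=O(\lambda_0^2)$. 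Finally,
\[
x^*-x(s)=\int_s^\infty\frac{K(\sigma)^2}{\sigma^2}\,d\sigma=\frac{K_\infty^2}{s}+o(1/s),
\]
while $\lambda^2(x)=K(s)/s\sim K_\infty/s$, and dividing gives $\lambda^2(x)/(x^*-x)\to 1/K_\infty$, i.e.\ $\lambda(x)\sim K_\infty^{-1/2}(x^*-x)^{1/2}$ as $x\to x^*$.

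The main obstacle is really the sharp decay of $G=sb-1$. The crude pointwise estimate $|b_s+b^2|\lesssim 1/s^2$ would only give $|R(s)|\lesssim s$, which is useless for concluding $R/s\to 0$. One has to exploit the growing-weight integrability \eqref{hyp:b-mod-rate}: Cauchy--Schwarz against this weight gains the crucial $\eta$-factor that turns the bound on $R$ into a sub-linear one. Once this is in hand, the remainder of the argument reduces to elementary ODE manipulations and a dominated-convergence computation.
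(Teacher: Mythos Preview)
Your proof is correct and follows essentially the same approach as the paper: the key step of differentiating $1/b$ and using Cauchy--Schwarz against the weighted $L^2$ hypothesis to get $sb(s)-1\to 0$ is exactly what the paper does (packaged there as Lemma~\ref{lem:b}), and the subsequent computation of $\lambda(s)$, $x^*$, and $x^*-x(s)$ matches the paper's argument with only cosmetic differences in notation (your $K(s)=s\lambda^2(s)$ versus the paper's $\psi(s)$).
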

\begin{proof}
First, setting
\[
J:=\int_{s_0}^{\infty} s^{3+2\eta} |b_s+b^2|^2\:ds
\]
and
using Lemma \ref{lem:b} in the Appendix, we know that
$$
b(s)=\frac{1}{s} + r(s),
$$
where 
$$
\forall s\geq s_0,\quad |r(s)|\leq \eps\frac{1+\eps}{1-\eps} \frac{s_0}{s^2} + J^{1/2} \frac{1+\eps}{(1-\eps)^2 } \frac{1}{s^{1+\eta}}.
$$
As a consequence,
$$
\int_{s_0}^\infty |r(s)|\:ds\leq \eps\frac{1+\eps}{1-\eps} + J^{1/2} \frac{1+\eps}{(1-\eps)^2 \eta s_0^\eta} <\infty. 
$$
From \eqref{def:x-s} and \eqref{def:b}, it follows that
$$
\lambda(s)= \lambda(s_0) \exp\left(-\frac{1}{2}\int_{s_0}^s b(s')\:ds'\right)=\lambda(s_0)\left(\frac{s_0}{s}\right)^{1/2} \exp\left(-\frac{1}{2}\int_{s_0}^s r(s')\:ds'\right).
$$
We have $\lambda(s_0)=\lambda_0= O(s_0^{-1/2})$ by assumption. Moreover, according to the estimate of $r$ above, the function
$
\psi(s):= \exp\left(-\frac{1}{2}\int_{s_0}^s r(s')\:ds'\right)
$ has a finite, strictly positive limit $\psi_\infty$ as  $s\to \infty$. As a consequence $\lambda(s)=\left(\lambda_0 s_0^{1/2}\right)\psi(s) s^{-1/2}$ for all $s\geq s_0$. According to \eqref{def:x-s}, we have
$$
x^*:=\int_{s_0}^\infty \lambda(s)^4\:ds=\left(\lambda_0 s_0^{1/2}\right)^4\int_{s_0}^\infty \psi(s)^4 s^{-2}\:ds<\infty.
$$
Thus separation occurs at a finite $x^*$.  Moreover, 
$$
x^*-x(s)=\left(\lambda_0 s_0^{1/2}\right)^4\int_{s}^\infty \psi(s')^4 s'^{-2}\:ds'\sim \left(\lambda_0 s_0^{1/2}\right)^4 \psi_\infty^4 s^{-1}\quad \text{as } s\to \infty.
$$
Going back to the original variables, we deduce that
$$
\lambda(x)\sim \frac{1}{\left(\lambda_0 s_0^{1/2}\right) \psi_\infty} (x^*-x)^{1/2}\quad \text{as }x\to x^*.
$$
Using the above formulas, we also infer that if $s_0\gg 1$, $x^*= O(s_0^{-1})= O(\lambda_0^2)$.

\end{proof}
\begin{remark}
Notice that  the precise value of the separation point $x^*$ depends on the whole function $u_0(y)$ (and not only on its derivatives at $y=0$) through the function $\psi(s)$. This intricate dependance might explain why the coefficients in Goldstein's expansion were undetermined.

\end{remark}

\begin{remark}
Let us now give some examples of norms $\cN$ such that
\be\label{hyp:cN}
\cN(W)\geq C_\cN |\p_Y^7 W_{|Y=0}|
\ee
for $W\in \mathcal C^8(\R_+)$ with $W=O(Y^7)$ for $Y$ close to zero. We can take for instance
$$
\cN(W)^2:=\int_0^\infty (\p_Y^7 W)^2 + (\p_Y^8 W)^2,
$$
or
$$
\cN(W)^2:=\int_0^\infty \left(\frac{W}{Y^7}\right)^2 + \left(\p_Y\frac{W}{Y^7}\right)^2 .
$$
More generally, we can use any norm $\cN$ such that
$$
\cN (W)\gtrsim \left\| Y^{k-7} \p_Y^k W\right\|_{H^1(0,Y_0)},
$$
for some fixed $Y_0>0$ and for any $k\in \{0,\cdots, 7\}$. The norm $\cN$ we will use eventually will be  equivalent to a linear combination of such norms in a neighbourhood of $Y=0$.
\end{remark}

\subsection{Error estimates}

In this paragraph, we explain roughly how estimates on  $V:=U-\Uapp$ are derived. More details will be given in sections \ref{sec:proof-statements-algebraic} and \ref{sec:proof-statements-energy}. We emphasize that all energy estimates written in this paper are new. The first step is to compute an evolution equation of transport-diffusion type on $V$. To that end, let us consider equation \eqref{rescaled}, and set, for $W_1, W_2\in \mathcal C(\R_+)$,
$$
L_{W_1} W_2 := W_1 W_2 - \p_Y W_1 \int_0^Y W_2,
$$
so that equation \eqref{rescaled} can be written as
$$
L_U U_s - b U^2 + \frac{3b}{2} U_Y \int_0^Y U - \p_{YY} U =-1.
$$

Notice that since $U(s,Y)>0$ for all $s,Y>0$,
$$
\frac{L_U W}{U^2} = \p_Y\left(\frac{\int_0^Y W}{U}\right).
$$
Hence we can define the inverse of  the operator $L_U$, for functions $f$ such that $f(Y)/Y^2$ is integrable in a neighbourhood of zero: we have
\be\label{LU}
\LU f= \left( U \int_0^Y \frac{f}{U^2}\right)_Y= U_Y \int_0^Y \frac{f}{U^2} + \frac{f}{U}.
\ee
As a consequence, the equation on $U$ can be written as
$$
\p_s U + b \LU \left( \frac{3}{2} U_Y \int_0^Y U - U^2\right) - \LU(\p_{YY} U -1)=0.
$$
It follows immediately from the definition that
$$
\LU (U^2)= \p_Y \left( U \int_0^Y 1\right)= (YU)_Y.
$$
On the other hand,
\begin{eqnarray*}
\LU \left(U_Y \int_0^Y U\right) &=& \LU \left(U_Y \int_0^Y U - U^2\right) + (YU)_Y\\
&=&-\LU L_U U + (YU)_Y\\
&=& -U + (YU)_Y= YU_Y.
\end{eqnarray*}
We infer that the equation on $U$ becomes
\be\label{eq:U-LU}
\p_s U - b U + \frac{b}{2} Y \p_Y U - \LU(\p_{YY} U -1)=0.
\ee
The whole non-linearity of the equation is now encoded in the diffusion term  $\LU(\p_{YY} U -1)$.

Setting $\cLU:= \LU \p_{YY}$,  the equation on $V=U-\Uapp$ becomes
\be\label{eq:V-2}
\p_s V - b V + \frac{b}{2} Y \p_Y V - \cLU V =\cR,
\ee
where the remainder $\cR$ is defined by
$$
\cR:=
-\left(\p_s \Uapp - b \Uapp + \frac{b}{2} Y \p_Y \Uapp\right) + \LU(\p_{YY}\Uapp -1).$$
Equation \eqref{eq:V-2} is the second form we will be using for the rescaled Prandtl equation. It will be handy for the derivation of energy estimates.

We have the following result, which is proved in section \ref{sec:proof-statements-algebraic}:
\begin{lemma}\label{lem:reste}
The remainder term $\cR$ can be decomposed as
\begin{eqnarray*}
\cR&=& \left( b_s + {b^2}\right)\chi\left(\frac{Y}{s^{2/7}}\right) \left[ a_4 Y^4 + 2 a_7bY^7 + 3 a_{10}b^2 Y^{10} + 3 a_{11} b^2 Y^{11} \right ]\\
&+& \chi\left(\frac{Y}{s^{2/7}}\right) \left[ a_{10}  b^4 Y^{10} + a_{11}  3b^4 Y^{11}/2\right]\\
&+&  \frac{b}{2}\LU (L_VY) + \frac{a_7 b^3}{2} \LU \left( \chi \left( \frac{Y}{s^{2/7}} \right)\left( L_V Y^7 + L_{-a_4 b Y^4 - a_7 b^2 Y^7 + a_{10} b^3 Y^{10} + a_{11 } b^3 Y^{11} } Y^7\right) \right)\\
&+& P_1(s,Y) + \LU (P_2(s,Y))
\end{eqnarray*}
 where $P_1, P_2 \in \mathcal C^0([s_0, \infty), \mathcal C^\infty(\R_+))$ are such that $P_i$ has at most polynomial growth in $s$ and $Y$ and $P_i(s,Y)=0 $ for $Y \leq c s^{{2/7}}$ for some $c>0$.

\end{lemma}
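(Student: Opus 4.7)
The statement is a purely algebraic decomposition, so the strategy is to choose manipulations in which the six pieces of $\cR$ appear one by one. I plan to split $\Uapp$ into its inner polynomial piece $\chi(Y/s^{2/7})[Y - a_4 b Y^4 - a_7 b^2 Y^7 - a_{10} b^3 Y^{10} - a_{11} b^3 Y^{11}]$ and its outer piece $b^{-1}\Theta(\sqrt{b}Y)$. In the inner region $\{Y \leq c s^{2/7}\}$, $\chi \equiv 1$ and $b^{-1}\Theta(\sqrt{b}Y) = Y^2/2$, so $\Uapp$ reduces to a polynomial in $s, Y$; elsewhere --- where $\chi' \neq 0$ or $b^{-1}\Theta(\sqrt{b}Y) \neq Y^2/2$ --- the contributions are smooth, polynomially bounded, and supported in $\{Y \geq cs^{2/7}\}$. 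These outer contributions are collected into $P_1$ (from the direct piece $-(\p_s \Uapp - b\Uapp + \tfrac{b}{2}Y\p_Y\Uapp)$) and $\LU P_2$ (from $\LU(\p_{YY}\Uapp - 1)$).

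The key tool in the inner region is the pair of identities $L_W((YW)_Y) = W^2$ and $L_W(YW_Y) = W_Y \int_0^Y W$, already used in the paper to pass from \eqref{rescaled} to \eqref{eq:U-LU}. Applied with $W = \Uapp$ and combined with the linearity $L_U = L_{\Uapp} + L_V$, they yield
\[
\LU(\Uapp^2) = (Y\Uapp)_Y - \LU L_V\!\big((Y\Uapp)_Y\big), \qquad \LU\!\left(\Uapp_Y \textstyle\int_0^Y \Uapp\right) = Y\Uapp_Y - \LU L_V(Y\Uapp_Y),
\]
which is exactly where the $\LU L_V$ terms of the statement are born. Substituting $\Uapp$ into the polynomial form \eqref{rescaled}, applying $\LU$, and using the above identities delivers a decomposition of $\cR$ into a pure-polynomial part (coming from the Taylor-expansion residual of $\Uapp$ as an approximate solution of \eqref{rescaled}) plus $\LU L_V$ corrections applied to $\p_s \Uapp - b\Uapp + \tfrac{b}{2}Y\p_Y\Uapp$.

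The explicit coefficients then come from two direct computations. First, substituting $b_s = -b^2 + (b_s+b^2)$ into the inner-region expression for $\p_s \Uapp - b\Uapp + \tfrac{b}{2}Y\p_Y\Uapp$ gives $-(b_s+b^2)[a_4 Y^4 + 2a_7 b Y^7 + 3a_{10}b^2 Y^{10} + 3a_{11}b^2 Y^{11}] - \tfrac{b}{2} Y - \tfrac{a_7}{2} b^3 Y^7 - a_{10} b^4 Y^{10} - \tfrac{3 a_{11}}{2} b^4 Y^{11}$; carrying the leading $-bY/2$ and $-\tfrac{a_7}{2} b^3 Y^7$ monomials through $-\LU L_V$ produces exactly $\tfrac{b}{2}\LU(L_V Y)$ and $\tfrac{a_7 b^3}{2}\LU(\chi L_V Y^7)$. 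The auxiliary $L_{-a_4 bY^4 - a_7 b^2 Y^7 + a_{10} b^3 Y^{10} + a_{11} b^3 Y^{11}} Y^7$ factor arises when the remaining monomials $-a_4 bY^4 - \cdots$ of $\Uapp$ are reassembled into an $L_W Y^7$ term via the identity for $\LU(\Uapp_Y \int_0^Y \Uapp)$. Second, the pure-polynomial residual, computed using the Taylor expansion already carried out in the construction of $U_3 \to U_4$, yields the $(b_s+b^2)\chi[\ldots]$ and $\chi[a_{10}b^4 Y^{10} + \tfrac{3 a_{11}}{2} b^4 Y^{11}]$ pieces; the latter survive because $\Uapp$ deliberately omits the $a_{13} b^4 Y^{13}, a_{16} b^5 Y^{16}$ monomials of $U_4$ that would otherwise have cancelled them. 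The main obstacle will be the bookkeeping: tracking every monomial through the nonlocal $\LU$ and $L_V$ operators so the stated integer coefficients drop out exactly, and verifying the precise form of the coupling in the $L_V Y^7 + L_{\text{poly}} Y^7$ piece.
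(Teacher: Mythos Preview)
Your identities for $\LU(\Uapp^2)$ and $\LU(\Uapp_Y\int_0^Y\Uapp)$ are correct and do give the exact formula
\[
\cR \;=\; -\LU R[\Uapp]\;-\;\LU L_V\!\left(\Uapp_s - b\Uapp + \tfrac{b}{2}Y\Uapp_Y\right),
\]
where $R[\Uapp]$ is the residual of $\Uapp$ in \eqref{rescaled}. The gap is that \emph{both} terms on the right are now wrapped in $\LU$, and $\LU$ applied to a polynomial in $Y$ is a nonlocal expression in $U$, not a polynomial. So this route does not produce the bare monomials $(b_s+b^2)\chi[a_4Y^4+\ldots]$ and $\chi[a_{10}b^4Y^{10}+\ldots]$ of the lemma. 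Those pieces come directly out of the transport term $-(\Uapp_s - b\Uapp + \tfrac{b}{2}Y\Uapp_Y)$ in the original definition of $\cR$, \emph{before} any $\LU$ is applied --- which your ``First'' paragraph in fact computes --- but then the claim that $\tfrac{b}{2}\LU(L_VY)$ arises by ``carrying $-\tfrac{b}{2}Y$ through $-\LU L_V$'' is inconsistent with that direct reading: the monomial $\tfrac{b}{2}Y$ is already sitting inside $\cR$ as a plain polynomial, and it must cancel against a matching piece of $\LU(\Uapp_{YY}-1)$, which your plan never computes.

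The paper's approach supplies the missing ingredient: to evaluate $\LU(\Uapp_{YY}-1)$ one writes (in the inner region) $L_U Y = \tfrac12 Y^2 + a_4 b Y^5 + \ldots + L_V Y$, so that $\LU(12a_4 bY^2 + 42a_7 b^2 Y^5 + \ldots) = \tfrac{b}{2}Y - \tfrac{b}{2}\LU(L_VY) + \LU(\text{higher})$. This is exactly the step that converts between bare polynomials and $\LU$-wrapped expressions, and it is the identity $Y^k=\LU L_U Y^k$ (for $k=1$ here, and $k=7$ to handle the leftover $\tfrac{a_7}{2}b^3 Y^7$) that does the work --- not your $\LU(\Uapp^2)$ identity. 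Once $\tfrac{a_7}{2}b^3 Y^7\chi$ is rewritten as $\tfrac{a_7 b^3}{2}\LU(L_{\Uapp}Y^7 + L_V Y^7)\chi$, the explicit polynomial $L_{\Uapp}Y^7=\tfrac78 Y^8+\tfrac38 Y^9+\ldots$ combines with the residual $\tilde\cD$ terms, and the $Y^8,Y^9$ contributions cancel precisely because of the defining relations \eqref{def:a10a11} for $a_{10},a_{11}$. Your plan never invokes these cancellations, and without them the $L_{-a_4bY^4-\ldots}Y^7$ piece cannot be isolated.
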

\begin{remark}
Following the decomposition of Lemma \ref{lem:reste}, we write $\cR=\sum_{i=1}^4 \cR_i$. Each of the remainder terms $\cR_i$ will play a different role and will be treated separately. More precisely:
\begin{enumerate}
\item Cancellations will occur in the remainder term  $\cR_1$;

\item The size of the term $\cR_2$ dictates the final rate of convergence of the energy. This is where the choice of the approximate solution plays an important role;

\item The term $\cR_3$ can be treated as perturbation of the zero order term $bV$ and of the transport term $bY\p_Y V$ as soon as $Y\gg 1$, and as a perturbation of the diffusion $\cLU V$ if $Y\ll s^{1/4}$. Indeed, think of $\LU$ as a division by $U$, and of a derivation with respect to $Y$ as a division by $Y$. Then
$$
|b \LU (L_V Y)| \lesssim b \frac{Y^2}{U}  |V_Y|\lesssim b \frac{1}{1+Y} |Y \p_Y V|.
$$
Thus if $Y\gg 1$, this term is small compared to $bY \p_Y V $. On the other hand, heuristically, $|\cLU V | \gtrsim  U^{-1} |\p_Y^2 V|\gtrsim (YU)^{-1} |\p_Y V|$ (think for instance of a Hardy inequality). Thus as long as $Y^2 U \ll b^{-1}$, i.e. $Y\ll s^{1/4}$, the diffusion term $\cLU V$ dominates $bY \p_Y V$, and therefore $\cR_3$.

\item The last term $\cR_4$ will not play any role in the energy estimates: indeed, we will choose weights with a strong polynomial decay for $Y\geq s^{\beta}$ for some $\beta<2/7$, so that the error stemming from $\cR_4$ can be made $O(s^{-P})$ for any $P>0$ by an appropriate choice of the weight.

\end{enumerate}

\label{rem:decomposition-D}

\end{remark}

The idea is now to perform weighted energy estimates on equation \eqref{eq:V-2}, with the help of a norm $\cN$ satisfying assumption \eqref{hyp:cN}. These estimates rely on the following ideas:
\begin{enumerate}
\item Let $\cN$ be a norm satisfying \eqref{hyp:cN}, and define an energy $E(s)$ by 
$$
E(s)= \cN(V(s))^2.
$$
In order to prove that $b_s+b^2=O(s^{-2-\eta})$ (or that $\int_{s_0}^{+\infty} s^{3+2\eta} (b_s+b^2)^2ds<+\infty$) for some $\eta>0$, it is enough to show that
\be\label{est:energie-generale}
\frac{dE}{ds} + \frac{\alpha}{s} E(s) \leq \rho(s)\quad \forall s\geq s_0
\ee
with $ 4+\eta\leq \alpha $, and with a right-hand side $\rho(s)$ such that $\int_{s_0}^\infty s^{\alpha} \rho(s)\:ds<+\infty$.  Indeed, integrating \eqref{est:energie-generale} between $s_0$ and $s$ yields
$$
E(s)\leq s^{-\alpha}\left( E(s_0) s_0^\alpha +\int_{s_0}^\infty s^{\alpha} \rho(s)\:ds\right).
$$
Assuming additionally that $E(s_0)\leq C_0 s_0^{-\alpha}$ for some constant $C_0$ independent of $s_0$, we are led to
$$
c_\cN|b_s+b^2|^2\leq E(s)\leq C s^{-4-\eta}\quad \forall s\geq s_0,
$$
and using Lemma \ref{lem:modulation}, we obtain the desired result.

\item The property $\alpha>4$ in \eqref{est:energie-generale} is derived thanks to algebraic manipulations on \eqref{eq:V-2}. Schematically, if we only keep the transport part in \eqref{eq:V-2} and if we consider the model equation
$$
\p_s f - \frac{1}{s} f + \frac{1}{2s} Y \p_Y f = r,
$$
we see that the $k$-th derivative of $f$ satisfies
$$
\p_s \p_Y^k f + \left(\frac{k}{2}-1\right) \frac{1}{s} \p_Y^k f + \frac{1}{2s} Y \p_Y^{k+1} f = \p_Y^k r.
$$
Hence
$$
\frac{d}{ds} \| \p_Y^k f \|_{L^2}^2 +\left(k-\frac{5}{2}\right) \frac{1}{s} \| \p_Y^k f \|_{L^2}^2= \int \p_Y^k r\;  \p_Y^k f
$$
Taking $k=7$ and $k=8$ and summing the two estimates, 
we  obtain the desired result with $\alpha=\frac{9}{2}$ and  $\cN(f):=\| \p_Y^7 f\|_{H^1}$.

\item The fact that the remainder is  integrable with a weight $s^\alpha$ in \eqref{est:energie-generale} stems from our choice of approximate solution. In particular, if we modify the algorithm of construction of $(U_N)_{N\geq 1}$ described in the previous paragraph and replace every occurrence of $b_s$ by $-cb^2$ for some constant $c\neq 1$, the estimate is no longer true.

\end{enumerate}

 Let us now explain the main steps in the derivation of estimate \eqref{est:energie-generale}. The difficulties lie  in the complex structure of the diffusion operator $\cLU$, and in the estimation of some commutator terms. The idea is to apply several times the operator $\cLU$ to equation \eqref{eq:V-2}.  This requires:
\begin{enumerate}
\item computing  the commutator of $\cLU$ with $\p_s+ \frac{b}{2} Y \p_Y$;
\item understanding the action of $\cLU$ on the remainder term $\cR$;
\item obtaining energy estimates on transport-diffusion equations of the type
\be\label{transport-diff}
\p_s f + C b f + \frac{b}{2} Y\p_Y f - \cLU f=r.
\ee
\end{enumerate}
Let us now explain how we deal with each of the points above.

\subsubsection{Commutator of \texorpdfstring{$\cLU$}{operator} with \texorpdfstring{$\p_s+ \frac{b}{2} Y \p_Y$}{transport}}

The  commutator result is stated in the following
\begin{lemma}[Computation of the commutator]
For any function $W\in W^{1,1}_{loc}((s_0,\infty)\times\R_+)$ such that $W=O(Y^2)$ for $Y$ close to zero,
\begin{eqnarray*}\left[\LU, \p_s + \frac{b}{2} Y \p_Y\right] W&=& b \LU W-\left(\cD \int_0^Y \frac{W}{U^2}\right)_Y + 2 \left( U \int_0^Y \frac{W }{U^3} \cD\right)_Y,
\end{eqnarray*}
where $\cD:=L_U^{-1}(\p_{YY} U -1)$.

\label{lem:commutator}
\end{lemma}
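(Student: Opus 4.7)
The plan is to compute $\LU TW$ and $T\LU W$ separately, where $T:=\p_s+\tfrac{b}{2}Y\p_Y$, and then subtract. The key input is that by \eqref{eq:U-LU}, the function $U$ itself satisfies
\[
TU=bU+\cD,\qquad \cD:=\LU(\p_{YY}U-1),
\]
so whenever $U$ gets transported by $T$ inside an expression, it produces an extra factor of $\cD$ beyond the linear contribution $bU$. The hypothesis $W=O(Y^2)$ at $Y=0$, combined with $U\sim Y$, ensures that $W/U^2$, $W/U^3$ and their products with $\cD$ are integrable near the origin, so that all primitives $\int_0^Y(\cdot)$ in \eqref{LU} are well defined.

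First I would record the two elementary commutators:
\[
[T,\p_Y]=-\tfrac{b}{2}\p_Y,\qquad \Bigl[T,\int_0^Y\cdot\Bigr]g=\tfrac{b}{2}\int_0^Y g,
\]
the second coming from an integration by parts on $\tfrac{b}{2}Y'\p_{Y'}g$. Setting $F:=\int_0^Y W/U^2$, so that $\LU W=(UF)_Y$ by \eqref{LU}, I then apply $T$ under the integral:
\[
T\!\left(\frac{W}{U^2}\right)=\frac{TW}{U^2}-\frac{2W(bU+\cD)}{U^3}=\frac{TW}{U^2}-\frac{2bW}{U^2}-\frac{2W\cD}{U^3},
\]
which after combining with the commutator for $\int_0^Y$ yields
\[
TF=\int_0^Y\frac{TW}{U^2}-\frac{3b}{2}F-2\int_0^Y\frac{W\cD}{U^3}.
\]

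Next I compute $T(UF)=(TU)F+U\,TF$; using $TU=bU+\cD$, the $bUF$ and $-\tfrac{3b}{2}UF$ terms combine into $-\tfrac{b}{2}UF$, leaving
\[
T(UF)=-\tfrac{b}{2}UF+\cD F+U\!\int_0^Y\!\frac{TW}{U^2}-2U\!\int_0^Y\!\frac{W\cD}{U^3}.
\]
Then I take the $Y$-derivative using $[T,\p_Y]=-\tfrac{b}{2}\p_Y$:
\[
T\LU W=T(UF)_Y=\p_Y T(UF)-\tfrac{b}{2}(UF)_Y,
\]
and after collecting the two $-\tfrac{b}{2}(UF)_Y=-\tfrac{b}{2}\LU W$ contributions one gets
\[
T\LU W=-b\LU W+\Bigl(\cD F\Bigr)_Y+\LU(TW)-2\Bigl(U\!\int_0^Y\!\tfrac{W\cD}{U^3}\Bigr)_Y.
\]

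The final step is just to rearrange $[\LU,T]W=\LU TW-T\LU W$, which gives exactly the claimed identity. The only real obstacle is bookkeeping: one must keep track of the sign and factor $\tfrac12$ that appear three times (from $[T,\p_Y]$, from $[T,\int_0^Y\cdot]$, and from $TU=bU+\cD$ when substituted into $(TU)F$), and verify that they combine to produce the coefficient $b$ in front of $\LU W$ and the coefficient $2$ in front of the last integral. The $O(Y^2)$ assumption on $W$ is used only to justify that every primitive written above converges at $Y=0$; no boundary term at infinity appears because $T$ and $\LU$ are pointwise operations in $Y$ after the primitive has been formed.
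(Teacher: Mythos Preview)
Your proof is correct. The computation of the elementary commutators $[T,\p_Y]=-\tfrac{b}{2}\p_Y$ and $[T,\int_0^Y\cdot]=\tfrac{b}{2}\int_0^Y$, together with the identity $TU=bU+\cD$ coming from \eqref{eq:U-LU}, indeed yields the claimed formula after the routine bookkeeping you describe.

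The paper organizes the computation differently: it splits the operator $\p_s+\tfrac{b}{2}Y\p_Y$ and computes $[\LU,Y\p_Y]$ and $[\LU,\p_s]$ separately, introducing the auxiliary quantity $\Gamma:=YU_Y-2U$ and the identity $U_s=-\tfrac{b}{2}\Gamma+\cD$ (equation \eqref{Us}); the two commutators individually contain several $\Gamma$-terms which must then be seen to cancel exactly when summed. Your route avoids $\Gamma$ entirely by treating $T$ as a single derivation from the start and exploiting $TU=bU+\cD$ directly; this packages the cancellations automatically and is arguably cleaner. The paper's decomposition, on the other hand, makes the separate contributions of the transport and $\p_s$ pieces visible, which could be informative in other settings. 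Both arguments rest on the same input (the equation for $U$) and involve the same amount of actual work.
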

In the rest of the article, we define the commutator operator
$$
\mathcal C[W]:=-\left(\cD \int_0^Y \frac{W}{U^2}\right)_Y + 2 \left( U \int_0^Y \frac{W }{U^3} \cD\right)_Y.
$$

The quantity $\cD$ involved in the commutator can then be written as
$$
\cD= \cLU V + \LU (\p_{YY}\Uapp-1),
$$
where the second term can be developed using the explicit expression $\Uapp$.
Notice the commutator $\cC[\p_Y^2 V]$ contains some quadratic  terms in $V$. In order to estimate these quadratic terms, we will need both preliminary estimates and estimates in $L^\infty$ on the function $V$. The $L^\infty$ estimates are derived in detail in section \ref{sec:infty}, and rely on a careful use of the maximum principle. 
 
 \subsubsection{Action of \texorpdfstring{$\cLU$}{LU} on the remainder term \texorpdfstring{$\cR$}{R}}

  We will use the decomposition of the remainder given in Remark \ref{rem:decomposition-D}. The first two terms, namely $\cR_1$ and $\cR_2$, are essentially polynomials. Therefore, in order to deal with them, we will need to get explicit formulas for terms such as $\cLU (Y^4)=12\LU (Y^2)$, and more generally, to understand the asymptotic behavior of $\LU (Y^k)$ for $Y\gg 1$ and $k\geq 2$. 
  
  In order to get explicit formulas, we will use in several instances the following trick: for $k\in \N$, write
  $$
  Y^k= \LU L_U (Y^k)= \LU (L_{\Uapp} (Y^k) + L_V Y^k).
 $$
 Now, since $\Uapp$ is a polynomial, $L_{\Uapp} (Y^k)$ can be easily computed, and is also a polynomial in $Y$.   The term $\LU L_V (Y^k)$ is expected to be of lower order. For instance, taking $k=1$, we observe that $L_{\Uapp} (Y)= \frac{Y^2}{2} + O(bY^5)$ for $Y\ll s^{2/7}$. Hence we obtain a formula for $\LU(Y^2)$ (up to some remainder terms).

 Concerning the asymptotic behavior of $\LU (Y^k)$ for $Y\gg 1$ and for $k\geq 4$,  notice  that the operator $\LU$ acts roughly like a division by $U$, as can be seen from the formula \eqref{LU}. 
Furthermore, the $L^\infty$ estimates (see Proposition \ref{prop:max-princ}) will ensure that there exists a constant $C$ such that
 $$
C^{-1}(Y+Y^2)\leq  U(s,Y) \leq C(Y+Y^2) \quad \forall Y\lesssim s^{2/7}.
 $$
   Therefore, $\LU$ behaves differently for $Y\ll 1$ and for $Y\gg 1$: for $Y$ close to zero, applying $\LU$ amounts to dividing by $Y$, while for $Y\gg 1$, it amounts to dividing by $Y^2$. We obtain that for $Y\ll s^{1/2}$ and $k\geq 4$,
   $$
   \LU (Y^k)=\left\{
   \begin{array}{ll}
 O(Y^{k-1})& \text{ if } Y \ll 1,\\
 O(Y^{k-2})& \text{ if } Y\gtrsim 1.
   \end{array}
   \right.
   $$
 
As explained in Remark \ref{rem:decomposition-D}, the term $\cR_3$ is treated as a perturbation of the dissipation coming from the transport and the diffusion term. Eventually, since $\cR_4$ is supported in $Y\gtrsim s^{2/7}$, while we use weights that have a strong polynomial decay for $Y\gtrsim s^{\beta}$ for some $\beta<2/7$, the size of $\cR_4$ in our energy norms will be smaller that that of $\cR_1+ \cR_2 + \cR_3$.

\subsubsection{Energy estimates on transport-diffusion equations of the type \texorpdfstring{\eqref{transport-diff}}{}}

The most difficult part is proving coercivity and positivity estimates for the diffusion. We will rely on the diffusion Lemma  \ref{lem:diff1}, which makes an extensive use of weighted Hardy inequalities, see \cite{masmoudi2011hardy}. They also rely on the fact that if $U= Y + \frac{Y^2}{2} + O(bY^4)$, then $\p_{YY} \LU$ is ``almost'' a local differential operator (see the formulas in Lemma \ref{lem:p3LU} in the Appendix).

We will also often use the observation that if $Y\lesssim s^{1/4}$, then the diffusion term dominates, while for $Y\gtrsim s^{1/4}$, then the transport part becomes preponderant. Indeed, for $k\geq 6$, if $b\sim s^{-1}$
$$
\ba
C b Y^k + \frac{b}{2} Y \p_Y Y^k\sim \left(\frac{k}{2} + C\right) \frac{1}{s} Y^k,\\
\text{and } \cLU Y^k= O(Y^{k-4})\quad \text{for } Y\gg 1.
\ea
$$
It is easily checked that both terms are of the same order for $Y\sim s^{1/4}$, and diffusion (resp. transport) is dominant below (resp. above) that threshold.

\vskip3mm

We now turn towards the sequence of estimates on $V$. In the end, we seek to obtain estimates on $\p_Y \cLU^2 V$ and $\p_Y^2 \cLU^2 V$. We recall that $V\sim -C(b_s+b^2) Y^7$ for $Y$ close to zero, and therefore $\p_Y \cLU^2 V \sim -C(b_s+b^2)$ for $Y$ close to zero.

Using Lemma \ref{lem:commutator}, we infer that $\cLU V$ satisfies the following equation:

\be\label{eq:cLUV}
\p_s \cLU V + b \cLU V + \frac{b}{2} Y \p_Y \cLU V - \cLU^2 V = \cLU \mathcal R + \mathcal C[\p_{YY} V].
\ee
We get the following result:
\begin{proposition}\label{prop:est-cLUV-1}
Assume that:
\begin{itemize}
\item There exists a constant $J>0$ such that
$$
\int_{s_0}^{s_1} s^{13/4} |b_s+b^2|^2 ds \leq J;
$$
\item $\frac{1-\bar \eps}{s} \leq b \leq \frac{1+\bar \eps}{s}$ for $s\in [s_0, s_1]$ and for some small universal constant $\bar \eps$ (say $\bar \eps=1/50$);
\item There exist  constants  $M_1,M_2,c$ independent of $s$ such that for all $Y$,
$$\ba
-M_1\leq U_{YY}\leq 1\quad\forall Y \geq 0,\\
1- M_2 b Y^2 \leq  U_{YY}\quad \forall Y \in [0, c s^{1/3}].\ea
$$

\end{itemize}
Let $w_1:=Y^{-a}(1+ s^{-\beta_1} Y)^{-m_1}$ for some $\beta_1\in ]1/4, 2/7[$, $m_1\in \N$.

Let
$$\ba
E_1(s):=\int_0^\infty (\p_Y^2 \cLU V )^2  w_1 ,\\
D_1(s):=\int_0^\infty \frac{(\p_Y^3 \cLU V)^2} U w_1 + \int_0^\infty \frac{(\p_Y^2 \cLU V)^2}{U^2} w_1 .\ea
$$
Then there exist universal constants $\bar a, \bar c>0$, such that for all $a\in ]0, \bar a[$, for all  $\alpha<6-(11-a)\beta_1$, for $m_1$ large enough, there exists $S_0, H_1>0$ depending on $M_1, M_2, c, \beta_1, m_1$, and $a$
such that if $s_0\geq \max (S_0, J^4)$,
$$
E_1(s ) \leq H_1 (1 + E_1(s_0) s_0^\alpha ) s^{-\alpha},\  \int_{s_0}^{s_1} s^\alpha D_1(s) ds\leq   H_1(1 + E_1(s_0) s_0^\alpha )\quad \forall s\in [s_0, s_1].
$$

\end{proposition}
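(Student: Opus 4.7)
The plan is to apply $\p_Y^2$ to equation \eqref{eq:cLUV}, multiply by $W\, w_1$ with $W := \p_Y^2 \cLU V$, integrate in $Y$, and close a Grönwall inequality of the form
\[
\frac{dE_1}{ds} + \frac{\alpha}{s} E_1 + c D_1 \leq \rho(s), \quad \int_{s_0}^{\infty} s^\alpha \rho(s)\, ds < +\infty.
\]
Using $\p_Y^2(Y\p_Y Z) = Y \p_Y^3 Z + 2 \p_Y^2 Z$, the equation for $W$ reads
\[
\p_s W + 2 b W + \frac{b}{2} Y \p_Y W - \p_Y^2 \cLU^2 V = \p_Y^2 \cLU \cR + \p_Y^2 \cC[\p_{YY} V].
\]
Testing against $W w_1$, the time derivative produces $\tfrac{1}{2}\tfrac{dE_1}{ds}$ up to a favorable contribution from $\p_s w_1 \leq 0$; the zero-order term gives $2bE_1$; and integration by parts on the transport, using $Y w_1' = -a w_1 - \tfrac{m_1 s^{-\beta_1}Y}{1+s^{-\beta_1}Y} w_1$, produces $-\tfrac{b(1-a)}{4} E_1$ plus a non-negative correction. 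With $b \geq (1-\bar\eps)/s$, the left-hand side is bounded below by $\tfrac{1}{2}\tfrac{dE_1}{ds} + \tfrac{(7+a)(1-\bar\eps)}{4s} E_1 - \int \p_Y^2 \cLU^2 V \cdot W\, w_1$, comfortably above any $\alpha/(2s)$ in the admissible range.

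The core of the proof is the coercivity of $-\int \p_Y^2 \cLU^2 V \cdot W w_1$ against $D_1$. Writing $\cLU^2 V = \LU W$ and using $\p_Y \LU f = U_{YY}\int_0^Y f/U^2 + f_Y/U$, a double integration by parts transfers two derivatives from $\p_Y^2 \LU W$ onto $W$, producing the two positive pieces of $D_1$ modulo lower-order remainders. Positivity of the principal part rests on $U_{YY} \geq 1 - M_2 b Y^2 > 0$ on $[0, c s^{1/3}]$ and on weighted Hardy inequalities akin to those of the paper's diffusion lemma; the asymptotics $U \sim Y(1+Y)$ on the support of $w_1$ keep the weighted ratios $w_1/U$ and $w_1/U^2$ in the Hardy range. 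Boundary terms at $Y=0$ vanish using $V = O(Y^7)$, which absorbs the $Y^{-a}$ singularity for $a$ small enough.

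The right-hand side is decomposed via $\cR = \cR_1 + \cR_2 + \cR_3 + \cR_4$ from Lemma \ref{lem:reste}. The piece $\cR_1$, proportional to $b_s + b^2$, is absorbed through the hypothesis $\int s^{13/4}|b_s+b^2|^2 ds \leq J$ after Cauchy--Schwarz with a small fraction of $E_1 + D_1$. The piece $\cR_2 \sim b^4 Y^k$ for $k = 10, 11$ dictates the admissible $\alpha$: using $\cLU(Y^k) \sim Y^{k-4}$ for $Y \gg 1$ and integrating $w_1$ up to scale $Y \sim s^{\beta_1}$ gives $\int (\p_Y^2 \cLU \cR_2)^2 w_1\, dY \lesssim s^{-8 + (11-a)\beta_1}$, integrable against $s^\alpha$ exactly when $\alpha < 6 - (11-a)\beta_1$. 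The term $\cR_3$ is treated as a perturbation of either the transport or the diffusion, as explained in Remark \ref{rem:decomposition-D}. The term $\cR_4$ is supported in $Y \gtrsim s^{2/7} > s^{\beta_1}$, so for $m_1$ large enough its contribution is $O(s^{-P})$ for any $P$. The commutator $\p_Y^2 \cC[\p_{YY} V]$ splits $\cD = \cLU V + \LU(\p_{YY}\Uapp - 1)$ into a deterministic polynomial piece estimated directly and a quadratic-in-$V$ piece absorbed by $D_1$ using the $L^\infty$ bounds from Section \ref{sec:infty} together with the smallness of $s_0^{-1}$.

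Once the differential inequality is established, multiplying by $s^\alpha$ and integrating from $s_0$ to $s$ yields both bounds in the proposition, provided $s_0$ exceeds the threshold $S_0$ depending on $M_1, M_2, c, \beta_1, m_1, a$. The principal obstacle is the coercivity step: the non-local fourth-order operator $\p_Y^2 \cLU^2$ must be controlled by a quadratic form in derivatives of $W$ alone, in the presence of a weight singular at $Y = 0$, with the correct signs in every Hardy-type integration by parts. A secondary difficulty is the quadratic commutator $\p_Y^2 \cC[\p_{YY} V]$, which cannot be closed purely on $E_1$ and requires the external $L^\infty$ control of Section \ref{sec:infty}.
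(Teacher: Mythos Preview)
Your outline follows the paper's approach and correctly identifies the overall structure: differentiate \eqref{eq:cLUV} twice, test against $W w_1$, invoke the diffusion lemma for coercivity, estimate the four pieces of $\cR$ together with the commutator, then Gr\"onwall. The power-counting for $\cR_2$ that produces the threshold $\alpha < 6-(11-a)\beta_1$ is also right.

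Two concrete gaps remain. First, the tail remainder in Lemma~\ref{lem:diff1} (the term supported on $Y\gtrsim s^{1/3}$) cannot be closed using only $E_1$, $D_1$, and the pointwise bounds of Section~\ref{sec:infty}: you have no $L^\infty$ control on $g=\p_Y^2\cLU V$ itself, and the non-local integral $\int_0^Y g/U^2$ extends well beyond the effective support of $w_1$, so its $L^2(w_1)$-norm is not self-controlled. The paper disposes of this via Lemma~\ref{lem:tails}, which requires a \emph{lower-order} dissipation $D_0=\int (\p_Y^3 V)^2 U^{-1} w_0 + \int (\p_Y^2 V)^2 U^{-2} w_0$ with a more slowly decaying weight $w_0$ (parameters $m_0\ll m_1$, $\beta_0>\beta_1$). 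Establishing this preliminary bound on $(E_0,D_0)$ is a separate step (Section~\ref{ssec:E0}) and is absent from your sketch; the same $D_0$ is needed for the commutator tails.

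Second, your treatment of $\cR_1$ is too quick. After applying $\p_Y^2\cLU$, the piece $-\tfrac12(b_s+b^2)\,\p_Y^2\LU(L_V Y)$ contains derivatives of $\cLU V$ and, via Young plus Hardy, generates a right-hand-side contribution of size $H_1\, s^3(b_s+b^2)^2 E_1$. This term cannot be placed in $\rho(s)$; it must be absorbed into the linear coefficient of $E_1$ in the Gr\"onwall inequality. The paper does this through the integrating factor $\exp(-\phi_1(s))$ with $\phi_1(s)=H_1\int_{s_0}^s \tau^3(b_\tau+b^2)^2\,d\tau\leq H_1 J s_0^{-1/4}$, and this is precisely where the hypothesis $s_0\geq J^4$ is used. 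Your closing sentence invokes only $s_0\geq S_0$ and omits this mechanism.
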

\begin{remark}
The weight $Y^{-a}$ in $w_1$ has two different roles. On the one hand, we gain  a bit of decay in the remainder terms. On the other hand, we are able to control, through a simple Cauchy-Schwarz inequality, quantities of the type
$$
\int_0^Y \frac{\p_Y^2 \cLU V}{U^2}
$$
by the diffusion term.
\end{remark}

\begin{remark}
Notice that if we take $\beta_1$ such that
$$
\frac{1}{4}< \beta_1 < \frac{1}{4} \frac{11}{11-a},
$$
then we can choose $\alpha$ so that $\alpha>13/4$. We will make this choice in the final energy estimates, and we will use the corresponding decay of $E_1$ when we apply the maximum principle.
\label{rem:E_1-13/4}
\end{remark}

Differentiating \eqref{eq:cLUV} with respect to $Y$ and taking the trace of the equation at $Y=0$, we obtain in particular
\begin{lemma}
\label{lem:trace-cLU2V}
For all $s\ge s_0$, we have
$$
\p_Y \cLU^2 V_{|Y=0}= -\frac{1}{2}(b_s+b^2).
$$

\end{lemma}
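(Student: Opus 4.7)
The strategy is to differentiate equation \eqref{eq:cLUV} in $Y$ and evaluate the resulting identity at $Y = 0$; I will then show that the only surviving term on the left-hand side is $-\p_Y \cLU^2 V|_{Y=0}$, and that the right-hand side reduces to $\tfrac12(b_s+b^2)$. From \eqref{V-0}, together with the observation that $\Uapp$ differs from $U_3$ only by $O(b^3 Y^{10})$ near $Y=0$, one has $V(s,Y) = -\tfrac{8 a_7}{5}(b_s+b^2) Y^7 + O(Y^8)$ uniformly in $s$. Combining this with the expansion $U = Y + Y^2/2 + O(Y^4)$ near $0$ and the formula $\LU f = (U\int_0^Y f/U^2)_Y$ gives $\cLU V = O(Y^4)$ near $Y=0$. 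Hence each of $\p_Y \cLU V$, $\p_s \p_Y \cLU V$ and $Y \p_Y^2 \cLU V$ vanishes at $Y=0$, and the $Y$-derivative of the left-hand side of \eqref{eq:cLUV} reduces at $Y=0$ to $-\p_Y \cLU^2 V|_{Y=0}$.

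For the right-hand side, I would use the decomposition $\cR = \sum_{i=1}^4 \cR_i$ of Lemma \ref{lem:reste}. Since $\chi(Y/s^{2/7}) = 1$ near $Y=0$, one has $\cR_1 = (b_s+b^2)[a_4 Y^4 + 2 a_7 b Y^7 + \ldots]$. A short expansion using the Taylor series of $1/U^2$ near $0$ gives $\LU(Y^2) = 2Y + O(Y^3)$, and therefore $\cLU(Y^4) = \LU(12 Y^2) = 24 Y + O(Y^3)$; since $a_4 = 1/48$, this yields $\p_Y \cLU \cR_1|_{Y=0} = 24 a_4 (b_s+b^2) = \tfrac12(b_s+b^2)$. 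The other remainders contribute nothing: $\cR_2 = O(Y^{10})$ near $0$ gives $\cLU \cR_2 = O(Y^7)$; the identity $L_V Y = Y(V - Y \p_Y V) = O(Y^8)$ together with an analogous estimate of the second summand of $\cR_3$ yields $\cLU \cR_3 = O(Y^4)$; and $\cR_4$ is supported in $\{Y \gtrsim s^{2/7}\}$, so by locality of $\int_0^Y$ in the definition of $\LU$, the function $\cLU \cR_4$ vanishes identically on a neighbourhood of $0$. For the commutator term, I would write $\cD = \LU(\p_{YY} \Uapp - 1) + \cLU V$ and use $\p_{YY} \Uapp = 1 - 12 a_4 b Y^2 + O(Y^5)$ near $0$ to obtain $\cD = -24 a_4 b Y + O(Y^4)$; plugging this together with $\p_{YY} V = O(Y^5)$ into both summands of $\mathcal C[\p_{YY} V]$ produces terms of order $Y^4$ near $0$, hence $\p_Y \mathcal C[\p_{YY} V]|_{Y=0} = 0$.

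Combining these contributions gives $-\p_Y \cLU^2 V|_{Y=0} = \tfrac12(b_s+b^2)$, which is the desired identity. The main technical obstacle is the bookkeeping of the leading-order asymptotics of the nonlocal operator $\LU$ applied to various functions near $Y=0$: because $\LU$ depends on $U$ itself, one must retain enough terms of the Taylor expansion of $1/U^2$ to correctly capture the first nonzero coefficient of each $\LU(Y^k)$, and track the cancellations between the two summands of $\mathcal C$. The numerical identity $24 a_4 = 1/2$ is precisely the bridge linking the combinatorial choice $a_4 = 1/48$ made in the construction of $\Uapp$ to the factor $-1/2$ appearing in the statement.
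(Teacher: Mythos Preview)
Your proposal is correct and follows essentially the same route as the paper's own proof: differentiate \eqref{eq:cLUV} in $Y$, take the trace at $Y=0$, use $V=O(Y^7)$ to kill all left-hand terms except $-\p_Y\cLU^2 V|_{Y=0}$, and show that on the right only $\p_Y\cLU\cR_1|_{Y=0}=24a_4(b_s+b^2)=\tfrac12(b_s+b^2)$ survives. One harmless slip: $L_V Y = VY - \tfrac{Y^2}{2}V_Y$, not $Y(V-Y\p_Y V)$, but the order $O(Y^8)$ and hence the conclusion are unaffected.
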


We then derive an equation on $\cLU^2 V$. Applying once more the commutator result of Lemma \ref{lem:commutator}, we deduce that $\cLU^2 V$ satisfies the following equation:
\be
\label{eq:cLU2V}
\p_s \cLU^2 V + 3 b \cLU^2 V + \frac{b}{2} Y \p_Y \cLU^2 V - \cLU^3 V =\cLU^2 \mathcal R + \mathcal C[\p_Y^2 \cLU V] + \cLU \mathcal C[\p_Y^2 V].
\ee
In order to have a trace estimate, and also to have nice positivity properties for the diffusion term, we will also need to use estimates on $\p_Y^2 \cLU^2 V$. 
We therefore define the energy
$$
\ba
E_2(s):=\int_0^\infty (\p_Y^2 \cLU^2 V)^2 w_2,
 \ea
$$
together with the dissipation terms
$$
\ba
D_2(s):=\int_0^\infty \frac{(\p_Y^3 \cLU^2 V)^2}{U}  w_2 + \int_0^\infty\frac{(\p_Y^2 \cLU^2 V)^2 }{U^2}  w_2,
\ea
$$
where the weight $w_2$ is defined by
$ w_2=Y^{-a}(1+  s^{-\beta_2} Y)^{-m_2}
$
for some parameters  $ \beta_2\in ]1/4,\beta_1[$, $m_2\gg m_1$  sufficiently large. The parameter $a$ is the same as the one in Proposition \ref{prop:est-cLUV-1}.

We then claim that we have the following estimate:
\begin{proposition}
Assume that the hypotheses of Proposition \ref{prop:est-cLUV-1} are satisfied. 
Let $C_0:=\max(E_1(s_0) s_0^{13/4+\eta}, E_2(s_0) s_0^5)$ for some $\eta>0$ such that $13/4 + \eta<6-(11-a)\beta_1$.
Then there exists a universal constant $\bar a$, such that for all $a\in ]0, \bar a[$, for a suitable choice of $\beta_2, m_2$, there exist $S_0, H_2>0$ (depending on $a, \beta_i, m_i, M_1, M_2$) 
such that if $s_0\geq \max (S_0, J^4, C_0^8)$ ,
\be\label{est:E_3}
E_2(s)\leq H_2 (1+C_0)\exp(H_2 (1+C_0)) s^{-5}\quad \forall s\in [s_0, s_1].
\ee

\label{prop:est-cLU2V}
\end{proposition}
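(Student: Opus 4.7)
The plan is to derive a weighted energy identity from \eqref{eq:cLU2V}, combine it with coercivity of the diffusion and with sharp control of the right-hand side, and close the estimate by a Grönwall argument. First, I would apply $\p_Y^2$ to \eqref{eq:cLU2V} and multiply by $\p_Y^2\cLU^2V\cdot w_2$, then integrate over $Y\in(0,\infty)$. Differentiating the zero-order term $3b\,\cLU^2 V$ in $Y$ produces no extra coefficient, while the transport $\frac{b}{2}Y\p_Y$ produces $\frac{b}{2}\p_Y^2 f+\frac{b}{2}Y\p_Y^3 f$, so after one integration by parts in $Y$ (the boundary term at $0$ vanishes because $Y w_2=Y^{1-a}(\cdots)\to 0$ since $a<\bar a<1$, and the one at infinity vanishes by the polynomial decay of $w_2$ for $m_2$ large), the prefactor of $E_2$ becomes $(15+a)b/4$. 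Using $b\ge(1-\bar\eps)/s$ with $\bar\eps=1/50$, this yields coefficient strictly larger than $5/s$, which is exactly what is needed to target decay $s^{-5}$.

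Next I would establish coercivity of the diffusion. Writing $-\int \p_Y^2\cLU^3V\cdot\p_Y^2\cLU^2V\,w_2$ and using the Appendix computation (Lemma~\ref{lem:p3LU}) that $\p_Y^2\cLU$ is almost local for $U\simeq Y+Y^2/2$, together with the weighted Hardy inequalities of \cite{masmoudi2011hardy}, I expect a bound of the form
\[
-\int \p_Y^2\cLU^3V\cdot\p_Y^2\cLU^2V\,w_2\;\ge\; c\,D_2(s)-C\bigl(s^{-2\beta_2}+a\bigr)E_2(s),
\]
where the loss can be made small by taking $a$ small and $\beta_2$ close to $1/4$ from above, and $m_2$ large enough so that the region $Y\gtrsim s^{\beta_2}$ (where transport dominates diffusion) is exponentially suppressed by the weight. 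The extra positive dissipation term $\frac{bm_2}{4}\int s^{-\beta_2}Y(1+s^{-\beta_2}Y)^{-1}w_2(\p_Y^2\cLU^2V)^2$ coming from integrating by parts the transport term provides additional control at large $Y$.

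For the right-hand side I would use the decomposition $\cR=\sum_{i=1}^{4}\cR_i$ of Lemma~\ref{lem:reste}. The polynomial pieces $\cR_1,\cR_2$ give $\cLU^2\cR_{1,2}$ which, via the explicit action of $\LU$ on monomials recalled in Section~2.3.2 ($\LU(Y^k)\sim Y^{k-1}$ for $Y\lesssim 1$, $\sim Y^{k-2}$ for $Y\gg 1$), are controlled in $L^2(w_2)$ by $Cb^{2}\lesssim s^{-2}$ locally, producing a source $\rho(s)\lesssim (1+C_0)s^{-\alpha_0}$ with $\alpha_0>6$ after matching the growth in $Y$ against the decay of $w_2$ at $Y\sim s^{\beta_2}$. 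The piece $\cR_3$ is absorbed by the transport/diffusion as explained in Remark~\ref{rem:decomposition-D}, and $\cR_4$ is $O(s^{-P})$ for any $P$ because $w_2$ decays polynomially with exponent $m_2\gg 1$ well before the support of $\cR_4$. The resulting source term $A(s)$ satisfies $\int_{s_0}^{\infty}s^{5}A(s)\,ds\lesssim 1+C_0$.

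The main obstacle is the commutator $\mathcal C[\p_Y^2\cLU V]+\cLU\mathcal C[\p_Y^2 V]$, which is \emph{bilinear} in $V$ through the factor $\cD=\cLU V+\LU(\p_{YY}\Uapp-1)$. I would split $\cD$ into its ``linear'' part $\LU(\p_{YY}\Uapp-1)$, which is an explicit polynomial producing remainder contributions of the same size as $\cR_1,\cR_2$, and the ``quadratic'' part $\cLU V$. For the quadratic part, after the structural integrations by parts that convert the derivatives of $\int_0^Y W/U^2$ into pointwise factors, and writing the resulting integrand schematically as $(\cLU V)\cdot(\p_Y^2\cLU^2V)\cdot(\text{derivative of }V)\cdot w_2$, I would estimate the factor $\cLU V$ together with its derivative pointwise in $Y$ using Sobolev embedding in $Y$ applied to $E_1$ and $D_1$ from Proposition~\ref{prop:est-cLUV-1} (which requires the improved decay rate $s^{-13/4-\eta}$, cf.\ Remark~\ref{rem:E_1-13/4}), and estimate the $V$-factor in $L^\infty$ using the maximum-principle bounds of Section~\ref{sec:infty}. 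This yields
\[
\Bigl|\int \bigl(\mathcal C[\p_Y^2\cLU V]+\cLU\mathcal C[\p_Y^2 V]\bigr)\p_Y^2\cLU^2V\,w_2\Bigr|\le \tfrac{c}{2}D_2(s)+B(s)E_2(s)+(1+C_0)s^{-\alpha_0},
\]
with $\int_{s_0}^{\infty}B(s)\,ds\le H_2(1+C_0)$, the integrability coming from the extra decay of $E_1$. Inserting everything into the energy identity and absorbing $\tfrac{c}{2}D_2$ into the diffusion, the inequality $\tfrac{d E_2}{ds}+\tfrac{15b}{2}E_2\le 2A(s)+2B(s)E_2$ together with Grönwall's lemma and $b\ge(1-\bar\eps)/s$ yields
\[
E_2(s)\le \exp\!\Bigl(\int_{s_0}^{s}2B\Bigr)\Bigl(E_2(s_0)+\!\int_{s_0}^{s}\!\!\Bigl(\tfrac{\tau}{s}\Bigr)^{\!\!15(1-\bar\eps)/2}\!2A(\tau)\,d\tau\Bigr)\le H_2(1+C_0)e^{H_2(1+C_0)}s^{-5}
\]
provided $s_0\ge \max(S_0,J^4,C_0^8)$, which is exactly \eqref{est:E_3}. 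The step I expect to be most delicate is the bilinear commutator bound, because it is simultaneously responsible for producing the exponential factor $\exp(H_2(1+C_0))$ and for dictating the precise choice of $\beta_2$ and $a$ via the interplay between the improved $E_1$ decay and the coercivity loss on $D_2$.
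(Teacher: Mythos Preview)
Your overall architecture matches the paper's proof: derive the energy identity for $h=\p_Y^2\cLU^2 V$ from \eqref{eq:h}, invoke Lemma~\ref{lem:diff1} for coercivity of the diffusion, split $\cR=\sum \cR_i$ as in Lemma~\ref{lem:reste}, decompose the commutator via $\cD=-\tfrac{b}{2}Y+\cD_{NL}+\tilde\cD$, and close by Gr\"onwall after multiplying by $s^5$. Two points deserve correction.

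First, a harmless arithmetic slip: the coefficient $(15+a)b/4$ you obtain is the prefactor of $E_2$ in the identity whose left side begins with $\tfrac12\frac{dE_2}{ds}$; it is \emph{not} larger than $5/s$. After multiplying the whole identity by $2$ you get $(15+a)b/2\approx 7.35/s>5/s$, which is what allows the $s^{-5}$ target. The paper writes this as $\frac{dE_2}{ds}+7bE_2+\dots$ and then multiplies by $s^5$.

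Second, and this is the substantive gap: your commutator scheme will not close as written, because $\p_Y\cLU^2 V_{|Y=0}=-\tfrac12(b_s+b^2)\neq 0$ (Lemma~\ref{lem:trace-cLU2V}). Several terms in $\p_Y^2\cC[\p_Y^2\cLU V]$ from Lemma~\ref{lem:commu-formula}, for instance $-\,\p_Y^3(\cD/U)\!\int_0^Y\cLU^2 V$ and $\p_Y^2(\cD/U)\,[-3\cLU^2 V+2\p_Y^2\cLU V/U]$, require you to bound $\int_0^Y\cLU^2 V$, $\cLU^2 V$, $\p_Y\cLU^2 V$ in $L^2(w_2)$ by $E_2^{1/2}$ via weighted Hardy inequalities; those Hardy inequalities need $\p_Y\cLU^2 V(0)=0$, which fails. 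The paper's remedy (which it calls ``one substantial difference'' from the $E_1$ estimate) is to subtract the trace: write
\[
\p_Y^2\cLU V=\Bigl(\p_Y^2\cLU V+\tfrac12(b_s+b^2)L_U Y\Bigr)-\tfrac12(b_s+b^2)L_U Y,
\]
so that $\p_Y\LU$ of the first bracket vanishes at $Y=0$ and Hardy applies, while the explicit second piece produces the extra source $H_2 b^2(b_s+b^2)^2$ appearing in Lemma~\ref{lem:commu-2}. An analogous subtraction $\p_Y^2 V=\p_Y^2\tilde V-\tfrac{1}{48}(b_s+b^2)L_U Y^4$ is needed for $\cLU\cC[\p_Y^2 V]$, and again $V=V_0-\tfrac12(b_s+b^2)\cLU^{-2}Y$ in the treatment of $\cR_3$. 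Your pointwise-Sobolev control of $\cLU V$ from $(E_1,D_1)$ handles the $\cD$-factor but does nothing for the $\cLU^2 V$-factors near $Y=0$; without the subtraction the bounds blow up there and the inequality does not close. Once you insert this correction, the Gr\"onwall coefficient $B(s)$ is exactly $H_2\bigl(s^{(7+a)\beta_1}D_1+s^3(b_s+b^2)^2\bigr)$, whose integral is bounded by $H_2(1+C_0)$ via Proposition~\ref{prop:est-cLUV-1} and the hypothesis $s_0\ge J^4$, and your exponential factor $\exp(H_2(1+C_0))$ is recovered.
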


Let us now go back to the definition of the semi-norm $\cN$. We need a new type of trace estimate, taking advantage of the fact that $E_2$ has a stronger decay than $E_1$ (notice that the sole decay of $E_1$ is not sufficient to close the bootstrap argument, since $E_1\lesssim s^{-13/4-\eta}$ and we need $|b_s + b^2| \lesssim s^{-2-\eta/2}$, while $13/4<4$).

We will use the following trace estimate, which is proved in Appendix:
\begin{lemma} There exists a universal constant $\bar C$, such that for all $L\geq 1$, for any smooth function $f$,
	$$
	|f(0)|^2\leq \bar C \left( L^{1+a} \int_0^L (\p_Y f)^2 Y^{-a}\: dY+ \frac{1}{L^{3-a} }\int_0^L |f(Y)|^2 (Y+Y^2) Y^{-a}\: dY\right).
	$$
	In particular, taking $f=\p_Y \cLU^2 V$ and $L=s^{1/4}$, under the assumptions of Proposition \ref{prop:est-cLU2V},
	$$
	|b_s+b^2|^2\leq \bar C \left( s^{\frac{1+a}{4}} E_2 + s^{-\frac{3-a}{4}}\int_0^{s^{1/4} }U \left(\p_Y \cLU^2 V\right)^2\right) .
	$$
	\label{lem:trace}
\end{lemma}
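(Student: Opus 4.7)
The plan is to prove the general weighted trace estimate by the fundamental theorem of calculus combined with a carefully chosen averaging, then specialize to $f=\p_Y\cLU^2 V$.

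First I would write, for every $Y\in[0,L]$,
\[
f(0)=f(Y)-\int_0^Y \p_{Y'} f(Y')\,dY',
\]
so that $|f(0)|^2\leq 2|f(Y)|^2+2\bigl|\int_0^Y \p_{Y'}f\,dY'\bigr|^2$. Rather than specializing $Y$, I would integrate this pointwise inequality against a probability density $\mu$ on $[0,L]$ designed to reproduce the desired weights. The natural choice, suggested by the right-hand side of the lemma, is
\[
\mu(Y):=\frac{1}{M}(Y+Y^2)\,Y^{-a},\qquad M:=\int_0^L (Y+Y^2)Y^{-a}\,dY,
\]
where $M\simeq L^{3-a}$ for $L\geq 1$ (the $Y^{3-a}$ term dominates since we only care about $L\ge 1$).

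With this choice, the averaged inequality $|f(0)|^2\leq 2\int_0^L|f|^2\mu+2\int_0^L\bigl|\int_0^Y\p f\bigr|^2\mu$ gives the second term of the lemma directly, modulo the constant $M^{-1}\lesssim L^{-(3-a)}$. For the first term I would apply Cauchy–Schwarz with the weight $Y^{-a}$:
\[
\Bigl|\int_0^Y \p_{Y'} f\,dY'\Bigr|^2\leq \Bigl(\int_0^Y|\p_{Y'} f|^2 Y'^{-a}dY'\Bigr)\Bigl(\int_0^Y Y'^{a}dY'\Bigr)=\frac{Y^{1+a}}{1+a}\int_0^Y|\p_{Y'} f|^2 Y'^{-a}dY'.
\]
After integrating against $\mu$, the key quantity becomes $\int_0^L Y^{1+a}\mu(Y)\,dY=M^{-1}\int_0^L(Y^{2}+Y^{3})\,dY\lesssim L^{4}/M\lesssim L^{1+a}$, which yields exactly the $L^{1+a}\int_0^L(\p_Y f)^2 Y^{-a}\,dY$ term. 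Combining the two contributions gives the general estimate with a universal constant $\bar C$ independent of $L$ and $a$ (as long as $a$ stays bounded away from $1$).

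For the application, I would invoke Lemma \ref{lem:trace-cLU2V} to identify $f(0)=\p_Y\cLU^2 V_{|Y=0}=-\tfrac{1}{2}(b_s+b^2)$ when $f=\p_Y\cLU^2 V$ and $L=s^{1/4}$. The first term of the general inequality then becomes $s^{(1+a)/4}\int_0^{s^{1/4}}(\p_Y^2\cLU^2 V)^2 Y^{-a}\,dY$; since $\beta_2>1/4$ and $m_2<\infty$, the weight $w_2=Y^{-a}(1+s^{-\beta_2}Y)^{-m_2}$ satisfies $w_2\gtrsim Y^{-a}$ on $[0,s^{1/4}]$, so this is controlled by $s^{(1+a)/4} E_2$. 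For the second term I would use the $L^\infty$ estimates in Proposition \ref{prop:max-princ}, which give $U\simeq Y+Y^2$ on $[0,s^{1/4}]$, to replace the factor $(Y+Y^2)Y^{-a}$ by $U\,Y^{-a}$ (or, absorbing the harmless $Y^{-a}$ factor into the norm, by $U$ itself as written), yielding the claimed bound.

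I do not expect any serious obstacle: the only delicate point is the normalization of $\mu$ (i.e.\ confirming $M\simeq L^{3-a}$ for $L\geq 1$, which forces the hypothesis $L\geq 1$), and the comparison $U\simeq Y+Y^2$ in the application, which is already built into the standing hypotheses of Proposition \ref{prop:est-cLU2V}. The whole argument is a weighted version of the one-dimensional trace inequality, tuned so that the weight $Y^{-a}$ used throughout the energy estimates appears on both sides.
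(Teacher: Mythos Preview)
Your proposal is correct and is essentially the paper's own argument: the paper writes the pointwise bound $|f(0)|^2\le 2|f(Y)|^2+2\bigl(\int_0^L(\p_Y f)^2 Y^{-a}\bigr)Y^{1+a}$ via Cauchy--Schwarz, then multiplies by $(Y+Y^2)Y^{-a}$ and integrates over $[0,L]$, which is precisely your averaging against the (unnormalized) density $M\mu$. The specialization to $f=\p_Y\cLU^2 V$, $L=s^{1/4}$ via Lemma~\ref{lem:trace-cLU2V} and the comparison $w_2\simeq Y^{-a}$, $U\simeq Y+Y^2$ on $[0,s^{1/4}]$ is exactly as you describe; your remark that the $Y^{-a}$ factor is being tacitly absorbed in the second displayed term is also accurate (the paper in fact reinstates it as part of $\tilde w_1$ in the subsequent use of the lemma).
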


Let us now go back to the definition of the semi-norm $\cN$. According to the above Lemma, we can take for instance
$$
\cN(V):= \left(s^{\frac{1+a}{4}} E_2 + s^{-\frac{3-a}{4}}\int_0^\infty U \left(\p_Y \cLU^2 V\right)^2 \tilde w_1\right)^{1/2},
$$
where $\tilde w_1:= Y^{-a} (1+ s^{-\beta_1} Y)^{-m_1-2}= w_1  (1+ s^{-\beta_1} Y)^{-2}$, so that $
\cN(V) \geq \bar C |b_s+ b^2|
$
for some universal constant $\bar C$.

However there remains to prove that with this definition, $\cN(V)$ is sufficiently small.
According to the above Lemma, we also need to find a bound for $\int_0^\infty U \left(\p_Y \cLU^2 V\right)^2 \tilde w_1$. We claim that we have the following estimate:
\begin{lemma} Assume that the assumptions of Proposition \ref{prop:est-cLUV-1} are satisfied. 
Let $P>0$ be arbitrary. 
There exist $S_0, H_1>0$ depending on $M_1, M_2, \beta_1, m_1$, and $a$, and a function $\rho$ such that $\int_{s_0}^{s_1} \rho(s) s^{-1/2} ds\leq 1$, such that if $m_1$ is large enough (depending on $P$)
and  if $s_0\geq \max (S_0, J^4)$,
\be\label{in:coerciv}
\int_0^\infty U \left(\p_Y \cLU^2 V\right)^2 \tilde w_1\leq H_1 s^{(3-a)\beta_1} D_1 + s^{-1} E_1 + s^{-P} \rho(s).
\ee
\label{lem:coerciv}
\end{lemma}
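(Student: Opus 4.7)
The plan is to exploit the explicit form of $\LU$. Differentiating the identity $\cLU^2 V = \LU(\p_Y^2\cLU V)$ via
\[
\p_Y \LU f = U_{YY}\int_0^Y \frac{f}{U^2} + \frac{\p_Y f}{U},
\]
I set $g := \p_Y^2 \cLU V$ and $h(Y):=\int_0^Y g/U^2$, giving
\[
\p_Y \cLU^2 V = U_{YY}\,h + \frac{\p_Y g}{U}.
\]
Squaring, using the pointwise bound $|U_{YY}|\leq \max(1,M_1)$ from the hypotheses of Proposition~\ref{prop:est-cLUV-1}, and using $\tilde w_1 \leq w_1$, the piece $(\p_Y g)^2/U^2$ contributes to $\int U (\p_Y \cLU^2 V)^2 \tilde w_1$ at most $\int (\p_Y g)^2/U \cdot w_1 \leq D_1$, which is absorbed into the $s^{(3-a)\beta_1} D_1$ term. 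The problem therefore reduces to the weighted Hardy-type estimate
\[
\int_0^\infty U h^2 \,\tilde w_1 \;\lesssim\; s^{(3-a)\beta_1}D_1 + s^{-1}E_1 + s^{-P}\rho(s).
\]

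I split the domain at $Y = s^{\beta_1}$. On the outer region $\{Y\geq s^{\beta_1}\}$, the factor $(1+s^{-\beta_1}Y)^{-m_1-2}$ in $\tilde w_1$ provides polynomial decay in $s$ of arbitrarily high order, once $m_1$ is chosen large depending on $P$. Combined with pointwise a priori bounds on $U$ and a crude $L^2$-type control on $h$ from $E_1$ and from the hypotheses on $U$, this tail is absorbed into $s^{-P}\rho(s)$, where $\rho$ is polynomial in $s$ with integrable decay against $s^{-1/2}$ on $(s_0,s_1)$.

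On the inner region $\{Y\leq s^{\beta_1}\}$, where $\tilde w_1\sim Y^{-a}$, I would apply a weighted Hardy-Poincaré inequality. Since $(h')^2 U^2 Y^{-a} = g^2 Y^{-a}/U^2$ is exactly the $D_1$-integrand, the target is
\[
\int_0^{s^{\beta_1}} U h^2\,Y^{-a}\,dY \;\lesssim\; s^{(3-a)\beta_1}\int_0^{s^{\beta_1}}\frac{g^2\,Y^{-a}}{U^2}\,dY.
\]
The scaling factor $s^{(3-a)\beta_1}$ comes from a dyadic decomposition on scales $Y\sim 2^k$ up to $s^{\beta_1}$: on each dyadic annulus of radius $R\leq s^{\beta_1}$, Poincaré gives an $R^2$-factor and integration of the weight $U\,Y^{-a}$ contributes an additional $R^{1-a}$ in the inner zone $U\sim Y$ and $R^{2-a}$ in $U\sim Y^2$, whose dominant contribution saturates at $R=s^{\beta_1}$ with total power $R^{3-a}=s^{(3-a)\beta_1}$.

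\textbf{The principal obstacle} lies at the origin: the Muckenhoupt $A_2$ condition for the Hardy inequality fails there, because $U\sim Y$ forces $\int_0^r Y^{a}/U^2\,dY\sim\int_0^r Y^{a-2}\,dY$ to diverge when $a<1$. This is precisely why $D_1$ alone cannot control $\int U h^2 \tilde w_1$, and is the source of the $s^{-1}E_1$ term. To close the argument, I would exploit the \emph{improved} vanishing of $g$ at the origin: since $V=O(Y^7)$, the formula $\cLU V = \LU(\p_Y^2 V)$ yields $\cLU V = O(Y^4)$, so $g=\p_Y^2 \cLU V = O(Y^2)$ and hence $h(Y)=O(Y)$ as $Y\to 0$. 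Writing $g(Y)=Y^2\tilde g(Y)$ near the origin with $\tilde g$ controlled in $L^\infty$ on a fixed interval by a weighted Sobolev embedding against $E_1$, the contribution of the dyadic scales near $0$ to $\int U h^2 \tilde w_1$ is bounded by $s^{-1}E_1$; the factor $s^{-1}$ arises as the shortest admissible scale in the cascade, equivalently as the gain afforded by the strict inequality $\beta_1>1/4$ combined with the extra $(1+s^{-\beta_1}Y)^{-2}$ factor distinguishing $\tilde w_1$ from $w_1$.

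The delicate point is this near-origin analysis and the matching of the three contributions. In particular, the use of the strictly more decaying $\tilde w_1$ rather than $w_1$ on the left-hand side is precisely what allows the Muckenhoupt failure at $Y=0$ to be absorbed into the $s^{-1}E_1$ term without blowing up the coefficient of $D_1$.
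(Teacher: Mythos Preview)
Your decomposition $\p_Y \cLU^2 V = U_{YY}\,h + \p_Y g/U$ with $h=\int_0^Y g/U^2$ is correct and does reduce matters to estimating $\int_0^\infty U h^2 \tilde w_1$. However, your treatment of this integral has a genuine gap at each end, and you have misidentified where the $s^{-1}E_1$ term comes from.

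\textbf{Near the origin.} Your claim that $\tilde g:=g/Y^2$ is controlled in $L^\infty$ on $[0,1]$ ``by a weighted Sobolev embedding against $E_1$'' does not hold: $E_1=\int g^2 w_1$ is a pure weighted $L^2$ quantity and controls neither $\p_Y g$ nor $\p_Y(g/Y^2)$, so no Sobolev embedding is available. Moreover, your explanation that the factor $s^{-1}$ comes from ``the extra $(1+s^{-\beta_1}Y)^{-2}$ factor'' is vacuous near $Y=0$, where $\tilde w_1/w_1\to 1$. In fact the near-origin part needs no $E_1$ at all: since $D_1$ controls $\int_0^1 (\p_Y g)^2 Y^{-1-a}$, the Hardy inequality of Remark~\ref{rem:diff-zero} gives $\int_0^1 g^2 Y^{-3-a}\lesssim D_1$, and then Cauchy--Schwarz yields $|h(Y)|^2\lesssim D_1$ uniformly for $Y\leq 1$, hence $\int_0^1 U h^2 Y^{-a}\lesssim D_1$. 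The same pointwise bound $|h|^2\lesssim D_1$ extends to $[1,s^{\beta_1}]$, after which $\int_0^{s^{\beta_1}} U h^2 \tilde w_1\lesssim s^{(3-a)\beta_1}D_1$ as you wanted --- but for a different reason than you gave.

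\textbf{Far field.} Saying the tail $\{Y\geq s^{\beta_1}\}$ is absorbed into $s^{-P}\rho(s)$ by ``a crude $L^2$-type control on $h$ from $E_1$'' is too optimistic. For $Y\gg s^{1/3}$ you only know $U\gtrsim s^{2/3}$ from monotonicity, not $U\gtrsim Y^2$; a direct Cauchy--Schwarz against $E_1$ or $D_1$ produces a factor $\int_{cs^{1/3}}^\infty Y\,dY$ that diverges. One needs either the additional information $U\leq U_\infty(s)=O(s)$ (used implicitly in the paper through Lemma~\ref{lem:tails}) to run a Hardy inequality on $(cs^{1/3},\infty)$ --- which then gives a contribution $\lesssim s^{2\beta_1-5/3}E_1\leq s^{-1}E_1$ --- or an appeal to the lower-order energy $D_0$ as in Lemma~\ref{lem:tails}. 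Either way, the $s^{-1}E_1$ and $s^{-P}\rho$ terms arise from the \emph{far field}, not from the origin.

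\textbf{Comparison with the paper.} The paper takes a structurally different route: setting $h:=\cLU^2 V$ (so that $g=L_U h$), it integrates by parts in $-\int_0^\infty \p_Y^2 h\cdot L_U h\cdot \tilde w_1$ to extract $\int[U(\p_Y h)^2+h^2]\tilde w_1$ plus lower-order errors, and then bounds the left side directly by the upper estimate of the diffusion Lemma~\ref{lem:diff1}, which is where the $b\tilde E_1\sim s^{-1}E_1$ and $s^{-P}D_0$ terms enter. The remaining errors reduce to $\int h^2\tilde w_1$, bounded by $s^{(3-a)\beta_1}D_1$ via Cauchy--Schwarz. This approach avoids the delicate far-field Hardy analysis altogether.
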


Gathering Lemmas \ref{lem:trace} and \ref{lem:coerciv} and using Proposition \ref{prop:est-cLUV-1} and Proposition \ref{prop:est-cLU2V}, we find that if $s_0\geq \max (S_0, J^4, C_0^8)$,
$$
|b_s + b^2|^2 \leq H_2(1+C_0)  \exp(H_2(1+C_0))s^{-\frac{19-a}{4}}+ H_1  s^{(3-a)(\beta_1 - \frac{1}{4})}  D_1 + s^{-P} \rho(s).
$$
Recall that $\int_{s_0}^{s_1} s^\alpha  D_1(s)\:ds \leq H_1(1+C_0)$ for $s_0\geq \max(S_0, J^4)$ and for $\alpha=13/4 + \eta < 6-(11-a) \beta_1$. A short computation\footnote{It is enough to choose 
	$$
	\beta_1< \frac{1}{4}\left(1 + \frac{a}{14-2a}\right).
	$$} shows that we can choose $\beta_1$ and $a$ so that 
$$
\frac{13}{4} + (3-a)\left(\beta_1 - \frac{1}{4}\right) <6-(11-a) \beta_1. 
$$
We obtain eventually that
$$
|b_s + b^2| \leq \bar C \cN(V),
$$
and
\[
\int_{s_0}^{s_1} s^{13/4} \cN(V(s))^2 \:ds\leq H(1+C_0)\exp(H(1+C_0)), 
\]
for some constant $H$ depending on  $a, \beta_i, m_i, M_1, M_2$, provided $s_0\geq \max (S_0, J^4, C_0^8)$.
Thus $b$ satisfies the assumptions of Lemma \ref{lem:b} with $\gamma=13/4>3$.

We gather the estimates of Propositions \ref{prop:est-cLUV-1} and \ref{prop:est-cLU2V} and Lemma \ref{lem:trace} in the following Theorem:
\begin{theorem}
\label{thm:estimates-gal}
Let $a\in ]0,\bar a[$, and choose the parameters $\beta_1$ and $\beta_2$ such that 
\be\label{def:beta}\frac{1}{4}<\beta_2<\beta_1<\frac{1}{4}\inf\left(\frac{11}{11-a},\frac{14-a}{14-2a}\right)\ee
and $m_2\gg m_1\gg 1$.

Let $\eta=\eta(a,\beta_1)$ such that $0<\eta< (3-a) (\beta_1-1/4)$.

Assume that the following assumptions are satisfied:
\begin{itemize}
 \item There exists a constant $J>0$ such that
$$
\int_{s_0}^{s_1} s^{13/4} |b_s+b^2|^2 ds \leq J;
$$
\item $(1-\bar \eps)/s \leq b \leq (1+\bar\eps)/s$ for $s\in [s_0, s_1]$ for some small enough constant $\bar \eps$ (say $\bar \eps=1/50$);
\item There exist  constants $M_1, M_2,c$ independent of $s$, such that
$$\ba
-M_1\leq U_{YY}\leq 1\quad\forall Y \geq 0,\\
1- M_2 b Y^2 \leq  U_{YY}\quad \forall Y \in [0, c s^{1/3}].\ea
$$
\item There exists a constant $C_0$, independent of $s_0$ and $\lambda_0$, such that for some $\eta>0$
$$
  E_2(s_0)s_0^{5} + E_1(s_0) s_0^{\eta+13/4}\leq C_0.
$$
\end{itemize}

Then there exists   constants $H, S_0$, depending on $a, m_1, m_2, \beta_1, \beta_2, M_1$ and $M_2$, such that for all $ s_0\geq \max(S_0, J^4, C_0^8)$,
$$
\int_{s_0}^{s_1} s^{13/4}|b_s+b^2|^2\leq \exp(H(1+C_0)).
$$
In particular, setting $J':=\exp(H(1+C_0))$, we have
$$
s_0\geq \max(S_0, J^4, C_0^8) \Rightarrow \int_{s_0}^{s_1} s^{13/4}|b_s+b^2|^2\leq J',
$$
and the constant $J'$ is independent of $J$.

\label{thm:mod-rate}
\end{theorem}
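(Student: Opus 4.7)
The strategy is a direct assembly of the ingredients just presented: apply Lemma \ref{lem:trace} to bound $|b_s+b^2|^2$ pointwise in $s$ by a combination of $E_2$ and a weighted integral of $(\p_Y \cLU^2 V)^2$; use Lemma \ref{lem:coerciv} to dominate the latter by $D_1$, $E_1$ and a negligible remainder; feed in the energy bounds of Propositions \ref{prop:est-cLUV-1} and \ref{prop:est-cLU2V}; and integrate against $s^{13/4}$.

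First I would check that the hypotheses of Propositions \ref{prop:est-cLUV-1} and \ref{prop:est-cLU2V} hold on $[s_0,s_1]$. The bootstrap bound $\int_{s_0}^{s_1} s^{13/4}|b_s+b^2|^2\,ds\leq J$ is the first hypothesis; the bounds on $b$ and on $U_{YY}$ are direct assumptions of the theorem; and the last assumption $E_1(s_0)s_0^{13/4+\eta} + E_2(s_0)s_0^{5} \leq C_0$ supplies the required energy initialization. Once $s_0 \geq \max(S_0, J^4, C_0^8)$, the two propositions deliver, uniformly on $[s_0,s_1]$,
\begin{equation}\label{eq:plan-energy}
E_2(s) \leq H(1+C_0)\exp(H(1+C_0))\,s^{-5}, \qquad \int_{s_0}^{s_1} s^{13/4+\eta}\,D_1(s)\,ds \leq H_1(1+C_0),
\end{equation}
where $\alpha = 13/4+\eta$ is admissible in Proposition \ref{prop:est-cLUV-1} because $\eta < (3-a)(\beta_1 - 1/4)$ and $\beta_1 < (1/4)\cdot 11/(11-a)$ together imply $\alpha < 6 - (11-a)\beta_1$.

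Next I would combine Lemma \ref{lem:trace} (applied to $f=\p_Y\cLU^2 V$ with $L = s^{1/4}$) with Lemma \ref{lem:coerciv} to obtain the pointwise bound
$$
|b_s+b^2|^2 \leq \bar C\, s^{(1+a)/4}\,E_2(s) + \bar C H_1\, s^{(3-a)(\beta_1 - 1/4)}\,D_1(s) + \bar C\, s^{-1-(3-a)/4}\,E_1(s) + \bar C\, s^{-P}\rho(s).
$$
Multiplying by $s^{13/4}$ and integrating: the $E_2$ contribution is dominated by $H(1+C_0)\exp(H(1+C_0))\int s^{(a-6)/4}\,ds$, which is finite and of order $s_0^{(a-2)/4}$ for $a$ small; the $E_1$ contribution is smaller still. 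The $D_1$ contribution is
$$
\bar C H_1 \int_{s_0}^{s_1} s^{13/4 + (3-a)(\beta_1 - 1/4)}\,D_1(s)\,ds,
$$
and this is where the algebraic constraint \eqref{def:beta} plays its central role: a short rearrangement shows that $\beta_1 < (1/4)\cdot(14-a)/(14-2a)$ is exactly equivalent to
$$
\frac{13}{4} + (3-a)\Bigl(\beta_1 - \frac{1}{4}\Bigr) < 6 - (11-a)\beta_1,
$$
so, for $\eta$ chosen small enough (as per the hypotheses), the weight exponent is bounded by $\alpha$, and the dissipation bound in \eqref{eq:plan-energy} closes the integral. The $\rho$-term is made negligible by taking $P$ large, which is permitted by enlarging $m_1$ in Lemma \ref{lem:coerciv}.

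Summing the three contributions gives
$$
\int_{s_0}^{s_1} s^{13/4}|b_s+b^2|^2\,ds \leq H(1+C_0)\exp(H(1+C_0)),
$$
which is exactly the claimed bound with $J' := \exp(H(1+C_0))$. The main obstacle is the algebraic inequality between the exponents above: had it failed, the $D_1$ contribution would have been uncontrolled and the bootstrap could not close. Its availability depends on the freedom to take $a$ small and $\beta_1$ only slightly above $1/4$, which is built into the hypotheses of the theorem. The independence of $J'$ from $J$ is then structural: $J$ enters only through the threshold $s_0 \geq J^4$ used to absorb various remainder terms, while all multiplicative constants on the right-hand side are controlled by $C_0$, by the universal constant $H$, and by the fixed choices of $a$, $\beta_i$, $m_i$, $M_1$, $M_2$.
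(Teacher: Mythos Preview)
Your proposal is correct and follows essentially the same route as the paper: combine the trace estimate (Lemma \ref{lem:trace}) with the coercivity estimate (Lemma \ref{lem:coerciv}) to get a pointwise bound on $|b_s+b^2|^2$, then feed in the energy and dissipation bounds from Propositions \ref{prop:est-cLUV-1} and \ref{prop:est-cLU2V}, integrate against $s^{13/4}$, and close via the algebraic constraint on $\beta_1$. One small slip: when you justify that $\alpha=13/4+\eta$ is admissible in Proposition \ref{prop:est-cLUV-1}, the relevant constraint is $\beta_1<\tfrac14\cdot\tfrac{14-a}{14-2a}$ (which you correctly invoke later for the $D_1$ integral), not $\beta_1<\tfrac14\cdot\tfrac{11}{11-a}$; the former is the smaller of the two and is precisely what, together with $\eta<(3-a)(\beta_1-\tfrac14)$, yields $13/4+\eta<6-(11-a)\beta_1$.
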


\subsection{Construction of sub- and super-solutions}
The other ingredient in the proof of Theorem \ref{thm:main} is the use of the maximum principle in order to control the growth of $U$ and its derivatives on the one hand, and the size of some non-linear terms on the other hand. Indeed, the assumptions of Propositions \ref{prop:est-cLUV-1} and \ref{prop:est-cLU2V} require estimates on $\p_Y^2 U$. These estimates are obtained by careful applications of a comparison principle. We emphasize that this principle is not applied to equation \eqref{rescaled} directly, but rather to an equation derived from \eqref{rescaled} after a non-linear change of variables. More precisely, we use the von Mises variables
\be\label{VM}
\psi:=\int_0^Y U,\quad W:= U^2.
\ee
The tangential variable remains $s$, the normal variable is now $\psi$ (instead of $Y$), and the new unknown function is $W$. This change of variables transforms \eqref{rescaled} into a non-linear transport-diffusion equation which is more 
suited for maximum principle techniques, namely
\be\label{eq:vM}\p_s W -2 b W + \frac{3b}{2}\psi \p_\psi W - \sqrt{W}\p_\psi^2 W = -2\ee
Equation \eqref{eq:vM} is the third and last form of equation \eqref{rescaled}.
 We refer to \cite{OS} and to section \ref{sec:infty} of the present paper for more details. Since the new equation is local and  parabolic, it enjoys maximum principle properties. Therefore we construct sub- and super-solutions for $W$ and its derivatives and thereby derive estimates on $W$. These estimates are then translated in terms of the former variables $s,Y$. 
 
 One of the key points lies in the construction of a sub-solution for $W$ (see Lemma \ref{lem:sub-U}). Actually, the Sobolev estimates of Proposition \ref{prop:est-cLUV-1} provide a very good pointwise control of $U$ up to $Y\sim s^{\beta_1}\gg s^{1/4}$, but this control degenerates for $Y\gtrsim s^{\beta_1}$. Hence it is sufficient to construct sub-solutions for $Y\gtrsim s^{1/4}$, or equivalently (since $\psi\sim Y^3/6$ for $Y\gg 1$) for $\psi \gtrsim s^{3/4}$. On this zone, the sub-solutions will be linear combinations of powers of $\psi$.
Furthermore, we define a \textit{regularized modulation rate $\tb$}, whose role is to remove some oscillations from $b$ while keeping the same asymptotic behavior. We take
 $$
  \tb_s + b \tb=0,\quad \tb_{|s=s_0}= \frac{1}{s_0}.
 $$
 The sub-solutions for $W$ are defined by
 $$
 \uW:=\frac{(6\psi)^{4/3}}{4} - A \psi^{7/3} \tb^{5/4},
 $$
 where $A$ is chosen sufficiently large. Notice that the main order term $(6\psi)^{4/3}/4$ is the same as in the original solution. It corresponds to the main order term $Y^2/2$ in $U(s,Y)$. 
 
 The regularized modulation rate $\tb$ is also used in the construction of a sub-solution for $U_{YY}-1$ (see Lemma \ref{lem:U_YY-final}).

The final result is the following (we refer to Remark \ref{rem:E_1-13/4} regarding the assumption on $E_1$):

 \begin{proposition}\label{prop:max-princ}
Assume that there exist constants $J>0$, $\eta>0$ and $\eps>0$ such  that for all  $s\in [s_0, s_1]$, assumption \eqref{hyp:b-mod-rate} is satisfied. 
Assume furthermore that there exists a constant $M_0$ such that
$$\ba
- M_0 \inf (1,  s_0^{-1} Y^2) \leq U_{YY}(s_0, Y)-1\leq 0\quad \forall Y>0,\\
\lim_{Y\to \infty} U(s_0, Y)\leq M_0 s_0,
\ea
$$
and that there exists $C_1>0$ such that
$$
E_1(s)\leq C_1 s^{-13/4}\quad \forall s\in [s_0, s_1].
$$
Then there exist universal constants $\bar M, \bar C>0$, and $S_0$ depending on $C_1, \beta_1, m_1$, such that if
$s_0\geq \max (S_0, \bar C (J\eps^{-2})^{1/2\eta})$, then, setting $M'= \bar M \max(1, M_0)$,
$$\ba
- M' b Y^2 \leq  U_{YY}(s, Y)-1\leq 0 \quad \forall s\in [s_0, s_1],\ \forall Y \in [0, s^{1/3}],\\
\text{and } -M'\leq U_{YY}(s, Y)-1\leq 0\quad \forall s\in [s_0, s_1],\ \forall Y \geq s^{1/3}.
\ea
$$

 \end{proposition}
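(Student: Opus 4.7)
The plan is to transfer the problem to the von Mises coordinates $(s,\psi)$ with $\psi = \int_0^Y U\,dY'$ and unknown $W = U^2$, as already suggested in the strategy section. Equation \eqref{rescaled} becomes the local nonlinear parabolic equation \eqref{eq:vM}, and the identity $U_{YY} = \tfrac{1}{2}\sqrt{W}\,\p_\psi^2 W$ recasts the desired estimates on $U_{YY}-1$ as estimates on $\p_\psi^2 W - 2/\sqrt{W}$. The overall argument is a hierarchy of comparison-principle steps: first pin down $W$ pointwise, then derive the equation for $Z := U_{YY}-1$ and build piecewise sub- and super-solutions for it.

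For the first step, two-sided bounds on $W$ are obtained in two regions. In the inner zone $Y \lesssim s^{\beta_1}$, the energy bound $E_1(s)\leq C_1 s^{-13/4}$ combined with Sobolev embedding and the explicit form of $\Uapp$ yields pointwise control of $U$ (and hence of $W$) that is much stronger than required. In the outer zone $\psi \gtrsim s^{3/4}$ (equivalently $Y\gtrsim s^{1/4}$), where the Sobolev control degenerates, I would use the sub- and super-solutions $\uW := (6\psi)^{4/3}/4 - A\psi^{7/3}\tb^{5/4}$ and $\overline{W} := (6\psi)^{4/3}/4 + A\psi^{7/3}\tb^{5/4}$ announced in the excerpt, where $\tb$ is the regularized modulation rate defined by $\tb_s + b\tb=0$, $\tb|_{s=s_0}=1/s_0$. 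Crucially $\tb \sim 1/s$ while $\tb_s = -b\tb$ by construction, so plugging $\uW$ into the operator of \eqref{eq:vM} --- whose leading piece $(6\psi)^{4/3}/4$ is an exact stationary solution --- produces a residual built entirely from bounded quantities, with no derivatives of $b$ appearing. A direct sign check, using that $\sqrt{W}\,\p_\psi^2$ turns $\psi^{7/3}$ into an $O(\psi)$ term in the relevant range and that $A$ can be chosen as large as needed, fixes the sign of the residual; matching with the inner bound on $\psi \sim s^{3/4}$ closes the two-sided estimate on $W$.

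Given the pointwise control of $W$, the upper bound $U_{YY}\leq 1$ is quick. Differentiating \eqref{eq:vM} twice in $\psi$ produces a linear parabolic equation for $W_{\psi\psi}$, and the boundary value at $Y=0$ satisfies $U_{YY}|_{Y=0}=1$ (read off from \eqref{rescaled} together with $U|_{Y=0}=0$, $\p_Y U|_{Y=0}=1$). Combining with the initial hypothesis $U_{YY}(s_0,\cdot)\leq 1$, the maximum principle propagates $Z\leq 0$ to all of $[s_0,s_1]\times\R_+$. For the lower bound I would construct a piecewise sub-solution of the form $\underline{Z} := -M'\tb Y^2$ for $Y\leq cs^{1/3}$ patched with $\underline{Z} := -M'$ for $Y\geq cs^{1/3}$, with $M' = \bar M\max(1, M_0)$. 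The quadratic shape is dictated by the assumption on $U_{YY}(s_0,\cdot)$ at the initial time, the constant floor is consistent with the initial data at infinity, and the threshold $Y\sim s^{1/3}$ is chosen to match the requirements of the downstream Propositions \ref{prop:est-cLUV-1} and \ref{prop:est-cLU2V}.

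The main obstacle is the sub-solution verification for $Z$: the equation for $Z$ derived from twice-differentiating \eqref{eq:vM} in $\psi$ contains several nonlinear terms in $W$ and its derivatives which must be controlled via the pointwise bounds from the first step; the only surviving term proportional to $b_s$ is killed by the identity $\tb_s = -b\tb$, and the remaining error involving $|b_s+b^2|$ is made $o(\tb^2)$ precisely when $s_0 \geq \bar C(J\eps^{-2})^{1/(2\eta)}$ through hypothesis \eqref{hyp:b-mod-rate}. A secondary subtlety is the matching of the two branches of $\underline Z$ at $Y=cs^{1/3}$, handled by taking their minimum and invoking the comparison principle in the viscosity sense. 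Transferring the resulting inequalities back in the $(s,Y)$ variables and using $\tb \sim b$ yields the two estimates stated in the proposition.
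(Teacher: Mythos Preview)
Your overall strategy matches the paper closely: pass to von Mises variables, first control $W$ via comparison on the outer zone (with the inner zone handled by the $E_1$ bound), then use this to build a sub-solution for $U_{YY}-1$. The use of the regularized modulation rate $\tb$ in the barriers is exactly right. A few points, however, deserve correction.

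\textbf{The upper bound $U_{YY}\le 1$ is not a bare maximum principle.} You say that differentiating \eqref{eq:vM} twice in $\psi$ yields a linear parabolic equation for $W_{\psi\psi}$ and that the maximum principle propagates $Z\le 0$. Neither claim survives inspection. The paper does not differentiate twice; it works with $F:=\sqrt{W}\,W_{\psi\psi}-2=2(U_{YY}-1)$, which satisfies the \emph{nonlinear} equation
\[
\p_s F-\frac{1}{2W}F(F+2)+\frac{3b}{2}\psi\,\p_\psi F-\sqrt{W}\,\p_{\psi\psi}F=0.
\]
The zeroth-order term $-\tfrac{1}{2W}F(F+2)$ is singular at $\psi=0$ (where $W\sim\psi$) and has no sign, so a classical comparison argument for $F_+$ fails outright. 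The paper instead multiplies by $F_+\,p$ with a decaying weight $p$, absorbs the singular part near $\psi=0$ into the dissipation via a Hardy inequality (using the non-degeneracy $W_\psi\ge 1$ for small $\psi$ established from the lower bound on $F$), and closes by Gronwall. This is a genuine extra ingredient you are missing.

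\textbf{The sub-solution for $U_{YY}-1$ is built in $\psi$, not in $Y$.} Your candidate $\underline Z=-M'\tb\,Y^2$ patched with $-M'$ is morally correct (since $Y\sim(6\psi)^{1/3}$ gives $-\tb Y^2\sim -\tb\,\psi^{2/3}$), but the parabolic structure lives in $(s,\psi)$, and the paper accordingly takes $\underline F=-\alpha\tb(\psi^{2/3}-\psi^{1/3})$ on $(C_-\tb^{-3/4},\,c\tb^{-5/4})$ and a carefully designed profile $-f(s,\psi\tb^{5/4})$ beyond. The delicate point is not the ``min of two branches'' matching you invoke, but the fact that the diffusion term $\sqrt{W}\,\p_{\psi\psi}\underline F$ exactly cancels the bad part of $-\tfrac{1}{W}\underline F$ at leading order; this is why the $-\psi^{1/3}$ correction and the precise functional form of $f$ are needed. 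Your description does not capture this cancellation.

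\textbf{Minor points.} The paper's super-solution for $W$ is $\overline W=(6\psi)^{4/3}/4+A_+\psi^{10/3}\tb^{2}$, not $+A\psi^{7/3}\tb^{5/4}$; the higher exponent is used to beat the transport term at very large $\psi$. Also, no residual $|b_s+b^2|$ term survives in the barrier verification once $\tb$ is used; the hypothesis $s_0\ge \bar C(J\eps^{-2})^{1/(2\eta)}$ enters only through showing $\tb\sim 1/s$ (your Corollary-type statement for $\tb$), not through an $o(\tb^2)$ error as you suggest.
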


Notice that the above estimates are precisely the ones that are required in Proposition \ref{prop:est-cLUV-1}.

\subsection{Bootstrap argument}
The bootstrap argument consists in bringing together Theorem \ref{thm:mod-rate} on the one hand, and  Proposition \ref{prop:max-princ} on the other. In the rest of this section, we will assume that $U(s_0)$ satisfies
\be\label{hyp:well-prep}\ba
E_1(s_0)\leq C_0 s_0^{-13/4-\eta/2}, \quad E_2(s_0)\leq C_0 s_0^{-5} ,\\
 \left|b(s_0)-\frac{1}{s_0}\right| \leq \frac{\bar \eps}{2 s_0},\\
\text{ and }- M_0 \inf (1,  s_0^{-1} Y^2) \leq U_{YY}(s_0, Y)-1\leq 0\quad \forall Y>0, \quad \lim_{Y\to \infty} U(s_0, Y)\leq M_0 s_0.
\ea \ee
where $C_0, M_0$ are constants independent of $s_0$, and $\eta$ is such that $\eta-3/4<2-(11-a)\beta_1$. Without loss of generality, we also assume that $M_0\geq 1$.
Such an initial data is ``well-prepared" in the sense that it is close to the blow-up profile.

Assumption \eqref{hyp:well-prep} involves three different types of estimates. In order to propagate these estimates, we will apply three different results:
\begin{enumerate}
\item The energy estimates from Propositions \ref{prop:est-cLUV-1} and \ref{prop:est-cLU2V}, which are gathered in Theorem~\ref{thm:mod-rate};
\item The maximum principle estimates from Proposition \ref{prop:max-princ};
\item Lemma  \ref{lem:b} on the modulation rate $b$.
\end{enumerate}
Note that the maximum principle will propagate the third estimate of \eqref{hyp:well-prep} without improving it (in fact, we will change the constant $M_0$ into $\bar M M_0$); however the energy estimates will transform $\eta/2$ into $\eta$, and will therefore improve the estimates on $E_1$.

The argument goes as follows: let $S_0, H$ be the constants from Theorem \ref{thm:mod-rate} with $\alpha=13/4 + \eta$ and $M_1=M_2=\bar M M_0$ (recall that $S_0$ depends in particular on $\beta_1,\beta_2, \eta$   and $a$). 
Let $J:=2 \exp (H(1+C_0))$.
Assume that $s_0\geq \max (S_0, \bar CJ^4, C_0^8)$ for the large universal constant $\bar C$ from Proposition \ref{prop:max-princ}.

If the initial data satisfies \eqref{hyp:well-prep}, by continuity,  there exists $s_1> s_0$ such that for all $s\in [s_0, s_1]$,
\be\label{bootstrap1}
E_1(s)\leq 2H_1(1+C_0)s^{-13/4-\frac{\eta}{2}}, \quad \left|b(s)-\frac{1}{s}\right| \leq \frac{\bar \eps}{s},\quad \int_{s_0}^{s_1}|b_s + b^2|^2 s^{13/4}\:ds\leq J.
\ee

Then for all $s\in [s_0, s_1]$, according to Proposition \ref{prop:max-princ} with $C_1:= 2H_1(1+C_0)$, we infer that  up to choosing a larger $S_0$ (depending on $H_1$ and $C_0$), 
$$\ba
-\bar M M_0\leq U_{YY}\leq 1\quad\forall Y \geq 0,\\
1- \bar M M_0 b Y^2 \leq  U_{YY}\quad \forall Y \in [0,  s^{1/3}].\ea
$$

The assumptions of Propositions \ref{prop:est-cLUV-1} and \ref{prop:est-cLU2V} are satisfied (with $M_1=M_2=\bar M M_0$), and we infer that if $s_0\geq \max(S_0, \bar C J^4, C_0^8)$, for all $s\in [s_0, s_1]$,
$$\ba
E_1(s)\leq H_1(1 + C_0 s_0^{\eta/2}) s^{-13/4 -\eta},\quad E_2(s)\leq H_2(1+C_0) \exp(H_2(1+C_0)) s^{-5}.
\ea
$$
We have in particular for all $s\in [s_0, s_1]$
$$
E_1(s)\leq H_1(1+ C_0) s^{-13/4-\eta/2},
$$
and using Lemmas \ref{lem:trace} and \ref{lem:coerciv},
\[
\int_{s_0}^{s_1} |b_s+b^2|^2 s^{13/4}ds \leq \exp(H(1+C_0))= \frac{J}{2}.
\]

Using Lemma \ref{lem:b} in the Appendix, we infer that for all $s\in [s_0, s_1]$,
$$
\left|b(s)-\frac{1}{s}\right| \leq  \frac{1+\bar\eps}{1-\bar \eps}\left| \frac{1}{s_0} - b(s_0)\right| \frac{s_0^2}{s^2} +  \frac{1+\bar\eps}{(1-\bar\eps)^2} s^{-9/8}\sqrt{\frac{2J}{7}}.
$$
Without loss of generality, we can always assume that $\bar \eps, \eta, s_0$ are such that
$$
 \frac{1+\bar\eps}{1-\bar \eps}\leq \frac{5}{4},\quad  \frac{1+\bar\eps}{(1-\bar\eps)^2} \sqrt{\frac{2J}{7}} s_0^{-1/8}\leq \frac{\bar \eps}{4}.
$$
Then for all $s\in [s_0, s_1]$, we have
\be\label{bootstrap2}
\left|b(s)-\frac{1}{s}\right| \leq \frac{7\bar\eps}{8s}.
\ee

Gathering \eqref{bootstrap1} and \eqref{bootstrap2}, we infer that
\begin{multline*}
s_1:=\inf\{s\geq s_0,\ E_1(s)=2H_1(1+C_0) s^{-13/4-\eta/2}\text{ or } |b(s)-1/s| = \bar\eps/s\\\text{ or } \int_{s_0}^s \tau^{13/4} |b_\tau+b^2|^2d\tau= J\}=+\infty.
\end{multline*}

As a consequence, we infer that for some constant $J$ depending only on $C_0$ and $M_0$, if $s_0\geq \max (S_0, \bar C J^4, C_0^8)$,
$$
\int_{s_0}^\infty s^{13/4} |b_s + b^2|^2 ds\leq J,\quad \left|b-\frac{1}{s}\right|\leq \frac{\bar \eps}{s}.
$$

We therefore obtain the following Theorem in the rescaled variables $(s,Y)$:

\begin{theorem}
Let $\eta$ be such that $0<\eta<(3-a)(\beta_1-1/4)$, where $\beta_1$ satisfies \eqref{def:beta}.
Assume that $U_0$ satisfies the hypotheses \eqref{hyp:well-prep}, and consider the solution $U$ of equation \eqref{rescaled} with $U(s_0, Y)=U_0$. Then there exists a constant $S_0>0$, depending on $\eta, C_0, M_0$, such that if $s_0\geq S_0$, then for all $s\geq s_0$,
\be\label{conclusion-b}
\int_{s_0}^\infty s^{13/4} |b_s + b^2|^2 ds<+\infty,\quad  \left|b-\frac{1}{s}\right|\leq \frac{\bar \eps}{s}.
\ee

\label{thm:rescaled}
\end{theorem}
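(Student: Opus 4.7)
The plan is a continuity/bootstrap argument that interlocks the three ingredients developed in the preceding sections: the maximum-principle bounds on $U_{YY}$ (Proposition \ref{prop:max-princ}), the weighted energy estimates (Theorem \ref{thm:mod-rate}), and the ODE analysis of the modulation parameter $b$ (Lemma \ref{lem:b}). First I would fix $a\in (0,\bar a)$ together with parameters $\beta_1,\beta_2,m_1,m_2$ satisfying \eqref{def:beta} with $m_2\gg m_1$, set $J:=2\exp(H(1+C_0))$ where $H$ is the constant from Theorem \ref{thm:mod-rate}, and introduce the bootstrap time
$$
s_1:=\inf\left\{s\geq s_0\ :\ E_1(s)=2H_1(1+C_0)s^{-13/4-\eta/2}\ \text{or}\ \left|b-\tfrac{1}{s}\right|=\tfrac{\bar\eps}{s}\ \text{or}\ \int_{s_0}^s \tau^{13/4}|b_\tau+b^2|^2\,d\tau=J\right\}.
$$
The strict inequalities in \eqref{hyp:well-prep} give $s_1>s_0$ by continuity, and the goal is to show that for $s_0$ large enough (depending only on $\eta,C_0,M_0$) one has $s_1=+\infty$.

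On $[s_0,s_1]$ the bootstrap assumption on $b$ gives $(1-\bar\eps)/s\leq b\leq (1+\bar\eps)/s$, and that on $E_1$ supplies the decay hypothesis of Proposition \ref{prop:max-princ} with $C_1=2H_1(1+C_0)$. The proposition then yields the uniform bounds $-\bar M M_0\leq U_{YY}\leq 1$ on $\R_+$ together with $U_{YY}\geq 1-\bar M M_0\,bY^2$ on $[0,cs^{1/3}]$, which are precisely the structural bounds required to invoke Theorem \ref{thm:mod-rate} with $M_1=M_2=\bar M M_0$. That theorem in turn upgrades the energy bootstrap assumption to
$$
E_1(s)\leq H_1(1+C_0)\,s^{-13/4-\eta},\qquad E_2(s)\leq H_2(1+C_0)\exp(H_2(1+C_0))\,s^{-5},
$$
and, combined with the trace estimate of Lemma \ref{lem:trace}, the coercivity estimate of Lemma \ref{lem:coerciv}, and the choice \eqref{def:beta} ensuring $(3-a)(\beta_1-1/4)>\eta$, gives $\int_{s_0}^{s_1}\tau^{13/4}|b_\tau+b^2|^2\,d\tau\leq \exp(H(1+C_0))=J/2$.

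Plugging this refined integral bound into Lemma \ref{lem:b} with $\gamma=13/4>3$ produces
$$
\left|b(s)-\tfrac{1}{s}\right|\leq \tfrac{1+\bar\eps}{1-\bar\eps}\left|b(s_0)-\tfrac{1}{s_0}\right|\frac{s_0^2}{s^2}+\tfrac{1+\bar\eps}{(1-\bar\eps)^2}\,s^{-9/8}\sqrt{2J/7}.
$$
Choosing $s_0\geq\max(S_0,\bar C J^4,C_0^8)$ with universal constants sufficiently large, each term on the right is dominated by $\bar\eps/(2s)$, so $|b-1/s|\leq 7\bar\eps/(8s)<\bar\eps/s$. All three bootstrap quantities are thus \emph{strictly} improved at $s=s_1$, contradicting the definition of $s_1$ unless $s_1=+\infty$. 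The conclusions \eqref{conclusion-b} then hold on $[s_0,+\infty)$.

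The main obstacle is orchestrating the strict closure of the bootstrap, because the three estimates are coupled: energy bounds feed into the maximum principle through the pointwise control of $U_{YY}$, and the integral bound on $b_s+b^2$ feeds into the pointwise bound on $b-1/s$ through Lemma \ref{lem:b}. The delicate balancing act is checking that the $\eta/2\to\eta$ gain in the energy estimate, the factor of $2$ built into the definition of $J$, and the $s^{-9/8}$ decay in Lemma \ref{lem:b} each provide a strict margin that is not erased by the composition of constants. This hinges on the fact, tracked carefully through Propositions \ref{prop:est-cLUV-1}, \ref{prop:est-cLU2V}, and \ref{prop:max-princ}, that all constants depend only on the universal parameters or on $C_0,M_0$, and on the well-prepared initial assumption \eqref{hyp:well-prep} providing genuinely strict inequalities to start from.
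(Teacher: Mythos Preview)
Your proposal is correct and mirrors the paper's bootstrap argument in Section~2.5 essentially line by line: the same definition of $J=2\exp(H(1+C_0))$ and of $s_1$, the same chain Proposition~\ref{prop:max-princ} $\Rightarrow$ Theorem~\ref{thm:mod-rate} $\Rightarrow$ Lemmas~\ref{lem:trace}--\ref{lem:coerciv} $\Rightarrow$ Lemma~\ref{lem:b}, and the same strict improvements ($\eta/2\to\eta$, $J\to J/2$, $\bar\eps\to 7\bar\eps/8$) forcing $s_1=+\infty$.
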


Let us now go back to the original variables and prove Theorem \ref{thm:main}. First, we set
$$\lambda_0:=\p_y u_{0|y=0},\  b_0:= -\lambda_0^2\p_y^4 u_{0|y=0},\ s_0:=b_0^{-1}.
$$
The assumption (H1) entails that
$$
c_0^{-1} \lambda_0^2\leq s_0^{-1}\leq c_0 \lambda_0^2.
$$
Assumption (H2) implies
$$\ba- M_0 \inf (1,  s_0^{-1} Y^2) \leq U_{YY}(s_0, Y)-1\leq 0\quad \forall Y>0,\\
U(s_0,Y)\leq M_0 \lambda_0^{-2}\leq M_0c_0 s_0\ea
$$
for a suitable constant $M_0$. Of course, without loss of generality, we can assume that $M_0 \geq 1$.
Furthermore, assumption (H3) becomes, in the rescaled variables and after a few easy computations,
$$
V(s_0, Y)= O(s_0^{-9/4}) (Y^7 + c_8 Y^8) + O(s_0^{-3}(Y^{10}+ Y^{11}))
$$
for $Y\leq s_0^{2/7}$. Here the constant $c_8$ is defined so that
$$
\p_Y\cLU^2 V(s_0)= O(s_0^{-9/4}) + O(s_0^{-3}Y^2).
$$
It can be easily checked that these assumptions ensure that $U$ is a well-prepared initial data. As a consequence, if $s_0$ is large enough (i.e. if $\lambda_0$ is small enough), \eqref{conclusion-b} holds. Using Lemma \ref{lem:modulation}, we infer that $x^*<+\infty$ and that $\lambda(x)\sim C \sqrt{x^*-x}$. Theorem \ref{thm:main} follows.

Furthermore, we deduce from the maximum principle estimates some pointwise control on $u$. Indeed, we have
\[
Y + \frac{Y^2}{2} - \frac{M_2}{12} b Y^4\leq U(s,Y)\leq Y + \frac{Y^2}{2}\quad \text{for }0\leq Y \leq c s^{1/3}. 
\]
Going back to the original variables, we find that there exist constants $C,c$ such that
\[
\lambda(x)y + \frac{y^2}{2} - C y^4\leq  u(x,y)\leq \lambda(x)y + \frac{y^2}{2}\quad \forall y\leq (x^*-x)^{1/6}.
\]
 
\begin{remark}[Comparison with the result by Caffarelli and E]
	Let us now plug the change of variables in the result announced by Caffarelli and E  in \cite{E-prandtl} into the asymptotic expansion above. 
	We recall that
	$$
	u^\mu(\xi, z)= \frac{1}{\mu^{1/2}} u (x^*-\xi\mu, \mu^{1/4}z).
	$$
	It follows that in the zone  $z\leq  \mu^{-1/12} \xi^{1/6}$, $\xi\lesssim 1$,
\[
	u^\mu(\xi,z)=O (\mu^{1/4} \sqrt{\xi} )z + \frac{z^2}{2}+ O(\mu^{1/2} z^4 )\to \frac{z^2}{2}\quad \text{as } \mu\to 0.
\]

	\label{rem:ECaf}
	
\end{remark}

\subsection{Organization of the rest of the paper}
The rest of the paper is dedicated to the proof of Theorem \ref{thm:rescaled}, or more specifically, to the proofs of Proposition \ref{prop:est-cLUV-1}, Proposition \ref{prop:est-cLU2V}
 and Proposition \ref{prop:max-princ}. Since the maximum principle estimates are easier to derive than the energy estimates, we start with the proof of Proposition \ref{prop:max-princ} in section \ref{sec:infty}. We then lay the ground for the derivation of the energy estimates by proving several important intermediate results in section \ref{sec:proof-statements-algebraic}. Eventually, we prove Proposition  \ref{prop:est-cLUV-1} and Proposition \ref{prop:est-cLU2V} in section \ref{sec:proof-statements-energy}.

Let us also explain here the order in which the parameters are chosen. We first pick $a\in (0, \bar a)$, where $\bar a$ is the universal constant in Proposition \ref{prop:est-cLUV-1}. We then choose $\beta_1> \beta_2$ satisfying \eqref{def:beta}, and $\eta>0$ such that $
\eta-\frac{3}{4} < 2 - (11-a) \beta_1.$
We then pick $m_1,m_2$ large enough and such that $m_2\gg m_1$. Eventually, we take $s_0$ large, depending on all other parameters.

\subsubsection*{Notation} We will use indifferently $f_{Y}$ and $\p_Y f$ to denote the $Y$ derivative of an arbitrary function $f$. 
The constants with a bar ($\bar a, \bar C, \bar M, \bar \eps$) denote universal constants, that do not depend on any of the parameters. All constants with a zero subscript ($M_0$, $C_0$, $s_0$) refer to the initial data. Constants involving the letter $M$ ($M_0$, $M_1$, $M_2$, $\bar M$) are related to the maximum principle.

\section{Derivation of \texorpdfstring{$L^\infty$}{sup-norm} estimates and construction of sub and super solutions}

\label{sec:infty}
This section is devoted to the proof of Proposition \ref{prop:max-princ}, which consists in the derivation of pointwise estimates on $U, U_{Y}$ and $U_{YY}$, provided $b$ satisfies the assumptions of Lemma \ref{lem:b} and $E_1(s)= O(s^{-13/4})$. 
Throughout this section, we will use the von Mises formulation of the rescaled Prandtl equation, namely \eqref{VM}-\eqref{eq:vM}.
The idea is to use the maximum and comparison principles for these equations  (see Lemmas 2.1.3 and 2.1.4 in \cite{OS}), together with Sobolev estimates coming from the bound on $E_1(s)$.

Let us first recall  some useful formulas regarding the von Mises formulation of the equation in the original variables and in the rescaled variables. If $u$ is the solution of \eqref{Prandtl}, we set
$$
\phi(x,y)=\int_0^y u,\quad w=u^2.
$$
We recall that \eqref{Prandtl} is equivalent to the following equation, written in the variables $(x,\phi)$
\be\label{vM-original}
w_x -\sqrt{w} \p_{\phi}^2 w=-2.\ee
Furthermore, notice that if $W,\psi$ are defined by \eqref{VM}
$$
\psi(s,Y)=\lambda(x(s))^{-3} \phi(x(s), \lambda(x(s)) Y),\quad W(s,\psi)=\lambda(x(s))^{-4} w(x(s), \lambda^3 \psi).
$$
It follows that some qualitative properties of equation \eqref{eq:vM} (growth with respect to $\phi$, local bounds) can be inherited directly from  equation  \eqref{vM-original}. More precisely, we have the following result:
\begin{lemma}
Let $w_0\in \mathcal C^{3,\alpha} (\R_+)$ such that $w_0(0)=0$ and $w_0'(0)>0$. Assume  that $w_0$ is increasing. Then for all $x\in [0, x^*[$, $w(x,\phi)$ is increasing with respect to $\phi$. Furthermore, for any $X\in [0, x^*[$, there exists $C_X>0$ such that for all $x\in [0,X]$,
$$\ba
|w_\phi(x,\phi)|\leq C_X \quad\forall \phi\geq 0,\\
|\p_\phi^2 w(x,\phi)|, \ |\p_\phi^3 w(x,\phi)|\leq C_X \quad \forall \phi\geq 1.
\ea$$

As a consequence, $W$ is increasing in $\psi$ (or equivalently, $U$ is increasing in $Y$) for all $s\in [s_0, s_1]$, and 
$$
\lim_{\psi \to \infty} W_{\psi}= \lim_{\psi \to \infty} W_{\psi\psi}=0\quad \forall s\in [s_0, s_1].
$$

\label{lem:preliminary-bounds}
\end{lemma}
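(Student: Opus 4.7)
My plan is to work in the original Prandtl variables $(x,y)$ and translate back to $(x,\phi,w)$ via the chain-rule identities obtained from $\partial_\phi = u^{-1}\partial_y$, namely
\[
w_\phi = 2 u_y, \qquad w_{\phi\phi} = \frac{2 u_{yy}}{u}, \qquad \partial_\phi^3 w = \frac{2 u_{yyy}}{u^2} - \frac{2 u_{yy} u_y}{u^3}.
\]
All stated conclusions then follow from (i) $L^\infty$ bounds on $u_y,u_{yy},u_{yyy}$ on $[0,X]\times\R_+$, (ii) a uniform lower bound on $u$ on the set $\{\phi\ge 1\}$, and (iii) the explicit scaling $\psi=\lambda^{-3}\phi$, $W=\lambda^{-4}w$, which gives $W_\psi=\lambda^{-1}w_\phi$ and $W_{\psi\psi}=\lambda^2 w_{\phi\phi}$, with $\lambda(x(s))$ bounded and bounded away from zero on any finite $s$-interval $[s_0,s_1]$.

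For the monotonicity, since $w=u^2$ with $u\ge 0$, $w_\phi>0$ is equivalent to $u_y>0$. Differentiating the first line of \eqref{P-general} in $y$ and using $u_x+v_y=0$, the function $\tilde u:=u_y$ solves the linear parabolic equation $u \tilde u_x + v \tilde u_y - \tilde u_{yy}=0$, with boundary value $\tilde u(x,0)=\lambda(x)>0$ for $x\in[0,X]$, initial data $u_0'>0$, and $\tilde u\to 0$ at infinity (inherited from $u\to u_E$). The strong maximum principle, in the form used by Oleinik (\cite{OS}, Lemmas~2.1.3--2.1.4), then gives $\tilde u>0$ on $[0,X]\times\R_+$, hence $w_\phi>0$ and $U_Y=\lambda^{-1}u_y>0$.

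For the $L^\infty$ bounds, Proposition~\ref{prop:Oleinik} directly provides uniform boundedness of $u,u_y,u_{yy}$ on $[0,X]\times\R_+$ for each $X<x^*$, so $|w_\phi|\le C_X$ follows at once. To bound $w_{\phi\phi}$ and $\partial_\phi^3 w$ on $\{\phi\ge 1\}$ I need a uniform lower bound on $u$ there. The non-degeneracy clause of Proposition~\ref{prop:Oleinik} supplies $m,y_0>0$ with $u_y(x,y)\ge m$ on $[0,X]\times[0,y_0]$, so $u(x,y_0)\ge m y_0$, and the monotonicity just established extends this to $u(x,y)\ge m y_0$ for all $y\ge y_0$. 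Combined with the upper bound $u\le C_X$, the condition $\phi\ge 1$ forces $y\ge y_X:=1/C_X$, and a minor refinement yields $u\ge c_X>0$ on $\{\phi\ge 1\}$, giving $|w_{\phi\phi}|\le C_X$. For $\partial_\phi^3 w$ I also need an $L^\infty$ bound on $u_{yyy}$: differentiating Prandtl in $y$ gives $u_{yyy}=u u_{xy}+v u_{yy}$, and classical parabolic Schauder estimates applied to \eqref{vM-original} on the region $\phi\ge 1/2$ (where that equation is strictly uniformly parabolic thanks to the lower bound on $w$), together with the $C^7$ regularity of $u_0$ from (H1), provide the required bound.

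The rescaled statements then follow immediately: monotonicity of $W$ in $\psi$ (equivalently $U$ in $Y$) transfers from $w$ in $\phi$, and the factors $\lambda,\lambda^{-1}$ appearing in the scaling are harmless on $[s_0,s_1]$. For the limits at infinity, $u(x,y)\to u_E(x)$ as $y\to\infty$ gives $u_y\to 0$, hence $w_\phi\to 0$ and $W_\psi\to 0$ as $\psi\to\infty$. Rewriting Prandtl as $u_{yy}=u u_x + v u_y + 1$ and using $u_E u_E'=-1$, one checks that $u u_x+1\to 0$ and, by standard boundary-layer decay estimates on $u-u_E$, that $v u_y\to 0$ as $y\to\infty$; hence $u_{yy}\to 0$ and $W_{\psi\psi}=2\lambda^2 u_{yy}/u\to 0$.

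The main technical obstacle is the bound on $\partial_\phi^3 w$: Proposition~\ref{prop:Oleinik} as quoted only explicitly controls $u_y$ and $u_{yy}$, so obtaining $u_{yyy}\in L^\infty([0,X]\times\R_+)$ requires either an extension of Oleinik's estimates exploiting the $C^7$ regularity provided by (H1), or, as sketched above, a direct parabolic bootstrap on the von Mises equation \eqref{vM-original} away from the degenerate boundary $\phi=0$. The decay $u_{yy}\to 0$ at infinity, needed for $W_{\psi\psi}\to 0$, is the other mildly delicate point, since it rests on controlling the rate at which $u$ relaxes to $u_E$.
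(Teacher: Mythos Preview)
Your proposal is essentially correct and parallels the paper's argument, but the routes diverge in two places worth noting.

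\textbf{Coordinate choice.} The paper stays in von Mises variables throughout: the $L^\infty$ bounds on $w_\phi,w_{\phi\phi}$ are quoted directly from Oleinik's book (Lemmas~2.1.9 and 2.1.11 of \cite{OS}), and the bound on $\partial_\phi^3 w$ is obtained by applying the same maximum-principle argument to the differentiated von Mises equation satisfied by $w_\phi$. Your translation to $(x,y)$ via $w_\phi=2u_y$, $w_{\phi\phi}=2u_{yy}/u$, etc., is equivalent and perfectly valid; the only cost is the extra step of turning the lower bound on $u$ on $\{\phi\ge 1\}$ into a lower bound via the non-degeneracy clause, which you handle correctly. For monotonicity, both proofs apply the maximum principle to the same linear equation, just written in different coordinates.

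\textbf{Limits at infinity.} Here the paper's route is cleaner and your argument has a soft spot. The paper simply observes: $w$ has a finite limit and $w_{\phi\phi}$ is bounded (for $\phi\ge 1$), hence $w_\phi\to 0$; then $w_\phi\to 0$ and $\partial_\phi^3 w$ is bounded, hence $w_{\phi\phi}\to 0$. This is the elementary one-variable fact that a function with a limit and bounded second derivative has derivative tending to zero, applied twice. Your approach via the equation $u_{yy}=uu_x+vu_y+1$ requires $u_x\to u_E'$ and $vu_y\to 0$ as $y\to\infty$; the first is not a direct consequence of $u\to u_E$ (pointwise convergence plus bounded derivative does not give convergence of the derivative), and the second needs a quantitative decay rate on $u_y$ to beat the possible linear growth of $v$. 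Both can be justified, but it is simpler to note that you already have $u_y\to 0$ (positive, integrable, Lipschitz) and $u_{yyy}$ bounded away from $y=0$, and then invoke the same interpolation to conclude $u_{yy}\to 0$ directly.
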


\begin{proof}
The bounds on $w_\phi$, $w_{\phi \phi}$ are explicitly written in \cite{OS} (see Lemmas 2.1.9 and 2.1.11). The bound on $\p_\phi^3 w$ follows from the same arguments as \cite[Lemma 2.1.11]{OS}, writing down the equation on $w_\phi$. Since $\lim_{\phi\to \infty} w(x,\phi)= \bar U(x)^2$, it follows that $\lim_{\phi\to \infty} w_\phi=0$ for all $x\in (0, x^*)$. Therefore we also have $\lim_{\phi\to \infty} w_{\phi\phi}=0$.
Whence $\lim_{\psi \to \infty} W_\psi = \lim_{\psi \to \infty} W_{\psi\psi}=0$.

Furthermore, the equation satisfied by $w_\phi$ is
$$
\p_x w_\phi - \frac{w_\phi}{2\sqrt{w}} \p_\phi w_\phi- \sqrt{w} \p_{\phi \phi} w_\phi=0,
$$
with boundary conditions $w_{\phi|x=0}= w_0'(\phi)\geq 0$, $\lim_{\phi\to \infty} w_\phi=0$, and $w_{\phi|\phi=0}= 2\lambda(x)>0$ for all $x\in (0, x^*)$. According to the maximum principle, we have $w_\phi \geq 0$ in $(0, x^*)\times (0,\infty)$. Hence $W_\psi\geq 0$, and $W$ is increasing in $\psi$.

\end{proof}

\subsection{Uniform bounds on \texorpdfstring{$\p_{YY} U$}{the second derivative of U}}

The first step of the proof of Proposition \ref{prop:max-princ} is the derivation of uniform $L^\infty$ bounds on $\p_{YY} U$. The result we prove in this paragraph is the following

\begin{lemma}
Let $U$ be a solution of \eqref{rescaled} on $(s_0, s_1)\times (0, + \infty)$ such that $U_{Y|Y=0}=1$ for all $s\in [s_0, s_1]$, and such that $U$ is strictly increasing in $Y$ for all $s$, with $\lim_{Y\to \infty}U(s,Y)=U_\infty(s)<+\infty$.

Assume that there exists $M_2$ such that
$$
-M_2\leq \p_{YY} U(s_0, Y)-1\leq 0\quad \forall Y>0,
$$
and that 
\be\label{compatibility}
\p_{YY}U(s_0, Y)= 1-12 a_4 b_0 Y^2 + O(Y^5) \text{ for } Y\ll 1.
\ee

Then
$$
-\max(M_2,1)\leq \p_{YY} U(s, Y)-1\leq 0\quad \forall Y>0\ \forall s\geq s_0.
$$

\label{lem:U_YY-prelim}
\end{lemma}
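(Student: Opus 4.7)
}
The idea is to derive a semilinear local parabolic equation for $G:=U_{YY}-1$ and then compare it against the constant barriers $\bar G\equiv 0$ and $\underline G\equiv -M$ with $M:=\max(M_2,1)$. Because equation \eqref{rescaled} is nonlocal in $(s,Y)$, I would work entirely in the von Mises variables $(s,\psi)$, $W=U^2$, in which the equation becomes the \emph{local} parabolic equation \eqref{eq:vM}. The monotonicity of $U$ in $Y$ (propagated by Lemma \ref{lem:preliminary-bounds}) ensures that this change of variables is a bijection, and the identity
\[
U_{YY}\;=\;\tfrac{1}{2}\sqrt{W}\,W_{\psi\psi}
\]
translates the quantity of interest into a function of $W$ and its $\psi$-derivatives.

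The PDE for $G$ can be obtained either by differentiating \eqref{eq:vM} directly, or, more transparently, by writing down the corresponding identity in the \emph{original} von Mises variables and transporting it back through the self-similar scaling. In original variables, setting $g:=w_x=2(u_{yy}-1)$ and differentiating the von Mises equation $w_x-\sqrt{w}\,w_{\phi\phi}=-2$ in $x$ yields the linear (but semilinearly reinterpretable) equation
\[
g_x-\sqrt{w}\,g_{\phi\phi}-\frac{w_{\phi\phi}}{2\sqrt{w}}\,g\;=\;0,\qquad
\frac{w_{\phi\phi}}{2\sqrt{w}}=\frac{u_{yy}}{u^2}=\frac{1+g/2}{u^2}.
\]
Pushing this identity through the change of variables $\phi=\lambda^3\psi$, $ds/dx=\lambda^{-4}$ and using $b=-2\lambda_s/\lambda$, all powers of $\lambda$ cancel, and I would arrive at
\begin{equation}\label{eq:plan-G}
G_s-\sqrt{W}\,G_{\psi\psi}+\tfrac{3b}{2}\,\psi\,G_\psi\;=\;\frac{G(1+G)}{W}.
\end{equation}
The boundary data for \eqref{eq:plan-G} are obtained directly: evaluating \eqref{rescaled} at $Y=0$ with $U_{|Y=0}=0$, $U_{Y|Y=0}=1$ gives $U_{YY|Y=0}=1$, hence $G_{|\psi=0}=0$; and Lemma \ref{lem:preliminary-bounds} yields $W_{\psi\psi}\to 0$ as $\psi\to\infty$, so $G\to-1$ at infinity. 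The compatibility condition \eqref{compatibility} is used exactly to guarantee the regularity of $G$ needed to differentiate the equation twice near $\psi=0$.

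With \eqref{eq:plan-G} in hand, both bounds are immediate from the parabolic comparison principle. The constant $\bar G\equiv 0$ is a classical solution; on the parabolic boundary one has $G(s_0,\cdot)\leq 0$ (hypothesis), $G(\cdot,0)=0$, and $G(\cdot,\infty)=-1\leq 0$, so $G\leq 0$. For the lower bound, the constant $\underline G\equiv -M$ satisfies
\[
\underline G_s-\sqrt{W}\,\underline G_{\psi\psi}+\tfrac{3b}{2}\psi\,\underline G_\psi-\frac{\underline G(1+\underline G)}{W}\;=\;-\frac{M(M-1)}{W}\;\leq\;0
\]
as soon as $M\geq 1$, hence is a sub-solution. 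On the parabolic boundary $\underline G=-M\leq -M_2\leq G(s_0,\cdot)$, $\underline G\leq 0=G(\cdot,0)$, and $\underline G\leq -1=G(\cdot,\infty)$, so $G\geq -M$.

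The main obstacle is the rigorous justification of the comparison principle on the unbounded strip $(s_0,s_1)\times(0,\infty)$: the diffusion $\sqrt{W}$ degenerates as $\psi\to 0$ and the zero-order coefficient $(1+G+\underline G)/W=(1+G-M)/W$ appearing in the linearized equation for $H:=G-\underline G$ is unbounded near $\psi=0$. Crucially, once the upper bound $G\leq 0$ is established, this coefficient is $\leq (1-M)/W\leq 0$, so it has the \emph{good} sign for the maximum principle. I would then conclude by a standard truncation-and-barrier argument (à la Oleinik \cite[Lemmas 2.1.3--2.1.4]{OS}): apply the comparison principle on $[s_0,s]\times[\varepsilon,L]$ to $H^-=\min(H,0)$ with auxiliary weights that compensate the singularity at $\psi=0$ and the unboundedness at $\psi=\infty$ (noting that $H$ is bounded thanks to the finite limits of $G$ at both endpoints), then let $\varepsilon\to 0$ and $L\to\infty$. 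The upper bound $\bar G\equiv 0$ is handled identically (and more easily, since it is an exact solution).
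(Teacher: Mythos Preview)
Your derivation of the equation for $G=U_{YY}-1$ in von Mises variables is correct and coincides (up to the factor $F=2G$) with the paper's equation \eqref{eq:F}. The lower bound via the constant barrier $\underline G=-M$ is also essentially the paper's argument, though the paper phrases it as a direct interior-minimum analysis: at an interior minimum of $G$ one reads off $G(1+G)\le 0$, hence $G\ge -1$, and combines this with the parabolic boundary data. In particular, the lower bound does \emph{not} require the upper bound as an input.

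The genuine gap is your treatment of the upper bound. You claim that comparison with the exact solution $\bar G\equiv 0$ is ``identical and easier,'' but the linearized zero-order coefficient for $H=G-\bar G=G$ is $(1+G)/W$, which is \emph{positive} (and singular like $1/\psi$) near $\psi=0$, where $G\to 0$ and $W\to 0$. This is the wrong sign for the minimum principle: at a positive interior maximum of $G$ the equation gives $G(1+G)/W\ge 0$, which is no contradiction. The paper therefore treats the upper bound by a completely different, and harder, mechanism: it first uses the already-established \emph{lower} bound to obtain the non-degeneracy $W\ge\psi$ near $\psi=0$, then runs a bootstrap (set $s_0'=\inf\{s:\sup F(s,\cdot)\ge 1\}$), multiplies \eqref{eq:F} by $F_+ p$ with a decaying weight $p$, and absorbs the singular term $\int F_+^2/W$ into the degenerate diffusion $\int\sqrt{W}(\partial_\psi F_+)^2$ via a Hardy inequality; a Gronwall argument then gives $F_+\equiv 0$. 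Your truncation-and-barrier sketch does not supply this Hardy-type absorption, which is the heart of the matter, so the order and the method for the upper bound both need to be revised along these lines.
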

\begin{remark}
Assumption \eqref{compatibility} is a compatibility condition at a high order at $s=s_0$. It is propagated by the equation.
\end{remark}

\begin{proof}
We rely on the equation on $W$  in the $(s,\psi)$ variables. We recall that $\p_Y U = \p_\psi W/2$, and therefore
$$
\p_{YY} U(s,Y) = \frac{1}{2}\sqrt{ W(s,\psi(s,Y))} \p_{\psi \psi} W(s,\psi(s,Y))\quad \forall s\geq s_0,\ \forall Y>0.
$$ 
Therefore we derive estimates on the quantity
$$
F(s,\psi):=\sqrt{W} \p_{\psi\psi} W - 2.
$$
Notice that the assumptions on $U$ imply that 
$$
-2M_2\leq F(s_0,\psi)\leq 0\quad \forall \psi>0.
$$
On $\{Y=0\}$, we have $U_{YY}=1$, and therefore $F_{|\psi=0}=0$. Using Lemma \ref{lem:preliminary-bounds}, we also have $\lim_{\psi \to \infty} F(s,\psi)=-2$.

Furthermore, $F$ satisfies
$$
\p_s F= \frac{\p_s W}{2 \sqrt{W}}\p_{\psi\psi }W + \sqrt{W}\p_{\psi\psi} \p_s W.
$$
Using the equation on $W$ \eqref{eq:vM} and writing $\p_{\psi\psi } W=(F+2)/\sqrt{W}$, we infer that
$$
\p_s F =\frac{1}{2 W} F (F+2) + \frac{1}{2\sqrt{ W}} \p_{\psi }^2 W \left(2b W - \frac{3b}{2} \psi \p_{\psi} W\right) + \sqrt{W}\left( - b \p_{\psi}^2 W - \frac{3b}{2} \psi \p_\psi^3 W + \p_{\psi}^2F\right). 
$$
Gathering all the terms and using the formula
$$
\p_\psi F= \frac{1}{2\sqrt{W}} \p_\psi W \p_{\psi\psi} W + \sqrt{ W}\p_\psi^3 W,
$$
we obtain eventually
\be\label{eq:F}
\p_s F - \frac{1}{2W}F (F+2) + \frac{3b}{2}\psi \p_\psi F - \sqrt{W}\p_{\psi\psi} F=0.\ee

$\rhd$\textit{ First step: Lower bound on $F$ and consequences.}

We start with the lower bound, which is easier. Assume that $F$ has an interior minimum $F_{min}$ at some point $(s,\psi)$ for some $s\in (s_0, s_1]$, $\psi>0$. Then according to  equation \eqref{eq:F}, $F_{min} (F_{min}+2)\leq 0$, and therefore $F_{min}\in (-2,0)$. Thus $F(s,\psi)\geq \min (-2, \inf F(s_0))\geq \min(-2, -2M_2)$.

We infer from this lower bound on $F$ some non-degeneracy estimates for $W$ for $\psi$ close to zero. Indeed, it follows from the inequality $U_{YY}\geq -M_2'$ with $M_2'= \max (M_2-1, 0)$, that
$$
1-M_2' Y\leq U_Y\quad \forall Y>0.
$$
In particular, if $Y\leq Y_0:=(2M_2')^{-1}$, then $U_Y(s,Y)\geq 1/2$ and $U(s,Y)\geq Y/2$. As a consequence, if $\psi\leq \psi(s,Y_0)$, then $W_{\psi}\geq 1$. Now, the lower bound on $U_{YY}$ also entails that
$$
\psi(s, Y_0)=\int_0^{Y_0} U(s,Y)\:dY\geq \frac{Y_0^2}{4}= \frac{1}{16{M_2'}^2}.
$$
Hence in particular, for all $s\geq s_0$,
\be\label{non-deg-W}
\psi \leq \frac{1}{16{M_2'}^2}\Rightarrow W(s,\psi)\geq \psi\text{ and } W_\psi \geq 1.
\ee

$\rhd$\textit{ Second step: Upper bound on $F$.}

The derivation of the upper-bound is a little more involved. The main difficulty comes from the nonlinear term $F(F+2)/W$, which is also singular near $\psi=0$. In order to deal with it, we use a bootstrap type argument. 
%
Notice first that the preliminary bounds of Lemma \ref{lem:preliminary-bounds} entail that $W$ is Lipschitz continuous, uniformly in $\psi$ and locally uniformly in $s$ and that $\p_{\psi\psi} W$,  $\p_{\psi}^3 W$ are bounded (locally in $s$, uniformly in $\psi$) in $s\geq s_0$, $\psi\geq \delta$, for any $\delta>0$. Considering eventually equation \eqref{eq:F}, we deduce that $\p_s F$ is bounded in a neighbourhood of $s=s_0$, uniformly in $\psi$ for $\psi\geq \delta$. Furthermore, using  assumption \eqref{compatibility} on $U(s_0)$, both $F(s_0)/W(s_0)$ and  $\sqrt{W(s_0)} \p_{\psi \psi} F(s_0)$ are  bounded in a neighbourhood of $Y=0$, and therefore $\p_s F_{|s=s_0}$ is bounded in $L^\infty(\R_+)$.

We now set
$$
s_0':=\inf\{s\in [s_0,s_1],\ \exists \psi>0,\ F(s, \psi)\geq 1\}.
$$
It follows from the above arguments that $s_0'>s_0$. On the interval $[s_0, s_0']$, we have $F(s, \psi)\in [-2\max(M_2,1), 1]$. As a consequence, we multiply \eqref{eq:F} by $ F_+ p$, where $p\in \mathcal C^\infty(\R)$ is a non-increasing weight function such that $p\equiv 1$ for $\psi $ close to zero and $ p(\psi)=O(\psi^{-k})$ for some $k>1$ for $\psi>1$, with $|p'|/p\in L^\infty$, $p''/p\in L^\infty$. Since $F_{+|\psi=0}=0$, we obtain, as long as $s\leq s_0'$,
$$
\frac{d}{ds}\int_{\R_+}  F_+^2 p + \int_{\R_+}\sqrt{W} (\p_\psi F_+)^2 p - \frac{1}{2}\int_{\R_+} F_+^2 \p_\psi^2(\sqrt{W} p)\leq \frac{3}{2}\int_{\R_+}\frac{F_+^2}{W} p + \frac{3b}{4}\int_{\R_+} F_+^2 p.
$$
An easy computation gives
$$
 \p_\psi^2(\sqrt{W} p)= \frac{F+2}{2W} p- \frac{1}{4}\frac{W_\psi^2}{W^{3/2}}p + \frac{W_\psi}{\sqrt{W}} p' + \sqrt{W}p''.
$$
Using the assumptions on $p$, the upper-bound $\sqrt{W} \leq U_\infty(s)$ and the bound on $F$ for $s\leq s_0'$, we deduce eventually that
\begin{multline*}
\frac{d}{ds}\int_{\R_+}  F_+^2 p + \int_{\R_+}\sqrt{W} (\p_\psi F_+)^2 p +  \frac{1}{2}\int_{\R_+} F_+^2\frac{W_\psi}{\sqrt{W}}| p'|  + \frac{1}{8}\int_{\R_+} F_+^2 \frac{W_\psi^2}{W^{3/2}}p\\\leq C\int_{\R_+}\frac{F_+^2}{W} p + C(1+ U_\infty(s))\int_{\R_+} F_+^2 p.
\end{multline*}
The second term in the right-hand side will be handled thanks to a Gronwall type argument. The singularity of the first term in the right-hand side will be absorbed in the dissipation term. Indeed, let us first decompose the integral into two pieces depending on the value of $W$. First,
$$
\int_{\R_+}\mathbf 1_{W\geq \inf(1, (5M_2')^{-2})}\frac{F_+^2}{W} p \leq C \int_{\R_+} F_+^2 p,
$$
and as before, that part can be handled thanks to a Gronwall type argument. We thus focus of the values of $W$ below $\inf(1, (5M_2')^{-2})$. In that case, according to \eqref{non-deg-W}, we have $\psi \leq \inf(1, (5M_2')^{-2})=:\psi_0$ and
$$
\frac{W_\psi}{W}\geq W_\psi \geq 1.
$$
Let us choose $p$ so that $p(\psi)=1 $ for $\psi \in [0,1]$. We deduce  that there exists an explicit constant $C$ such that for all $\psi \in (0,\psi_0)$,
$$\sqrt{W} (\p_\psi F_+)^2  + \frac{1}{8}F_+^2 \frac{W_\psi^2}{W^{3/2}}\\
\geq  C\left(\p_\psi\left( W^{1/4}  F_+\right)\right)^2.
$$
Therefore, using the Hardy inequality, there exists a constant $C$ such that
$$
D(s):=\int_0^{\psi_0}\sqrt{W} (\p_\psi F_+)^2 p +  \frac{1}{2}\int_0^{\psi_0}F_+^2\frac{W_\psi}{\sqrt{W}}| p'|  + \frac{1}{8}\int_0^{\psi_0} F_+^2 \frac{W_\psi^2}{W^{3/2}}\geq C \int_0^{\psi_0}\frac{W^{1/2} F_+^2}{\psi^2}p.$$
Using once again the non-degeneracy of $W$ for $\psi$ close to zero (see \eqref{non-deg-W}), we infer that up to choosing a smaller $\psi_0$,
$$
\int_{\R_+}\mathbf 1_{W\leq \inf(1, (5M_2')^{-2})}\frac{F_+^2}{W} p\leq \frac{1}{2} D(s).
$$
Eventually, we obtain
$$
\frac{d}{ds}\int_{\R_+}  F_+^2 p\leq C (1+ U_\infty(s)) \int_{\R_+}  F_+^2 p \quad \forall \in [s_0, s_0'].
$$
Now, since $F_{|s=s_0}\leq 0$, we have $F_{+|s=s_0}\equiv 0$. The Gronwall Lemma implies that $F_+\equiv 0$ for $s\leq s_0'$. Therefore $F(s,\psi)\leq 0<1$ for all $s\leq s_0'$. It follows that $s_0'=s_1$, and thus $F(s,\psi)\leq 0$ for all $s\in [s_0, s_1]$ and for all $\psi>0$.

\end{proof}

Under the assumptions of Lemma \ref{lem:U_YY-prelim}, we therefore have
\be\label{est:U-prelim}\ba
\sup\left(Y-M_2'\frac{Y^2}{2} , 0\right)\leq U(s,Y) \leq Y+ \frac{Y^2}{2}, \\ \sup (1-M_2'Y, 0)\leq U_Y\leq 1+Y
\ea \qquad \forall Y>0,\ \forall s\geq s_0.
\ee
Notice that these estimates are independent of $s$, and that the constant $M_2'$ depends only on $M_2$.

\subsection{Construction of sub and super solutions for \texorpdfstring{$W$}{W}}
\label{ssec:subsupW}

We now derive pointwise estimates  on $U$, which will be used in the last paragraph of this section to obtained a refined lower bound on $\p_{YY} U$. We distinguish between different zones:
\begin{itemize}
\item On the zone $Y\ll s^{\beta_1}$, where $\beta_1>1/4$ is the parameter entering the definition of $w_1$ (see Proposition \ref{prop:est-cLUV-1}), the energy estimate $E_1(s)\lesssim s^{-13/4}$ actually provides a very good pointwise estimate of $U$. However, this estimate degenerates when $Y\gtrsim s^{\beta_1}$. Let us emphasize that we do need estimates on $U, U_Y, U_{YY}$ on the zone $Y\geq s^{\beta_1}$ in order to prove Proposition \ref{prop:est-cLUV-1} and therefore close the bootstrap argument.

\item On the zone $Y\geq C s^{1/4}$ for some large enough constant $C$, which corresponds to $\psi \gtrsim s^{3/4}$, we construct  sub and super solutions for $U$ (or rather, for $W$) by using maximum principle arguments. Note that this requires to have a good control of $W$ on the lower boundary of that zone, i.e. on the line $\psi= C' s^{3/4}$. This is achieved thanks to the pointwise control coming from the bound on $E_1$.

\end{itemize}

Let us start with the following Lemma:
\begin{lemma}
Assume that $U$ satisfies the assumptions of Lemma \ref{lem:U_YY-prelim} and that 
$$
E_1(s)\leq C_1 s^{-13/4}\quad \forall s\in [s_0, s_1],
$$
where $E_1$ is defined in Proposition \ref{prop:est-cLUV-1}.

Let $c>0$ be arbitrary. Then for all $Y\in [0, cs^{1/4}]$, provided $s_0$ is large enough (depending on $\beta_1, m_1$ and $c$),
$$\ba
U_{YY}(s,Y)=1-12 a_4 bY^2 +O(s^{-13/8} Y^{\frac{5+a}{2}} (1+Y)),\\
U(s,Y)= Y + \frac{Y^2}{2} - a_4 b Y^4   + O(s^{-13/8} Y^{\frac{9+a}{2}} (1+Y)).
\ea
$$
As a consequence, if $s_0$ is large enough (depending on $\beta_1, m_1, c $ and $C_1$),
$$
U_{YY}(s,Y)\geq 1-\frac{1}{2} b Y^2 \quad \forall Y\in [0, cs^{1/4}].
$$

\label{lem:approx-E1}
\end{lemma}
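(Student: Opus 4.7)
The plan is to convert the $L^2$-type bound $E_1(s)\le C_1 s^{-13/4}$ into pointwise control on $\cLU V$, recover $V_{YY}$ via the identity $V_{YY}=L_U(\cLU V)$, and compare with an explicit Taylor expansion of $\Uapp$ on the zone $Y\le cs^{1/4}$.

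First I would exploit the vanishing of $V$ to high order at $Y=0$: since $V=O(Y^7)$ by \eqref{V-0}, the formula $\cLU V=U\int_0^Y V_{YY}/U^2$ combined with $U\sim Y$ near $0$ yields $\cLU V=O(Y^5)$, so $\cLU V$, $\p_Y\cLU V$ and $\p_Y^2\cLU V$ all vanish at $Y=0$. Since $\beta_1>1/4$, on $Y\in[0,cs^{1/4}]$ the weight $(1+s^{-\beta_1}Y)^{-m_1}$ is bounded below by $1/2$ for $s_0$ large enough (depending on $c,\beta_1,m_1$), so
\[
\int_0^{cs^{1/4}}(\p_Y^2\cLU V)^2\,Y^{-a}\,dY\le 2\,C_1 s^{-13/4}.
\]
Applying Cauchy-Schwarz twice with the weight $Y^{-a}$ and using the vanishing traces at $Y=0$,
\[
|\p_Y\cLU V(s,Y)|\lesssim s^{-13/8}\,Y^{(1+a)/2},\qquad |\cLU V(s,Y)|\lesssim s^{-13/8}\,Y^{(3+a)/2}.
\]

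Plugging these into $V_{YY}=U\,\cLU V-U_Y\int_0^Y\cLU V$, together with the a priori bounds $U\le Y+Y^2/2$ and $U_Y\le 1+Y$ from Lemma \ref{lem:U_YY-prelim}, gives
\[
|V_{YY}(s,Y)|\lesssim s^{-13/8}\,Y^{(5+a)/2}(1+Y),\qquad 0\le Y\le cs^{1/4},
\]
and two further integrations (using $V=V_Y=0$ at $Y=0$, which follow from $V=O(Y^7)$) produce the analogous bound for $V$ with $Y^{(9+a)/2}(1+Y)$. To match this with the statement, I would compute $\Uapp_{YY}$ on $Y\le cs^{1/4}$: for $s_0$ large, $\chi(Y/s^{2/7})=1$ (since $Y/s^{2/7}\le cs^{-1/28}\to 0$) and $\sqrt b\,Y\le cs^{-1/4}<c_0$, so $b^{-1}\Theta(\sqrt b\,Y)=Y^2/2$ and
\[
\Uapp_{YY}=1-12\,a_4\,bY^2-42\,a_7\,b^2 Y^5-90\,a_{10}\,b^3 Y^8-110\,a_{11}\,b^3 Y^9.
\]
Using $b\sim 1/s$, a direct comparison of exponents shows that each $b^k Y^{3k-1}$ with $k\ge 2$ is $\lesssim s^{-13/8}\,Y^{(5+a)/2}(1+Y)$ on $Y\le cs^{1/4}$; combined with the pointwise bound on $V_{YY}$, this gives the expansion of $U_{YY}$, and two integrations in $Y$ (using $U(s,0)=0$ and $\p_YU(s,0)=1$) yield the expansion of $U$.

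Finally, since $12\,a_4=1/4$, the expansion reads $U_{YY}=1-bY^2/4+O(s^{-13/8}\,Y^{(5+a)/2}(1+Y))$. Choosing $s_0$ large enough (depending on $C_1$, $c$, $a$) ensures that the remainder is bounded by $bY^2/4$ uniformly on $[0,cs^{1/4}]$, yielding $U_{YY}\ge 1-bY^2/2$. The main obstacle is really this book-keeping of exponents: verifying that all the neglected polynomial corrections fit under the stated remainder on the whole interval $[0,cs^{1/4}]$, which is where the positivity of $a$ (and the smallness of $c$ in the borderline case) is used.
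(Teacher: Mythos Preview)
Your proposal is correct and follows essentially the same approach as the paper: bound the weight from below on $[0,cs^{1/4}]$, apply Cauchy--Schwarz to pass from $E_1$ to pointwise control of $\p_Y\cLU V$ and $\cLU V$, recover $V_{YY}$ via $V_{YY}=L_U(\cLU V)$ with the a priori bounds on $U,U_Y$, and finally compare exponents to absorb the higher-order terms of $\Uapp$ into the remainder. Your treatment is in fact slightly more explicit than the paper's (e.g.\ checking that $\chi\equiv1$ and $b^{-1}\Theta(\sqrt b\,Y)=Y^2/2$ on this zone); the only unnecessary remark is the parenthetical about smallness of $c$, since the positivity of $a$ alone suffices for the exponent comparison, with $s_0$ depending on $c$.
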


\begin{proof}
We recall that
$$
E_1(s)=\int_0^\infty \left( \p_Y^2 \cLU V\right)^2 w_1,
$$
with $w_1= Y^{-a} (1+ s^{-\beta_1} Y)^{-m_1}$. Therefore, choosing $s_0$ sufficiently large (depending on $\beta_1, m_1$ and $c$), we have, for all $Y\in [0, c s^{1/4}]$,
$$
w_1\geq Y^{-a} \left(1+ c s_0^{\frac{1}{4} - \beta_1}\right)^{-m_1} \geq \frac{1}{2} Y^{-a}.
$$
Using a simple Cauchy-Schwartz inequality, it follows that
$$
|\p_Y\cLU V(s,Y)| = \left| \int_0^Y \p_Y^2 \cLU V \right| \leq \sqrt{2} Y^\frac{1+a}{2} E_1(s)^{1/2}\quad \forall s\in [s_0, s_1],\ \forall Y\in [0, c s^{1/4}].
$$
Integrating twice, we obtain estimates on $\cLU V$ and $\int_0^Y \cLU V$.

Now
$$
\p_Y^2 V= L_U \cLU V = U \cLU V - U_Y \int_0^Y \cLU V.
$$
Using \eqref{est:U-prelim}, we infer that
$$
| \p_Y^2 V | \leq \bar C (1+Y) Y^\frac{5+a}{2} E_1^{1/2},
$$
where $\bar C$ is an explicit and computable constant. Therefore
$$
|V| \leq \bar C C_1^{1/2} (1+Y )Y^\frac{9+a}{2} s^{-13/8}  \quad \forall Y \in [0, cs^{1/4}].
$$
Writing $U=\Uapp + V$ and recalling the definition of $\Uapp$, we notice that $\Uapp= Y + Y^2/2 -a_4 b Y^4 +  O(s^{-2 } Y^7+ s^{-3} (Y^{10}+Y^{11} ) )$ for $Y\leq c s^{1/4}$, and we obtain the estimate announced in the statement of the Lemma. Notice that the remainder terms in $\Uapp$ (namely $O(s^{-2 } Y^7+ s^{-3} (Y^{10}+Y^{11} ) )$) are smaller than $(1+Y )Y^\frac{9+a}{2} s^{-13/8}$ in the region $Y\leq c s^{1/4}$.
\end{proof}

Let us now deduce from the above Lemma an asymptotic expansion of $W$ for $1\ll \psi \lesssim s^{3/4}$. Indeed, a precise pointwise estimate on $W$ is necessary in order to build sub and super solutions.

By definition of $W$ and $\psi$, we have, in terms of $Y$,
\begin{eqnarray*}
W&=& Y^2+ Y^3 + \frac{Y^4}{4}-a_4  b Y^6+O(s^{-1} Y^5 + s^{-13/8} Y^\frac{15+a}{2} )\\&=& \frac{Y^4}{4}\left(1+4  Y^{-1} + 4  Y^{-2} - 4 a_4 b Y^2  +O(s^{-1} Y + s^{-13/8} Y^\frac{7+a}{2} )\right)\\
\psi&=&\frac{1}{6}Y^3\left(1+ 3  Y^{-1}-\frac{6}{5} a_4 b Y^2  +O( s^{-13/8} Y^\frac{7+a}{2} )\right).
\end{eqnarray*}
Above, the notation $A=O(B)$ means the following: there exists a constant $C$, depending only on $C_1$, and there exists $S_0>0$ depending on $c, C_1, M_2, \beta_1, m_1$ such that for all $s\geq s_0\geq S_0$, for all $Y\in [1, c s^{1/4}]$, $|A| \leq C B$.

It follows that
\begin{eqnarray*}
W(s,\psi)&=&\frac{(6\psi)^{4/3}}{4}\left(1+4  Y^{-1} + 4  Y^{-2}- 4 a_4 \tb Y^2 +O(s^{-1} Y + s^{-13/8} Y^\frac{7+a}{2} )\right)\\&&\times\left(1+ 3  Y^{-1}-\frac{6}{5} a_4 b Y^2  +O(s^{-13/8} Y^\frac{7+a}{2} )\right)^{-4/3}.
\end{eqnarray*}
Performing an asymptotic expansion of the right-hand side for $1\ll Y \lesssim s^{1/4}$, we find that
$$
W(s,\psi)=\frac{(6\psi)^{4/3}}{4}\left(1+2 Y^{-2} -\frac{12}5 a_4 b Y^2+O(s^{-2} Y^5 + s^{-13/8} Y^\frac{5+a}{2} + Y^{-3})\right).
$$
Since $Y\sim (6\psi)^{1/3}$, we obtain eventually, for $1\ll \psi \lesssim s^{3/4}$,
\be\label{asympt-W}
W(s,\psi)=\frac{(6\psi)^{4/3}}{4}\left(1+2 (6\psi)^{-2/3} -\frac{12}5 a_4 b (6\psi)^{2/3} +O(s^{-1} \psi^{1/3} + s^{-13/8} \psi^\frac{7+a}{6} + \psi^{-1})\right).
\ee

We are now ready to construct a sub-solution for $W$ beyond $c s^{1/4}$, for some constant $c>0$ large but fixed, that will be determined later on. To that end, we introduce a\textit{ regularized modulation rate $\tb$}, that has the same asymptotic behavior as $b$, but whose role is to remove some time oscillations. More precisely, define $\tb$ by the ODE 
\be\label{def:tb}
\tb_s + b \tb=0,\quad \tb_{|s=s_0}=\frac{1}{s_0}.
\ee
We then have the following result (see Appendix B for a proof):
\begin{lemma}
	Assume that there exist constants $J>0$,  and $\eps>0$ such  that for all  $s\in [s_0, s_1]$
such  that for all  $s\in [s_0, s_1]$ 
\be\label{hyp:b-simple}
\ba
\int_{s_0}^{s_1}\left| b_s + b^2\right|^2 s^{13/4} ds \leq J,\\
\frac{1-\eps}{s}\leq b(s)\leq \frac{1+\eps}{s}.\ea
\ee

Then if $s_0$ is large enough (depending on $K$ and $\eps$), for all $s\geq s_0$,
$$
 \frac{1-2\eps}{s}\leq \tb(s)  \leq \frac{1+2\eps}{s}.
$$
\label{lem:tb}
\end{lemma}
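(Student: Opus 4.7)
The ODE for $\tb$ is linear and homogeneous, so it can be solved explicitly:
\[
\tb(s) = \frac{1}{s_0} \exp\left(-\int_{s_0}^s b(\sigma)\, d\sigma\right).
\]
The whole lemma thus reduces to controlling $\int_{s_0}^s b(\sigma)\,d\sigma$ with enough precision that it equals $\ln(s/s_0)$ up to a small error. Write $\int_{s_0}^s b = \ln(s/s_0) + R(s)$; the goal is to show $|R(s)| \leq c\eps$ uniformly in $s \geq s_0$ for a constant $c$ such that $e^{c\eps} \leq 1+2\eps$.

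This is exactly the information supplied by Lemma \ref{lem:b} from the appendix, applied with $\gamma = 13/4$, i.e. $\eta = 1/8$. It gives the decomposition $b(\sigma) = 1/\sigma + r(\sigma)$ together with a pointwise estimate
\[
|r(\sigma)| \leq \eps \frac{1+\eps}{1-\eps} \frac{s_0}{\sigma^2} + J^{1/2} \frac{1+\eps}{(1-\eps)^2} \frac{1}{\sigma^{9/8}}.
\]
Since $R(s) = \int_{s_0}^s r(\sigma)\,d\sigma$, and both contributions on the right-hand side are integrable at infinity, I obtain
\[
|R(s)| \leq \eps \frac{1+\eps}{1-\eps} + 8 J^{1/2} \frac{1+\eps}{(1-\eps)^2}\, s_0^{-1/8}.
\]

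Substituting back gives $\tb(s) = e^{-R(s)}/s$. The anticipated obstacle is that the first term in the bound on $|R|$ does not vanish as $s_0 \to \infty$. However, for $\eps$ small, the inequality $e^{\eps(1+O(\eps))}\leq 1+2\eps$ holds with room to spare. Concretely, taking $s_0$ large enough (depending on $J$ and $\eps$) that the $J^{1/2} s_0^{-1/8}$ term is $\leq \eps^2$, one gets $|R(s)|\leq \eps + c\eps^2$, and therefore $e^{\pm R(s)} \in [1 - 2\eps, 1 + 2\eps]$ for $\eps$ small. This is precisely the conclusion $(1-2\eps)/s \leq \tb(s) \leq (1+2\eps)/s$, and the whole argument is essentially an invocation of the appendix Lemma \ref{lem:b} followed by explicit integration.
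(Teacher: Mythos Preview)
Your proof is correct and follows essentially the same route as the paper's own argument (given as Corollary~\ref{cor:tb} in Appendix~B): solve the linear ODE for $\tb$ explicitly, invoke Lemma~\ref{lem:b} with $\gamma=13/4$ to bound $b-1/s$, integrate to control $R(s)=\int_{s_0}^s b - \ln(s/s_0)$, and exponentiate. The only cosmetic difference is that the paper packages the bound on $|R|$ into a single constant $C_{\eps,\gamma,s_0}$ and states the largeness condition on $s_0$ as $s_0\geq \bar C(J\eps^{-2})^{1/(\gamma-3)}$, whereas you phrase it as making the $J^{1/2}s_0^{-1/8}$ term $\leq \eps^2$; these are equivalent.
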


We then define our subsolution and super-solutions in the following way:
\begin{lemma}
Assume that:
\begin{itemize}
	\item There exist constants $J>0$, $\eta\in (0,1)$ and $\eps>0$ such  that \eqref{hyp:b-simple} is satisfied;
	\item $U$ satisfies the assumptions of Lemma \ref{lem:U_YY-prelim};
	\item There exists a constant $M_0$ such that 
	\be\label{hyp:UYY-1}
	U_{YY}(s_0)-1 \geq -M_0 s_0^{-1} Y^2\quad \forall Y\geq 0\ee
	and such that $\lim_{Y\to \infty}U(s_0, Y)\leq M_0 s_0$;
	\item $E_1(s)\leq C_1 s^{-13/4}$ for all $s\in [s_0, s_1]$.
	
\end{itemize}

Then there exist a universal constant $\bar C$ and a  constant $A_0$, depending only on $M_0$, such that the following properties are satisfied:

$\bullet$ Sub-solution: For $A_->0$ define\footnote{The constant $C_+$ is such that $\uW(s,C_+ \tb^{-\frac{5}{4} })=0$. It can be determined explicitely, depending on $A_-$; however its precise value is irrelevant.}
$$
\uW(s,\psi):=\frac{(6\psi)^{4/3}}{4} - A_- \psi^{7/3} \tb^{\frac{5}{4}} \quad \forall s\in [s_0, s_1],\ \forall \psi\in \left[C_- \tb^{-3/4}, C_+ \tb^{-\frac{5}{4} }\right].
$$

If $A_-\geq A_0$, $C_-\geq \bar C$ and if $s_0$ is large enough, then
$$
\uW(s,\psi)\leq W(s,\psi)\quad \forall s\in [s_0, s_1],\ \psi \geq C_- \tb^{-3/4}.
$$

$\bullet$ Super-solution: For $A_+>0$, define
$$
\bar W(s,\psi):=\frac{(6\psi)^{4/3}}{4} + A_+ \psi^{10/3} \tb^{2} \quad \forall s\in [s_0, s_1],\ \forall \psi\geq C_- \tb^{-3/4}.
$$
If $A_-\geq A_0$, $C_-\geq \bar C$ and if $s_0$ is large enough, then
$$
 W(s,\psi)\leq \bar W(s,\psi)\quad \forall s\in [s_0, s_1],\ \psi \geq C_- \tb^{-3/4}.
$$

\label{lem:sub-U}
\end{lemma}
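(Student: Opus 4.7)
The plan is to apply the comparison principle for the parabolic equation \eqref{eq:vM} (Lemma 2.1.4 of \cite{OS}): I will verify that $\uW$ is a sub-solution and $\bar W$ a super-solution in the interior of the half-strip $\{s\in[s_0,s_1],\ \psi\ge C_-\tb^{-3/4}\}$, and then check the requisite inequalities on the parabolic boundary $\{s=s_0\}\cup\{\psi=C_-\tb^{-3/4}\}$. Denote by $\mathcal L[W]:=\p_s W-2bW+\tfrac{3b}{2}\psi\p_\psi W-\sqrt{W}\p_\psi^2 W+2$ the operator associated to \eqref{eq:vM}.

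The first observation is that the explicit profile $W_0(\psi):=(6\psi)^{4/3}/4$, which corresponds to the stationary solution $U=Y^2/2$ in the original variables, satisfies $\mathcal L[W_0]=0$: the relation $\p_\psi W_0=\tfrac{4}{3\psi}W_0$ makes the transport part vanish, and a direct computation yields $\sqrt{W_0}\p_\psi^2 W_0=2$. Writing $\uW=W_0+w^-$ with $w^-=-A_-\psi^{7/3}\tb^{5/4}$ and using the defining ODE $\tb_s=-b\tb$, the transport portion reduces to
\[
\p_s w^- -2bw^- + \tfrac{3b}{2}\psi\p_\psi w^- = \bigl(-\tfrac{5}{4}-2+\tfrac{7}{2}\bigr)b w^- = \tfrac{b}{4}w^- <0.
\]
The complementary diffusion $-(\sqrt{W_0+w^-}-\sqrt{W_0})\p_\psi^2 W_0 -\sqrt{W_0+w^-}\p_\psi^2 w^-$ has the opposite sign (both summands inherit the sign of $-w^->0$) and is of size $A_-\psi\tb^{5/4}$, whereas the good transport contribution has size $bA_-\psi^{7/3}\tb^{5/4}$; their ratio is $b\psi^{4/3}\sim \psi^{4/3}/s$, which exceeds a large universal constant as soon as $\psi \ge C_-\tb^{-3/4}$ with $C_-\ge\bar C$. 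The analogous computation for $\bar W=W_0+A_+\psi^{10/3}\tb^2$ produces the transport contribution $bw^+>0$ and the same threshold $\psi\gtrsim s^{3/4}$ on the ratio to the (now unfavourable) diffusion corrections of order $A_+\psi^2\tb^2$. On the portion of the domain where $|w^-|$ becomes comparable to $W_0$ (near $\psi\sim C_+\tb^{-5/4}$) the above linearisation must be replaced by a direct expansion of $\sqrt{W_0+w^-}$; but beyond $C_+\tb^{-5/4}$ the function $\uW$ is $\le 0$ and is trivially dominated by $W\ge 0$, so only the range $\psi\le C_+\tb^{-5/4}$ needs genuine treatment.

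It remains to check the boundary and initial inequalities. At $\psi=C_-\tb^{-3/4}$, which lies in the region $1\ll \psi \lesssim s^{3/4}$ where Lemma \ref{lem:approx-E1} applies thanks to $E_1\le C_1 s^{-13/4}$, the expansion \eqref{asympt-W} gives $W=W_0+O(\tb^{-1/2})$, while $A_-\psi^{7/3}\tb^{5/4}\big|_{\psi=C_-\tb^{-3/4}}=A_-C_-^{7/3}\tb^{-1/2}$ is of the same order; picking $A_-\ge A_0$ with $A_0$ depending on $M_0$ and the universal constants in \eqref{asympt-W} then yields $\uW\le W$ on this boundary, and the symmetric choice works for $\bar W$. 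At $s=s_0$ (where $\tb=1/s_0$) the same argument applies: assumption \eqref{hyp:UYY-1}, together with $\lim_{Y\to\infty}U(s_0,Y)\le M_0 s_0$, yields after two $Y$-integrations and the change of variable $\psi=\int_0^Y U$ a pointwise lower bound on $W(s_0,\cdot)$ compatible with $\uW(s_0,\cdot)$ for the same choice of $A_0$. No condition at $\psi=+\infty$ is required, since $\uW$ vanishes at $C_+\tb^{-5/4}$ and $\bar W\to+\infty$ while $W\le U_\infty(s)^2$ remains bounded. The main obstacle I anticipate is the careful verification of the sub-solution inequality in the transition zone $\psi\sim C_+\tb^{-5/4}$ where the expansion of $\sqrt{W_0+w^-}$ is no longer perturbative, together with the explicit tracking of $A_0$ in terms of $M_0$, $\bar C$ and the universal constants appearing in \eqref{asympt-W}.
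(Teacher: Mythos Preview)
Your plan is correct and essentially identical to the paper's proof: both apply the comparison principle to \eqref{eq:vM} on the half-strip $\{\psi\ge C_-\tb^{-3/4}\}$, compute the transport contribution of the correction $w^\pm$ exactly (your coefficients $\tfrac14$ and $1$ match the paper's $\tfrac{3(k_\pm-2)}{4}$ with $k_-=7/3$, $k_+=10/3$), and then show it dominates the diffusion contribution once $\psi\gtrsim s^{3/4}$; the parabolic boundary is handled via \eqref{asympt-W} on $\{\psi=C_-\tb^{-3/4}\}$ and via the initial assumptions on $\{s=s_0\}$.

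One point to flag: for the super-solution in the region where $w^+$ dominates $W_0$ (i.e.\ $\psi\gtrsim\tb^{-1}$), the diffusion contribution is no longer $O(A_+\psi^2\tb^2)$ as you wrote but rather $O(A_+^{3/2}\psi^3\tb^3)$, coming from $\sqrt{w^+}\,\p_\psi^2 w^+$. The paper handles this by writing $\sqrt{W_+}\le\tfrac12(6\psi)^{2/3}+\sqrt{A_+}\psi^{5/3}\tb$ and checking directly that the transport term $A_+b\psi^{10/3}\tb^2$ still wins (ratio $\sim A_+^{-1/2}\psi^{1/3}$). This is the super-solution analogue of the ``main obstacle'' you correctly anticipated for the sub-solution near $\psi\sim C_+\tb^{-5/4}$; in both cases the remedy is to abandon the perturbative expansion of $\sqrt{W_0+w^\pm}$ and use a crude pointwise bound instead.
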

The proof of Lemma \ref{lem:sub-U} is postponed to the Appendix.

\subsection{Refined lower bound on \texorpdfstring{$\p_{YY} U$}{the second derivative of U}}

Lemma \ref{lem:sub-U} allowed us to extend the lower bound on $W$ coming from the estimation of $E_1$ beyond $\psi\simeq s^{3/4}$. Thanks to this extension, we now construct a sub-solution for $U_{YY}-1$ (or rather, for the function $F$ introduced in Lemma \ref{lem:U_YY-prelim}). Eventually, the lower bound on $U_{YY}-1$ will yield a finer lower bound on $U$.

\begin{lemma}\label{lem:U_YY-final}
Assume that the hypotheses of Lemma \ref{lem:sub-U} are satisfied. 
Then, setting $M_2:=\max(M_0, \bar M)$ for some universal constant $\bar M$,  there exists a constant $ c>0$ such that
$$
U_{YY}-1\geq -M_2b Y^2\quad \forall Y\in [0, c s^{1/3}].
$$

\end{lemma}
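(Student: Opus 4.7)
The plan is to split $\{0\le Y\le c s^{1/3}\}$ into an inner piece $\{Y\le c_1 s^{1/4}\}$, on which Lemma~\ref{lem:approx-E1} directly yields the sharper bound $U_{YY}-1\ge -\tfrac12 b Y^2$ (so any $M_2\ge 1/2$ suffices there), and an outer piece $\{c_1 s^{1/4}\le Y\le c s^{1/3}\}$, which I would handle by a parabolic maximum principle in the von Mises formulation. More precisely, I would work with the function $F=2(U_{YY}-1)=\sqrt{W}\p_\psi^2 W-2$, which satisfies the scalar parabolic equation \eqref{eq:F} derived in the proof of Lemma~\ref{lem:U_YY-prelim}. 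The outer piece corresponds, via $\psi\sim Y^3/6$, to $\psi\in[C_- s^{3/4},C_+ s]$ in von Mises coordinates.

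\medskip
\textbf{Sub-solution and leading cancellation.} The natural ansatz, matching the target $-M_2 b Y^2\sim -M_2\tb (6\psi)^{2/3}$, is
\[
\underline{F}(s,\psi):=-M_2\,\tb(s)\,(6\psi)^{2/3}+(\text{subleading correction}).
\]
Using $\tb_s=-b\tb$ (Lemma~\ref{lem:tb}), the contribution of $\p_s\underline{F}$ cancels exactly with the transport term $\tfrac{3b}{2}\psi\p_\psi\underline{F}$, so the sub-solution inequality $\mathcal{L}\underline{F}\le 0$ reduces to a comparison between $-\underline{F}/W$ and $-\sqrt{W}\p_\psi^2\underline{F}$. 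With $W\sim(6\psi)^{4/3}/4$, the arithmetic identity $\tfrac{4}{6^{4/3}}=\tfrac{6^{2/3}}{9}$ forces an exact cancellation of the principal $\psi^{-2/3}$ terms. The sign of $\mathcal{L}\underline{F}$ is therefore dictated by the subleading corrections of the expansion \eqref{asympt-W}: the term $+2(6\psi)^{-2/3}$ contributes favorably and controls the regime $\psi\lesssim s^{3/4}$, while $-\tfrac{12}{5}a_4 b(6\psi)^{2/3}$ contributes unfavorably and dominates for $\psi\gtrsim s^{3/4}$. To correct this, I would enrich the ansatz with a term of the form $-D\,\tb\,b\,(6\psi)^{4/3}$ with $D$ a universal constant, tuned so that its diffusion absorbs the unfavorable contribution. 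All pointwise information on $W$ needed for $\psi\le C_+ s$ is provided by Lemma~\ref{lem:sub-U} together with \eqref{asympt-W} valid up to $\psi\sim s^{3/4}$.

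\medskip
\textbf{Boundary matching, comparison, main obstacle.} On the parabolic boundary of $\{s_0\le s\le s_1,\ C_- s^{3/4}\le \psi\le C_+ s\}$, one checks $\underline{F}\le F$ as follows: at $s=s_0$ the assumption $U_{YY}(s_0)-1\ge -M_0 s_0^{-1}Y^2$, together with $\tb(s_0)=s_0^{-1}$ and $Y^2\le 2(6\psi)^{2/3}$ in the relevant range, gives the ordering provided $M_2\ge 2M_0$; on the inner face $\psi=C_- s^{3/4}$, the Sobolev estimate of Lemma~\ref{lem:approx-E1} dominates $\underline{F}$ as soon as $M_2$ exceeds a universal threshold $\bar M$; and on the outer face $\psi=C_+ s$ the uniform bound of Lemma~\ref{lem:U_YY-prelim} combined with the smallness of $\underline{F}$ (of order $\tb s^{2/3}\sim s^{-1/3}$) closes the matching, or alternatively one exploits the outgoing direction of the drift $\tfrac{3b}{2}\psi\p_\psi$ to dispense with this face entirely. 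The nonlinear term $-\tfrac{1}{2W}F(F+2)$ in \eqref{eq:F}, linearized around the uniformly bounded pair $(F,\underline{F})$, produces an absorbable zero-order coefficient in the equation for $F-\underline{F}$, and the parabolic comparison principle then yields $F\ge\underline{F}$ throughout the outer piece. Setting $M_2:=\max(\bar M,2M_0)$ and translating back to $(s,Y)$ variables concludes the proof. The \emph{main obstacle} is the exact leading-order cancellation $\tfrac{4}{6^{4/3}}=\tfrac{6^{2/3}}{9}$, which reflects the scaling invariance of the stationary profile $Y^2/2$ and forces the sub-solution to be built with two distinct $\psi$-scales, carefully tuned to the actual asymptotics of $W$; without the correction term $-D\,\tb\,b\,(6\psi)^{4/3}$ the naive ansatz would fail precisely in the regime $\psi\gtrsim s^{3/4}$ that we need to reach.
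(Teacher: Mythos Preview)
Your overall architecture is correct and matches the paper: split into an inner region handled by Lemma~\ref{lem:approx-E1}, pass to von Mises variables, and construct a sub-solution for $F=\sqrt{W}\,\partial_\psi^2 W-2$ in the outer region. You also correctly identify the exact leading-order cancellation between the reaction term $-\underline F/W$ and the diffusion $-\sqrt{W}\,\partial_\psi^2\underline F$ for the ansatz $-M_2\tb(6\psi)^{2/3}$. However, two of your steps do not work as stated.

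\textbf{The outer boundary cannot be matched.} On the face $\psi=C_+s$ you need $\underline F\le F$. The uniform bound of Lemma~\ref{lem:U_YY-prelim} only gives $F\ge -2\max(M_0,1)$, while your $\underline F$ there is of order $-\tb\,s^{2/3}\sim -s^{-1/3}$, i.e.\ very close to zero. A sub-solution close to zero is a \emph{stronger} requirement on $F$, not a weaker one, so this does not close. The ``outgoing drift'' alternative is also not available: equation~\eqref{eq:F} is genuinely parabolic (the coefficient $\sqrt{W}\sim s^{2/3}$ on this face is not small), and the parabolic maximum principle requires a comparison on the entire lateral boundary. The paper resolves this by working on the half-line $\psi\ge C_-\tb^{-3/4}$ and building a second piece of $\underline F$ for $\psi\ge c\,\tb^{-5/4}$ of the form $-f(s,\psi\tb^{5/4})$ that tends to a constant in $[-4,-2]$; in that regime $\underline F(\underline F+2)\ge 0$, $H''\le 0$ and $H'\ge 0$ make every term in \eqref{eq:F} favorable regardless of $W$, and the condition $\underline F\le F\to -2$ at $\psi=\infty$ is automatic.

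\textbf{The proposed correction term has the wrong effect.} For $\underline F_1=-D\,\tb\,b\,(6\psi)^{4/3}$ with $D>0$ one computes $\partial_\psi^2\underline F_1=-16D\tb b(6\psi)^{-2/3}<0$, hence $-\sqrt{W}\,\partial_\psi^2\underline F_1=+8D\tb b>0$, and the reaction contribution $-\underline F_1/W=+4D\tb b>0$ as well; so this correction does not absorb the unfavorable $+C a_4 M_2 b\tb$ term, it adds to it. Moreover, because the coefficient involves $b$ (not just $\tb$), the transport part equals $-D\tb(b_s+b^2)(6\psi)^{4/3}$, which is not pointwise controlled under the hypotheses (only $\int s^{13/4}|b_s+b^2|^2<\infty$ is assumed). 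The paper's correction is instead $+\alpha\tb\,\psi^{1/3}$ (so $\underline F=-\alpha\tb(\psi^{2/3}-\psi^{1/3})$), built entirely from $\tb$; since $\tb_s=-b\tb$, its transport contribution is exactly $-\tfrac{1}{2}\alpha b\tb\,\psi^{1/3}$, a favorable term that dominates the residual $\tb\alpha\sqrt{W}\bigl(g/W^{3/2}+g''\bigr)=O(\tb^{9/4}\psi^{1/3})$ arising from the sub-solution bound of Lemma~\ref{lem:sub-U}. The mechanism is thus a favorable \emph{transport} of a lower-order-in-$\psi$ correction, not a favorable diffusion of a higher-order one.
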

The proof is postponed to the Appendix.

Putting together the results of this section, we obtain Proposition \ref{prop:max-princ}.

\section{Main tools for the energy estimates}
\label{sec:proof-statements-algebraic}

This section is devoted to the derivation of several independent intermediate results which play an important role in the proof of energy estimates.  We first prove the result on the decomposition of the diffusion term and on the remainder term, namely Lemma \ref{lem:reste}. We then turn to the commutator Lemma \ref{lem:commutator}. We study the structure of the diffusion term (see Lemma \ref{lem:diff1}). Eventually,  we state some estimates allowing to perform a systematic treatment of some remainder terms.

For the sake of brevity, we   adopt the following notation, which we will use extensively in the next two sections: for any $\alpha>0$, and for quantities $A$ and $B$ that depend on $s$, we say that $A=O_\alpha(B)$ if there exists a constant $C$ and a function $Q=Q(s,Y)$ with at most polynomial growth in $s$ and $Y$, such that
\be\label{O-alpha}\ba
|A(s,Y)|\leq C | B(s,Y)| \quad\text{for } Y \leq s^{\alpha}, \\
|A(s,Y)|\leq Q(s,Y)\quad\text{for } Y \geq s^{\alpha}.\ea
\ee
This notation will be useful because we work with weights of the form $w(s,Y)= Y^{-a} (1 + s^{-\beta} Y)^{-m}$, where $m$ is an arbitrarily large integer. Therefore, the contribution of any function having at most polynomial growth in $s$ and $Y$ can be made as small as desired on the set $Y\geq s^{\alpha}$, in the following sense: if $\alpha>\beta$, for any integers $n,P\in \N$, if $m$ is large enough (depending on $\alpha, \beta, n$ and $P$),
$$
\int_{s^\alpha}^\infty (s^n + Y^n) w(s,Y)\:dY \leq s^{-P}.
$$
In other words, when we estimate functions in $L^2(w)$, their behavior for $Y\gg s^{\beta}$ is unimportant, as long as these functions are polynomially bounded (with an explicit and computable bound).

\subsection{Proof of Lemma \ref{lem:reste}}

Since $\Uapp$ is essentially a polynomial in $b$ and $Y$ (at least in the zone $Y\lesssim s^{2/7}$), the computation of the transport term $\p_s \Uapp - b \Uapp + bY/2 \p_Y \Uapp$ is straightforward. Difficulties stem from $\LU( \p_{YY} \Uapp -1)$, which is also present in the diffusion term $\cD$. Hence we start
 with a decomposition of the diffusion term 
$$
\cD:= \LU (\p_{YY} U-1),
$$
which will be useful in other occurrences. Writing $U= \Uapp + V$, we decompose $\cD$ into four parts: 
\begin{itemize}
\item the biggest term, which we compute explicitly, and which is equal to $-b/2 Y$. This term comes from $\Uapp$.
\item a second order term $\cLU V$;
\item a first order term $ \frac{b}{2}\LU L_V Y $;
\item additional error terms coming from $\Uapp$, which we will treat as perturbations in all occurrences.
\end{itemize}
%

Our precise result concerning the diffusion term is the following:

\begin{lemma}
We recall that $
\cD= \LU (\p_{YY} U-1).
$ Then
\begin{eqnarray*}
\cD&=& - \frac{b}{2} Y + \cLU V + \frac{b}{2}\LU L_{V} Y\\
&&+\LU \left(\left(\left(\frac{5}{4} a_7-90 a_{10} \right) b^3 Y^8 - 110 a_{11} b^3 Y^9 + 2 a_{10} b^4 Y^{11} + \frac{9}{4} a_{11} b^4 Y^{12}\right) \chi \left(\frac{Y}{s^{2/7}}\right)\right)\\
&&+\LU \left( P(s,Y) (1-\bar \chi) \left(\frac{Y}{s^{2/7}}\right)\right)
 \\&=& - \frac{b}{2} Y +\cD_{NL} + \tilde{\cD}
 \end{eqnarray*}
where $\chi, \bar \chi\in \mathcal C^\infty_0(\R)$ are cut-off functions such that $\chi, \bar \chi \equiv 1$ in a neighbourhood of zero,  and $P$ is a function that has at most polynomial growth in $s$ and $Y$.

We have set
\begin{eqnarray*}
\cD_{NL} &:=& \cLU V + \frac{b}{2}\LU L_V Y,\\
\tilde{\cD}&:=&\LU \left(\left(\left(\frac{5}{4} a_7-90 a_{10} \right) b^3 Y^8 - 110 a_{11} b^3 Y^9 + 2 a_{10} b^4 Y^{11} + \frac{9}{4} a_{11} b^4 Y^{12}\right) \chi \left(\frac{Y}{s^{2/7}}\right)\right)  \\
&+& \LU \left( P(s,Y) (1-\bar \chi) \left(\frac{Y}{s^{2/7}}\right)\right).
\end{eqnarray*}
\label{lem:cR}
\end{lemma}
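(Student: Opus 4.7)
The starting point is the split $U = \Uapp + V$, which immediately gives $\p_{YY}U - 1 = (\p_{YY}\Uapp - 1) + \p_{YY} V$ and hence
\[
\cD = \LU(\p_{YY}\Uapp - 1) + \cLU V.
\]
Since $\cLU V$ already appears in the target decomposition, the whole task reduces to producing $-\tfrac{b}{2}Y$ out of $\LU(\p_{YY}\Uapp - 1)$. The key tautology is $\LU L_U Y = Y$, verified directly: from $L_U Y = UY - U_Y \tfrac{Y^2}{2}$ and $\LU f = (U \int_0^Y f/U^2)_Y$ one checks that $\int_0^Y L_U Y/U^2 = Y^2/(2U)$ (integrable at $0$ because $L_U Y \sim Y^2/2$ there). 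Combined with the linearity $L_U Y = L_{\Uapp} Y + L_V Y$ in the subscript, this yields
\[
\LU L_{\Uapp} Y = Y - \LU L_V Y.
\]
Consequently, the plan is to write $\p_{YY}\Uapp - 1 = -\tfrac{b}{2} L_{\Uapp} Y + (\text{explicit remainder})$ in the zone where the cutoffs defining $\Uapp$ are trivial, apply $\LU$, and substitute the identity above to read off the $-\tfrac{b}{2}Y$ and $\tfrac{b}{2}\LU L_V Y$ contributions.

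The central algebraic calculation is performed on the zone $\{Y \leq c s^{2/7}\}$, where $\chi(Y/s^{2/7}) = 1$ and $\sqrt{b}Y \leq c_0$, so that $\tfrac{1}{b}\Theta(\sqrt{b}Y) = Y^2/2$. There $\Uapp$ reduces to the explicit polynomial of paragraph 2.2, and direct differentiation yields
\[
\p_{YY}\Uapp - 1 = -12 a_4 b Y^2 - 42 a_7 b^2 Y^5 - 90 a_{10} b^3 Y^8 - 110 a_{11} b^3 Y^9,
\]
while expanding $L_{\Uapp} Y = \Uapp \cdot Y - \p_Y \Uapp \cdot \tfrac{Y^2}{2}$ gives
\[
L_{\Uapp} Y = \tfrac{Y^2}{2} + a_4 b Y^5 + \tfrac{5}{2} a_7 b^2 Y^8 + 4 a_{10} b^3 Y^{11} + \tfrac{9}{2} a_{11} b^3 Y^{12}.
\]
The identities $a_4 = 1/48$ and $a_7 = a_4/84$ force the $Y^2$ and $Y^5$ coefficients of $-\tfrac{b}{2} L_{\Uapp} Y$ to cancel exactly those of $\p_{YY}\Uapp - 1$, leaving
\[
(\p_{YY}\Uapp - 1) + \tfrac{b}{2} L_{\Uapp} Y = \bigl(\tfrac{5}{4} a_7 - 90 a_{10}\bigr) b^3 Y^8 - 110 a_{11} b^3 Y^9 + 2 a_{10} b^4 Y^{11} + \tfrac{9}{4} a_{11} b^4 Y^{12},
\]
which is exactly the explicit polynomial appearing inside the first $\LU$-bracket in $\tilde\cD$.

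The last step is to promote this local identity to all $Y \geq 0$. Multiplying the explicit residual polynomial by $\chi(Y/s^{2/7})$ and collecting the discrepancy into a single function $G(s,Y)$, one writes
\[
(\p_{YY}\Uapp - 1) + \tfrac{b}{2} L_{\Uapp} Y = \chi(Y/s^{2/7}) \cdot (\text{residual polynomial}) + G(s,Y).
\]
By construction $G$ vanishes on $\{Y \leq c s^{2/7}\}$. Its nonzero contributions come from three sources: derivatives of $\chi(Y/s^{2/7})$ hitting the polynomial piece of $\Uapp$ inside $\p_{YY}\Uapp$; the deviation $\Theta''(\sqrt{b}Y) - 1$, nonzero only for $\sqrt{b}Y > c_0$ (hence for $Y \gtrsim s^{1/2} \gg s^{2/7}$); and the portion of $L_{\Uapp} Y$ produced by the $\tfrac{1}{b}\Theta(\sqrt{b}Y)$ piece of $\Uapp$ once $\chi(Y/s^{2/7}) \neq 1$. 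Each is smooth, supported in $\{Y \geq c s^{2/7}\}$, and has at most polynomial growth in $(s, Y)$. Choosing $\bar\chi \in \mathcal C^\infty_0(\R)$ with $\bar\chi \equiv 1$ on a neighbourhood of $0$ strictly contained in $\{|\xi| < c\}$ makes $1 - \bar\chi(Y/s^{2/7}) \equiv 1$ on the support of $G$, so that $G = P(s,Y)(1 - \bar\chi(Y/s^{2/7}))$ with $P := G$ of polynomial growth. Applying $\LU$ to the resulting global identity and substituting $\LU L_{\Uapp} Y = Y - \LU L_V Y$ gives the stated decomposition.

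The proof is entirely algebraic; the only real obstacle is bookkeeping. The coefficient matching in the second paragraph is secured by the specific values of the $a_i$ built into the construction of $\Uapp$, and the tail analysis in the third paragraph requires care only in selecting the auxiliary cutoff $\bar\chi$ so that every discrepancy term is captured by the factor $1 - \bar\chi(Y/s^{2/7})$.
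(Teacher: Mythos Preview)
Your argument is correct and follows essentially the same approach as the paper: split $\cD=\LU(\p_{YY}\Uapp-1)+\cLU V$, compute $L_{\Uapp}Y$ explicitly in the region where the cutoffs are trivial, use $\LU L_U Y=Y$ (equivalently, the paper multiplies the identity for $L_U(Y)$ by $b/2$ and applies $\LU$) to generate the $-\tfrac{b}{2}Y$ and $\tfrac{b}{2}\LU L_V Y$ terms, and sweep all contributions supported in $\{Y\gtrsim s^{2/7}\}$ into $P(s,Y)(1-\bar\chi)(Y/s^{2/7})$. Your presentation is slightly more conceptual in that you isolate the tautology $\LU L_U Y=Y$ upfront, but the algebra and the tail handling are identical to the paper's.
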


\begin{remark}
The decomposition of Lemma \ref{lem:cR} will be used in two different occurrences: 
\begin{itemize}
\item First, we will use it to decompose the total diffusion term $\cD$ into a dissipation operator acting on the error term $V$, namely $\cLU V$, 
 and remainder terms, namely $- \frac{b}{2} Y $, $\frac{b}{2}\LU (L_V Y)$ and $\tilde \cD$. As we derive an equation on $V$, the diffusion term $\cLU V$ will be kept in the left-hand side of the equation, while the remainder terms will be added to the terms stemming from $\Uapp$ in the left-hand side.

\item Additionally, $\cD$ will appear in the commutator of $\cLU$ with $\p_s + \frac{b}{2} Y \p_Y$. We will then isolate the term in $\cD$ which bears the highest number of derivatives on $V$, namely $\cLU V$, which we will need to estimate separately in some instances.
\end{itemize}

\end{remark}

\begin{proof}
Throughout the proof, since we are not interested in the specific definitions of the functions $P, \chi, \bar \chi$, we keep uniform notations for these three objects, even though they are used to group together different terms. 

Recalling the definition of $\Uapp$ \eqref{def:Uapp}, we have
\begin{eqnarray*}
\cD&=&\LU(\p_{YY} U -1)= \LU(\p_{YY} \Uapp -1) + \cLU V\\
&=& -\LU \left((12 a_4 b Y^2+42 a_7 b^2 Y^5 + 90 a_{10} b^3 Y^8 +110 a_{11} b^3 Y^9) \chi\left(\frac{Y}{s^{2/7}}\right)\right) \\
&+&\LU (F''(\sqrt{b Y})- 1)  \\
&+& \LU \left((Y- a_4 bY^4 - a_7 b^2 Y^7 - a_{10} b^3 Y^{10} - a_{11} b^3 Y^{11}) \frac{1}{s^{{4/7}}} \chi''\left(\frac{Y}{s^{2/7}}\right)\right) \\
&+&2 \LU \left( (1-4 a_4 b Y^3 - 7 a_7 b^2 Y^6 -10 a_{10} b^3 Y^9 -11 a_{11} b^3 Y^{10})\chi'\left(\frac{Y}{s^{2/7}}\right)\right)\\
&+&\cLU V.
\end{eqnarray*}

We now examine each of the terms in the right-hand side separately. 
\begin{itemize}

\item The term
$
F''(\sqrt{b Y})- 1
$ and all the terms involving at least one derivative of $\chi$
are identically zero up to $Y\sim s^{2/7}$. Therefore they can all be written as 
$
P(s,Y) (1-\bar \chi) \left(\frac{Y}{s^{2/7}}\right).
$

\item We therefore focus on 
$$ \LU \left((12 a_4 b Y^2+42 a_7 b^2 Y^5 + 90 a_{10} b^3 Y^8 +110 a_{11} b^3 Y^9) \chi\left(\frac{Y}{s^{2/7}}\right)\right),$$ 
and in particular on the value of this term on $Y\leq s^{2/7}$. Indeed, all values for $Y\geq s^{{2/7}}$ can be written as $\LU (P(s,Y) (1-\bar \chi) \left(\frac{Y}{s^{2/7}}\right))$.

We recall that $12 a_4=1/4$, and that $42 a_7=a_4/2$. We compute
\begin{eqnarray}
L_U (Y)&=& U Y - U_Y \frac{Y^2}{2}\nonumber\\
&=& \Uapp Y - \Uapp_Y\frac{Y^2}{2} + L_{V} Y\nonumber\\
&=&  \left(\frac{Y^2}{2}+ a_4 b Y^5 + \frac{5}{2} a_7 b^2 Y^8 + 4 a_{10} b^3 Y^{11} + \frac{9}{2} a_{11} b^3 Y^{12}\right) \chi \left(\frac{Y}{s^{2/7}}\right)\label{LUY2}\\
&+& L_{V} Y+P(s,Y) (1-\bar \chi) \left(\frac{Y}{s^{2/7}}\right).\nonumber
\end{eqnarray}

Multiplying \eqref{LUY2} by $b/2$  and applying $L_U^{-1}$, we deduce that 
\begin{eqnarray*}
&&\LU \left((12 a_4 b Y^2+42 a_7 b^2 Y^5 + 90 a_{10} b^3 Y^8 +110 a_{11} b^3 Y^9) \chi\left(\frac{Y}{s^{2/7}}\right)\right)\\
&=&\frac{b}{2} Y - \frac{b}{2} \LU L_V Y\\
&+&  \LU \left(\left((90 a_{10} - \frac{5}{4} a_7) b^3 Y^8 + 110 a_{11} b^3 Y^9 - 2 a_{10} b^4 Y^{11} - \frac{9}{4} a_{11} b^4 Y^{12}\right) \chi \left(\frac{Y}{s^{2/7}}\right)\right)\\
&+ &\LU \left( P(s,Y) (1-\bar \chi) \left(\frac{Y}{s^{2/7}}\right) \right).
\end{eqnarray*}
Gathering all the terms, we obtain the  decomposition announced in the Lemma.

\end{itemize}
\end{proof}

\begin{corollary}
Assume that there exist constants $M,c$ such that
$$\ba
| \p_{YY} U - 1 |\leq M b Y^2 \quad \forall Y \in [0, c s^{1/3}],\ \forall s \in [s_0, s_1],\\
\frac{1}{2s}\leq b\leq \frac{2}{s},\\
|U_{YY}|\leq M \quad \forall Y>0,\ \forall s \in [s_0, s_1].
\ea
$$
Then 
$$
\cD= O_{1/3} (bY),\quad \p_Y \cD= \p_Y \cLU V + O_{2/7} (b).
$$

\label{cor:est-cD}
\end{corollary}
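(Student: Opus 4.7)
The plan is to establish the two claims by going back to the explicit representation of $\cD$ and combining it with the decomposition already carried out in Lemma~\ref{lem:cR}.

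For the first claim $\cD=O_{1/3}(bY)$, I would work directly from $\cD=\LU(\p_{YY}U-1)$ and the formula
\[
\LU f=U_Y\int_0^Y\frac{f}{U^2}+\frac{f}{U}.
\]
First, I would combine the hypotheses with the already established pointwise bound $\sup(Y-M'Y^2/2,0)\le U\le Y+Y^2/2$ (coming from Lemma~\ref{lem:U_YY-prelim}) and the subsolution from Lemma~\ref{lem:sub-U} to argue that on the range $0<Y\lesssim s^{1/3}$ one has $U\gtrsim Y(1+Y)$ and $U_Y\lesssim 1+Y$. Plugging the hypothesis $|U_{YY}-1|\le MbY^2$ on $[0,cs^{1/3}]$ into the integral formula then yields
\[
|\cD|\lesssim (1+Y)\,b\int_0^Y\frac{dY'}{(1+Y')^2}+\frac{bY}{1+Y}\lesssim bY,
\]
which is the desired bound on $[0,cs^{1/3}]$. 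For $Y\ge cs^{1/3}$ the uniform bound $|U_{YY}|\le M$ makes the integrand polynomially controlled, so $\cD$ is itself polynomially bounded.

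For the second claim I would start from the decomposition of Lemma~\ref{lem:cR},
\[
\cD=-\tfrac{b}{2}Y+\cLU V+\tfrac{b}{2}\LU L_V Y+\tilde\cD,
\]
and use the algebraic identity $L_V Y=L_U Y-L_{\Uapp}Y$ together with $\LU L_U Y=Y$ to rewrite
\[
\cD=\cLU V-\tfrac{b}{2}\LU L_{\Uapp} Y+\tilde\cD,
\]
so that $\p_Y(\cD-\cLU V)=-\tfrac{b}{2}\p_Y\LU L_{\Uapp}Y+\p_Y\tilde\cD$. Using the formula $L_{\Uapp}Y=Y^2/2+a_4 bY^5+\tfrac{5}{2}a_7 b^2 Y^8+\ldots$ computed in the proof of Lemma~\ref{lem:cR} on $\{Y\le c_0 s^{2/7}\}$, together with the identity $\p_Y\LU f=f_Y/U+U_{YY}\int_0^Y f/U^2$, I would check that $\p_Y\LU(Y^2/2)=O(1)$ on this zone — this is the only contribution of size $O(1)$ — and that all higher order monomial contributions (in $L_{\Uapp}Y$ and in the polynomial part of $\tilde\cD$) give $O_{2/7}(1)$-quantities multiplied by at least one extra factor of $b$, hence $o(b)$.

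The essentially polynomial part of $\tilde\cD$ supported in $Y\gtrsim s^{2/7}$ contributes only to the polynomial tail permitted by the notation $O_{2/7}$. The main obstacle is bookkeeping: carrying out the computation of $\p_Y\LU$ applied to each monomial in $L_{\Uapp}Y$ and in $\tilde\cD$, and verifying uniformly in $Y\in[0,c_0 s^{2/7}]$ that the prefactors remain bounded by $b$. Once this is done, combining the two claims and collecting the various smaller error terms into the respective $O_\alpha$ notations yields the corollary.
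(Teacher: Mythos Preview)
Your argument is correct and tracks the paper's proof closely for the first estimate. One minor point: invoking Lemma~\ref{lem:sub-U} is unnecessary (and slightly misplaced, since that lemma works in $(s,\psi)$ variables and concerns $\psi\gtrsim s^{3/4}$). The lower bound $U\gtrsim Y(1+Y)$ on $[0,cs^{1/3}]$ follows directly by integrating the hypothesis $|U_{YY}-1|\le MbY^2$ twice, which is exactly what the paper does.

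For the second estimate your route differs slightly from the paper's. The paper keeps the decomposition $\p_Y\cD=\p_Y\cLU V-\tfrac{b}{2}+\p_Y\LU Z$ with $Z=\tfrac{b}{2}L_VY+(\text{polynomial terms})$, and then invokes the pointwise bounds $V=O_{2/7}(bY^4)$, $V_Y=O_{2/7}(bY^3)$, $V_{YY}=O_{2/7}(bY^2)$ (which follow from the hypothesis on $U_{YY}-1$ after subtracting the explicit $\p_{YY}\Uapp$). You instead use the identity $\LU L_VY=Y-\LU L_{\Uapp}Y$ to eliminate $V$ entirely, reducing everything to explicit monomials coming from $\Uapp$. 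Both approaches are valid; the paper's is slightly more economical because it handles the whole $L_VY$ contribution at once, while yours requires checking each monomial of $L_{\Uapp}Y$ but has the advantage of never appealing to pointwise bounds on $V$. Note that your phrase ``$O_{2/7}(1)$-quantities multiplied by at least one extra factor of $b$'' is imprecise: $\p_Y\LU(Y^5)$, $\p_Y\LU(Y^8)$, etc.\ grow like $Y^2$, $Y^5$ on the relevant zone, so the smallness really comes from the combination $b^2Y^2\lesssim s^{-10/7}$, $b^3Y^5\lesssim s^{-11/7}$ on $Y\le s^{2/7}$, not from the extra $b$ alone.
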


\begin{proof}
Note first that under these assumptions, 	
	\[
	\ba
 Y + \frac{Y^2}{2} - \frac{M}{12} b Y^4	\leq U(s,Y)\leq Y + \frac{Y^2}{2} + \frac{M}{12} b Y^4,\\
 1+Y- \frac{M}{4} b Y^3 \leq U_Y(s,Y)\leq 1+Y+ \frac{M}{4} b Y^3,
	\ea
	\qquad \forall Y\in [0, cs^{1/3}].
	\]
The  estimate on $\cD$ follows simply from writing
$$
\cD= U_Y \int_0^Y \frac{\p_{YY} U -1}{U^2} + \frac{\p_{YY } U -1}{U}
$$
and using the bounds on $\p_{YY } U, U_Y$ and $U$. As for the second one, notice that
$$
\p_Y \cD= \p_Y \cLU V - \frac{b}{2} + \p_Y \LU Z,
$$
where
\begin{eqnarray*}
Z&=& \frac{b}{2} (Y V - \frac{Y^2}{2} V_Y  )\\&&+ \left(\left(\frac{5}{4} a_7-90 a_{10} \right) b^3 Y^8 - 110 a_{11} b^3 Y^9 + 2 a_{10} b^4 Y^{11} + \frac{9}{4} a_{11} b^4 Y^{12}\right) \chi\left(\frac{Y}{s^{2/7}}\right) \\&&+ P(s,Y) (1-\bar \chi) \left(\frac{Y}{s^{2/7}}\right)
\end{eqnarray*}
where $P$ has at most polynomial growth in $s$ and $Y$. The estimate then follows from the bounds
$$
V= O_{2/7}( b Y^4),\ V_Y=O_{2/7}( b Y^3),\  V_{YY}=O_{2/7}( b Y^2)
$$
and from the formula giving $\p_Y \LU$ in Lemma \ref{lem:p3LU}.

\end{proof}

We deduce that $V$ is a solution of equation \eqref{eq:V-2}, i.e.
$$
\p_s V - b V + \frac{b}{2} Y\p_Y V  -\cLU V= \cR,
$$
 with a remainder
\begin{eqnarray*}
\mathcal{R}&:=& -\left(\p_s \Uapp - b \Uapp + \frac{b}{2} Y \p_Y \Uapp \right) - \frac{b}{2} Y\\
&+& \LU \left(\left(\left(\frac{5}{4} a_7-90 a_{10} \right) b^3 Y^8 - 110 a_{11} b^3 Y^9 + 2 a_{10} b^4 Y^{11} + \frac{9}{4} a_{11} b^4 Y^{12}\right) \chi \left(\frac{Y}{s^{2/7}}\right)\right)\\
&+& \frac{b}{2} \LU (L_V Y)\\
&+& \LU \left( P(s,Y) (1-\bar \chi) \left(\frac{Y}{s^{2/7}}\right) \right),
\end{eqnarray*}
which we now compute.

\begin{lemma}[Computation  of $\mathcal R$]
\label{lem:comp-R0}
The remainder term $\mathcal R$ can be written as
\begin{eqnarray*}
\cR&=& (b_s + b^2)(a_4 Y^4 + 2 a_7 b Y^7+3a_{10} b^2 Y^{10} + 3 a_{11} b^2 Y^{11})\chi \left(\frac{Y}{s^{2/7}}\right)\\
&+&\left(a_{10}b^4 Y^{10} +3 a_{11}b^4/2 Y^{11}\right)\chi \left(\frac{Y}{s^{2/7}}\right)\\
&+ &\frac{b}{2} \LU (L_V Y)+  \frac{1}{2} b^3 a_7 \LU \left(\left(Y^7 V - V_Y \frac{Y^8}{8}\right)\chi \left(\frac{Y}{s^{2/7}}\right)\right)\\
&+& \LU\left[ \left(d_{11} b^4 Y^{11} + d_{12} b^5 Y^{12} + d_{14} b^5 Y^{14} + d_{17} b^6 Y^{17} + d_{18} b^6 Y^{18}\right)\chi \left(\frac{Y}{s^{2/7}}\right)\right]\\
&+&  \LU \left( P(s,Y) (1-\bar \chi) \left(\frac{Y}{s^{2/7}}\right) \right),
\end{eqnarray*}
for some explicit constants $d_{11}, d_{12}, d_{14}, d_{17}, d_{18}\in \R$ .

\end{lemma}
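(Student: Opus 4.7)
The proof is a direct computation starting from the expression for $\mathcal R$ given in Lemma~\ref{lem:cR}. The plan has three parts: (a) expand the transport piece $-(\partial_s \Uapp - b\Uapp + \tfrac{b}{2} Y \partial_Y \Uapp) - \tfrac{b}{2} Y$ using the explicit polynomial formula \eqref{def:Uapp} for $\Uapp$; (b) rewrite the $\LU$-of-polynomial piece by means of the identity $Y^k = \LU L_{\Uapp}(Y^k) + \LU L_V(Y^k)$ in order to extract low-order polynomial contributions; (c) absorb all pieces supported on $\{Y\gtrsim s^{2/7}\}$ into the last term.

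In step (a), wherever $\chi(Y/s^{2/7})=1$ we have $\sqrt{b}Y\ll 1$ so $\tfrac{1}{b}\Theta(\sqrt{b}Y)=\tfrac{Y^2}{2}$, and $\Uapp$ coincides there with the polynomial $Y+\tfrac{Y^2}{2}-a_4 bY^4-a_7 b^2 Y^7-a_{10}b^3 Y^{10}-a_{11}b^3 Y^{11}$. Since $Y^2/2$ is stationary for $\partial_s-b+\tfrac{b}{2}Y\partial_Y$, and since the $Y$ contribution $-\tfrac{b}{2}Y$ from the transport cancels the explicit $-\tfrac{b}{2}Y$ in $\mathcal R$, a power-by-power calculation produces, for $k\in\{4,10,11\}$, exactly the announced coefficients $a_k'(b_s+b^2)\chi Y^k$; for $k=10,11$ it also yields the residual pure-polynomial terms $a_{10}b^4 Y^{10}\chi$ and $\tfrac{3}{2} a_{11}b^4 Y^{11}\chi$ appearing in the statement. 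For $k=7$ however, the transport produces $(2a_7 bb_s+\tfrac{5}{2}a_7 b^3)Y^7\chi$ instead of the target $2a_7 b(b_s+b^2)Y^7\chi$, leaving an unwanted residue $\tfrac{1}{2}a_7 b^3 Y^7\chi$ to be absorbed by step (b).

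This cancellation is exactly what step (b) provides. A direct computation of $L_{\Uapp}(Y^7)=\Uapp Y^7-\tfrac{Y^8}{8}\partial_Y\Uapp$ on the main zone yields
\[
L_{\Uapp}(Y^7)=\tfrac{7}{8}Y^8+\tfrac{3}{8}Y^9-\tfrac{a_4}{2}bY^{11}+O_{2/7}(b^2 Y^{14}),
\]
while the numerical identities $\tfrac{5}{4}a_7-90 a_{10}=-\tfrac{7}{16}a_7$ and $110 a_{11}=\tfrac{3}{16}a_7$ (both immediate from \eqref{def:a10a11}) give, on $\{\chi=1\}$,
\[
\bigl(\tfrac{5}{4}a_7-90 a_{10}\bigr)b^3 Y^8-110a_{11}b^3 Y^9=-\tfrac{1}{2}a_7 b^3 L_{\Uapp}(Y^7)+\tfrac{a_4 a_7}{4}b^4 Y^{11}+O_{2/7}(b^5 Y^{14}).
\]
Applying $\LU$ and using $\LU L_{\Uapp}(Y^7)=Y^7-\LU L_V(Y^7)$ (modulo boundary corrections from the fact that $L_{\Uapp}(Y^7)\chi$ and $L_{\Uapp}(Y^7\chi)$ agree only on the main zone) then produces $-\tfrac{1}{2}a_7 b^3 Y^7\chi$, which cancels the residue from step (a), plus $\tfrac{1}{2}a_7 b^3\LU(L_V(Y^7)\chi)$, which is the declared $L_V$ contribution of the lemma once one recognises $L_V(Y^7)=VY^7-V_Y\tfrac{Y^8}{8}$, plus an $\LU$ of polynomial of degree $\geq 11$. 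Together with the remaining input $\LU[(2a_{10}b^4 Y^{11}+\tfrac{9}{4}a_{11}b^4 Y^{12})\chi]$ from Lemma~\ref{lem:cR}, this assembles into the $\LU$-polynomial tail $\LU[(d_{11}b^4 Y^{11}+d_{12}b^5 Y^{12}+d_{14}b^5 Y^{14}+d_{17}b^6 Y^{17}+d_{18}b^6 Y^{18})\chi]$ of the statement, for explicit computable constants $d_i$.

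Finally, step (c) is pure accounting: every contribution arising either from derivatives of $\chi(Y/s^{2/7})$ hitting $\Uapp$, from the gap between $\tfrac{1}{b}\Theta(\sqrt{b}Y)$ and $\tfrac{Y^2}{2}$ where $\sqrt{b}Y\gtrsim 1$, from the outer bound on $\Uapp$ for $Y\gtrsim \lambda_0^{3/7}$, or from the mismatch between $\LU(f\chi)$ and $\LU(f)$ for $Y\gtrsim s^{2/7}$ incurred during step (b), is smooth, has polynomial growth in $(s,Y)$, and is supported in $\{Y\geq c s^{2/7}\}$. All such pieces are gathered into the single term $\LU(P(s,Y)(1-\bar\chi(Y/s^{2/7})))$, while the $\tfrac{b}{2}\LU(L_V Y)$ piece passes unchanged from input to output. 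I expect the main (purely technical) obstacle to be the faithful tracking of numerical coefficients — verifying simultaneously that no polynomial of degree $\leq 9$ in $Y$ survives outside $\LU$, that the $b^3 Y^7$ cancellation between steps (a) and (b) is exact, and that the $d_i$ are well-defined — but no new idea is needed beyond the identity $Y^k=\LU L_{\Uapp}(Y^k)+\LU L_V(Y^k)$.
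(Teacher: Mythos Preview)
Your proposal is correct and follows essentially the same route as the paper: compute the transport part of $\Uapp$ power by power, isolate the $(b_s+b^2)$ contributions, and handle the leftover $\tfrac{1}{2}a_7 b^3 Y^7\chi$ via the identity $Y^7=\LU L_{\Uapp}(Y^7)+\LU L_V(Y^7)$ (the paper applies this identity to $Y^7$ directly rather than to the $Y^8,Y^9$ coefficients, but this is the same computation read in the other direction). Two small slips worth fixing: in your step~(b) identity the correction term should be $-\tfrac{a_4 a_7}{4}b^4 Y^{11}$ rather than $+\tfrac{a_4 a_7}{4}b^4 Y^{11}$ (harmless, it is absorbed in $d_{11}$ either way), and the reference to ``the outer bound on $\Uapp$ for $Y\gtrsim \lambda_0^{3/7}$'' is out of place---that is a hypothesis on the original initial data, not a feature of the rescaled $\Uapp$; the relevant far-field contribution here is simply the $\Theta$ part of $\Uapp$ for $\sqrt{b}Y\gtrsim 1$, which you already mention.
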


\begin{remark}
Notice that this remainder term is essentially (up to a small error depending on $V$)
$$
\LU \left( \Uapp \p_s \Uapp - \p_Y \Uapp \int_0^Y \Uapp -2 b (\Uapp)^2 + \frac{3b}{2} \p_Y\Uapp\int_0^Y \Uapp - \p_{YY} \Uapp + 1 \right),
$$
and therefore has been computed (up to the application of the operator $\LU$) when the approximate solution $\Uapp$ was defined. However, it is actually easier to do over the computations rather than to apply $\LU$ to the remainder that has already been computed.

\end{remark}

\begin{proof}
We start with
\begin{eqnarray*}
&&\p_s \Uapp - b \Uapp + \frac{b}{2} Y \p_Y \Uapp + \frac{b}{2} Y \\
&=&- a_4\left( b_s + {b^2}\right) Y^4\chi\left(\frac{Y}{s^{2/7}}\right)\\
&+& \chi\left(\frac{Y}{s^{2/7}}\right) \left[  - a_7b \left(2 b_s +\frac{5b^2}{2}\right) Y^7 - a_{10} Y^{10} (3 b_s b^2 + 4 b^4) - a_{11} Y^{11}(3 b_s b^2 + \frac{9}{2} b^4) \right]\\
&+&\chi'\left(\frac{Y}{s^{2/7}}\right) \frac{Y}{s^{2/7}} \left( \frac{b}{2} - \frac{{2/7}}{s}\right)\left[ Y- a_4 b Y^4 - a_7 b^2 Y^7 - a_{10 } b^3 Y^{10} - a_{11} b^3 Y^{11}  \right] \\
&+&b^{-2} \left( b_s + {b^2}\right) \left(\frac{1}{2} Z \Theta'(Z) - \Theta(Z)\right)_{|Z=\sqrt{b} Y} \\
&+& \frac{b}{2} Y (1-\chi) \left(\frac{Y}{s^{2/7}}\right) .
\end{eqnarray*}
The last three terms in the right-hand side are supported in $Y \geq c_1 s^{2/7}$. They can be written as a linear combination of terms of the type $ P(s,Y) (1-\bar \chi) \left(\frac{Y}{s^{2/7}}\right) $. 

We thus focus on the second term, which we group with the other terms in the definition of $\mathcal R$. We first isolate the factor $(b_s+b^2)$ in  $Y^7, Y^{10}, Y^{11}$, which we group with the first term. There remains to study
\begin{multline*}
\frac{1}{2} a_7 b^3 Y^7 \chi\left(\frac{Y}{s^{2/7}}\right) \\+b^3\LU \left(\left(\left(\frac{5}{4} a_7-90 a_{10} \right)  Y^8 - 110 a_{11}  Y^9 + 2 a_{10} b Y^{11} + \frac{9}{4} a_{11} b Y^{12}\right) \chi \left(\frac{Y}{s^{2/7}}\right)\right).
\end{multline*}
We use the same trick as in Lemma \ref{lem:cR} and we write
$$
Y^7\chi\left(\frac{Y}{s^{2/7}}\right)= \LU \left((L_{\Uapp} + L_V) Y^7\chi\left(\frac{Y}{s^{2/7}}\right)\right)
$$
Notice that $L_V Y^7= V Y^7 - V_Y Y^8/8$. On the other hand, a lengthy but straightforward computation yields
$$
L_{\Uapp} Y^7 = \frac{7}{8} Y^8 + \frac{3}{8} Y^9 - \frac{ a_4}{2} b Y^{11} - \frac{a_7}{8} b^2 Y^{14} + \frac{a_{10}}{4} b^3 Y^{17} + \frac{3 a_{11}}{8} b^3 Y^{18}.
$$
Gathering all the terms and recalling the values of $a_{10}$, $a_{11}$ \eqref{def:a10a11}, we obtain the decomposition announced in the Lemma.


\end{proof}

\subsection{Proof of the commutator result (Lemma \ref{lem:commutator})}

We compute separately $[\LU, \p_s]$ and $[\LU, Y \p_Y]$, and then check that cancellations occur between the two commutators.

Using the formulas in Lemma \ref{lem:p3LU} in the Appendix, we have
\begin{eqnarray*}
[\LU, Y\p_Y] W&=&\left( U \int_0^Y \frac{Y\p_Y W}{U^2}\right)_Y - Y \p_Y \LU W \nonumber\\
&=& U_Y \int_0^Y \frac{Y \p_Y W}{U^2} + \frac{Y \p_Y W}{U}- Y \left( \p_Y^2 U \int_0^Y \frac{W}{U^2} + \frac{\p_Y W} U\right)\\
&=&U_Y \int_0^Y \frac{Y \p_Y W}{U^2} -Y\p_Y^2 U \int_0^Y \frac{W}{U^2} .
\end{eqnarray*}
We introduce the quantity
$$
\Gamma:=Y U_Y - 2U,
$$
so that $\Gamma_Y=Y \p_Y^2 U - U_Y$. Then
\begin{eqnarray}
[\LU, Y\p_Y] W&=&-\Gamma_Y \int_0^Y   \frac{W}{U^2} + U_Y \int_0^Y \frac{\p}{\p Y }\left(  \frac{W}{Y}\right)   \frac{Y^2}{U^2}\nonumber\\
&=& -\Gamma_Y \int_0^Y   \frac{W}{U^2} +   \frac{YU_Y}{U^2} W - 2 U_Y\int_0^YW   \frac{U-YU_Y}{U^3}\nonumber\\
&=&2 \LU W - \Gamma_Y \int_0^Y   \frac{W}{U^2} +   \frac{\Gamma}{U^2} W + 2U_Y\int_0^Y W  \frac{\Gamma}{U^3}.\label{eq:comm-cLU}
\end{eqnarray}


We now address the commutator with $\p_s$. To that end, we recall that $U$ satisfies \eqref{eq:U-LU}, so that, with the previous definitions of $f$ and $\cD$,
\be\label{Us}U_s= -\frac{b}{2} \Gamma  + \cD.\ee

It follows that 
\begin{eqnarray*}
[\LU, \p_s] W &=&-\left(U_s \int_0^Y \frac{ W}{U^2}\right)_Y + 2 \left( U \int_0^Y \frac{ W}{U^3} U_s\right)_Y\\
&=&\frac{b}{2}\left(\Gamma \int_0^Y \frac{ W}{U^2}\right)_Y - b\left( U \int_0^Y \frac{ W}{U^3} \Gamma\right)_Y-\left(\cD \int_0^Y \frac{ W}{U^2}\right)_Y + 2 \left( U \int_0^Y \frac{ W}{U^3} \cD\right)_Y.
\end{eqnarray*}
Using \eqref{eq:comm-cLU}, we infer that
\begin{eqnarray*}
&&\left[\LU, \p_s + \frac{b}{2} Y \p_Y \right] W\\ &=& b \LU W-\frac{b}{2} \Gamma_Y \int_0^Y \frac{W}{U^2}+ \frac{b}{2} \frac{\Gamma}{U^2} W + b U_Y\int_0^Y W\frac{\Gamma}{U^3}\\
&+&\frac{b}{2 }\left(\Gamma \int_0^Y \frac{ W}{U^2}\right)_Y - b\left( U \int_0^Y \frac{ W}{U^3} \Gamma\right)_Y-\left(\cD \int_0^Y \frac{ W}{U^2}\right)_Y + 2 \left( U \int_0^Y \frac{ W}{U^3} \cD\right)_Y\\
&=&b \LU W-\left(\cD \int_0^Y \frac{ W}{U^2}\right)_Y + 2 \left( U \int_0^Y \frac{ W}{U^3} \cD\right)_Y.
\end{eqnarray*}

This completes the proof of the commutator Lemma.
\subsection{Structure of the diffusion term}


We will use in several instances the following weighted Hardy inequality (see \cite{masmoudi2011hardy}):

\begin{lemma}
Let $p_1, p_2$ be measurable functions such that $p_1, p_2> 0$ almost everywhere. Let $0<R\leq \infty$ and let
$$
C_H:=4 \sup_{0<r<R}\left(\int_{r}^R p_1\right) \left(\int_0^r \frac{1}{p_2} \right).
$$
Assume that $C_H<+\infty$.
Then for any function $f\in H^1_{loc}(\R_+)$ such that $f(0)=0$, there holds
$$
\int_0^R f^2\; p_1 \leq C_H \int_0^R (\p_Y f)^2 p_2.
$$

\label{lem:Hardy}

\end{lemma}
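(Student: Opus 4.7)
My plan is to prove this weighted Hardy inequality by the standard ``Muckenhoupt-weight'' trick: build a clever $L^2$ representation of $f$, apply Cauchy--Schwarz with a well-chosen auxiliary weight, and then control the resulting double integral by Fubini plus one integration by parts. The auxiliary weight is dictated by the form of $C_H$.

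Concretely, set $h(Y):=\left(\int_0^Y p_2^{-1}\right)^{1/2}$, so that $h'(Y)=\frac{1}{2 p_2(Y) h(Y)}$ and $\int_0^Y \frac{dt}{p_2(t) h(t)}=2h(Y)$. Since $f(0)=0$ we write $f(Y)=\int_0^Y f'(t)\,dt$ and apply Cauchy--Schwarz with splitting $f'=\big(f' p_2^{1/2} h^{1/2}\big)\cdot\big(p_2^{-1/2} h^{-1/2}\big)$ to get
\[
f(Y)^2\;\le\;\left(\int_0^Y (f')^2 p_2\, h\right)\left(\int_0^Y \frac{dt}{p_2(t) h(t)}\right)\;=\;2\, h(Y)\int_0^Y (f'(t))^2 p_2(t)\,h(t)\,dt.
\]
Multiplying by $p_1(Y)$, integrating on $(0,R)$, and switching the order of integration by Fubini, the estimate reduces to showing
\[
K(t):=\int_t^R h(Y)\,p_1(Y)\,dY\;\le\;\frac{C_H}{2\,h(t)}\qquad\text{for all }t\in(0,R).
\]
Indeed, once this is in hand, we obtain $\int_0^R f^2 p_1\le 2\int_0^R (f')^2 p_2(t)\, h(t)\, K(t)\,dt\le C_H\int_0^R (f')^2 p_2$, which is the claim.

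The key step is therefore the pointwise bound on $K(t)$. Setting $F(Y):=\int_Y^R p_1$, the definition of $C_H$ gives $F(Y)\le C_H/(4 h^2(Y))$. Integration by parts yields
\[
K(t)=-\int_t^R h(Y) F'(Y)\,dY= h(t)F(t)+\int_t^R h'(Y) F(Y)\,dY,
\]
and the first term is at most $C_H/(4 h(t))$ directly from the bound on $F(t)$. For the second, the bound on $F$ combined with $\int_t^R h'(Y)/h(Y)^2\,dY=1/h(t)-1/h(R)\le 1/h(t)$ gives an additional $C_H/(4 h(t))$. Summing the two contributions produces the desired $K(t)\le C_H/(2 h(t))$.

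I expect no real obstacle here: the only subtlety is that the argument above is formally carried out on the set $\{h(Y)\in(0,\infty)\}$, which requires $p_2^{-1}$ to be locally integrable near $0$ (otherwise both sides of the desired inequality handle the trivial cases, since $C_H<\infty$ forces compatibility between $p_1$ and $p_2$). A standard approximation argument, replacing $(0,R)$ by $(\eps,R)$ and sending $\eps\to 0$, takes care of any integrability issue at the endpoints and delivers the inequality for general $f\in H^1_{\mathrm{loc}}(\R_+)$ with $f(0)=0$ by density.
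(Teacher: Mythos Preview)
Your proof is correct: this is the classical Muckenhoupt argument for the weighted Hardy inequality, and every step checks out. The only minor technical point is the boundary term at $Y=R$ in the integration by parts for $K(t)$; since $-h(R)F(R)\le 0$ (both factors are nonnegative), one obtains the inequality $K(t)\le h(t)F(t)+\int_t^R h'F$ rather than equality, which is all you need, and your approximation remark covers the case $R=\infty$ or $h$ unbounded.

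As for comparison with the paper: the paper does not actually prove this lemma. It states the inequality and simply refers to the literature (specifically \cite{masmoudi2011hardy}) for the proof. Your argument is precisely the standard one found there, so you have supplied what the paper omits.
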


In this paragraph, we state and prove the coercivity inequality that will be used to control $(E_1, D_1)$ and $(E_2, D_2)$, up to small remainder terms.

\begin{lemma}Let $s_0<s_1$, and let $\delta>0$ be arbitrary.
Assume that $U$ is increasing in $Y$ for all $s\geq s_0$, with $U_{|Y=0}=0, \p_Y U_{|Y=0}=1$, and that there exists constants $M_2, c$ such that
\be\label{hyp:U_YY} \ba
1-M_2bY^2 \leq \p_{YY} U\leq 1 \quad \forall Y \in [0, c s^{1/3}],\ \forall s\in [s_0,s_1],\\
-M_2\leq \p_{YY} U \leq 1 \quad \forall Y\geq c s^{1/3}\ \forall s\in [s_0,s_1].\ea
\ee
Assume furthermore that 
$$
\frac{1}{2s}\leq b\leq \frac{2}{s}\quad \forall s\in [s_0, s_1].
$$

For $a>0, \beta\in ]1/4, 2/7[, m\in \N$, define the weight $w(s,Y):=Y^{-a} (1+ s^{-\beta} Y)^{-m}$.

There exist universal constants $\bar a, \bar c>0$,
independent of $\delta, s,\beta, m$, such that if $s_0$ is large enough (depending on $\delta, \beta, a$), then
for all $f\in W^{1,\infty} (\R_+)$ such that $\p_Y f= O(Y)$ for $Y\ll 1$, for all $0<a\leq \bar a$, for any cut-off function $\chi\in \mathcal C^\infty_0(\R_+)$ such that $\chi\equiv 1$ in $[0,1]$,
\begin{eqnarray*}
-\int_0^\infty \left( \p_{YY}\LU f \right)\;  f \; w &\geq& \bar c \left( \int_0^\infty \frac{(\p_Y f)^2}{U} w + \int_0^\infty \frac{f^2}{U^2} w \right) -\delta b \int_0^\infty f^2 w\\
&&-C\int_{cs^{1/3}}^\infty U \left(\int_0^Y \left(1-\chi\left(\frac{Y}{s^{1/4}}\right)\right) \frac{f}{U^2}\right)^2 w.
\end{eqnarray*}

\label{lem:diff1}
\end{lemma}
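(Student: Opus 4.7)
The plan is to obtain the coercivity by integrating by parts twice after a change of unknown, then using the algebraic identities relating $f, f_Y$ to $h := L_U^{-1} f$ and its primitive $\phi := \int_0^Y h = U\int_0^Y f/U^2$, and finally applying the weighted Hardy inequality of Lemma~\ref{lem:Hardy} to absorb lower-order terms.

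First I would introduce $h := L_U^{-1} f$ and $\phi := \int_0^Y h$, so that $\phi = Ug$ with $g = \int_0^Y f/U^2$, and record the two key algebraic identities obtained by differentiating $f = L_U h = Uh - U_Y\phi$:
\[
f = U h - U_Y \phi, \qquad f_Y = U h_Y - U_{YY}\phi.
\]
The assumption $\p_Y f = O(Y)$ together with the required integrability of $f/U^2$ near zero forces $f = O(Y^2)$, hence $h = O(Y)$ and $\phi = O(Y^2)$, which makes all boundary terms at $Y=0$ vanish provided $a$ is small enough ($a < 2$ suffices). The polynomial decay of $w$ for $Y \gg s^\beta$ handles the boundary at infinity.

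Next I would integrate by parts twice in $-\int h_{YY} f w = -\int h_{YY}(Uh - U_Y\phi) w$. The first piece, $-\int h_{YY} Uh\, w$, yields after IBP the coercive term $\int U h_Y^2 w$ together with $\frac12\int (h^2)_Y(U_Y w + U w_Y)$; shifting the derivative off $h^2$ produces terms $-\frac12\int h^2(U_{YY}w + 2U_Y w_Y + U w_{YY})$. The second piece $\int h_{YY} U_Y \phi\, w$, using $h_Y = \phi_{YY}$, gives $-\int h^2 U_Y w + \frac12\int \phi^2(U_{YYY} w + 2U_{YY}w_Y + U_Y w_{YY})$. Collecting everything,
\[
Q(f) = \int_0^\infty U h_Y^2\, w \;-\; \tfrac{1}{2}\!\int h^2(U_{YY} w + 2U_Y w_Y + U w_{YY}) \;-\; \int h^2 U_Y w \;+\; \tfrac{1}{2}\!\int \phi^2\bigl(U_{YYY} w + 2U_{YY} w_Y + U_Y w_{YY}\bigr).
\]
The main positive term $\int U h_Y^2 w$ now has to be connected to the target $\int f_Y^2/U\, w + \int f^2/U^2\, w$. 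The identity for $f_Y$ gives
\[
\int \frac{f_Y^2}{U}\, w = \int U h_Y^2 w - 2\int U_{YY} h_Y \phi\, w + \int \frac{U_{YY}^2 \phi^2}{U}\, w,
\]
and the identity for $f$ gives $\int f^2/U^2\, w \lesssim \int h^2 w + \int U_Y^2 \phi^2/U^2\, w$. Under the assumption $U_{YY}\le 1$, the cross term $\int U_{YY} h_Y \phi\, w$ is absorbed by a Cauchy--Schwarz split against $\int U h_Y^2 w$ and $\int U_{YY}^2 \phi^2/U\, w$, while the $\int \phi^2 U_Y^2/U^2\, w$ and $\int h^2 w$ terms are controlled using the weighted Hardy inequality (Lemma~\ref{lem:Hardy}) applied to $\phi$ and to $h$ (both vanish at $Y=0$). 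The moment condition in Hardy's inequality is checkable in closed form thanks to the pointwise bounds on $U, U_Y$ that follow from $1 - M_2 b Y^2 \leq U_{YY} \leq 1$ on $[0,cs^{1/3}]$, which force $U \asymp Y(1+Y/2)$ there.

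The last two ingredients are the handling of the bad region $Y\ge c s^{1/3}$ and the absorption of perturbative terms into $\delta b \int f^2 w$. Where the lower bound on $U_{YY}$ degrades, I would cut off $g$ using $\chi(Y/s^{1/4})$: the part $\int_0^Y \chi(\cdot /s^{1/4}) f/U^2$ is supported in $Y\lesssim s^{1/4}$ so the coercivity obtained above applies directly to it, while the "tail" $\int_0^Y (1-\chi(\cdot /s^{1/4})) f/U^2$ only contributes for $Y\ge cs^{1/3}$ and is precisely what appears in the remainder term of the statement. For the lower-order terms carrying $U_{YYY}$, $w_Y$, $w_{YY}$: the hypothesis on $U_{YY}$ gives $|U_{YYY}| = O(bY)$ on $[0, cs^{1/3}]$, and $|w_Y|/w$, $|w_{YY}|/w$ are of order $Y^{-1}$ for $Y\le s^\beta$ up to small corrections, so each such term is bounded by $C\, b\int h^2 w + C\, b\int \phi^2/Y^2\, w$, which a further Hardy argument reduces to $C b\int f^2 w$; choosing $s_0$ large enough (depending on $\delta,\beta,a$) the constant $Cb$ becomes smaller than $\delta b$. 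The universal $\bar a$ is determined by the necessity that $a < 2$ for boundary terms to vanish and that the Hardy moment condition holds uniformly in $s$.

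The main technical obstacle is the bookkeeping in the two integrations by parts and the verification of the Hardy moment condition in each inequality: one has to track how the $Y$-weights coming from $U\simeq Y+Y^2/2$, from $w=Y^{-a}(1+s^{-\beta}Y)^{-m}$, and from their derivatives conspire so that every zero-order and first-order remainder can be split between (a) a small multiple of the coercive term $\int U h_Y^2 w$ (itself comparable, up to the $U_{YY}$ corrections, to $\int f_Y^2/U\, w + \int f^2/U^2\, w$), (b) the small perturbation $\delta b \int f^2 w$, and (c) the explicit tail integral over $Y\ge cs^{1/3}$.
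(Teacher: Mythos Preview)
Your route through $h=L_U^{-1}f$ and $\phi=\int_0^Y h$ runs into two genuine obstructions.

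First, the identity you write for the ``second piece'' $\int h_{YY}U_Y\phi\,w$ is incorrect: a careful IBP (using $h_{YY}\phi=(h_Y\phi)_Y-\tfrac12(h^2)_Y$, then $h_Y\phi=(h\phi)_Y-h^2$) produces terms in $\int h\phi\,(U_{YYY}w+2U_{YY}w_Y+U_Yw_{YY})$ together with $\tfrac32\int h^2(U_{YY}w+U_Yw_Y)$, not the $-\int h^2U_Yw$ you claim; a quick check with $U=Y^2/2$, $w\equiv 1$ confirms the discrepancy. More importantly, this means your expansion of $Q(f)$ unavoidably contains either $U_{YYY}$ (if you stop there) or $U_{YYYY}$ (if you integrate by parts once more to reach the $\phi^2$ form you wrote). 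The hypotheses of the lemma only bound $U_{YY}$ pointwise; they say nothing about $\p_Y^3U$, and the pointwise band $1-M_2bY^2\le U_{YY}\le 1$ permits $U_{YYY}$ to be arbitrarily large. Your claim that ``$|U_{YYY}|=O(bY)$'' is simply not available here.

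Second, even granting the algebra, the step where you control $\int U_{YY}^2\phi^2/U\,w$ and $\int U_Y^2\phi^2/U^2\,w$ by Hardy against the target fails for the small $a$ the lemma requires: writing $\phi=Ug$ with $g=\int_0^Y f/U^2$, the relevant Hardy moment involves $\int_0^r U^{-2}w^{-1}\sim\int_0^r Y^{a-2}$, which diverges unless $a>1$.

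The paper's proof avoids both problems by never passing to $h$. It integrates by parts once and uses the explicit formula $\p_Y L_U^{-1}f=U_{YY}\int_0^Y f/U^2+f_Y/U$; splitting $U_{YY}=1+(U_{YY}-1)$ and integrating the ``$1$'' piece against $(fw)_Y$ yields directly
\[
-\int(\p_{YY}L_U^{-1}f)\,f\,w=\int\frac{f_Y^2}{U}\,w-\int\frac{f^2}{U^2}\,w+\text{(lower order)},
\]
where the lower-order terms involve only $U_{YY}-1$ and $w_Y$. The entire coercivity then comes from a single \emph{sharp} weighted Hardy inequality: one shows that the constant $C_H$ in $\int f^2/U^2\,w\le C_H\int f_Y^2/U\,w$ satisfies $C_H\le 9/10$ uniformly for $0<a\le\bar a$ and $Y\le Ks^{1/4}$ (this is the content of Lemma~\ref{lem:constant-Hardy}, computed explicitly with $U\simeq \mu Y+Y^2/2$). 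That strict inequality $C_H<1$ is the heart of the argument and is what your proposal is missing.
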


\begin{remark}
Note that the estimate we prove here is not as strong as one would like to have. Indeed, we only control $f$ and $\p_Y f$, and not $\LU f$. However, another way of writing the inequality, setting $h= \LU f$, is
$$
-\int_0^\infty \p_{YY} h \; L_U h\; w \geq \bar c \int_0^\infty \frac{(\p_Y (L_U h))^2}{U} w + \int_0^\infty \frac{(L_U h)^2}{U^2} w + \text{ remainder terms.}
$$
But notice that if $h=U_Y$, then $L_U h=0$ while $h\neq 0$, $\p_Y h\neq 0$. Hence it is hopeless to control $h$ and $\p_Y h$ by the left-hand side of the above inequality without any further assumption that would discard the case $h=U_Y$.

\end{remark}

\begin{remark}
The last term in the right-hand side of the inequality, namely
$$
C\int_{cs^{1/3}}^\infty U \left(\int_0^Y \left(1-\chi\left(\frac{Y}{s^{1/4}}\right)\right) \frac{f}{U^2}\right)^2 w
$$
will be handled in Lemma \ref{lem:tails}. Heuristically, since it is supported in a zone where $w$ is strongly decaying, it can be made ``as small as desired'', i.e. $O(s^{-P})$ for any $P>0$, provided $m$ is chosen sufficiently large (depending on $P$ and $\beta$).

\end{remark}

\begin{remark}
Notice that under the assumptions of the Lemma, there exists a constant $C$ such that for $1\leq Y \leq cs^{1/3}$,
$$
\frac{C^{-1}}{Y^2}\leq \frac{1}{U}\leq \frac{C}{Y^2}, \quad \frac{C^{-2}}{Y^4}\leq \frac{1}{U^2}\leq \frac{C^2}{Y^4}.
$$
Hence, for $1\leq Y \leq c s^{1/3}$, the two integrals in the diffusion term
$$
\int_0^\infty \frac{(\p_Y f)^2}{U} w + \int_0^\infty \frac{f^2}{U^2} w 
$$
have the same scaling.

However, if $Y\leq 1$, then
$$
\frac{C^{-1}}{Y}\leq \frac{1}{U}\leq \frac{C}{Y}, \quad \frac{C^{-2}}{Y^2}\leq \frac{1}{U^2}\leq \frac{C^2}{Y^2}.
$$
Hence for $Y\leq 1$, the control given by the first integral is stronger. Indeed, the Hardy inequality implies that
$$
\int_0^1 \frac{(\p_Y f)^2}{Y^{1+a}}\geq C \int_0^1 \frac{ f^2}{Y^{3+a}}.
$$
Therefore
$$
\int_0^{cs^{1/3}} \left( \frac{(\p_Y f)^2}{U} +\frac{f^2}{U^2}\right) w   \geq C \left(\int_1^{cs^{1/3}} \frac{f^2}{Y^4} w + \int_0^1 \frac{f^2}{Y^{3+a}}\right).
$$
We will often use this control close to zero when we deal with some non-local terms.\label{rem:diff-zero}
\end{remark}

\begin{proof}[Proof of Lemma \ref{lem:diff1}]
Starting with a simple integration by parts,
$$
-\int_0^\infty \left( \p_{YY}\LU f \right)\;  f \; w= \int_0^\infty \left(\p_Y \LU f\right)\: (\p_Y f w + f \p_Y w).
$$
Using the formula in lemma \ref{lem:p3LU} and integrating by parts once again, we obtain
\begin{eqnarray*}
-\int_0^\infty \left( \p_{YY}\LU f \right)\;  f \; w&=& \int_0^\infty \frac{(\p_Y f)^2}{U} w - \int_0^\infty \frac{f^2}{U^2}w\\
&&+ \int_0^\infty (U_{YY}-1) \left(\int_0^Y \frac{f}{U^2}\right)\: (f \p_Y w + \p_Y f w)\\
&&+ \int_0^\infty \frac{1}{U} \p_Y f \: f \p_Y w.
\end{eqnarray*}
The first two terms are the main order terms. We now prove the coercivity thanks to a weighted Hardy inequality for which we compute the constant explicitly.
Using the assumptions on $U$, we have
$$
U \geq Y + \frac{Y^2}{2} - \frac{M_2}{12} b Y^4\quad \text{for } Y \in [0, cs^{1/3}].
$$
Therefore, since $U$ is increasing, 
$$
U(s,Y)\geq \frac{K^2}{4} s^{1/2}\quad \forall Y\geq K s^{1/4}
$$
provided $s_0$ is sufficiently large. It follows that if $K$ is chosen large enough (depending on $\delta$),
$$
\int_{Ks^{1/4}}^\infty \frac{f^2}{U^2}w\leq  \frac{16}{K^4} s^{-1} \int_{Ks^{1/4}}^\infty f^2 w\leq \delta b \int_0^\infty f^2 w.
$$
On the set $[0, K s^{1/4}]$, we use a weighted Hardy inequality (see Lemma \ref{lem:Hardy}), namely
$$ \int_0^{Ks^{1/4}}\frac{1}{U^2} f^2 w
\leq  C_a \int_0^{K s^{1/4}} \frac{(\p_Y f)^2}{U}w
$$
where the constant $C_a$ satisfies
$$
C_a\leq 4 \sup_{0<r<Ks^{1/4}} \left(\int_{r}^{Ks^{1/4}} \frac{w}{U^2}\right) \left(\int_{0}^{r} \frac{U}{w}\right).
$$
On the set $[0, K s^{1/4}]$, we have
$$\ba
\mu Y + \frac{Y^2}{2}\leq U \leq Y + \frac{Y^2}{2}\quad \text{with } \mu=1-\frac{M_2}{24} K^3 s^{-1/4},\\
\text{and } Y^{-a} (1-\delta)\leq w\leq Y^{-a}\ea
$$
for any $\delta>0$ provided $s_0$ is sufficiently large. Therefore
$$
C_a \leq \frac{1}{1-\delta} \bar C_{a,\mu} ,
$$
where
$$
\bar C_{a,\mu}:=4\sup_{r>0} \left(\int_{r}^\infty \frac{Y^{-a}}{\left(\mu Y + \frac{Y^2}{2}\right)^2}dY\right) \left(\int_{0}^{r} Y^{a}\left( Y + \frac{Y^2}{2}\right)dY\right).
$$
We then have the following Lemma, which is proved in the Appendix:
\begin{lemma}
There exists universal constants $\bar a>0, \mu_0\in (0,1)$ such that for all $a\in (0, \bar a)$, for all $\mu\in (\mu_0, 1)$,
$$
C_{a,\mu}\leq \frac{9}{10}.
$$
\label{lem:constant-Hardy}
\end{lemma}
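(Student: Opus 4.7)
The strategy is to carry out an explicit computation at the reference parameters $(a,\mu)=(0,1)$ and then extend the bound to a neighborhood of $(0,1)$ by a continuity/perturbation argument. Set
\[
I_1(r;a,\mu):=\int_r^\infty \frac{Y^{-a}}{(\mu Y+\frac{Y^2}{2})^2}\,dY, \qquad I_2(r;a):=\int_0^r Y^a\!\left(Y+\frac{Y^2}{2}\right)dY,
\]
so that $\bar C_{a,\mu}=4\sup_{r>0}I_1(r;a,\mu)I_2(r;a)$. At $(a,\mu)=(0,1)$, partial fractions on $\frac{1}{Y^2(Y+2)^2}$ yield
\[
I_1(r;0,1)=-\ln\!\left(1+\tfrac{2}{r}\right)+\frac{1}{r}+\frac{1}{r+2},\qquad I_2(r;0)=\frac{r^2}{2}+\frac{r^3}{6}.
\]

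Next I would analyze $4I_1I_2$ at $(0,1)$. Since $I_1=O(1/r)$ and $I_2=O(r^2)$ near $r=0$, the product vanishes at the left endpoint. For the right endpoint, set $u=2/r$: a direct Taylor expansion gives
\[
-\ln(1+u)+\frac{u(2+u)}{2(1+u)}=\frac{u^3}{6}-\frac{u^4}{4}+O(u^5),\qquad I_2=\frac{2(3u+2)}{3u^3},
\]
and multiplying these yields
\[
4\,I_1(r;0,1)I_2(r;0)=\frac{8}{9}-\frac{2u^2}{5}+O(u^3).
\]
In particular the limit as $r\to\infty$ is $8/9$, approached from below. To conclude $\bar C_{0,1}\le 8/9$ on all of $(0,\infty)$, I would show monotonicity of $r\mapsto 4I_1I_2$ via the derivative identity $(4I_1I_2)'(r)=\frac{2r(2+r)}{1}I_1(r)-\frac{8(3+r)}{3(2+r)^2}$, which reduces to verifying the pointwise inequality $I_1(r)\ge \frac{4(3+r)}{3r(2+r)^3}$ (tight at infinity, easily checked at $r=O(1)$). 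This gives $\bar C_{0,1}=8/9<9/10$.

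For the perturbation to $(a,\mu)$ near $(0,1)$, observe that as $r\to 0$ one has $I_2\lesssim r^{2+a}$ and $I_1\lesssim \mu^{-2}r^{-1-a}$, so $4I_1I_2\lesssim \mu^{-2}r\to 0$ uniformly for $\mu$ bounded away from $0$; as $r\to\infty$, the same expansion (with $Y^{-a}$ inserted in $I_1$) gives $4I_1I_2\to \frac{8}{(3+a)^2}\le \frac{8}{9}$ for $a\ge 0$, with a uniform remainder $O(1/r^2)$. Hence there exist $r_0<R$ (depending only on the prescribed slack) such that $4I_1I_2\le 8/9+\epsilon$ on $(0,r_0]\cup[R,\infty)$ for all $(a,\mu)$ in a fixed neighborhood of $(0,1)$. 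On the compact set $[r_0,R]$, the integrand of both $I_1$ and $I_2$ is smooth in $(a,\mu,r)$, so $4I_1I_2$ is jointly continuous; its maximum at $(0,1)$ is strictly less than $8/9$ by Step~2, hence by uniform continuity it remains below $8/9+\epsilon$ in a neighborhood. Choosing $\epsilon=1/90$ delivers $\bar C_{a,\mu}\le 9/10$ for $a\in(0,\bar a)$ and $\mu\in(\mu_0,1)$.

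\textbf{Main obstacle.} The genuinely delicate point is the global bound $4I_1(r;0,1)I_2(r;0)\le 8/9$ on all of $(0,\infty)$, not just in the asymptotic regime: the constant $8/9$ is only attained in the limit $r\to\infty$, so the gap available for the perturbation argument is small ($1/90$). Establishing either monotonicity in $r$ or the direct algebraic inequality above requires a careful (but elementary) analysis of the logarithmic term; everything else in the argument is routine continuity and asymptotic analysis.
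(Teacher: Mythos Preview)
Your approach is essentially the same as the paper's: compute the product explicitly at $(a,\mu)=(0,1)$, verify that $r\mapsto 4I_1I_2$ is increasing with limit $8/9$, and then perturb to a neighborhood of $(0,1)$. The only real difference is in how the perturbation is organized. The paper first disposes of the region $r\ge K$ for all $(a,\mu)$ at once via the crude bound $\frac{1}{(\mu Y+Y^2/2)^2}\le 4/Y^4$, which gives $4\varphi(r,a,\mu)\le \tfrac{8}{9}+\tfrac{8}{3r}$; it then obtains explicit Lipschitz control in $a$ and $\mu$ on the compact interval $[0,K]$, yielding a quantitative inequality $\bar C_{a,\mu}\le \max(9/10,\,8/9+C_K a+C_K|\mu-1|)$. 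Your version replaces this by an asymptotic expansion at $r\to\infty$ and a soft joint-continuity argument on $[r_0,R]$; this is perfectly valid but less explicit, and in particular the uniformity in $(a,\mu)$ of your $O(1/r^2)$ remainder at infinity needs to be checked (the paper's crude bound gives it for free).
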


Therefore, for any $a\in (0, \bar a)$, provided $\delta$ is small enough (say $\delta<1/50$) and $s_0$ is sufficiently large,
$$
C_a\leq 1- \frac{1}{20}
$$
and thus
\be\label{in:coercivity1}
\int_0^\infty \frac{(\p_Y f)^2}{U} w - \int_0^\infty \frac{f^2}{U^2}w
 \geq \frac{1}{40} \left(\int_0^\infty \frac{(\p_Y f)^2}{U} w + \int_0^\infty \frac{f^2}{U^2}w\right) - \delta b \int_0^\infty f^2 w.
\ee

There remains to estimate the two lower order terms, namely
$$
\int_0^\infty \frac{1}{U} \p_Y f \: f \p_Y w\quad \text{and } \int_0^\infty (U_{YY}-1) \left(\int_0^Y \frac{f}{U^2}\right)\: (f \p_Y w + \p_Y f w).
$$
For the first lower order term, we distinguish once again between the zones $Y \leq K s^{1/4}$ and $Y \geq K s^{1/4}$. Using the Cauchy-Schwartz inequality, we have
$$
\left|\int_0^\infty \frac{1}{U} \p_Y f \: f \p_Y w\right|\leq \left( \int_0^\infty \frac{(\p_Y f)^2}{U} w \right)^{1/2} \left(\int_0^\infty \frac{f^2}{U} \frac{(\p_Y w)^2}{w}\right)^{1/2}.
$$
For $Y\leq  K s^{1/4}$, we have, for $s_0$ large enough (depending on $a, m, \beta,K$)
$$
\left|\p_Yw\right|\leq 2 a Y^{-1} w.
$$
Using a Hardy inequality, we have
\be\label{Hardy-g}
\int_0^{Ks^{1/4}} f^2\frac{1}{Y^{2} U}w \leq C_H \int_0^{K s^{1/4}} \frac{(\p_Y f)^2}{ U} w
\ee
where the constant $C_H$ is defined by
$$
C_H=4\sup_{0<r<K s^{1/4}}\left( \int_{r}^{K s^{1/4}}\frac{1}{Y^{2} U}w \right)\left(\int_0^r  U \frac{1}{w}\right).
$$
As above, we have, provided $s_0$ is sufficiently large (depending on $m$, $\beta$, $K$)
$$
C_H \leq 16 \sup_{r>0} \left( \int_{r}^{\infty}\frac{1}{Y^{2+a} (Y+Y^2)}dY \right)\left(\int_0^r  Y^a (Y + Y^2)dY\right).
$$
Studying separately the cases $r<1$ and $r>1$, 
it can be easily proved that $C_H$ is bounded uniformly in $a$ and $s$, so that there exists a universal constant $\bar C$ such that for $s$ large enough
$$
\int_0^{K s^{1/4}}\frac{f^2}{U} \frac{(\p_Y w)^2}{w}\leq \bar C a\int_0^{K s^{1/4}} \frac{(\p_Y f)^2}{ U} w.
$$
This term is absorbed in the main order diffusion term for $a$ small enough.
Now, for $Y\geq K s^{1/4}$, we have
$$
\left|\p_Yw\right|\leq \left( a  + m \right) Y^{-1} w \leq (a+m) K^{-1} s^{-1/4} w,
$$
and $U(s,Y)\geq \frac{K^2}{4} s^{1/2}$. As a consequence,
$$
\int_{Ks^{1/4}}^\infty \frac{f^2}{U} \frac{(\p_Y w)^2}{w} \leq 4\frac{(a+m)^2}{K^4} s^{-1}\int_0^\infty f^2 w\leq \delta b \int_0^\infty f^2 w
$$
for $K$ large enough (depending on $m, \delta$). We infer that
\be
\label{in:coercivity2}
\left|\int_0^\infty \frac{1}{U} \p_Y f \: f \p_Y w \right| \leq \bar C a^{1/2} \int_0^\infty \frac{(\p_Y f)^2}{U} w + \delta b \int_0^\infty f^2 w.
\ee

We now address the second lower order term. We focus on the term involving $\p_Y f w$, since the one with $f \p_Y w$ can be treated with similar ideas combined with the same estimates as above. Using
  assumption \eqref{hyp:U_YY} on $\p_{YY} U$, we have
\begin{multline*}
\left|\int_0^{cs^{1/3}} (U_{YY}-1) \left(\int_0^Y \frac{f}{U^2}\right)\: \p_Y f w\right|\\\leq M_2 b \left(\int_0^{cs^{1/3}} \frac{(\p_Y f)^2}{ U} w \right)^{1/2}\left( \int_0^{cs^{1/3}} Y^4\left(\int_0^Y \frac{f}{U^2}\right)^2 U w \right)^{1/2}.
\end{multline*}
We separate the last integral in the right-hand side into three zones: $Y\leq 1$, $1\leq Y \leq K s^{1/4}$ for some large constant $K$, and $Y\geq K s^{1/4}$. Notice that if $Y\leq Ks^{1/4}$, then
$$
w(s,Y)\geq (1+ K s_0^{1/4-\beta})^{-m} Y^{-a}\geq \frac{1}{2}Y^{-a}
$$
for $s_0$ large enough (depending on $K$, $m$ and $\beta$).
For $Y\leq 1$, we have, using \eqref{Hardy-g}
$$
\left|\int_0^Y \frac{f}{U^2}\right|\leq \left(\int_0^Y \frac{f^2}{Y^{2+a} U}\right)^{1/2}\left(\int_0^Y \frac{Y^{2+a}}{U^3}\right)\leq \frac{\bar C}{a^{1/2}} \left(\int_0^1\frac{(\p_Y f)^2}{U}w\right)^{1/2}.
$$
And if $1\leq Y \leq K s^{1/4}$, using a simple Cauchy-Schwartz inequality,
$$
\int_0^Y \frac{|f|}{U^2}\leq\frac{\bar C}{a^{1/2}}  \left(\int_0^1\frac{(\p_Y f)^2}{U}w\right)^{1/2} + \bar C \left(\int_1^Y \frac{f^2}{U^2} w\right)^{1/2}.
$$
Therefore, we have
\begin{eqnarray*}
\int_0^{Ks^{1/4}}Y^4\left(\int_0^Y \frac{f}{U^2}\right)^2 U w 
&\leq &\frac{\bar C}{a} \left[\int_0^\infty \frac{(\p_Y f)^2}{U} w + \int_0^\infty \frac{f^2}{U^2}w\right] \int_0^{Ks^{1/4}}Y^{4-a} U\\
&\leq & \frac{\bar CK^7}{a}  \left[\int_0^\infty \frac{(\p_Y f)^2}{U} w + \int_0^\infty \frac{f^2}{U^2}w\right] s^{(7-a)/4}.
\end{eqnarray*}
It follows that
\begin{multline*}
M_2b\left(\int_0^{cs^{1/3}} \frac{(\p_Y f)^2}{ U} w \right)^{1/2}\left( \int_0^{K s^{1/4}} Y^4\left(\int_0^Y \frac{f}{U^2}\right)^2 U w \right)^{1/2}\\
\leq \bar C M_2 \frac{K^{7/2}}{a^{1/2}} s^{-\frac{1+a}{8}} \left( \int_0^\infty \frac{(\p_Y f)^2}{U} w + \int_0^\infty \frac{f^2}{U^2}w \right).
\end{multline*}
Therefore, for $s_0$ sufficiently large (depending on $K$ and $a$), this term can be absorbed in the main order diffusion term. On the other hand, using the same estimates as above,
\begin{eqnarray*}
&& \int_{Ks^{1/4}}^{c s^{1/3}} Y^4\left(\int_0^Y \frac{f}{U^2}\right)^2 U w  \\
&\leq &  \frac{C}{a} \left[\int_0^\infty \frac{(\p_Y f)^2}{U} w + \int_0^\infty \frac{f^2}{U^2}w\right] \int_{Ks^{1/4}}^\infty Y^4 U w  +  \int_{Ks^{1/4}}^{c s^{1/3}}  Y^4 U \left(\int_{Ks^{1/4}}^Y \frac{|f|}{U^2}\right)^2   w\\
&\leq & C_{a,m} s^{(7-a)\beta} \left[\int_0^\infty \frac{(\p_Y f)^2}{U} w + \int_0^\infty \frac{f^2}{U^2}w\right] + C_{m,\beta}(s) \int_{Ks^{1/4}}^{c s^{1/3}}  \frac{f^2}{U^4} w
\end{eqnarray*}
where
\begin{eqnarray*}
 C_{m,\beta}(s)&=& 4 \sup_{r\in (Ks^{1/4}, c s^{1/3})} \left(\int_{r}^{c s^{1/3}}  Y^4 U w \right)\left( \int_{Ks^{1/4}}^r \frac{1}{w}\right)\\
 &\leq & 8 s^{8\beta} \sup_{r\in (K s^{\frac{1}{4} - \beta}, c s^{\frac{1}{3} - \beta})}  \left(\int_{r}^{c s^{\frac{1}{3} - \beta}}  Z^{6-a} (1+ Z)^{-m} \; dZ \right)\left( \int_{Ks^{\frac{1}{4} - \beta}}^r Z^a(1+Z)^m \; dZ\right)\\
&\leq & C_{m} s^{8\beta} s^{8(\frac{1}{3} - \beta)}\leq C_m s^{8/3}.
\end{eqnarray*}
It follows that, if $\beta<2/7$, for $s_0$ sufficiently large,
\begin{eqnarray*}
&&M_2b\left(\int_0^{cs^{1/3}} \frac{(\p_Y f)^2}{ U} w \right)^{1/2}\left( \int_{K s^{1/4}}^{cs^{1/3}} Y^4\left(\int_0^Y \frac{f}{U^2}\right)^2 U w \right)^{1/2}\\
&\leq &  C_{a,m} M_2s^{(7-a)\frac{\beta}{2} -1} \left[\int_0^\infty \frac{(\p_Y f)^2}{U} w + \int_0^\infty \frac{f^2}{U^2}w\right] \\
&&+ \delta  \left[\int_0^\infty \frac{(\p_Y f)^2}{U} w + \int_0^\infty \frac{f^2}{U^2}w\right]\\
&&+ C_{\delta,m } b^2 s^{8/3} K^{-8} s^{-2} \int_0^\infty f^2 w\\
&\leq & \delta \left[\int_0^\infty \frac{(\p_Y f)^2}{U} w + \int_0^\infty \frac{f^2}{U^2}w + b \int_0^\infty f^2 w\right].
\end{eqnarray*}

There remains to consider the part of the second lower order term for $Y\geq c s^{1/3}$. Using the cut-off function $\chi$, we have
\begin{eqnarray*}
&&\left|\int_{cs^{1/3}}^\infty (U_{YY}-1) \left(\int_0^Y \frac{f}{U^2}\right)\: \p_Y f w\right|\\&\leq & \delta \int_0^\infty \frac{(\p_Y f)^2}{U} w + \frac{(M_2+1)^2}{4\delta} \int_{cs^{1/3}}^\infty U\left(\int_0^Y \left( 1 - \chi \left(\frac{Y}{s^{1/4}} \right) + \chi \left(\frac{Y}{s^{1/4}} \right) \right) \frac{f}{U^2}\right)^2 w\\
&\leq &  \delta \int_0^\infty \frac{(\p_Y f)^2}{U} w +\frac{C}{\delta}  \left(\int_0^{K s^{1/4}} \frac{|f|}{U^2}\right)^2 \int_{cs^{1/3}}^\infty U w\\&&+ \frac{C}{\delta}   \int_{cs^{1/3}}^\infty U\left(\int_0^Y \left( 1 - \chi \left(\frac{Y}{s^{1/4}} \right)  \right) \frac{f}{U^2}\right)^2 w.
\end{eqnarray*}
The second term in the right-hand side is estimated as above. Notice that for any $P>0$,
$$
\int_{cs^{1/3}}^\infty U w \leq\int_{cs^{1/3}}^\infty \left(Y+ \frac{Y^2}{2}\right) w \leq s^{-P}
$$
provided $m$ is sufficiently large (depending on $\beta$ and $P$). Therefore we obtain, for any $\delta>0$, provided $s\geq s_0$ with $s_0$ sufficiently large,
\begin{multline}\label{in:coercivity3}
\left| \int_0^\infty (U_{YY}-1) \left(\int_0^Y \frac{f}{U^2}\right)\: (f \p_Y w + \p_Y f w)\right| \\\leq \delta \left( \int_0^\infty \frac{(\p_Y f)^2}{U} w + \int_0^\infty \frac{f^2}{U^2}w + b \int_0^\infty f^2 w \right)+ \frac{C}{\delta}   \int_{cs^{1/3}}^\infty U\left(\int_0^Y \left( 1 - \chi \left(\frac{Y}{s^{1/4}} \right)  \right) \frac{f}{U^2}\right)^2 w.
\end{multline}

Gathering \eqref{in:coercivity1}, \eqref{in:coercivity2} and \eqref{in:coercivity3}, we conclude that there exists a positive universal constant $\bar c$ (say for instance $\bar c=1/50$), and $\bar a>0$, $\bar \delta>0$, such that for all $0<a<\bar a$ and for all $\beta \in (1/4, 2/7)$, $m\geq 1$, $\delta\in (0, \bar \delta)$, there exists $s_0>0$ such that if $s\geq s_0$, then
\begin{multline}\label{est:diff-D_1}
\int_0^\infty (\p_Y^2 \LU f) f w\geq \bar c \left( \int_0^\infty \frac{(\p_Y f)^2}{U}w + \int_0^\infty \frac{f^2}{U^2}w \right) - \delta b \int_0^\infty f^2 w\\
- C  \int_{cs^{1/3}}^\infty U\left(\int_0^Y \left( 1 - \chi \left(\frac{Y}{s^{1/4}} \right)  \right) \frac{f}{U^2}\right)^2 w.
\end{multline}

This completes the proof of Lemma \ref{lem:diff1}.

\end{proof}

\subsection{Structure of the commutator}

We record here some formulas and a few estimates that will be useful in the estimation of commutator terms.
We recall that $\mathcal D= \LU(\p_{YY} U-1)= \cLU V + \LU(\p_{YY} \Uapp-1)$, and that a decomposition of $\p_{YY} \Uapp-1$ is given in Lemma \ref{lem:cR}.
We also recall that the commutator $\cC$ is defined by $\cC=[\LU, \p_s + b/2 Y \p_Y] - b \LU$, and that according to Lemma \ref{lem:commutator}, 
$$
\mathcal C[W]= - \left(\cD\int_0^Y \frac{W}{U^2}\right)_Y + 2  \left(U\int_0^Y \frac{W}{U^3}\cD\right)_Y .
$$
\begin{lemma}
Let $W\in \mathcal C^2(\R_+)$ be arbitrary and such that $W= O(Y^2)$ for $Y\ll 1$.
Then
\begin{eqnarray*}
\mathcal C [W]&=& 2 \LU \left(\frac{\mathcal D}{U} W \right) - \left( \frac{\mathcal D}{U}  \int_0^Y \LU W \right)_Y,\\
\p_{YY} \mathcal C [W]&=& \frac{\mathcal D}{U}  \p_Y^2 \LU W + \p_Y \frac{\mathcal D}{U}  \left[ \p_Y \LU W - 2 \frac{U_Y}{U^2} W - 4 U_{YY} \int_0^Y \frac{W}{U^2}\right]\\
&&+ \p_Y^2 \frac{\cD}{U} \left[- 3 \LU W + 2 \frac{W}{U}\right] -  \p_Y^3 \frac{\cD}{U} \int_0^Y \LU W\\
&&+ 2 \p_Y^3 U \int_0^Y \frac{W \cD}{U^3} - 2 \p_Y^3 U \frac{\cD}{U} \int_0^Y \frac{W}{U^2}.
\end{eqnarray*}

\label{lem:commu-formula}
\end{lemma}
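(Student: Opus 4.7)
The proof will be a direct algebraic computation. The key identity behind both formulas is that, since $U(0)=0$, for any $W$ with $W=O(Y^2)$ near $Y=0$ one has
\[
\int_0^Y \LU W\; dY' = \left[U(Y')\int_0^{Y'}\frac{W}{U^2}\right]_{0}^{Y}= U(Y)\int_0^Y\frac{W}{U^2},
\]
simply by recognizing $\LU W$ as a total derivative via \eqref{LU}. I will use this identity to rewrite both terms in the definition of $\mathcal{C}[W]$.

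For the first identity, I start from
\[
\mathcal{C}[W]=-\left(\mathcal{D}\int_0^Y \frac{W}{U^2}\right)_Y+2\left(U\int_0^Y \frac{W\mathcal{D}}{U^3}\right)_Y.
\]
In the second term I recognize the expression $\left(U\int_0^Y \frac{\mathcal D W/U}{U^2}\right)_Y= \LU\!\left(\frac{\mathcal D}{U}W\right)$, which produces the factor $2\LU(\mathcal D W/U)$. In the first term I use the identity above in reverse, writing $\int_0^Y W/U^2= U^{-1}\int_0^Y \LU W$, so that $\mathcal D\int_0^Y W/U^2=\frac{\mathcal D}{U}\int_0^Y \LU W$. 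This yields the first formula.

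For the second identity, I apply $\partial_Y^2$ to this representation. The term $2\partial_Y^2\LU(\mathcal D W/U)$ is expanded using the formulas of Lemma \ref{lem:p3LU}, which give $\partial_Y \LU f = U_{YY}\int_0^Y f/U^2+ f_Y/U$ and $\partial_Y^2 \LU f= U_{YYY}\int_0^Y f/U^2 + U_{YY} f/U^2 + (f_Y/U)_Y$ applied to $f=\mathcal D W/U$. The term $\partial_Y^3\bigl(\frac{\mathcal D}{U}\int_0^Y \LU W\bigr)$ is expanded by the Leibniz rule: writing $A=\mathcal D/U$ and $B=\int_0^Y \LU W$, one gets $(AB)_{YYY}=AB_{YYY}+3A_Y B_{YY}+3A_{YY}B_Y+A_{YYY}B$, and then $B_Y=\LU W$, $B_{YY}=\partial_Y\LU W$ and $B_{YYY}=\partial_Y^2 \LU W$ are computed via the same formulas from Lemma \ref{lem:p3LU}.

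The last step is pure bookkeeping: I group the resulting terms according to how many derivatives fall on $\mathcal D/U$, and use the identity $\int_0^Y \LU W= U\int_0^Y W/U^2$ a second time to convert integrals of $\LU W$ back into integrals of $W/U^2$ whenever they appear multiplied by factors containing $U_{YYY}$. After cancellation of the contributions coming from $\partial_Y^2\LU$ (the $(f_Y/U)_Y$ piece) against the terms $3A_Y B_{YY}$ and $A B_{YYY}$, only the terms displayed in the statement survive; the $2\partial_Y^3 U\int_0^Y W\mathcal D/U^3$ and $-2\partial_Y^3 U\,(\mathcal D/U)\int_0^Y W/U^2$ contributions arise precisely from the $U_{YYY}$ terms in $\partial_Y^2\LU(\mathcal D W/U)$ and in $A B_{YYY}$ respectively. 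The main (and only) obstacle is keeping the signs and combinatorial factors straight in this final rearrangement; there is no analytic difficulty, since all manipulations are formal and justified by the regularity and boundary behavior of $W$, $U$ and $\mathcal D$.
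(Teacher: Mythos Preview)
Your approach is correct and is essentially the same as the paper's: both proofs use the identity $\int_0^Y \LU W = U\int_0^Y W/U^2$ for the first formula, and for the second formula both expand $2\,\p_Y^2\LU(\cD W/U)$ via Lemma~\ref{lem:p3LU} together with the Leibniz expansion of $\p_Y^3\!\bigl(\tfrac{\cD}{U}\int_0^Y \LU W\bigr)$, then regroup by the number of derivatives on $\cD/U$ using $\tfrac{\p_Y W}{U}=\p_Y\LU W-U_{YY}\int_0^Y \tfrac{W}{U^2}$.

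One small bookkeeping correction: the coefficient $-2$ in front of $\p_Y^3 U\,\tfrac{\cD}{U}\int_0^Y W/U^2$ does not come solely from the $U_{YYY}$ part of $AB_{YYY}$ as you write; that term only contributes $-1$. The remaining $-1$ appears when you convert the zero-derivative-on-$A$ contributions from $2\,\p_Y^2\LU(\cD W/U)$, namely $A\bigl[2U_{YY}W/U^2-2U_YW_Y/U^2+2W_{YY}/U\bigr]$, into the form $2A\,\p_Y^2\LU W - 2A\,\p_Y^3 U\int_0^Y W/U^2$. Combining this with $-A\,\p_Y^2\LU W$ from $-AB_{YYY}$ then gives the net $+A\,\p_Y^2\LU W$ and the $-2A\,\p_Y^3 U\int_0^Y W/U^2$ of the statement. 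This is purely a matter of attribution and does not affect the validity of your argument.
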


\begin{remark}
In the estimations of $E_1$ and $E_2$, we will use the form of $\p_{YY} \mathcal C[W]$ with $W=\p_Y^2 V$ and $W= \p_Y^2 \cLU V$ respectively. Notice in particular that using Corollary \ref{cor:est-cD}, for any weight $w$ with a strong polynomial decay for $Y\geq s^{\beta}$ for some $\beta\in [1/4, 2/7]$,
\begin{multline*}
\left|\int_0^\infty  \frac{\mathcal D}{U}  \left(\p_Y^2 \LU W\right)\left(\p_Y^2 \LU W\right) w \right|\\\leq C b \int_0^\infty \frac{1}{1+Y}\left(\p_Y^2 \LU W\right)^2 w + \left\|  \frac{\mathcal D}{U}  \right\|_{L^\infty(Y\gtrsim s^{1/3})} \int_{cs^{1/3}}^\infty \left(\p_Y^2 \LU W\right)^2 w.
\end{multline*}
In order to control the tail of the integral, we will use lower order estimates. More precisely, we will use a control of $\p_Y W$ in $L^2_{s,Y}$ (with appropriate weights in $s$ and $Y$). We refer to Lemma \ref{lem:tails} for details.

\label{rem:est:commu}
\end{remark}

\begin{proof}

The first formula follows easily by recalling the definition of $\LU$ and noticing that
$$
\int_0^Y \frac{W}{U^2}=\frac{1}{U}\int_0^Y \LU W.
$$
The second formula is a consequence of Lemma \ref{lem:p3LU}. Notice indeed that
\begin{eqnarray*}
\p_Y^2 \LU  \left(\frac{\mathcal D}{U} W \right)&=& \p_Y^3 U \int_0^Y \frac{\cD W}{U^3} + \p_Y^2 U \frac{\cD W}{U^3} - \frac{U_Y}{U^2}\p_Y \left(\frac{\cD W}{U}\right) + \frac{1}{U} \p_Y^2 \left(\frac{\cD W}{U}\right)\\
&=&  \p_Y^3 U \int_0^Y \frac{\cD W}{U^3} + \frac{\cD}{U}\p_Y^2 W - \p_Y^3 U \frac{\cD}{U}\int_0^Y \frac{W}{U^2}\\
&& - \frac{U_Y}{U^2}\p_Y \left(\frac{\cD }{U}\right) W + \frac{2}{U}\p_Y \left(\frac{\cD }{U}\right)\p_Y W + \frac{1}{U}\p_Y^2 \left(\frac{\cD }{U}\right) W.
\end{eqnarray*}
Then, writing
$$
\frac{\p_Y W}{U}= \p_Y \LU W - U_{YY}\int_0^Y \frac{W}{U^2}
$$
and gathering the terms, we obtain the expression announced in Lemma \ref{lem:commu-formula}

\end{proof}

\subsection{Useful inequalities}

\subsubsection{Evaluation of some remainder terms}

In equations \eqref{eq:cLUV}, \eqref{eq:cLU2V}, several terms behave heuristically like 
$$
b \frac{Y}{U} f,
$$ 
where $f=\cLU V$ in \eqref{eq:cLUV} and $f=\cLU^2 V$ in \eqref{eq:cLU2V}. This is for instance the case for the main order commutator term in \eqref{eq:cLUV} (see Remark \ref{rem:est:commu})
or for the remainder term
$$
\frac{b}{2} \cLU \LU \left( Y V - \frac{Y^2}{2} V_Y\right).
$$
(Recall that $\LU$ behaves like a division by $U$ and $\p_Y$ like a division by $Y$.)
Therefore it is useful to have a systematic way to estimate such remainder terms. To that end, let us first recall that the $L^\infty$ estimates given by Proposition \ref{prop:max-princ} ensure that
$$
Y + \frac{Y^2}{2} - \frac{M_2}{12} b Y^4\leq U \leq Y + \frac{Y^2}{2}\quad \forall Y \in [0, cs^{1/3}],
$$
so that there exist constants $C, c >0$ such that
$$
\frac{Y}{U} \leq \frac{C}{1+Y}\quad \forall Y \leq c s^{1/3}.
$$
We then have the following result:
\begin{lemma}
Assume that
$$
\frac{1}{2s}\leq b \leq \frac{2}{s} \quad \forall s\in [s_0, s_1].
$$
Define the weight $w(s,Y):= Y^{-a} (1+ s^{-\beta} Y)^{-m}$ for some $a>0, m>1, \beta>1/4$.

Let $\delta>0$ arbitrary. Then there exists $s_0>0$, depending  on $\delta, \beta, m$, such that the following properties holds:
For all  $f\in L^\infty (\R_+)$ such that $ f= O(Y)$ for $Y\ll 1$, for all $s\geq s_0$,
$$
b \int_0^\infty \frac{1}{1+Y} f^2 w \leq \delta b \int_0^\infty f^2 w + \delta \int_0^\infty \frac{f^2}{U^2} w.
$$

\label{lem:rem-1+Y}

\end{lemma}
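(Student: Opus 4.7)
The plan is to split the integration domain at the threshold $Y_\star := (\delta s/4)^{1/3}$ and estimate each piece separately, exploiting the gain $1/(1+Y) \leq 1/Y_\star$ in the outer region $Y \geq Y_\star$, and the upper bound $U \leq Y + Y^2/2$ (a standing consequence of $U_{YY}\leq 1$ from Lemma~\ref{lem:U_YY-prelim}, valid for all $Y\geq 0$) in the inner region $Y \leq Y_\star$.

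For the outer piece, the estimate $1/(1+Y) \leq 1/Y_\star$ gives
\[
b\int_{Y_\star}^\infty \frac{f^2\, w}{1+Y} \leq \frac{b}{Y_\star}\int_0^\infty f^2\, w,
\]
and taking $s_0 \geq 4\delta^{-4}$ ensures $Y_\star \geq 1/\delta$, so this piece is bounded by $\delta b\int f^2 w$. For the inner piece, the bound $U \leq Y(1+Y/2)$ yields $U^2/(1+Y) \leq Y^2(1+Y) \leq 2Y_\star^3$ (once $Y_\star\geq 1$), and combining with $b\leq 2/s$ and the definition of $Y_\star$ gives $bU^2/(1+Y) \leq \delta$ on $[0, Y_\star]$. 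This rearranges to the pointwise inequality $b/(1+Y) \leq \delta/U^2$, whence
\[
b\int_0^{Y_\star} \frac{f^2\, w}{1+Y} \leq \delta \int_0^\infty \frac{f^2\, w}{U^2}.
\]
The hypothesis $f = O(Y)$ near $0$ makes the right-hand integral finite at the origin, since $U\sim Y$ there.

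There is no real obstacle: the argument is elementary once one identifies the critical scale $Y \sim s^{1/3}$ at which $b/(1+Y) \sim 1/U^2$, i.e.\ at which the two competing terms on the right-hand side of the claimed inequality balance. The dependence of $s_0$ on $\beta$ and $m$ announced in the statement does not actually enter this computation, since the weight $w$ appears identically on both sides of the inequality; it is listed only for uniformity with the other estimates of this subsection.
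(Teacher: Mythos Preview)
Your proof is correct and follows essentially the same approach as the paper: split the integration domain into an inner and an outer piece, use $1/(1+Y)$ small on the outer piece to produce the $\delta b\int f^2 w$ term, and use the pointwise bound $U\leq Y+Y^2/2$ on the inner piece to produce the $\delta\int f^2 w/U^2$ term. The only cosmetic difference is the splitting point: the paper uses the fixed threshold $Y=\delta^{-1}$ rather than your $s$-dependent $Y_\star=(\delta s/4)^{1/3}$, which leads to the same constraint $s_0\gtrsim\delta^{-4}$. Your remark that $\beta,m$ play no role is also correct.
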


\begin{proof}

We split the integral in two, distinguishing between $Y\leq \delta^{-1}$ and $Y\geq\delta^{-1}$. First, it is clear that
$$
\int_{\delta^{-1}}^\infty  \frac{1}{1+Y} f^2 w_1 \leq \delta \int_{\delta^{-1}}^\infty f^2 w_1.
$$
Thus we focus on the set $Y \leq \delta^{-1}$. On this set, we have
$$
\frac{1}{U^2}\geq\frac{1}{Y^2\left(1+\frac{Y}{2}\right)^2},
$$
so that
$$
s \geq 2 \delta^{-3} (1+ \delta^{-1})\Rightarrow\frac{b}{1+Y} \leq \frac{\delta}{U^2}\quad \forall Y \in [0, \delta^{-1}] .
$$

\end{proof}

\subsubsection{Control of integral tails}

\begin{lemma}
Assume that $U$ satisfies the following assumptions:
$$
\ba
U(s,Y)\leq  Y + \frac{Y^2}{2},\\
U_Y \leq 1+Y,\\
|U_{YY}|\leq C.
\ea
$$

Let $p, p_0$ be positive weights given by
$$
p(s,Y)= Y^{-k} (1+ s^{-\beta} Y)^{-m},\quad p_0(s,Y)= Y^{-k_0} (1+ s^{-\beta_0} Y)^{-m_0},
$$
for some $k,k_0\geq 0$, $m\geq m_0$, $\beta<\beta_0$. Let $\alpha_0>\beta_0$.

Let $P>0$ be arbitrary. Then there exists  $m_P\geq 1$ (depending on $\alpha_0, \beta, \beta_0, k, k_0$) such that if  and $m_0, m-m_0\geq m_P$, then for all $\delta>0$:

\begin{itemize}
\item For $a>0$, for all $W\in W^{3,\infty}$ such that $W=O(Y^2)$ for $Y\ll 1$,
\begin{multline*}
\int_{cs^{\alpha_0}}^\infty \left(\p_Y^2 \LU W\right)^2 p \\\leq \delta \int_0^\infty \frac{(\p_Y^3 \LU W)^2}{U} p + \frac{s^{-P}}{\delta}  \left(\int_0^\infty \frac{(\p_Y W)^2}{U} p_0 + \int_0^\infty W^2 p_0 + C_a\int_0^1 \frac{W^2}{Y^{3+a}}\right);
\end{multline*}

\item Let $\alpha_1\leq \beta$, and let $\chi \in \mathcal C^\infty_0(\R)$ be a cut-off function such that $\chi\equiv 1$ in a neighbourhood of zero. Then
\begin{multline*}
\int_{c s^{\alpha_0}}^\infty U \left( \int_0^Y (1-\chi)\left(\frac{Y}{s^{\alpha_1}}\right) \frac{\p_Y^2 \LU W}{U^2}\right)^2 p \\\leq C s^{-P}\left(\int_0^\infty W^2 p_0+ \int_0^1 \frac{W^2}{Y^{3+a}} + \int_0^\infty \frac{(\p_Y W)^2}{U} p_0 \right).
\end{multline*}

\end{itemize}
, 

\label{lem:tails}
\end{lemma}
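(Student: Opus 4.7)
The proof rests on two ingredients: a tail weight comparison between $p$ and $p_0$, and an integration-by-parts scheme that trades a derivative on $\LU W$ for a factor of $U$ in the denominator.

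First I would establish the weight comparison. For $Y \geq cs^{\alpha_0}$, since $\alpha_0 > \beta_0 > \beta$, we have $1+s^{-\beta}Y \gtrsim s^{-\beta}Y$ and $1+s^{-\beta_0}Y \lesssim s^{-\beta_0}Y$, so
\[
\frac{p(s,Y)}{p_0(s,Y)} \lesssim Y^{k_0-k+m_0-m}\, s^{\beta m - \beta_0 m_0} \lesssim s^{-(\alpha_0-\beta)(m-m_0)-(\beta_0-\beta)m_0 + \alpha_0(k-k_0)}.
\]
An analogous bound with extra prefactors $Y^j s^\ell$ holds, absorbed by further enlarging $m-m_0$ and $m_0$. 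Given $P>0$, choosing $m_P$ large enough makes each such factor $\leq s^{-P}$; this defines $m_P$.

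For the first estimate, I would introduce a smooth cutoff $\eta(Y):=\eta_0(Y/s^{\alpha_0})$ vanishing for $Y\leq c/2$ and equal to $1$ for $Y\geq c$, then integrate by parts:
\[
\int_0^\infty (\p_Y^2\LU W)^2 p\eta^2 = -\int_0^\infty (\p_Y\LU W)(\p_Y^3\LU W)p\eta^2 - \int_0^\infty (\p_Y\LU W)(\p_Y^2\LU W)(p\eta^2)',
\]
with no boundary terms (the cutoff kills $Y=0$; the decay of $p$ handles $\infty$). Young's inequality on the first term produces the desired $\delta\int (\p_Y^3\LU W)^2/U \cdot p$ plus $\delta^{-1}\int U(\p_Y\LU W)^2 p\eta^2$; since $|(p\eta^2)'/(p\eta^2)|\lesssim 1/Y \lesssim 1/\sqrt{U}$ on $\operatorname{supp}\eta$, the second integral is also controlled. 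It then remains to bound $\int_{\{Y\gtrsim s^{\alpha_0}\}} U(\p_Y\LU W)^2 p$. I would use the identity $\p_Y\LU W = U_{YY}\int_0^Y W/U^2 + \p_Y W/U$ with $|U_{YY}|\leq C$: the local term $\p_Y W/U$ gives a contribution directly bounded by $s^{-P}\int (\p_Y W)^2 p_0/U$ via the weight comparison, while the nonlocal integral $\int_0^Y W/U^2$ is split at $Y=1$. The contribution from $[0,1]$ is bounded by $C_a\int_0^1 W^2/Y^{3+a}$ via Cauchy--Schwarz with the weight $Y^{3+a}/U^4\sim Y^{a-1}$ (integrable for $a>0$), and the contribution from $[1,Y]$ by $(\int_0^\infty W^2 p_0)\cdot \int_1^Y dY'/(U^4 p_0)$ via Cauchy--Schwarz with weight $p_0$. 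Multiplying by $Up\eta^2$ and integrating in $Y$, the resulting $Y$-integrals all fall under the weight comparison.

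For the second estimate, since $(1-\chi)(\cdot/s^{\alpha_1})$ is supported in $\{Y\geq c's^{\alpha_1}\}$, I would apply Cauchy--Schwarz to the inner integral with weight $1/U^2$:
\[
\left(\int_{c's^{\alpha_1}}^Y \frac{\p_Y^2\LU W}{U^2}\right)^2 \leq \left(\int_{c's^{\alpha_1}}^Y \frac{dY'}{U^2}\right)\left(\int_{c's^{\alpha_1}}^Y \frac{(\p_Y^2\LU W)^2}{U^2}\right),
\]
the first factor being $O(s^{-\alpha_1})$ on the outer tail. Swapping the order of integration reduces to estimating $\int (\p_Y^2\LU W)^2 \tilde p$ for a modified weight $\tilde p$ of the same form, which is then handled by the first estimate of the lemma applied with appropriate parameters. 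The main obstacle is the behaviour of the nonlocal term $\int_0^Y W/U^2$ near $Y=0$, where $U\sim Y$ makes the integrand only marginally controllable; this is precisely why the hypothesis $W=O(Y^2)$ together with $a>0$ is crucial, as the Hardy-type quantity $\int_0^1 W^2/Y^{3+a}$ in the right-hand side is exactly what is needed to absorb the near-origin singularity.
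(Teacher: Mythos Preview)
Your approach to the first estimate is essentially the paper's: a cutoff, one integration by parts trading $\p_Y^2\LU W$ for $\p_Y\LU W\cdot\p_Y^3\LU W$, Young's inequality to isolate the $\p_Y^3\LU W$ diffusion term, and then the explicit formula $\p_Y\LU W = U_{YY}\int_0^Y W/U^2 + \p_Y W/U$ combined with the weight comparison to produce the $s^{-P}$ factor. The only minor variation is your handling of the cross term $\int(\p_Y\LU W)(\p_Y^2\LU W)(p\eta^2)'$: the paper integrates by parts once more to rewrite it as $\tfrac12\int(\p_Y\LU W)^2\,\p_Y^2\bigl[(1-\chi)p\bigr]$, whereas you absorb the $\p_Y^2\LU W$ factor back into the left-hand side using the smallness of $|(p\eta^2)'|/(p\eta^2)$ on the tail. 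Both are fine.

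The second estimate, however, has a genuine gap. Your Cauchy--Schwarz plus Fubini step leaves you with $\int(\p_Y^2\LU W)^2\,\tilde p$, and you propose to invoke the first estimate. But the first estimate does not give a clean bound: its right-hand side contains $\delta\int(\p_Y^3\LU W)^2 U^{-1}\tilde p$. Since $\p_Y^3\LU W$ involves $\p_Y^3 W/U$ (differentiate the formula in Lemma~\ref{lem:p3LU}), this is a quantity of order $\p_Y^3 W$---or, if you skip the integration by parts and expand $\p_Y^2\LU W$ directly, still of order $\p_Y^2 W$---while the target right-hand side contains only $\int W^2 p_0$, $\int_0^1 W^2 Y^{-3-a}$, and $\int(\p_Y W)^2 U^{-1} p_0$. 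No amount of $s^{-P}$ decay rescues this: the extra derivative on $W$ is simply not controlled by those integrals. The paper circumvents the problem by integrating by parts \emph{inside the inner integral} before anything else, writing
\[
\int_0^Y(1-\chi)\Bigl(\frac{Y'}{s^{\alpha_1}}\Bigr)\frac{\p_{Y'}^2\LU W}{U^2}
= -\int_0^Y\p_{Y'}\LU W\;\p_{Y'}\Bigl[(1-\chi)\Bigl(\frac{Y'}{s^{\alpha_1}}\Bigr)\frac{1}{U^2}\Bigr]
\]
(the cutoff kills the boundary contribution at $Y'=0$). This lowers the order by one derivative, and since $\p_Y\LU W$ involves only $W$ and $\p_Y W$, the bound then closes directly via Cauchy--Schwarz with weight $p_0$ and the tail weight comparison.
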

\begin{remark}
Notice that a similar estimate also holds for quantities such as
$$
\int_{cs^{\alpha_0}}^\infty \left(\p_Y^3 \LU W\right) \left(\p_Y^2 \LU W\right) p.
$$
Indeed, the integral above can be transformed after a straightforward integration by parts into an quantity similar to the one handled in the Lemma (provided $p(cs^{\alpha_0})=0$, which we can always assume without loss of generality, up to the addition of a cut-off function.)

\end{remark}

\begin{remark}
We will use these estimates in the next section and we will make the following choices
\begin{itemize}
\item $\alpha_0=1/3, \alpha_1=1/4$;
\item $W=\p_Y^2 V$, $p=w_1$, $p_0=w_0= Y^{-a} (1+ s^{-\beta_0})^{-m_0}$,  (estimate on $E_1$ from Proposition \ref{prop:est-cLUV-1});
\item $W=\p_Y^2 \cLU V$, $p=w_2$, $p_0=w_1$  (estimate on $E_2$ from Proposition \ref{prop:est-cLU2V}).

%
\end{itemize}

\end{remark}

\begin{proof}
Let $\chi\in C^\infty_0(\R)$ be a non-negative cut-off function such that $\chi\equiv 1$ in $ \subset [-c/2,c/2]$ and $\mathrm{Supp} \;\chi \subset [-c,c]$ . Then
\begin{eqnarray*}
\int_{cs^{\alpha_0}}^\infty \left(\p_Y^2 \LU W\right)^2 p& \leq & \int_0^\infty (1-\chi) \left(\frac{Y}{s^{\alpha_0}}\right)\left(\p_Y^2 \LU W\right)^2 p\\
&=&- \int_0^\infty \p_Y \LU W \:\p_Y \left(\p_Y^2 \LU W (1-\chi) \left(\frac{Y}{s^{\alpha_0}}\right) p \right).
\end{eqnarray*}
We then estimate each term in the right-hand side separately. For instance,
\begin{multline*}
 \int_0^\infty | \p_Y \LU W | \; \left| \p_Y^3 \LU W\right|  (1-\chi)\left(\frac{Y}{s^{\alpha_0}}\right) p\\
 \leq \delta \int_0^\infty \frac{(\p_Y^3 \LU W)^2}{U} p + \frac{1}{4 \delta} \int_0^\infty {(\p_Y \LU W)^2} U (1-\chi) \left(\frac{Y}{s^{\alpha_0}}\right) p.
\end{multline*}
Then, recalling that 
$$
|\p_Y \LU W | \leq \int_0^Y \frac{|W|}{U^2} + \frac{| \p_Y W|}{U},
$$
we infer that
$$
\int_0^\infty {(\p_Y \LU W)^2} U (1-\chi) \left(\frac{Y}{s^{\alpha_0}}\right) p \leq 2\int_{\frac{cs^{\alpha_0}}{2}}^\infty \frac{(\p_Y W)^2}{U} p + \int_{\frac{cs^{\alpha_0}}{2}}^\infty \left( \int_0^Y \frac{|W|}{U^2} \right)^2 U p .
$$
If $Y\gtrsim s^{\alpha_0}$, write
$$
\int_0^Y \frac{|W|}{U^2}\leq C_a\left(\int_0^1 \frac{W^2}{Y^{3+a}}\right)^{1/2} + C \left(\int_0^Y W^2 p_0\right)^{1/2} \left(\int_0^Y p_0^{-1}\right)^{1/2}.
$$
Using the fact that $\|U(s)\|_{\infty}=O(s)$ together with the assumptions on $p$ and $p_0$, we obtain the desired result. 

As for the other term, we have
\begin{eqnarray*}
&&-\int_0^\infty \p_Y \LU W \: \p_Y^2 \LU W\:\p_Y \left( (1-\chi) \left(\frac{Y}{s^{\alpha_0}}\right) p \right)\\
&=& \frac{1}{2}\int_0^\infty (\p_Y \LU W)^2 \p_{YY} \left( (1-\chi) \left(\frac{Y}{s^{\alpha_0}}\right) p \right)\\
&\leq & C s^{-2/3}\int_{cs^{\alpha_0}/2}^\infty (\p_Y \LU W)^2 p,
\end{eqnarray*}
which is evaluated with the same estimate as above.

%

As for the second estimate, notice that thanks to the cut-off function $\chi$, we can integrate by parts without having to deal with boundary terms, so that
\be\label{tails}
\int_0^Y (1-\chi)\left(\frac{Y}{s^{\alpha_1}}\right) \frac{\p_Y^2 \LU W}{U^2}= - \int_0^Y \p_Y \LU W \p_Y \left[ (1-\chi)\left(\frac{Y}{s^{\alpha_1}}\right) \frac{1}{U^2}\right].
\ee
Recall that
$$
\p_Y \LU W = \frac{\p_Y W}{U} + U_{YY} \int_0^Y \frac{W}{U^2}.
$$
Then the integral in the right-hand side of \eqref{tails} is bounded by
$$
\left(\int_0^\infty W^2 p_0+ \int_0^1 \frac{W^2}{Y^{3+a}} + \int_0^\infty \frac{(\p_Y W)^2}{U} p_0 \right)^{1/2} \left(\int_0^Y Q(s,Y) p_0^{-1}\right)^{1/2},
$$
for some function $Q$ that can be computed explicitely and that has at most polynomial growth in $s$ and $Y$. We conclude as before by choosing $m$ sufficiently large.

\end{proof}

\subsubsection{A special case of Hardy inequalities}

We will often use the weighted Hardy inequality from Lemma \ref{lem:Hardy} in the following case:

\begin{lemma}
Let $k\geq 2$ be arbitrary, and let  $w=w_i$ for $i=1,2$. Then there exists a constant $C=C(k,m_i,a)$, independent of $s$, such that the following inequalities hold: if $s$ is large enough, then for any $f\in H^1_{loc}(\R_+)$ such that $f(0)=0$, 
$$
\ba
\int_0^\infty \frac{1}{(1+Y)^k} f^2 w \leq C\int_0^\infty \frac{1}{(1+Y)^{k-2}} (\p_Y f)^2 w,\\
\int_0^\infty \frac{1}{Y^k} f^2 w \leq C\int_0^\infty \frac{1}{Y^{k-2}} (\p_Y f)^2 w\text{ (provided $f=O(Y^{k/2})$ for $0<Y\ll 1$)}.
\ea
$$
\label{lem:Hardy-1}
\end{lemma}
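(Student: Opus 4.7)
The strategy is to apply the weighted Hardy inequality of Lemma~\ref{lem:Hardy} (with $R=+\infty$) twice, once for each of the two estimates. For the first inequality I would take $p_1=(1+Y)^{-k}w$ and $p_2=(1+Y)^{-(k-2)}w$; for the second, $p_1=Y^{-k}w$ and $p_2=Y^{-(k-2)}w$. In both cases the only nontrivial point is to verify that the associated Hardy constant
\[
C_H \;=\; 4\sup_{r>0}\left(\int_r^\infty p_1\,dY\right)\left(\int_0^r \frac{dY}{p_2}\right)
\]
is bounded by a constant $C=C(k,a,m_i)$ independent of $s$, for $s$ large enough.

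I would estimate this supremum by splitting $r$ according to the three natural scales built into $w=Y^{-a}(1+s^{-\beta_i}Y)^{-m_i}$: (i) $0<r\leq 1$, where $1+Y\sim 1$ and $1+s^{-\beta_i}Y\sim 1$; (ii) $1\leq r\leq s^{\beta_i}$, where $1+Y\sim Y$ but $1+s^{-\beta_i}Y\sim 1$; (iii) $r\geq s^{\beta_i}$, where $1+Y\sim Y$ and $1+s^{-\beta_i}Y\sim s^{-\beta_i}Y$. In each regime, replacing the two factors by their asymptotic behaviour reduces the two integrals defining $C_H$ to elementary power integrals whose $s$-dependencies cancel: for example, in regime (iii) one gets $\int_r^\infty p_1\sim s^{\beta_i m_i}\,r^{1-k-a-m_i}$ and $\int_0^r p_2^{-1}\sim s^{-\beta_i m_i}\,r^{k-1+a+m_i}$, whose product is $s$-independent, and the same cancellation occurs in regimes (i) and (ii). Convergence of the integrals at $0$ and $\infty$ in each regime uses only $k\geq 2$, $0<a<1$ (as in the bound $a<\bar a$ of Proposition~\ref{prop:est-cLUV-1}), and $m_i\in\N$; no size constraint on $m_i$ is needed here.

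The additional vanishing $f=O(Y^{k/2})$ is used only to guarantee that the left-hand side of the second inequality is finite near $Y=0$: there $f^2/Y^k\cdot w\sim Y^{-a}$, which is integrable. For the first inequality, the factor $(1+Y)^{-k}$ is bounded at the origin, so only the hypothesis $f(0)=0$ (implicit in $f\in H^1_{loc}$ with the stated vanishing at zero) is needed, and Lemma~\ref{lem:Hardy} applies directly. The only technical obstacle is the bookkeeping across the three regimes for $r$; each individual computation is a short power-integral estimate in which all $s$-powers cancel, so the output is a bound with a purely $(k,a,m_i)$-dependent constant, as required.
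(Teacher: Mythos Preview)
Your proposal is correct and follows essentially the same route as the paper: apply Lemma~\ref{lem:Hardy} with the obvious choices of $p_1,p_2$ and check that the Hardy constant $C_H$ is bounded independently of $s$. The only difference is organizational: the paper splits into the two cases $r<1$ and $r>1$, and in the second case absorbs your regimes (ii) and (iii) into a single step via the substitution $Z=s^{-\beta_i}Y$, which reduces the product $\bigl(\int_r^\infty w/Y^k\bigr)\bigl(\int_1^r Y^{k-2}w^{-1}\bigr)$ to $\sup_{r'>0}\bigl(\int_{r'}^\infty Z^{-k-a}(1+Z)^{-m_i}\bigr)\bigl(\int_0^{r'}Z^{k+a-2}(1+Z)^{m_i}\bigr)$, manifestly $s$-independent. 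Your three-regime split achieves the same cancellation by hand; the paper's change of variables is just a more compact way of packaging it.
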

\begin{remark}
Obviously the Lemma can be extended to weights of the form $Y^{-k}(1+Y)^{-l}w$ with $k+l\geq 2$.
\end{remark}

\begin{proof}
We focus on the first inequality, since the second one goes along the same lines (and is in fact slightly easier). Lemma \ref{lem:Hardy} entails that
$$
\int_0^\infty \frac{1}{(1+Y)^k} f^2 w \leq C_H\int_0^\infty \frac{1}{(1+Y)^{k-2}} (\p_Y f)^2 w
$$
where
$$
C_H=4\sup_{r>0}\left(\int_{r}^\infty \frac{w}{(1+Y)^k}\right) \left(\int_0^r (1+Y)^{k-2} w^{-1}\right). 
$$

We distinguish between the cases $r<1$ and $r>1$. If $Y\leq r<1$, then for $s$ large enough $w^{-1}\leq 2 Y^{a}$, and
$$
\left(\int_{r}^\infty \frac{w}{(1+Y)^k}\right) \left(\int_0^r (1+Y)^{k-2} w^{-1}\right)\leq C_{k,a}.
$$
If $r>1$, then writing $\int_0^r=\int_0^1 + \int_1^r$, we obtain
\begin{eqnarray*}
&&\left(\int_{r}^\infty \frac{w}{(1+Y)^k}\right) \left(\int_0^r (1+Y)^{k-2} w^{-1}\right)\\
&\leq & 2\left(\int_{1}^\infty \frac{Y^{-a}}{(1+Y)^k}\right)\left(\int_0^1 (1+Y)^{k-2} Y^a\right) + 2^{k-2}\left(\int_{r}^\infty \frac{w}{Y^k}\right) \left(\int_1^r Y^{k-2} w^{-1}\right)\\
&\leq & C_{k,a} + C_k \left(\int_{s^{-\beta}r}^\infty Z^{-k-a} (1+Z)^{-m }dZ\right) \left(\int_{s^{-\beta}}^{s^{-\beta}r} Z^{k+a-2} (1+Z)^m dZ \right),
\end{eqnarray*}
so that
$$
C_H \leq C_{k,a} + C_k \sup_{r'>0}\left(\int_{r'}^\infty Z^{-k-a} (1+Z)^{-m }dZ\right) \left(\int_{0}^{r'} Z^{k+a-2} (1+Z)^m dZ \right)\leq C_{m,k,a}.
$$

\end{proof}

\section{Sequence of estimates on \texorpdfstring{$V$}{V}}
\label{sec:proof-statements-energy}

The goal of this section is to prove the energy estimates of Propositions \ref{prop:est-cLUV-1} and \ref{prop:est-cLU2V}. To that end, we rely on the transport/diffusion version of the rescaled Prandtl equation, namely \eqref{eq:V-2}.
We will use extensively the tools introduced in section \ref{sec:proof-statements-algebraic}. Throughout the section, we will assume that $U$ satisfies the following pointwise $L^\infty$ estimates: there exists  constants $c, C$ such that for all $Y\in [0, c s^{1/3}]$, for all $s\in [s_0, s_1]$,
\be\label{hyp:U}\ba
Y + \frac{Y^2}{2} - \frac{M_2}{12} b Y^4\leq U(s,Y)\leq Y + \frac{Y^2}{2},\\
1+Y - \frac{M_2}{4} b Y^3 \leq U_Y(s,Y) \leq 1+Y,\\
-M_2 b Y^2\leq U_{YY} -1\leq 0.
\ea
\ee
Furthermore,  we assume that there exists a constant $M_1$ such that 
$$
-M_1\leq U_{YY}\leq 1\quad \forall Y \geq c s^{1/3}
$$

It follows in particular that there exists a universal constant $\bar C$ such that  for all $Y \in [0, c s^{1/3}]$,
\be\label{hyp:infty-V}
|V| \leq \bar C (1+M_2) b Y^4,\ |V_Y| \leq \bar C (1+M_2) b Y^3,\ |V_{YY}| \leq \bar C (1+M_2) b Y^2.
\ee
We will also assume that
\be\label{hyp:b}
 \frac{1- \bar \eps}{s} \leq b(s) \leq \frac{1+ \bar \eps}{s},
\ee
for some small universal $\bar \eps$ ($\bar \eps=1/50$ is sufficient), 
and that
\be\label{hy:b2}
\int_{s_0}^{s_1}s^{13/4}(b_s+b^2)^2ds\leq J.\ee

The $L^\infty$ estimates \eqref{hyp:infty-V} imply in particular the following  estimates, which will be used repeatedly in the sequel
\be
|\cLU V | \leq \bar C (1+M_2)^2b Y,\quad \left| \int_0^Y \cLU V \right| \leq  \bar C (1+M_2) b Y^2 \quad \forall Y \leq c s^{1/3},
\ee
and $\cLU V$ is bounded by a polynomial in $Y$ for $Y \geq s^{1/3}$.

Let us recall the definition of the notation $O_\alpha$ (see \eqref{O-alpha}): there exists a constant $C>0$ such that
$$\ba
|A(s,Y)|\leq C | B(s,Y)| \quad\text{for } Y \leq s^{\alpha}, \\
|A(s,Y)|\leq Q(s,Y)\quad\text{for } Y \geq s^{\alpha},\ea$$
for some function $Q$ that has at most polynomial growth in $s$ and $Y$.

\begin{remark}
This section contains rather heavy calculations: in particular, terms such as $\p_Y \cLU^2 (\LU (L_V Y))$ can be expressed as a linear combination of derivatives of $V$ from order zero up to order 6, with coefficients that are rational expressions involving $U$ and its derivatives. However, most of the terms in this expression will often have the same scaling, in the sense that they can all be bounded in $L^2$ by the leading order term, i.e. the one that has the largest number of derivatives. To obtain such estimates, we use the weighted Hardy inequalities from the previous section (see Lemma \ref{lem:Hardy-1}) together with the pointwise bounds on $U$ \eqref{hyp:U}. For instance, if $w=w_j$, $j=1,2$, then for any $P>0$, provided $m_j$ is large enough,
$$
\left\| \frac{U_Y}{U} \cLU V \right\|_{L^2(w)} \leq C \left\| \frac{1}{1+Y} \cLU V \right\|_{L^2(w)} + C s^{-P}\leq C \| \p_Y \cLU V \|_{L^2(w)} + C s^{-P}
$$
and
$$
\left\| \frac{\p_{YY} V}{U} \right\|_{L^2(w)} = \left\| \cLU V - \frac{U_{Y}}{U}\int_0^Y \cLU V \right\|_{L^2(w)} \leq C \| \cLU V \|_{L^2(w)} + C s^{-P}.
$$

As a consequence, we adopt the following convention: we will write
$$
f=g+\lot \quad \text{in } L^2(w)
$$
if  $f=g+h$ and there exists a constant $C$ such that $\| h\|_{L^2(w)}\leq C \|g\|_{L^2(w)} + C s^{-P}$ for any $M>0$ provided $m_1,m_2$ are chosen large enough.

\label{rem:lot}
\end{remark}

Eventually, let us recall the definition of the different weights that will be used in the sequel. We will use parameters
$$
\beta_2<\beta_1<\beta_0<\frac{2}{7}
$$
and integers $m_2\gg m_1\gg m_0$. For $i\in \{0,1,2\}$, we set $w_i:= Y^{-a} (1+ s^{-\beta_i} Y)^{-m_i}$. The parameter $a>0$ is a fixed number such that $a\in (0, \bar a)$, where $\bar a$ is the universal constant in Lemma \ref{lem:diff1}. The need for the weight $w_0$ is explained in the following remark:

\begin{remark}[Role of the different estimates]
	In this section, we derive estimates on $E_0$, $E_1$, $E_2$, where, for $i=0,1,2$,
	$$\ba
	E_i(s):=\int_0^\infty (\p_Y^2 \cLU^i V(s) )^2 w_i,\\
	D_i(s):= \int_0^\infty \frac{(\p_Y^3 \cLU^i V(s) )^2 }{U} w_i + \int_0^\infty \frac{(\p_Y^2 \cLU^i V(s) )^2}{U^2}w_i.\ea
	$$
	
	\begin{itemize}
\item The estimate on $D_0$ allows us to have an $L^2$ control on $\p_Y^3 V$, which is useful to bound the integral tails stemming from the estimates of $E_1, D_1$ (see Lemma \ref{lem:tails}). Note that the integral tails in the estimate of $(E_0, D_0)$ are merely handled thanks to the pointwise $L^\infty$ estimates \eqref{hyp:U}.

\item The estimate on $E_1$ will be used in the maximum principle argument (see Proposition \ref{prop:max-princ}). 

\item The estimate on $D_1$ will be used (in a quantitative fashion) when we bound the remainder terms on $(E_2, D_2)$. 

\item Eventually, $E_2, E_1$ and $D_1$ will control  $b_s + b^2$ thanks to Lemmas \ref{lem:trace} and \ref{lem:coerciv}.

	\end{itemize}

\end{remark}

Since the equations on $\p_Y^2 \cLU^k V$  for $k=0,1,2$ have the same structure, the estimates on $E_0, E_1, E_2$ go along the same lines. The differences between the three estimates come from the right-hand side of the equation:
\begin{itemize}
\item Commutator terms may or may not be present;
\item The estimates on the remainder terms are different for each energy estimate.
\end{itemize}
The reader who merely wishes to understand the energetic structure of the equation may go through paragraph \ref{ssec:E0} only.

\subsubsection*{Notation} Throughout this section, we will denote by $H_1$ a constant depending only on $a, m_1, \beta_1, M_1, M_2$, and by $H_2$ a constant depending on the same parameters and also on $m_2, \beta_2$. 

\subsection{Preliminary step: estimates on \texorpdfstring{$(E_0, D_0)$}{the first energy}}\label{ssec:E0}

Let us recall that the purpose of this paragraph is to have an $L^2$ control of $\p_Y^3 V$ through $D_0$. 

First, notice that $\p_Y^2 V$ is a solution of
$$
\p_s \p_Y^2 V + \frac{b}{2} Y \p_Y (\p_Y^2 V) - \p_Y^2 \LU \p_Y^2 V = \p_Y^2 \cR.
$$
Consider the weight $w_0:=Y^{-a} (1+ s^{-\beta_0 } Y)^{-m_0}$, for some $a\in ]0, \bar a]$, where $\bar a$ is defined in Lemma \ref{lem:diff1}, $\beta_0\in ]1/4, 2/7[$, and $m_0\geq 1$.  The diffusion term 
$$
-\int_0^\infty \left(\p_Y^2 \LU \p_Y^2 V\right) \; \p_Y^2 V w_0
$$
is handled by Lemma \ref{lem:diff1}, up to a remainder term which we estimate now: we have, for any $P>0$,
\begin{eqnarray*}
&&\int_{c s^{1/3}}^\infty U \left( \int_0^Y (1-\chi) \left(\frac{Y}{s^{1/4}} \right) \frac{\p_Y^2 V}{U^2}\right)^2 w_0\\
&\leq & \bar C (1+M_1)^2 \int_{c s^{1/3}}^\infty \left(Y+ \frac{Y^2}{2}\right) \frac{Y^2}{s^2} w_0\\
&\leq &  \bar C (1+M_1)^2s^{-P}
\end{eqnarray*}
provided $m_0$ is sufficiently large.

 Hence, according to Lemma \ref{lem:diff1}, setting
$$\ba
E_0(s):= \int_0^\infty (\p_Y^2 V)^2 w_0,\\
D_0(s):= \int_0^\infty \frac{(\p_Y^3 V)^2}{U} w_0 + \int_0^\infty \frac{(\p_Y^2 V)^2}{U^2} w_0
\ea
$$
we have, for any $\delta>0$, provided $s_0$ is large enough,
$$
-\int_0^\infty \left(\p_Y^2 \LU \p_Y^2 V\right) \; \p_Y^2 V w_0\geq \bar c D_0(s) - \delta b E_0(s) -  \bar C (1+M_1)^2s^{-P}
$$

Concerning the transport term, we have
$$
\int_0^\infty (\p_s \p_Y^2 V) \p_Y^2 V w_0 = \frac{1}{2} \frac{d}{ds} \int_0^\infty (\p_Y^2 V)^2 w_0 - \frac{1}{2}\int_0^\infty (\p_Y^2 V)^2 \p_s w_0
$$
and
$$
\int_0^\infty Y \p_Y^3 V\;  \p_Y^2 V w_0= -\frac{1}{2} \int_0^\infty (\p_Y^2 V)^2 (Y w_0)_Y.
$$
Combining the two identities and using the expression of $w_0$, we infer that
\begin{eqnarray*}
\int_0^\infty \left(\p_s \p_Y^2 V+ \frac b 2 Y \p_Y (\p_Y^2 V) \right) \p_Y^2 V w_0 &=& \frac{1}{2} \frac{d}{ds} \int_0^\infty (\p_Y^2 V)^2 w_0 - \frac{1-a}{4} b \int_0^\infty (\p_Y^2 V)^2 w_0\\
&&+m \left( \frac{b}{2} - \frac{\beta_0}{2s} \right)\int_0^\infty(\p_Y^2 V)^2 \frac{s^{-\beta_0} Y}{1 + s^{-\beta_0} Y} w_0.
\end{eqnarray*}
Using assumption \eqref{hyp:b} with $\bar \eps<1/7$ and $\beta_0<2/7$, we infer that the last term is non-negative.

It follows that for all $\delta>0$, if $s_0$ is large enough,
$$
\frac{d E_0}{ds} - \frac{1-a-\delta}{2} b E_0 (s ) + 2 \bar c D_0(s) \leq 2 \int_0^\infty \left( \p_Y^2 \cR\right) \p_Y^2 V w_0 +  \bar C (1+M_1)^2s^{-P}.
$$
We now evaluate $\p_Y^2 \cR$. Using Lemma \ref{lem:comp-R0} together with assumption \eqref{hyp:b}, we have
\begin{eqnarray*}
\p_Y^2 \cR&=& (b_s + b^2)\left(\frac{Y^2}{4} + a_4 b Y^5 + 270 a_{10} b^2 Y^8 + 330 a_{11} b^2 Y^9\right)\chi\left(\frac{Y}{s^{2/7}}\right) \\
&&+ O_{2/7} (b^4(Y^8 + Y^9))+ \p_Y^2 \LU \left(O_{2/7}(bY) V + O_{2/7}(b Y^2)V_Y + O_{2/7}(b^4 Y^{11} )\right),
\end{eqnarray*}
where the $O(\cdot)$ in the last term of the right-hand side must be understood\footnote{We say that ``$f=O_\alpha(g)$ in $W^{k, \infty}$" if $\partial_Y^j f= O_\alpha (\partial_Y^j g)$ for $0\leq j \leq k$.} in $W^{2,\infty}$. Recalling the expression of $\p_Y^2 \LU$ (see Lemma \ref{lem:p3LU}) together with the $L^\infty$ estimates \eqref{hyp:infty-V}, we obtain, for $Y\leq c s^{2/7}$,
$$
\p_Y^2 \cR= O((b_s+b^2)Y^2) +  O (b^4(Y^8 + Y^9)) + O \left(\frac{bY^2 \p_Y^3 V}{U} \right) + \p_Y^3 U \int_0^Y \frac{O(b^2 Y^5 )}{U^2}.
$$
All constants appearing in the $O(\cdot)$ depend on $M_1, M_2$.
Notice that for $Y\leq c s^{2/7}$
$$
\int_0^Y\frac{b^2 Y^5 }{U^2}= O(b^2 Y^2)\ll \frac{b Y^2}{U},
$$
and $\p_Y^3 U = \p_Y^3 \Uapp + \p_Y^3 V= \p_Y^3 V + O(bY)$. Hence the last term is smaller than the first two on $Y \leq c s^{2/7}$.

Now, for all $\delta>0$, for $s\geq s_\delta$ large enough and for $m_0\geq 6-a$,
$$
\ba
\int_0^\infty |b_s + b^2| Y^2 |\p_Y^2 V| w_0\leq \delta b E_0 + \frac{H_1}{\delta} (b_s + b^2)^2 s^{1+ (5-a)\beta_0},\\
\int_0^{cs^{2/7}} b^2 Y^2 | \p_Y^2 V| \; w_0 \leq \delta b E_0 + \frac{H_1}{\delta} b^3 s^{(5-a)\beta_0},\\
\int_0^{cs^{2/7}} b Y^2\frac{|\p_Y^3 V|}{U} | \p_Y^2 V| \; w_0 \leq \delta D_0 + \frac{1}{4\delta} \int_0^{cs^{2/7}} b^2 \frac{Y^4 }{U} |\p_Y^2 V|^2 w_0\leq \delta D_0 + \delta b E_0.
\ea
$$
We now focus on the set $Y\geq s^{2/7}$. Recalling that $\beta_0<2/7$, for any function $\phi=\phi(s,Y)$ such that
$$
\phi(s,Y) \leq C Y^{\gamma_1} s^{\gamma_2}\quad \forall Y\geq 0, \ s\geq s_0,
$$
for some explicit $\gamma_1, \gamma_2\geq 0$, we have, for any $P>0$,
$$
\int_{cs^{2/7}}^\infty \phi(s,Y) w_0 \leq s^{-P}
$$
provided $m_0$ is large enough (depending on $\beta_0$, $\gamma_1$, $\gamma_2$ and $P$). We use  a simple Cauchy-Schwartz inequality and control the terms involving $\p_Y^3 V$ by the dissipation term $D_0$. 
Since we have $L^\infty $ estimates on $V, \p_Y V, \p_Y^2 V$, it follows that for all $\delta>0$
$$
\int_{cs^{2/7}}^\infty |\p_Y^2 \cR| \; |\p_Y^2 V|\; w_0 \leq \delta D_0 + \frac{H_1}{\delta} s^{-P}.
$$

Eventually, choosing $\delta$  small enough (depending on $a$), we obtain
$$
\frac{d E_0}{ds} - \frac{1}{2s} E_0 + \bar c D_0 \leq H_1s^{-3+ (5-a) \beta_0} + H_1 (b_s + b^2)^2 s^{1+ (5-a)\beta_0}.
$$
Thus there exists $S_0$, depending on $M_1, M_2,a,\beta_0$, such that if $s\geq S_0$,
\be
\label{est:D_0}
E_0(s) s^{-1/2} + \int_{s_0}^s t^{-1/2} D_0(t)\:dt \leq E_0(s_0) s_0^{-1/2} + H_1 + H_1 J s_0^{-\frac{11}{4} + (5-a)\beta_0}.
\ee
We will use this inequality in the sequel in order to have a control of $\p_Y^3 V$, which is not given by the $L^\infty$ estimates. This will allow us in particular to control the tail of some commutators in the estimate of Proposition \ref{prop:est-cLUV-1}.

\begin{remark}
Notice that the assumption $|\p_Y^2 V(s_0)| \leq 1+M_1$ implies that
$$
E_0(s_0) \leq (1+M_1)^2 s_0^{(1-a)\beta_0}\leq s_0^{1/2}
$$
is $s_0$ is sufficiently large (depending on $M_1$ and $\beta_0$). Therefore there is no need to include any additional assumption on $E_0(s_0)$ in the bootstrap argument.

\end{remark}

\subsection{Estimate on \texorpdfstring{$\p_Y^2\cLU V$}{g}: proof of Proposition \texorpdfstring{\ref{prop:est-cLUV-1}}{}}

We now turn towards the estimate on $\cLU V$. Notice that the main order term in the right-hand side of \eqref{eq:cLUV} is now $(b_s+ b^2) Y$. The lack of decay of this remainder prompts us to differentiate twice \eqref{eq:cLUV}, in order to cancel the linear term. We therefore perform estimates on  $g:=\p_Y^2 \cLU V$. Using Lemma \ref{lem:commutator}, we have
\be\label{eq:g}
\p_s g + 2 b g + \frac{b}{2} Y \p_Y g  -\p_{YY}\LU g
=\p_Y^2 \cLU \mathcal R + \p_Y^2 \mathcal C[\p_Y^2 V].
\ee

We  multiply \eqref{eq:g}  by $g  w_1$, where $w_1=Y^{-a}(1+ s^{-\beta_1} Y)^{-m_1}$ for some $\beta_1 \in ]1/4, \beta_0[$, $a>0$, and $m_1\gg m_0\gg 1$. Using the same computations as in the previous paragraph, we have
$$
\int_0^\infty \left(\p_s g + 2 b g + \frac{b}{2} Y \p_Y g \right) g w_1
\geq  \frac{1}{2}\frac{d}{ds} E_1(s) + \frac{7+a}{4} b E_1(s),
$$
where
\be\label{def:E_1}
E_1(s):=\int_0^\infty g^2 w_1.
\ee
Using Lemma \ref{lem:diff1} and Lemma \ref{lem:tails}, we also have, for any $P>0$, provided $a\leq \bar a$,  $s_0$  large enough (depending on $a$ and $\beta_1$) and $m_1$ large enough,
$$
-\int_0^\infty  \left(\p_{YY}\LU g\right) g w_1 \geq \bar c D_1(s) - \frac{1}{8} b E_1(s) - s^{-P} D_0(s),
$$
where
\be\label{def:D_1}
D_1(s):= \int_0^\infty \frac{(\p_Y g)^2}{U} w_1 + \int_0^\infty \frac{g^2}{U^2} w_1.
\ee

We now treat independently the other terms, namely
\begin{itemize}
\item The  commutator term $\p_Y^2 \mathcal C[\p_Y^2 V]$;
\item The remainder term $\p_Y^2 \cLU \mathcal R$.
\end{itemize}

\subsubsection*{The commutator term}
\label{commu-E1}

The goal of this paragraph is to prove that
for all $\delta,P>0$, there exists  $s_0>0$ such that is $s\geq s_0$,
$$
\left|\int_0^\infty \p_Y^2 \mathcal C[\p_Y^2 V]\; \p_Y^2 \cLU V w_1 \right|\leq \delta \left( b E_1(s) + D_1(s)\right) + s^{-P }D_0(s) + s^{-P}.
$$

We use the formula in Lemma \ref{lem:commu-formula}. 
We recall that $\mathcal D= \LU(\p_{YY} \Uapp -1) + \cLU V$, so that $\mathcal D$ ``contains'' two derivatives of $V$ through the term $\cLU V$. Hence each term in 
$$
\int_0^\infty \p_Y^2 \mathcal C[\p_Y^2 V]\; \p_Y^2 \cLU V w_1
$$
is a product of three derivatives of $V$.

As a consequence, we arrange the terms in $\p_Y^2 \mathcal C[\p_Y^2 V]$ in the following way:
\begin{itemize}
\item The terms with the highest number of derivatives of $V$ are estimated by $E_1$ or $D_1$;
\item The terms with a number of derivatives lower than or equal to 2 are estimated in $L^\infty$ thanks to \eqref{hyp:infty-V}.
\end{itemize}
This strategy will work as long as we do not end up with a product of three terms of the type
$$
\p_Y^{k_1} V \; \p_Y^{k_2} V \; \p_Y^{k_3} V,
$$
with $k_1, k_2, k_3 \geq 3$. Such terms will need to be re-arranged thanks to an integration by parts. However, a quick look at the formula in Lemma  \ref{lem:commu-formula} shows that this situation occurs only for the second term in the right-hand side of the formula giving $\p_Y^2 \mathcal C[\p_Y^2 V]$, namely
\be\label{cD2}
\int_0^\infty \p_Y\frac{\cD}{U}\; \p_Y \cLU V \; \p_Y^2 \cLU V w_1.
\ee
But as we will see, it is easy to overcome the difficulty raised by this term.

We now examine the terms in $\p_Y^2 \mathcal C [\p_Y^2 V]$ one by one.
\begin{itemize}
\item Using the $L^\infty$ estimates \eqref{hyp:infty-V} together with Corollary \ref{cor:est-cD}, it can be easily checked that
 $$
\frac{\cD}{U}  = O_{1/3}\left(  b \frac{1}{1+Y}\right).
 $$
It follows that
$$
\int_0^{cs^{1/3}} \left| \frac{\cD}{U} \right| (\p_Y^2 \cLU V)^2 w_1 \leq H_1 b \int_0^{cs^{1/3}} \frac{1}{1+Y} (\p_Y^2 \cLU V)^2 w_1 ,
$$
which is estimated thanks to Lemma \ref{lem:rem-1+Y}. The part of the integral for $Y\geq c s^{1/3}$ is estimated thanks to Lemma \ref{lem:tails}, with $p=Y^k w_1$ for some integer $k$, $p_0= w_0$ with  $m_1\gg m_0$. It follows from Lemma \ref{lem:rem-1+Y} that for all $\delta>0$, $P\geq 1$, provided $s_0$ is large enough and $m_1$ is large enough,
\be\label{cD1}
\int_0^\infty\left| \frac{\cD}{U} \right| (\p_Y^2 \cLU V)^2 w_1  \leq \delta b E_1 + \delta D_1 +  s^{-P} D_0(s) + s^{-P}.
\ee

\item We then address the problematic term \eqref{cD2}.
We integrate by parts and obtain
\begin{eqnarray*}
I_1&=&
- \int_0^\infty   \frac{\cD}{U}  (\p_Y^2 \cLU V)^2 w_1  - \int_0^\infty \frac{\cD}{U} \p_Y \cLU V \; \p_Y^3 \cLU V w_1\\
&&- \int_0^\infty \frac{\cD}{U}  \p_Y \cLU V \p_Y^2 \cLU V \p_Y w_1.
\end{eqnarray*}
The first term in the right-hand side is the same as in \eqref{cD1}. In the third term, we use the fact that $|\p_Y w_1| \leq C_{a,m} Y^{-1} w_1$ together with a weighted Hardy inequality from Lemma \ref{lem:Hardy-1}, so that
\begin{eqnarray*}
&&\left| \int_0^\infty \frac{\cD}{U}  \p_Y \cLU V \p_Y^2 \cLU V \p_Y w_1\right|\\&\leq& H_1 b \left(\int_0^\infty \frac{(\p_Y^2 \cLU V)^2}{1+Y} w_1\right)^{1/2}\left(\int_0^\infty \frac{(\p_Y \cLU V)^2}{Y^2(1+Y)} w_1\right)^{1/2} + s^{-P} D_0 + s^{-P}\\
&\leq& H_1 b\int_0^\infty \frac{(\p_Y^2 \cLU V)^2}{1+Y} w_1+ s^{-P} D_0 + s^{-P}.
\end{eqnarray*}
We eventually evaluate the last term in $I_1$. We have, for all $\delta>0$
$$
\left| \int_0^\infty \frac{\cD}{U} \p_Y \cLU V \; \p_Y^3 \cLU V w_1\right| \leq \delta D_1 + \frac{1}{4 \delta}\int_0^\infty \frac{\cD^2}{U} (\p_Y \cLU V)^2 w_1.
$$
Using Corollary \ref{cor:est-cD}, we have 
$$\frac{\cD^2}{U} = O_{1/3}\left(\frac{b^2 Y}{1+Y}\right)= O_{1/3}\left(\frac{b }{Y^2(1+Y)}\right).$$ 
Using a Hardy inequality from Lemma \ref{lem:Hardy-1} together with Lemma \ref{lem:rem-1+Y}, we deduce that the part of the integral bearing on $Y\lesssim s^{1/3}$ is lower than $\delta D_1 + \delta b E_1$ for $s\geq s_0$ large enough. The part of the integral bearing on $Y\gtrsim s^{1/3}$ is handled thanks to Lemma \ref{lem:tails}, recalling the form of $\p_Y \LU$ (see Lemma \ref{lem:p3LU}). It is therefore smaller than $\delta D_1 + s^{-P} D_0 + s^{-P}$, for any $\delta,P>0$, provided $s_0$ is large enough.

\item We then treat simultaneously the next three terms, namely
\be\label{cD3}\ba
\int_0^\infty \p_Y \frac{\cD}{U} \left( - 2 \frac{U_Y}{U^2} \p_Y^2 V - 4 U_{YY} \int_0^Y \frac{\p_Y^2 V}{U^2}\right) \p_Y^2 \cLU V w_1,\\
\int_0^\infty\p_Y^2 \frac{\cD}{U} \left[ - 3 \cLU V + 2 \frac{\p_Y^2 V}{U}\right]  \p_Y^2 \cLU V w_1\text{ and } \int_0^\infty\p_Y^3 \frac{\cD}{U} \left(\int_0^Y \cLU V\right)\p_Y^2 \cLU V w_1.\ea
\ee
The overall idea  is to decompose $\p_Y^k (\cD/U)$ for $k=1,2,3$ into a part that is controlled in $L^\infty$ and a part that involves derivatives of $V$ of order 3 or higher (or equivalently, derivatives of $\cLU V$ of order one or higher). Concerning the part of $\p_Y^k \cD/U$ that is controlled in $L^\infty$, we  use weighted Hardy inequalities to upper-bound $\p_Y^2 V, \cLU V$, etc. by $\p_Y^2 \cLU V$ in $L^2$. As for the part of $\p_Y^k(\cD/U)$ that is not controlled in $L^\infty$, we observe that in the three terms in \eqref{cD3}, $\p_Y^2 V, \cLU V$ and $\int_0^Y \cLU V$ are controlled in $L^\infty$, and we use this $L^\infty$ control to conclude.

Let us now be more specific: it can be easily checked that for $k=1,2,3$,
$$
\p_Y^k \frac{\cD}{U}=O_{1/3}\left(b \frac{1}{Y^k(1+Y)}\right) +  O_{1/3}\left(\frac{1}{Y(1+Y)}\right)\p_Y^k \cLU V+\lot
$$
Since we also have $L^\infty$ estimates on $\p_Y^2 V$, $U_Y, U_{YY}$ and $U^{-1}$, using Lemma \ref{lem:tails}, we infer that the part of the integrals in \eqref{cD3} bearing on $Y\gtrsim s^{1/3}$ is  bounded by $s^{-P} + s^{-P} D_0(s) + \delta D_1$ for some $P>0$ arbitrary provided $m_1$ is large enough. 

We now address the part of the integral bearing on $Y\lesssim s^{1/3}$, and we start with  the part of $\p_Y^k (\cD/U) $ that is bounded in $L^\infty$. We focus on the first integral in \eqref{cD3}, since the other two are treated in a similar fashion. We recall that
$$
\p_Y^2V= U \cLU V - U_Y \int_0^Y \cLU V,\quad \int_0^Y \frac{\p_Y^2 V}{U^2} = \frac{1}{U}\int_0^Y \cLU V,
$$
and therefore
$$
- 2 \frac{U_Y}{U^2} \p_Y^2 V - 4 U_{YY} \int_0^Y \frac{\p_Y^2 V}{U^2}= O_{1/3}(Y^{-1}) \cLU V + \lot
$$
Using several weighted Hardy inequalities (see Lemma \ref{lem:Hardy-1}), it follows that
\begin{multline*}
\left| \int_0^{c s^{1/3}} O_{1/3}\left(b \frac{1}{Y(1+Y)}\right)  \left(- 2 \frac{U_Y}{U^2} \p_Y^2 V - 4 U_{YY} \int_0^Y \frac{\p_Y^2 V}{U^2}\right) \p_Y^2 \cLU V w_1\right|\\
\leq  H_1 b \int_0^{c s^{1/3}} \frac{1}{1+Y} | \p_Y^2 \cLU V|^2 w_1.
\end{multline*}

We then focus on the part of $\p_Y (\cD/U)$ that is not controlled in $L^\infty$, and that involves $\p_Y \cLU V$. For that part, we use the $L^\infty$ estimates on $V$, which entail
$$
\left| - 2 \frac{U_Y}{U^2} \p_Y^2 V - 4 U_{YY} \int_0^Y \frac{\p_Y^2 V}{U^2}\right| \leq H_1 b \quad \forall Y\leq c s^{1/3}.
$$
It follows that 
\begin{multline*}
\left| \int_0^{c s^{1/3}}  O_{1/3}\left(\frac{1}{Y(1+Y)}\right){\p_Y \cLU V}\left(- 2 \frac{U_Y}{U^2} \p_Y^2 V - 4 U_{YY} \int_0^Y \frac{\p_Y^2 V}{U^2}\right) \p_Y^2 \cLU V w_1\right|\\
\leq  H_1 b \int_0^{c s^{1/3}} \frac{1}{Y (1+Y) }| \p_Y \cLU V|\; | \p_Y^2 \cLU V| w_1.
\end{multline*}
We conclude once again using a weighted Hardy inequality.

We then treat the other two integrals in \eqref{cD3} using similar arguments. We conclude that for any $\delta, P>0$, provided $m_1\gg m_0$ and $s_0$ is large enough, the  three integrals in \eqref{cD3} are bounded by
$$
\delta D_1 +\frac{H_1}{\delta}\left( b \int_0^\infty \frac{1}{1+Y} | \p_Y^2 \cLU V|^2 w_1+ s^{-P} + s^{-P} D_0 \right),
$$
where the term $D_1$ stems from the bound of the third integral in \eqref{cD3}, which involves $\p_Y^3 \cLU V$.

\item Eventually, we address
\be\label{cD4}
2\int_0^\infty \p_Y^3 U \left(  \int_0^Y \frac{\p_Y^2 V \cD}{U^3} - \frac{\cD}{U} \int_0^Y \frac{\p_Y^2 V}{U^2} \right) \p_Y^2 \cLU V\; w_1.
\ee
Now, $\cD$ is controlled in $L^\infty$. As in the previous step, we decompose $\p_Y^3 U$ into a part that is controlled in $L^\infty$ and a part over which we have no $L^\infty$ control. More precisely,
$$
\p_Y^3 U = U \p_Y \cLU V + O_{1/3}(b(Y+Y^2)) .
$$
Let us start with the contribution of $ O_{1/3}(b(Y+Y^2))$. For that part, we use the control of $V$ and $\cD$  in $L^\infty$ to prove that the integral tails for $Y\geq c s^{1/3}$ are $O(s^{-P})$, and Hardy inequalities on the set $Y\leq c s^{1/3}$. We infer that the contribution of this part of the integral to \eqref{cD4} is bounded by
$$
 H_1 b E_1^{1/2} \left(\int_0^\infty \frac{1}{1+Y} (\p_Y^2 \cLU V)^2 w_1 +  s^{-P}\right).
$$
We now address the part of \eqref{cD4} where $\p_Y^3 U$ is replaced by $U \p_Y \cLU V$. For that part, we use the $L^\infty$ control of $\p_Y^2  V$, $\cD$ and $U$ in $L^\infty$, together with Lemma \ref{lem:tails} and the control of $D_0$ to estimate the tails. We infer that  this part of \eqref{cD4} is bounded by
$$
\int_0^{s^{2/7}} b^2 Y^2 |\p_Y \cLU V| \; | \p_Y^2 \cLU V | \; w_1 +  s^{-P} D_0 + s^{-P}.
$$
Now, for $Y\leq C s^{2/7}$, we have $b^2 Y^2 \leq C b Y^{-1} (1+Y)^{-1/2}$, so that the integral above is bounded by
$$
H_1 b E_1^{1/2} \left(\int_0^{\infty} \frac{1}{1+Y} (\p_Y^2 \cLU V)^2 w_1\right)^{1/2}.
$$

\end{itemize}

Gathering the estimates above and using Lemma \ref{lem:rem-1+Y}, we conclude that for any $P,\delta>0$, if $m_1, s_0$ are large enough,
the total commutator term satisfies
\be\label{est:commu-E_1}
\left|\int_0^\infty \p_Y^2 \mathcal C[\p_Y^2 V]\; \p_Y^2 \cLU V w_1 \right| \leq \delta b E_1(s) + \delta D_1(s) +  s^{-P} + s^{-P} D_0(s).
\ee

\subsubsection*{The remainder term}

We now evaluate
$$
\int_0^\infty \left(\p_Y^2 \cLU \cR\right) \left(\p_Y^2 \cLU V\right)  w_1.
$$
We claim that for all $\delta>0$, for all $P>0$, provided $m_1$ is large enough and $s_0$ is large enough,
\begin{multline*}
\left|\int_0^\infty \left(\p_Y^2 \cLU \cR\right) \left(\p_Y^2 \cLU V\right)  w_1 \right|\\\leq \delta\left(D_1 + b E_1\right) + \frac{H_1}{\delta}\left(s^{-P } D_0 + s^{-7+(11-a)\beta_1} + (b_s + b^2)^2 s^{-3 + (11-a)\beta_1}\right).
\end{multline*}

We follow the decomposition of Lemma \ref{lem:reste} and write $\cR=\sum_{i=1}^4 \cR_i$, as suggested in remark \ref{rem:decomposition-D}.
\begin{itemize}
\item Recalling that $a_4=1/48,\ a_7=a_4/84$, and $a_{10}, a_{11}$ are defined by \eqref{def:a10a11}, we have
\begin{eqnarray*}
\p_Y^2\cR_1&=& (b_s+ b^2)\left(\frac{1}{4} Y^2 + a_4 b Y^5 + \frac{81}{16} a_7 b^2 Y^8 + \frac{9}{10} a_7 b^2 Y^9\right) \chi\left(\frac{Y}{s^{2/7}}\right) \\&&+ \tilde P_1(s,Y) (1-\chi_1)\left(\frac{Y}{s^{2/7}}\right),
\end{eqnarray*}
for some function $\tilde P_1$ that has at most polynomial growth in $s$ and $Y$, and some cut-off functions $\chi, \chi_1\in \mathcal C^\infty_0(\R_+)$ such that $\chi, \chi_1\equiv 1$ in a neighbourhood of zero.
Using the identity \eqref{LUY2}, we have, up to terms supported in $Y\gtrsim s^{2/7}$,
\begin{eqnarray*}
&&\cLU \cR_1\\&=& \frac{1}{2} (b_s + b^2) Y +\LU \left[ \left(\frac{a_4}{2}(b_s + b^2) b Y^5 \right)\chi\left(\frac{Y}{s^{2/7}}\right)\right]\\
&-&(b_s + b^2)\LU \left( \left(\frac{61}{16} a_7 b^2 Y^8  - \frac{9}{10}a_7 b^2 Y^9+2 a_{10} b^3 Y^{10} + \frac{9}{4} a_4 b^3 Y^{12} \right)\chi\left(\frac{Y}{s^{2/7}}\right) \right)
\\&-& \frac{1}{2}(b_s + b^2) \LU L_V Y - \LU \left[\tilde P_1(s,Y) (1-\chi_1)\left(\frac{Y}{s^{2/7}}\right)\right],
\end{eqnarray*}
and therefore
\begin{eqnarray}\nonumber
&&\p_Y^2\cLU \cR_1 (b_s + b^2)^{-1}\\&=&\p_Y^2\LU \left[ \left(\frac{a_4}{2} b Y^5 -\frac{61}{16} a_7 b^2 Y^8 + \frac{9}{10} a_7 b^2 Y^9-2 a_{10} b^3 Y^{10} - \frac{9}{4} a_4 b^3 Y^{12}\right)\chi\left(\frac{Y}{s^{\frac{2}{7}}}\right)\right]\nonumber\\
&-& \frac{1}{2} \p_Y^2\LU L_V Y - \p_Y^2\LU \left[\tilde P_1(s,Y)  (1-\chi_1)\left(\frac{Y}{s^{\frac{2}{7}}}\right)\right].\label{dec:R1}
\end{eqnarray}
Recalling the expression of $\p_Y^2 \LU$ from Lemma \ref{lem:p3LU}, we infer that for $k\geq 5$,
$$
\p_Y^2 \LU Y^k= \p_Y^3 V \int_0^Y \frac{Y^k}{U^2} + O_{1/3} \left( \left\{ \begin{array}{l}
Y^{k-3}\text{ if } Y \ll1 ,\\
Y^{k-4}\text{ if } 1 \lesssim Y .
\end{array}\right.\right)
$$
In a similar way, we have
\begin{eqnarray}
\label{dec:LV}\p_Y^2 \LU (L_V Y)&=&\p_Y^3 U \int_0^Y \frac{L_V Y}{U^2} - \frac{Y^2}{2}\p_Y\cLU V  \\
\nonumber&&+ V_{YY}\frac{Y^2 U_Y - 2 YU}{2 U^2} + V_Y \frac{2U - Y^2 U_{YY}}{2U^2}+ V \frac{U_{YY} Y - U_Y}{U^2} \\
\nonumber&&+ \frac{Y^2U_{YY}}{2U} \int_0^Y \cLU V.
\end{eqnarray}
We write $\p_Y^3 U = \p_Y^3 \Uapp + \p_Y^3 V$ and replace $\p_Y^3 V$ by the formula \eqref{pkV} in Appendix A. Using the $L^\infty $ estimate on $\p_Y^2 V$,
we obtain
$$
\p_Y^2 \LU (L_V Y)=O_{2/7}(Y^2) \p_Y \cLU V + O_{2/7}(Y) \cLU V + O_{2/7}(1)\int_0^Y \cLU V+\lot
$$
Gathering all the terms, we get, for $Y\gg 1$,
$$
\p_Y^2 \cLU \cR_1= (b_s + b^2)\left(O_{2/7} (b Y) +  \sum_{i=0}^2 O_{2/7}(Y^i) \p_Y^i \int_0^Y  \cLU V +\lot\right).
$$

 Then, noticing that for $Y\leq s^{2/7}$ we have $bY^2 \leq Y^{-1} (1+Y)^{-1/2}$ and using Hardy inequalities from Lemma \ref{lem:Hardy-1}, we infer that for any $P>0$, provided $m_1$ and $S_0$ are large enough,
\begin{eqnarray*}
&&\left| \int_0^\infty \p_Y^2 \cLU \cR_1\; \p_Y^2 \cLU V w_1\right| \\
&\leq & H_1 |b_s+ b^2| E_1^{1/2} \left(b s^{(3-a)\beta_1/2}  + b^2 s^{(11-a)\beta_1/2}\right)\\&&+ H_1 s |b_s + b^2|  E_1^{1/2}\left(\int_0^{s^{2/7} } \frac{1}{Y^2(1+Y)} (\p_Y \cLU V)^2 w_1 +  s^{-P} + s^{-P} D_0\right)^{1/2}\\
&\leq & \delta b E_1 + \frac{H_1}{\delta} (b_s+b^2)^2s^{-3 + (11-a )\beta_1} \\&&+ \frac{H_1}{\delta} s^3 |b_s + b^2|^2 \int_0^\infty \frac{1}{1+Y} (\p_Y^2 \cLU V)^2 w_1 + s^{-P} D_0.
\end{eqnarray*}

\item We use the same type of estimates for the term $\cR_2$, and we find
\begin{eqnarray*}
&&\left| \int_0^\infty \p_Y^2 \cLU \cR_2\; \p_Y^2 \cLU V w_1\right|\\ &\leq& H_1 b^4E_1^{1/2}s^{(11-a)\beta_1/2}  + \delta b E_1\\&\leq& \delta b E_1 + \frac{H_1}{\delta}s^{-7+(11-a) \beta_1}+ s^{-P} D_0.
\end{eqnarray*}

\item We then address the term $\cR_3$. We recall that using \eqref{dec:LV} and \eqref{lem:p3LU}, for $Y\leq c s^{1/3}$, for $k\in \{2, 3, 4\}$
\begin{eqnarray*}
\p_Y\LU(L_V Y)&=& O_{1/3}(bY^2),\quad \LU(L_V Y)= O_{1/3}(bY^3),\\
\p_Y^k \LU(L_V Y)&=& \sum_{i=0}^{k} O_{1/3}(Y^{i-k+2})\p_Y^i \int_0^Y \cLU V + O_{1/3}\left( \frac{1}{Y^{k-1}(1+Y)}\right) V_Y +\lot
\end{eqnarray*}
Notice also that $b^3Y^7\lesssim b Y$  for $Y\leq c s^{1/3}$, so that we can treat $b^3 L_V Y^7$ as a perturbation of $b L_V Y$. In a similar way, for $k\in\{0,\cdots 3\}$,
$$
\p_Y^k\left(-a_4 b Y^4 - a_7 b^2 Y^7 + a_{10} b^3 Y^{10} + a_{11} b^3 Y^{11}\right)= O (bY^{4-k})\quad \text{for } Y \lesssim s^{2/7},
$$
so that
$$
b^3\p_Y^2 \LU L_{-a_4 b Y^4 - a_7 b^2 Y^7 + a_{10} b^3 Y^{10} + a_{11} b^3 Y^{11}} Y^7= O_{2/7}(b^4 Y^7) + O_{2/7}(b^4 Y^8)\p_Y^3 V.
$$
 It follows that
\begin{eqnarray*}
\p_Y^2 \cLU \cR_3 &=& \sum_{i=0}^{4} O_{2/7}(b Y^{i-4})\p_Y^i \int_0^Y \cLU V  \\
&&+ O_{2/7}\left(b^2 (Y+Y^2)\right) \p_Y \cLU V + O_{2/7} (b^2)\int_0^Y \cLU V\\
&&+ O_{2/7}\left(\frac{b}{Y^4(1+Y)^2} \right) V_Y +\lot + O_{2/7}(b^4 Y^3).
\end{eqnarray*}
Using Hardy inequalities together with Lemma \ref{lem:tails}, it is easily proved that for $0\leq i\leq 4$, for any $P>0$, provided $m_1$ is large enough,
$$\ba
\left| \int_0^\infty O_{2/7}(b Y^{i-4})\p_Y^i \int_0^Y \cLU V \; \p_Y^2 \cLU V w_1\right| \leq \delta(bE_1 + D_1)+ s^{-P} D_0(s) + s^{-P},\\
\left| \int_0^\infty O_{2/7}\left(b^2 (Y+Y^2)\right) \p_Y \cLU V  \; \p_Y^2 \cLU V w_1\right| \leq \delta bE_1 + s^{-P} D_0(s) + s^{-P},\\
\left| \int_0^\infty O_{2/7}(b^2)\int_0^Y \cLU V  \; \p_Y^2 \cLU V w_1\right| \leq \delta bE_1 + s^{-P} D_0(s) + s^{-P},
\ea
$$
and
\begin{eqnarray*}
&&\left| \int_0^\infty O_{2/7}\left( \frac{b}{Y^4(1+Y)^2}\right) V_Y\; \p_Y^2 \cLU V w_1 \right| \\&\leq& H_1 b \left(\int_0^\infty \frac{1}{1+Y} (\p_Y^2 \cLU V)^2 w_1\right)^{1/2}\left(\int_0^\infty \frac{1}{Y^8(1+Y)^3} V_Y^2 w_1 +  s^{-P}\right)^{1/2}\\
&\leq &  H_1 b \int_0^\infty \frac{1}{1+Y} (\p_Y^2 \cLU V)^2 w_1+  s^{-P}.
\end{eqnarray*}
Using Lemma \ref{lem:rem-1+Y},  we end up with
$$
\left| \int_0^\infty \p_Y^2 \cLU \cR_3\; \p_Y^2 \cLU V\; w_1\right| \leq \delta\left(D_1 + b E_1\right) +  \left(s^{-P } D_0 + s^{-P}\right).
$$

\item The term $\cR_4$ is easily treated thanks to Lemma \ref{lem:tails}. More precisely, using Appendix A, it can be proved that
$$
\p_Y^2 \cLU \cR_4 = \left(P_1(s,Y) + P_2(s,Y) \p_Y \cLU V + P_3(s,Y)\p_Y^2 \cLU V\right) (1-\bar \chi)\left(\frac{Y}{s^{2/7}}\right),
$$
where $P_1,P_2,P_3$ are functions that have at most polynomial growth in $s$ and $Y$, and $\bar \chi\in \mathcal C^\infty_1$ is identically equal to 1 in a neighbourhood of zero. We infer that for any $P>0$, provided $m_1$ and $s_0$ are large enough,
$$
\left|\int_0^\infty (\p_Y^2 \cLU \cR_4) \p_Y^2 \cLU V w_1\right| \leq C s^{-P} + s^{-P} D_0.
$$

\end{itemize}
We now gather all the terms. Notice that since $\beta_1>1/4$, $5+(3-a) \beta_1<-7+ (11-a) \beta_1$.  We end up with the following estimate: for all $\delta, P>0$, if $m_1$ is large enough,
there exists a constant $H_1$, depending on $M_1, M_2,a,\beta_1$ and $m_1$ and a constant $S_0$, depending on the same parameters and also on $\delta$, such that if $s\geq S_0$,
 then
\begin{multline}
\label{est:rem-g}
\left| \int_0^\infty \left(\p_Y^2 \cLU \cR\right)\: \p_Y^2 \cLU V \; w_1 \right|\\\leq \delta\left(D_1 + b E_1\right) + 
\frac{H_1}{\delta}\left[(b_s + b^2)^2 s^{-3 + (11-a)\beta_1} + s^3 (b_s+b^2)^2 E_1 + s^{-7+(11-a)\beta_1}\right] +s^{-P } D_0.
\end{multline}

\subsubsection*{Conclusion}
Gathering the estimates \eqref{est:commu-E_1} and \eqref{est:rem-g}, we infer that for $a>0$ sufficiently small, for any $P>0$, for $1/4<\beta_1<2/7$ and for $m_1$ sufficiently large, $s_0\leq S_0$, we have, for $s\in [s_0, s_1]$,
$$
\frac{d}{ds} E_1 + \left(\frac{7}{2}b - H_1 s^3 (b_s + b^2)^2 \right)  E_1 + \bar c D_1\leq  H_1(b_s+ b^2)^2 s^{-3+(11-a) \beta_1}+H_1 s^{-7+(11-a)\beta_1} + s^{-P} D_0.
$$
Since $\beta_1>1/4$, we have
$$
6-(11-a)\beta_1<\frac{7}{2},
$$
and therefore the rate of convergence is limited by the size of the right-hand side. 
Let $\phi_1(s):= H_1 \int_{s_0}^s \tau^3 (b_\tau + b^2)^2 d\tau $. The assumptions of the Lemma entail that $0\leq \phi_1(s)\leq H_1 J s_0^{-1/4}$ for all s$\in [s_0, s_1]$. As a consequence, if $s_0\geq J^4$, we have $0\leq \phi_1(s)\leq H_1$ for all $s\in [s_0, s_1]$.
Using a Gronwall type argument and using the preliminary estimate on $D_0$, we obtain, for all $\alpha<6-(11-a)\beta_1$,
\begin{multline*}
E_1(s)s^\alpha \exp(-\phi_1(s))+ \bar c \int_{s_0}^s \tau^\alpha \exp(-\phi_1(\tau))D_1(\tau)d\tau \\
\leq E_1(s_0) s_0^\alpha + H_1 J s_0^{-1/4} + \frac{H_1}{6-(11-a)\beta_1 - \alpha} s_0^{\alpha-6+ (11-a \beta_1)}.
\end{multline*}
Hence, for $s_0\geq \max(S_0, J^4)$ we obtain (up to a new definition of the constant $H_1$)
\[
E_1(s) s^{\alpha} + \int_{s_0}^s D_1(\tau ) \tau^\alpha d\tau \leq H_1 \left(1 + E_1(s_0) s_0^\alpha\right).
\]
This completes the proof of the Proposition.\qed

We also have the following
\begin{corollary}
Under the assumptions of Proposition \ref{prop:est-cLUV-1}, we get the following refined $L^\infty$ estimates on $V$: for all $a\in (0, \bar a)$, for all $\beta_1$ such that
$$
\frac{1}{4}<\beta_1\leq \frac{1}{4}\frac{11}{11-a},
$$
there holds
$$
\ba
|\p_Y \cLU V|= O_{\beta_1}\left( s^{-13/8} Y^{\frac{1+a}2}\right),\\
| \cLU V |=O_{\beta_1}\left( s^{-13/8} Y^{\frac{3+a}2}\right),\quad \int_0^Y \cLU V=O_{\beta_1}\left( s^{-13/8} Y^{\frac{5+a}2}\right),\\
|\p_Y^3 V| =O_{\beta_1}\left(C_1 s^{-13/8}Y^{\frac{3+a}2} (1+Y)\right).
\ea
$$
Note that the constants in the $O_{\beta_1}$ depend on $M_1, M_2, m_1, a, \beta_1$ and $J$.

As a consequence, setting $C_0=E_1(s_0) s_0^\alpha$, where $\alpha$ is such that $13/4<\alpha<6-(11-a)\beta_1$, we infer that there exists a universal constant $\bar H$ and an constant $C_{m_1}$ depending only on $m_1$ such that if $s_0\geq \max (S_0, J^4, C_0^8)$, 
$$
|\p_Y^3 U| \leq \bar H b Y \quad \forall Y\in [0, C_{m_1} s^{\beta_1}].
$$
Furthermore, there exists a constant $C_{a,m_1}$ depending only on $a$ and $m_1$, such that
\be\label{est:non-loc}
\ba
\left|\int_0^Y \frac{\p_Y^2 \cLU V}{U^2} \right|\leq C_{a,m_1} D_1^{1/2} (1+ s^{-\beta_1} Y)^\frac{4+a+m_1}2= O_{\beta_1}(D_1^{1/2}),\\
\left|\p_Y^2 \cLU V \right|\leq C_{a,m_1} D_1^{1/2} (1+ s^{-\beta_1} Y)^\frac{m_1}2(1+Y)^{1/2}Y^{(2+a)/2} = O_{\beta_1}\left(D_1^{1/2} (1+Y)^{1/2}Y^{(2+a)/2} \right).\\
\ea\ee
In particular,
$$
\p_Y \cLU^2 V = \frac{\p_Y^3 \cLU V} U + O_{\beta_1}(D_1^{1/2})
$$

\label{cor:p3U}
\end{corollary}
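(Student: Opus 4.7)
The first three pointwise bounds come from iterated Cauchy--Schwarz against the dissipation rate $E_1$. Choosing $\alpha\in(13/4,6-(11-a)\beta_1)$, Proposition \ref{prop:est-cLUV-1} yields $E_1^{1/2}\lesssim s^{-13/8}$. On $Y\lesssim s^{\beta_1}$ the weight $w_1$ is comparable to $Y^{-a}$, so Cauchy--Schwarz gives
\[
|\p_Y\cLU V(s,Y)|=\left|\int_0^Y\p_Y^2\cLU V\right|\leq E_1^{1/2}\left(\int_0^Y w_1^{-1}\right)^{1/2}\lesssim s^{-13/8}Y^{(1+a)/2},
\]
and two further integrations in $Y$ deliver the corresponding bounds for $|\cLU V|$ and $|\int_0^Y\cLU V|$. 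The bound on $\p_Y^3 V$ then follows from the identity $\p_Y^3 V=U\,\p_Y\cLU V-U_{YY}\int_0^Y\cLU V$, obtained by differentiating $\p_Y^2 V=L_U\cLU V=U\cLU V-U_Y\int_0^Y\cLU V$ (which uses $L_U\circ\LU=\mathrm{Id}$), combined with the hypotheses $U\lesssim Y+Y^2$ and $|U_{YY}|\leq C$.

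For $\p_Y^3 U=\p_Y^3\Uapp+\p_Y^3 V$, a direct calculation from \eqref{def:Uapp} gives $\p_Y^3\Uapp=-bY/2+O(b^2 Y^4)=O(bY)$ on $Y\lesssim s^{2/7}$. Comparing sizes, the estimate on $|\p_Y^3 V|$ being dominated by $bY\simeq Y/s$ throughout $[0,C_{m_1}s^{\beta_1}]$ reduces to requiring $(1+C_0)^{1/2}s^{-5/8+\beta_1(3+a)/2}$ to be small, which holds precisely because $\beta_1<2/7$ and the assumption $s_0\geq C_0^8$ is enforced; this is the only place the latter is used.

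The main technical obstacle lies in the non-local estimates \eqref{est:non-loc}. A direct Cauchy--Schwarz on $\int_0^Y |\p_Y^2\cLU V|/U^2$ against the dissipation $\int(\p_Y^2\cLU V)^2 w_1/U^2\leq D_1$ fails because the auxiliary weight $(U^2 w_1)^{-1}\sim Y^{a-2}$ is not integrable at the origin. I would bypass this by exploiting the cancellation of $V$ to high order at $Y=0$: the Taylor expansion imposed by (H3) propagates through the rescaled Prandtl equation, giving $V=O(Y^7)$ and hence $\p_Y^2\cLU V=O(Y^2)$; in particular $\p_Y^2\cLU V(s,0)=0$. Writing $\p_Y^2\cLU V(s,Y)=\int_0^Y\p_Y^3\cLU V$ and applying Cauchy--Schwarz against the other half of $D_1$ then gives
\[
|\p_Y^2\cLU V(s,Y)|^2\leq D_1\int_0^Y\frac{U}{w_1}\leq C D_1\,(1+s^{-\beta_1}Y)^{m_1}\,Y^{2+a}(1+Y),
\]
which is the second inequality in \eqref{est:non-loc}. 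Plugging this pointwise bound into $\int_0^Y |\p_Y^2\cLU V|/U^2$, using $U\geq cY(1+Y)$, and splitting at $Y=1$ (the integrand is then $\sim D_1^{1/2}Y'^{(a-2)/2}(1+Y')^{-3/2}$, integrable at $0$ since $a>0$, while a direct Cauchy--Schwarz handles the tail since $Y'^{a-4}$ is integrable at infinity) yields the first inequality.

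The identity for $\p_Y\cLU^2 V$ is obtained by writing $\cLU^2 V=\LU(\p_Y^2\cLU V)=\p_Y^2\cLU V/U+U_Y\int_0^Y\p_Y^2\cLU V/U^2$ via \eqref{LU}, differentiating in $Y$, and observing the cancellation of the $U_Y\,\p_Y^2\cLU V/U^2$ terms to obtain $\p_Y\cLU^2 V=\p_Y^3\cLU V/U+U_{YY}\int_0^Y\p_Y^2\cLU V/U^2$. The second summand is $O_{\beta_1}(D_1^{1/2})$ by the first part of \eqref{est:non-loc} together with $|U_{YY}|\leq C$, which completes the proof.
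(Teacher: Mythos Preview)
The proposal is correct and follows essentially the same route as the paper. The only cosmetic difference is in the first inequality of \eqref{est:non-loc}: the paper applies Cauchy--Schwarz with the weight $Y^{3+a}$ near the origin (invoking the Hardy control of Remark~\ref{rem:diff-zero}), while you first establish the pointwise bound on $\p_Y^2\cLU V$ and then integrate it against $U^{-2}$; both arguments rest on the same key input $\p_Y^2\cLU V(s,0)=0$ and give the same $O_{\beta_1}(D_1^{1/2})$ conclusion.
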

\begin{proof}
As mentioned in Remark \ref{rem:E_1-13/4} we can pick $a>0$, $\beta_1\in (1/4, 2/7)$ ($\beta_1$ depends on $a$) so that
$$
\frac{13}{4}= 6 - \frac{11}{4}<6-(11-a)\beta_1.
$$
With this choice of $a$ and $\beta_1$, we have
$$
E_1(s)\leq H_1 (1+ C_0) s^{-13/4}.
$$
A simple Cauchy-Schwartz inequality entails
\be\label{CS}
|\p_Y\cLU V| = \left| \int_0^Y \p_Y^2 \cLU V\right| \leq E_1^{1/2}\left(\int_0^Y \frac{1}{w_1}\right)^{1/2}.
\ee
Now, setting $C_{m_1}:= 2^{1/m_1}-1\leq 1$, it is easily checked that for $Y\leq C_{m_1} s^{\beta_1}$, we have
$$
\frac{1}{w_1} \leq 2 Y^a.
$$
The estimates follow, using the formula in equation \eqref{pkV} for the one of $\p_Y^3 V$. Note in particular that for $Y\leq C_{m_1} s^{\beta_1}$,
\begin{eqnarray*}
|\p_Y^3 U|& \leq& | \p_Y^3 \Uapp| + \left(Y+\frac{Y^2}{2} \right) |\p_Y \cLU V + \int_0^Y |\cLU V|\\
&\leq & \bar H\left( b Y + (Y+Y^2)  Y^{\frac{a+1}{2}} H_1^{1/2} (1+C_0)^{1/2} s^{-13/8}\right)\\
&\leq&\bar H b Y \left(1 + H_1^{1/2} (1+C_0)^{1/2}  s_0^{\frac{a+3}{2}\beta_1 - \frac{5}{8}  } \right). 
\end{eqnarray*}
Now, for $\beta_1<2/7$ and $a$ sufficiently small, $\beta_1<1/(3+a)$, so that, if $s_0\geq \max (C_0^8, H_1^8)$,
\[
|\p_Y^3 U|\leq \bar H b Y \left(1 + H_1^{1/2} (1+C_0)^{1/2} s_0^{-1/8} \right)\leq \bar H b Y.
\]
Since $\p_Y^3 V$ has  polynomial growth in $s$ and $Y$ according to \eqref{CS}, we obtain the estimate on $\p_Y^3 U$.

The  two estimates from \eqref{est:non-loc} follow from the Cauchy-Schwartz inequality (see also Remark \ref{rem:diff-zero} ): observe that for $Y \leq 1$,
$$
\left|\int_0^Y\frac{\p_Y^2 \cLU V}{U^2} \right|\leq \left(\int_0^1 \frac{(\p_Y^2 \cLU V)^2}{Y^{3+a}}\right)^{1/2} \left(\int_0^Y \frac{Y^{3+a}}{U^4}\right)^{1/2}\leq C_a D_1^{1/2}.
$$
The same type of estimate holds when $Y\geq 1$. As for the estimate on $\p_Y^2 \cLU V$, we have
$$
\left|\p_Y^2 \cLU V\right|=\left|\int_0^Y \p_Y^3 \cLU V \right|\leq D_1^{1/2} \left(\int_0^Y \frac{U}{w_1}\right)^{1/2},
$$
which leads to the result.

\end{proof}

%
%
%
%
%

We end this paragraph with a short proof for Lemma \ref{lem:trace-cLU2V}: differentiating once equation \eqref{eq:cLUV}, we have
$$
\p_s \p_Y \cLU V + \frac{3b}{2} \p_Y \cLU V + \frac{b}{2} Y \p_Y^2 \cLU V - \p_Y \cLU^2 V = \p_Y \cLU \cR + \p_Y \mathcal C[\p_Y^2 V].
$$
We now take the trace of the above equality at $Y=0$. Since $V=O(Y^7)$ for $Y\ll 1$ by definition of the approximate solution $\Uapp$, we have
$$
\p_Y \cLU V_{|Y=0}= \p_Y^2 \cLU V_{|Y=0}=0,
$$
as well as $\p_Y \mathcal C[\p_Y^2 V]_{|Y=0}=0$. Hence there remains only
$$
\p_Y \cLU^2 V_{|Y=0}= - \sum_{i=1}^4 \p_Y \cLU \cR_{i|Y=0}.
$$
Once again, it can be easily checked that $\p_Y \cLU \cR_{i|Y=0}=0$ for $i=2,3,4$, and that $$\p_Y \cLU \cR_{1|Y=0}= a_4 (b_s+b^2) \p_Y \cLU Y^4\vert_{Y=0}.$$

Now, using \eqref{LUY2}, we have 
$$
\p_Y \cLU Y^4=12 \p_Y \LU Y^2= 24 \p_Y Y + O (bY^3)\text{ for } Y\ll 1.
$$
We obtain eventually that
$$
\p_Y \cLU^2 V_{|Y=0}=-\frac{1}{2} (b_s+b^2).
$$

\begin{remark}
It also follows from  equation \eqref{eq:g} and from the computation of $\p_Y^2 \cR$ that for $Y \ll 1$,
$$\p_Y^2 \cLU^2 V = O((b|b_s+b^2| + | \p_s (b_s + b^2)|) Y^2).$$
In particular, $\cLU^3 V$ is well-defined.
\end{remark}

\subsection{Estimate on \texorpdfstring{$\p_Y^2\cLU^2 V$}{h}: proof of Proposition \texorpdfstring{\ref{prop:est-cLU2V}}{}}

We now tackle the estimates on $\p_Y \cLU^2 V$. The general scheme of proof is the same as the one of Proposition \ref{prop:est-cLUV-1}. There are however a few differences:
\begin{itemize}
\item There are now two commutator terms, namely $\p_Y \cLU \mathcal C[\p_Y^2 V]$ and $\p_Y \mathcal C[\p_Y^2 \cLU V]$;
\item The estimate of the remainder term $\p_Y^2 \cLU^2 \cR$  is more technical since the explicit expression of $\p_Y^2 \cLU^2$ has much more terms than the one of $\p_Y^2 \cLU$.
\end{itemize}

We set  $h=\p_Y^2 \cLU^2 V$ in the rest of the paper, and we have
\be\label{eq:h}
\p_s h + 4b h + \frac{b}{2 } Y \p_Y h - \p_Y^2 \LU h= \p_Y^2 \left(\cLU^2 \cR +  \mathcal C[\p_Y^2 \cLU V] + \cLU \mathcal C[\p_Y^2 V]\right).\ee

 In order to derive  estimate \eqref{est:E_3}, we multiply \eqref{eq:h} by $h w_2$, with $w_2:= Y^{-a} (1+ s^{-\beta_2} Y)^{-m_2}$. We recall that we choose $m_2\gg m_1\gg 1$ and $\beta_2<\beta_1$.

Using the same computations as in the previous paragraphs and recalling the definitions of $E_2,D_2$, we have
\begin{eqnarray*}
&&\frac{1}{2}\frac{dE_2}{ds} + \frac{15}{4} b E_2 + \bar c D_2 \\&\leq& \int_0^\infty\p_Y^2 \left(\cLU^2 \cR +  \mathcal C[\p_Y^2 \cLU V] + \cLU \mathcal C[\p_Y^2 V]\right) h w_2 + \bar C s^{-P}+ s^{-P } D_1.
\end{eqnarray*}

We now state the main intermediate results allowing us to prove the statement, namely a commutator estimate and a remainder estimate. We then prove each of the statements separately.

Concerning the commutators, we have the same type of estimates as in the proof of Proposition \ref{prop:est-cLUV-1}, which leads to
\begin{lemma}
Assume that the assumptions of Proposition \ref{prop:est-cLU2V} are satisfied. 
There exist constants $H_2, S_0$, depending on $a, m_i, \beta_i, M_1, M_2$ ($i=1,2$), such that if $s_0\geq \max (S_0, C_0^8, J^4)$, for all $\delta >0$,
\begin{multline*}
\left|\int_0^\infty \int_0^\infty \left(\p_Y^2 \mathcal C[\p_Y^2 \cLU V] + \p_Y^2\cLU \mathcal C[\p_Y^2 V]\right) h w_2\right| \\
\leq \delta\left( bE_2 + D_2 \right) + \frac{H_2}{\delta } b^2 (b_s+ b^2)^2+  \frac{H_2}{\delta } \left( s^{(7+a)\beta_1} D_1 E_2 + s^3(b_s + b^2)^2 E_2 + s^{-2} D_1 \right).
\end{multline*}

\label{lem:commu-2}
\end{lemma}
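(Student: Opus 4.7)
The plan is to mimic the structure of the commutator estimate carried out in the proof of Proposition \ref{prop:est-cLUV-1} (subsection ``commutator term'') but applied to two commutator-type quantities instead of one, and with one derivative more each time. Starting from the formula of Lemma \ref{lem:commu-formula}, I expand $\p_Y^2\mathcal{C}[\p_Y^2\cLU V]$ into a sum of products of factors built from $\cD/U$ and its derivatives, and from $\p_Y^2 V$, $\cLU V$, $\int_0^Y \cLU V$, $\p_Y^k \cLU V$. In every product, one factor will be controlled in $L^\infty$ (using the pointwise bounds \eqref{hyp:U}--\eqref{hyp:infty-V}, Corollary \ref{cor:est-cD} and the refined estimates of Corollary \ref{cor:p3U}) and the other in $L^2(w_2)$ by $E_2^{1/2}$ or $D_2^{1/2}$. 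As in Proposition \ref{prop:est-cLUV-1} the only term where this ``$L^\infty\times L^2$'' splitting is not immediately possible is the one involving $\p_Y(\cD/U)\,\p_Y\cLU^2 V\,h$: there I integrate by parts in $Y$ to trade the extra derivative, exactly as in the treatment of \eqref{cD2}. The part $\cD=\cLU V+\LU(\p_{YY}\Uapp-1)$: the second piece is polynomial and contributes linearly in $b$, absorbed into $\delta bE_2$ using Lemma \ref{lem:rem-1+Y}; the first piece $\cLU V$ gives rise to the genuinely quadratic contributions, which I control by $\|\cLU V\|_\infty^{(\text{power})} \cdot $ (energy), the $L^\infty$ norm being quantified through Corollary \ref{cor:p3U} in terms of $E_1$ and $D_1$. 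This is where the factor $s^{(7+a)\beta_1} D_1 E_2$ appears.

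For $\p_Y^2\cLU\mathcal{C}[\p_Y^2V]$, I first write $\cLU\mathcal{C}[\p_Y^2V]=2\cLU(\tfrac{\cD}{U}\p_Y^2V)-\p_Y(\tfrac{\cD}{U}\int_0^Y\cLU V)$ using the first identity of Lemma \ref{lem:commu-formula}, and then apply $\p_Y^2$. Again each resulting term is a product of factors; I use the $L^\infty$ bounds $|\p_Y^j V|\lesssim bY^{4-j}$ for $j\leq 2$ and the refined bounds on $\p_Y\cLU V$, $\cLU V$, $\int_0^Y\cLU V$ and $\p_Y^3 V$ from Corollary \ref{cor:p3U} to convert all powers of $V$-derivatives into $E_1,D_1$-controlled quantities, and Hardy inequalities (Lemma \ref{lem:Hardy-1}) to upgrade low-order derivatives to the top one $h=\p_Y^2\cLU^2V$. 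The linear-in-$V$ pieces coming from $\LU(\p_{YY}\Uapp-1)\subset\cD$ reproduce, after the same algebra as in the treatment of $\cR_1$ in Proposition \ref{prop:est-cLUV-1}, the factor $s^3(b_s+b^2)^2E_2$ as well as the inhomogeneous term $b^2(b_s+b^2)^2$.

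For both commutators, tails on $\{Y\gtrsim s^{1/3}\}$ are handled by Lemma \ref{lem:tails} with $p=w_2$, $p_0=w_1$, choosing $m_2-m_1$ large enough so that they are dominated by $s^{-P}D_1+s^{-P}$; this is the origin of the $s^{-2}D_1$ remainder (the $s^{-P}$ being absorbed into $H_2/\delta$ times that, for instance). The small-$Y$ non-local pieces $\int_0^Y W/U^k$ are controlled by weighted Hardy inequalities and the $E_1,D_1$-bounds on $\p_Y^2\cLU V$ and $\p_Y^3\cLU V$; in particular the Cauchy--Schwarz estimate $|\p_Y\cLU V(s,Y)|\lesssim E_1^{1/2}Y^{(1+a)/2}$, valid up to $Y\sim s^{\beta_1}$, directly gives the power $s^{(7+a)\beta_1/2}E_2^{1/2}D_1^{1/2}$ that squares to $s^{(7+a)\beta_1}D_1E_2$.

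The main obstacle is bookkeeping: there are many terms, and the only way to get the sharp $s^{(7+a)\beta_1}$ exponent is to carefully distribute $L^\infty$ bounds (which use $E_1\lesssim s^{-13/4}$, hence $\beta_1$) versus $L^2(w_2)$ bounds (which use $D_1$). Any misallocation costs a power of $s^{\beta_1}$ and closes the bootstrap loosely. The integration by parts on the term with $\p_Y(\cD/U)\,\p_Y\cLU^2V\,h$ is also delicate because the boundary term at $Y=0$ has to vanish, which forces us to use the cancellation $V=O(Y^7)$ at the origin (and hence $\p_Y^k\cLU^j V\to 0$ for suitable $k,j$), together with the $Y^{-a}$ weight in $w_2$ being integrable.
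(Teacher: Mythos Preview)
Your outline captures the overall shape of the argument, but there is a genuine gap that would cause your proof to fail: you assert that the boundary terms at $Y=0$ vanish because $V=O(Y^7)$ implies $\p_Y^k\cLU^j V\to 0$ for the relevant $(k,j)$. This is false precisely in the case you need most. By Lemma \ref{lem:trace-cLU2V},
\[
\p_Y\cLU^2 V_{|Y=0}=-\tfrac12(b_s+b^2)\neq 0,
\]
so the chain of Hardy inequalities that would carry $\int_0^Y\cLU^2 V$, $\cLU^2 V$, or $\p_Y\cLU^2 V$ up to $h=\p_Y^2\cLU^2 V$ breaks at the last step. In the formula of Lemma \ref{lem:commu-formula} with $W=\p_Y^2\cLU V$, the terms $\p_Y^2(\cD/U)[-3\cLU^2 V+\dots]$ and $\p_Y^3(\cD/U)\int_0^Y\cLU^2 V$, as well as the integration by parts you propose on $\p_Y(\cD/U)\,\p_Y\cLU^2 V\,h$, all run into this obstruction.

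The paper's proof handles this by a subtraction trick that you do not mention: one writes
\[
\p_Y^2\cLU V=\Bigl(\p_Y^2\cLU V+\tfrac12(b_s+b^2)L_U Y\Bigr)-\tfrac12(b_s+b^2)L_U Y,
\]
so that the first piece satisfies $\p_Y\LU(\cdot)_{|Y=0}=0$ and admits the full Hardy chain, while the second piece is explicit. The commutator acting on the explicit piece, $\p_Y^2\cC[\tfrac12(b_s+b^2)L_UY]$, is then computed directly (splitting $\cD=-\tfrac{b}{2}Y+\tilde\cD+\cD_{NL}$), and it is this computation that produces the inhomogeneous term $b^2(b_s+b^2)^2$ in the statement --- not, as you suggest, the linear-in-$V$ pieces of $\cD$ alone. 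An analogous decomposition $\p_Y^2V=(\p_Y^2V+\tfrac{1}{48}(b_s+b^2)L_UY^4)-\tfrac{1}{48}(b_s+b^2)L_UY^4$ is needed for the second commutator $\p_Y^2\cLU\cC[\p_Y^2V]$, for the same reason. Finally, the factor $s^{(7+a)\beta_1}D_1E_2$ does not come from the Cauchy--Schwarz bound on $\p_Y\cLU V$ you cite, but from estimating $Y^3U^{-1/2}\p_Y^2\cLU V$ in $L^\infty$ by $D_1^{1/2}$ (see the treatment of the ``problematic'' term $\p_Y^3(\cD/U)\int_0^Y\cLU^2 V$ in the paper); your attribution would give a different exponent.
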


We now turn towards the remainder term. We have the following estimate:
\begin{lemma}
Assume that the assumptions of Proposition \ref{prop:est-cLU2V} are satisfied. 
There exist constants $H_2, S_0$, depending on $a, m_i, \beta_i, M_1, M_2$ ($i=1,2$), such that if $s_0\geq \max (S_0, C_0^8, J^4)$, for all $\delta >0$,
$$\int_0^\infty \p_Y^2 \cLU^2 \cR\; \p_Y^2 \cLU^2 V w_2\\
\leq \delta (bE_2 + D_2) +  \frac{H_2}{\delta } b^2 (b_s + b^2)^2 +  \frac{H_2}{\delta } s^{-2} D_1 +  \frac{H_2}{\delta } s^{-7+(3-a)\beta_2} .
$$

\label{lem:reste-E2-E3}
\end{lemma}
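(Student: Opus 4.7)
My plan is to follow the decomposition $\cR = \sum_{i=1}^4 \cR_i$ of Lemma \ref{lem:reste} and treat each piece separately, then combine via Cauchy--Schwarz with weight $w_2$. The structure parallels the remainder estimate in the proof of Proposition \ref{prop:est-cLUV-1}, but now we apply $\cLU^2$ instead of $\cLU$, which generates a larger number of terms. Compared with that earlier estimate, there are two key differences: (i) the target bound contains $b^2 (b_s+b^2)^2 \sim s^{-2}(b_s+b^2)^2$ in place of $(b_s+b^2)^2$, which is cheaper and comes precisely from the extra factor $s^{-1}$ supplied by the Cauchy--Schwarz pairing $|b_s+b^2|\,E_2^{1/2}$ with $E_2\lesssim s^{-5}$; (ii) cross terms linear in $V$ are now controlled by $D_1$, gaining the factor $s^{-2}$.

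\textbf{Step 1: Polynomial pieces $\cR_1,\cR_2$.} For $\cR_1 = (b_s+b^2)\,\chi(Y/s^{2/7})\,[a_4 Y^4 + 2a_7 b Y^7 + 3a_{10}b^2 Y^{10} + 3a_{11}b^2 Y^{11}]$, I would compute $\p_Y^2 \cLU^2[\chi Q_1]$ using the identities of Lemma \ref{lem:p3LU} and the substitution trick $Y^k = \LU(L_{\Uapp}Y^k + L_V Y^k)$ already employed in Lemma \ref{lem:cR}. This decomposes $\p_Y^2\cLU^2 \cR_1$ as $(b_s+b^2)$ times a sum of (a) $O_{2/7}$-bounded polynomial pieces in $(b,Y)$, and (b) linear-in-$V$ corrections of schematic form $O_{2/7}(bY^p)\,\p_Y^i \cLU V$, $0\le i\le 1$, and $O_{2/7}(bY^p)\,\p_Y^3 V$. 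A Cauchy--Schwarz against $h w_2$ yields for the polynomial pieces a contribution $\le \delta b E_2 + H_2\delta^{-1}(b_s+b^2)^2 b^{-1}\,\|{\rm poly}\|_{L^2(w_2)}^2$; tracking the highest-weight polynomial degree $11$ together with the factor $b^3$ it carries gives $\|\cdot\|_{L^2(w_2)}^2 \lesssim s^{-6 + (11-a)\beta_2}\cdot b \lesssim b^3$ once $\beta_2$ is close enough to $1/4$, producing the desired $b^2(b_s+b^2)^2$ bound. The $V$-linear contributions, once paired with $h w_2$, are controlled by Hardy inequalities (Lemma~\ref{lem:Hardy-1}) in terms of $D_1^{1/2}$ using $w_1\ge w_2$ (for $m_2\gg m_1$), and the prefactor $|b_s+b^2|\lesssim s^{-2}$ together with the $D_1^{1/2}$ is then absorbed by $\delta(bE_2+D_2) + H_2\delta^{-1} s^{-2}D_1$. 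The term $\cR_2$ is handled identically with $(b_s+b^2)$ replaced by $b^4\lesssim s^{-4}$, contributing at most $s^{-8+(11-a)\beta_2} \ll s^{-7+(3-a)\beta_2}$.

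\textbf{Step 2: $V$-dependent piece $\cR_3$.} Writing $\cR_3 = \tfrac{b}{2}\LU(L_V Y) + \tfrac{a_7 b^3}{2}\LU(\chi[L_V Y^7 + L_{Q_0}Y^7])$ with $Q_0 = -a_4 bY^4 - a_7 b^2 Y^7 + a_{10}b^3 Y^{10} + a_{11}b^3 Y^{11}$, I apply Lemma \ref{lem:p3LU} iteratively and use the explicit expansion \eqref{dec:LV} for $\p_Y^2\LU(L_V Y)$ to reduce $\p_Y^2\cLU^2 \cR_3$ to a combination of terms $O_{2/7}(bY^{i-j})\,\p_Y^i\!\int_0^Y \cLU V$ ($0\leq i\leq 4$, appropriate $j$), of $O_{2/7}(\tfrac{b}{Y^{k}(1+Y)^2}) V_Y$, and of polynomial corrections of size $O(b^4 Y^p)$. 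Pairing with $h w_2$ and using Hardy inequalities and the refined bounds of Corollary \ref{cor:p3U} together with the pointwise estimates \eqref{hyp:infty-V}, the resulting contribution is absorbed in $\delta(bE_2+D_2) + H_2\delta^{-1}(s^{-2}D_1 + s^{-7+(3-a)\beta_2})$. The purely polynomial $b^4$ corrections are of the same size as those from $\cR_2$.

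\textbf{Step 3: Far-field piece $\cR_4$.} Since $\cR_4 = P_1 + \LU P_2$ with $P_i$ supported in $Y\ge c s^{2/7}$ and of at most polynomial growth, $\p_Y^2 \cLU^2 \cR_4$ is a polynomially bounded function also essentially supported in $\{Y \gtrsim s^{2/7}\}$ (up to lower-order terms involving derivatives of $V$, handled as in Step 2). Lemma \ref{lem:tails} with $p_0 = w_1$ then gives a bound $s^{-P}(1 + D_1)$ for any $P>0$, provided $m_2 - m_1$ is large enough.

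\textbf{Main obstacle.} The technical crux will be Step 1: keeping explicit enough structural control over $\p_Y^2 \cLU^2[\chi Q_1]$ to separate the purely polynomial pieces from the linear-in-$V$ pieces, while avoiding spuriously quadratic-in-$V$ contributions that would not close the bootstrap. This forces a careful expansion $U = \Uapp + V$ inside each inverse power $U^{-2},U^{-3}$ appearing in the iterated $\LU$, and the systematic use of the substitution $Y^k = \LU(L_{\Uapp}Y^k) + \LU(L_V Y^k)$ to trade high polynomial degrees for lower-order terms at the cost of an additional factor $V$ (which is then controlled by $D_1^{1/2}$).
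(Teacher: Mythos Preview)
Your overall strategy (decompose $\cR=\sum_i\cR_i$, use the substitution trick on polynomial pieces, control $V$-linear corrections by $D_1$) matches the paper, but there are two genuine gaps.

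\textbf{The origin of the factor $b^2$ in $b^2(b_s+b^2)^2$.} Your preamble attributes the extra factor $s^{-1}$ to the pairing $|b_s+b^2|\,E_2^{1/2}$ with $E_2\lesssim s^{-5}$. This is circular: the bound $E_2\lesssim s^{-5}$ is the \emph{conclusion} of Proposition~\ref{prop:est-cLU2V}, derived \emph{from} the present lemma; you cannot feed it back in. What actually happens for $\cR_1$ is structural: after one application of $\cLU$ and the substitution trick (equation~\eqref{dec:R1}), the leading polynomial $a_4Y^4$ becomes $(a_4/2)\,bY^5$, gaining a factor of $b$. The paper then computes explicitly $\varphi:=\p_Y^2\LU\p_Y^2\LU[Y^5\chi]$ and shows $\varphi=O_{\beta_1}((1+Y)^{-3})$; this decay is what allows pairing against $D_2$ (not $E_2$) via $\int |\varphi||h|w_2\le D_2^{1/2}(\int U^2\varphi^2 w_2)^{1/2}$ with $U^2(1+Y)^{-6}$ integrable, yielding $\delta D_2+H_2\delta^{-1}b^2(b_s+b^2)^2$. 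Your bookkeeping ``highest degree $11$ with $b^3$'' misses that the dangerous term is the \emph{lowest} degree $a_4Y^4$ with no $b$-prefactor; pairing that piece naively against $E_2$ only gives $b(b_s+b^2)^2$, which after multiplying by $s^5$ in the Gronwall step produces $\int s^4(b_s+b^2)^2$, not controlled by the hypothesis $\int s^{13/4}(b_s+b^2)^2\le J$.

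\textbf{The $\cR_2$ and $\cR_3$ estimates.} For $\cR_2$ your claim $s^{-8+(11-a)\beta_2}\ll s^{-7+(3-a)\beta_2}$ is false for $\beta_2>1/8$ (and here $\beta_2>1/4$): in fact $\cR_2$ is precisely the \emph{source} of the term $s^{-7+(3-a)\beta_2}$ in the statement, via $\p_Y^2\cLU^2\cR_2=O_{\beta_1}(b^4Y)+\ldots$ and $\|b^4Y\|_{L^2(w_2)}^2/b\sim s^{-7+(3-a)\beta_2}$. For $\cR_3$, your reduction ``$O_{2/7}(bY^{i-j})\p_Y^i\!\int_0^Y\cLU V$ with $0\le i\le 4$'' undercounts derivatives: the leading term in $\p_Y^2\cLU^2\LU(L_VY)$ is of order $\p_Y^3\cLU^2 V$ (equivalently $\p_Y^5\cLU V$ modulo $U$-factors), which is controlled by $D_2$, not $D_1$. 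Moreover the lower-order terms involve $\p_Y\cLU^2 V$, which does \emph{not} vanish at $Y=0$ (recall $\p_Y\cLU^2V_{|Y=0}=-\tfrac12(b_s+b^2)$), so Hardy inequalities are unavailable. The paper handles this by writing $V=V_0-\tfrac12(b_s+b^2)\cLU^{-2}Y$ with $\p_Y\cLU^2V_0{}_{|Y=0}=0$, treating the $V_0$-part by Hardy inequalities and the explicit $(b_s+b^2)$-part directly; this splitting is where the $b^2(b_s+b^2)^2$ contribution from $\cR_3$ comes from, and you have no mechanism replacing it.
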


Gathering  Lemmas \ref{lem:commu-2} and \ref{lem:reste-E2-E3} and choosing $\delta=\min(1/2,\bar c/2 )$, we obtain 
\begin{eqnarray*}
&&\frac{dE_2}{ds} + 7 b E_2 + \frac{\bar c}{2} D_2 \\&\leq & 
H_2 \left(b^2(b_s+b^2)^2 + s^{(7+a)\beta_1}D_1 E_2 + s^3(b_s + b^2)^2 E_2 + s^{-2} D_1 + s^{-7 + (3-a)\beta_2}\right).
\end{eqnarray*}

We now multiply the above equation by $s^5$ and infer
\begin{eqnarray}\label{est:E_2}
&&\frac{d}{ds}(E_2(s) s^5) - H_2\left(s^{(7+a)\beta_1} D_1 + s^3(b_s + b^2)^2 \right)  (E_2(s) s^5) \\&\leq& H_2\left(s^3(b_s+b^2)^2 + s^3 D_1 + s^{-2+(3-a)\beta_2}\right).
\end{eqnarray}
Define
$$
\phi_2(s):=H_2\int_{s_0}^s \tau^{(7+a)\beta_1} D_1(\tau)+ \tau^3(b_\tau + b^2)^2d\tau.
$$
According to Proposition \ref{prop:est-cLUV-1}, since $\beta_1<2/7<1/3$, we have $(7+a)\beta_1<6-(11-a)\beta_1$ and $7\beta_1<13/4$, and therefore, if $s_0\geq \max (S_0, J^4)$,
$$
0\leq \phi_2(s)\leq H_1H_2\left(1+ E_1(s_0)s_0^{13/4} \right).
$$
Hence, multiplying \eqref{est:E_2} by $\exp(-\phi_2(s))$ and integrating over $[s_0,s]$, we obtain
\begin{eqnarray*}
&&E_2(s)s^5\exp(-\phi_2(s)) 
\\&\leq& E_2(s_0)s_0^5 + H_2 \int_{s_0}^s\left(\tau^3(b_\tau+b^2)^2 + \tau^3 D_1 + \tau^{-2+(3-a)\beta_2}\right)\exp(-\phi_2(\tau))d\tau.
\end{eqnarray*}
Now, using assumption \eqref{hy:b2} together with Proposition \ref{prop:est-cLUV-1} with $\alpha=3$, we have, for $s_0\geq \max (S_0, J^4)$,
$$
\ba
\int_{s_0}^s \tau^3(b_\tau+b^2)^2d\tau\leq J s_0^{-1/4} \leq 1,\\
\int_{s_0}^s \tau^3 D_1\leq H_1(1+C_0) ,\\
\int_{s_0}^s \tau^{-2+(3-a)\beta_2}d\tau\leq \frac{1}{1-(3-a)\beta_2} s_0^{-1+(3-a)\beta_2}\leq 1.
\ea
$$
We infer eventually
$$
E_2(s) s^5\leq \exp(H_2(1+ C_0 )) \left[E_2(s_0)s_0^5 +  H_2H_1(1+C_0)\right] \leq \exp(H_2(1+ C_0 ) ) H_2(1+ C_0 ) .
$$

\vskip3mm

We now turn towards the proofs of the Lemmas.

\subsubsection*{The commutator terms: proof of Lemma \ref{lem:commu-2}}
We start with the computation of the commutator terms. Looking at equation \eqref{eq:h}, the commutator integrals in the differential inequality for $E_2$ are
\be\label{commu-E_2}
\int_0^\infty \left(\p_Y^2 \mathcal C [\p_Y^2 \cLU V] + \p_Y^2 \cLU \mathcal C[\p_Y^2 V]\right) \p_Y^2 \cLU^2 V w_2.
\ee
We recall that the heuristics is that up to some corrector terms,
$$
\left|\eqref{commu-E_2}\right|\leq H_2 b \int_0^\infty \frac{1}{1+Y} (\p_Y^2\cLU^2V)^2w_2,
$$
and the right-hand side of the above inequality is then handled by Lemma \ref{lem:rem-1+Y}. However, there are a few complications, coming from the fact that the trace $\p_Y \cLU^2V_{|Y=0}$ is not zero. In the sequel, we will therefore focus on the difficulties and differences with respect to the treatment of the commutators in Proposition \ref{prop:est-cLUV-1}.

$\bullet$ We first consider the first term in the integral of \eqref{commu-E_2}.
This term has the same type of structure as  the term 
$$
\int_0^\infty \p_Y^2 \mathcal C[\p_Y^2 V] \p_Y^2 \cLU V \; w_1, 
$$
which we treated in the previous section. However, there exists one substantial difference, due to the fact that $\p_Y \cLU^2 V$ does not vanish at $Y=0$, so that we cannot write Hardy inequalities for $\p_Y \cLU^2 V$. To overcome this difficulty, we recall that $\p_Y\cLU^2 V_{|Y=0}= - \frac{1}{2} (b_s+b^2)$, so that $\p_Y^2 \cLU V \sim -\frac{1}{2} (b_s+b^2)L_U Y \sim -\frac{1}{4} (b_s+b^2) Y^2$ for $Y\ll 1$, and we write
\be\label{dec:p2LU2V}
\p_Y^2 \cLU V= \left(\p_Y^2 \cLU V + \frac{1}{2}(b_s+b^2) L_U Y\right) - \frac{1}{2}(b_s+b^2) L_U Y.
\ee

Now, we have $\p_Y \LU \left(\p_Y^2 \cLU V + \frac{1}{2}(b_s+b^2) L_U Y\right)_{|Y=0}=0$ by construction, so that we can apply Hardy inequalities to the first term. 
For instance
$$
\int_0^\infty \frac{1}{Y^2}\left(\p_Y \LU \left(\p_Y^2 \cLU V + \frac{1}{2}(b_s+b^2) L_U Y\right)\right)^2 w_2 \leq H_2 \int_0^\infty (\p_Y^2 \cLU^2 V)^2 w_2.
$$
Using the additional bound on $\p_Y^3 U$ from Corollary \ref{cor:p3U}, the  computations  are almost identical to the ones on page \pageref{commu-E1} . The only difference lies in  the treatment of one non-linear term, for which we do not apply exactly the same strategy (i.e. evaluate the term with the least number of derivatives in $L^\infty$, and the others in $L^2$) and for which
we use the extra information coming from the bound on $E_1$ and $D_1$. More precisely, the only term for which we do not use the same type of estimates as the ones on page \pageref{commu-E1} is
$$
\int_0^\infty \p_Y^3 \frac{\cD}{U} \left(\int_0^Y (\cLU^2 V + \frac{1}{2} (b_s + b^2) Y)\right) \p_Y^2 \cLU^2 V w_2.
$$
For this term, the problem comes from the part of $\p_Y^3 \frac{\cD}{U} $ for which we do not have $L^\infty$ bounds, namely $O_{1/3} (U^{-1}) \p_Y^3 \cLU V + O_{1/3} (Y^{-1} U^{-1}) \p_Y^2 \cLU V$. We first integrate by parts once; the most problematic term is then
$$
-\int_0^\infty \frac{\p_Y^2 \cLU V}{U} \left(\int_0^Y (\cLU^2 V + \frac{1}{2} (b_s + b^2) Y)\right) \p_Y^3 \cLU^2 V w_2.
$$
Here, although the middle integral term has less derivatives than $\p_Y^2 \cLU V$, we choose to evaluate it in $L^2$ thanks to a Hardy inequality because cancellations occur between $\cLU^2 V$ and $\frac{1}{2} (b_s + b^2) Y$. More precisely, the above integral is bounded by
$$
\int_0^\infty \left| \frac{\p_Y^2 \cLU V}{U^{1/2}} Y^3 \right| \; \left| \frac{1}{Y^3} \int_0^Y (\cLU^2 V + \frac{1}{2} (b_s + b^2) Y)\right|\; \frac{|\p_Y^3 \cLU^2 V|}{U^{1/2}} w_2.
$$
Now,
\begin{eqnarray*}
 \frac{\p_Y^2 \cLU V}{U^{1/2}} Y^3&=& \int_0^Y \p_Y \left(  \frac{\p_Y^2 \cLU V}{U^{1/2}} Y^3\right) \\
 &\leq & H_2 D_1^{1/2} \left(\int_0^Y (Y^4+ Y^6) w_1^{-1}\right)^{1/2}=O_{\beta_1} (s^{(7+a) \beta_1/2} D_1^{1/2}).
\end{eqnarray*}
The part of the integral bearing on $Y\gtrsim s^{\beta_1}$ can be bounded by noticing that we have pointwise bounds on $\int_0^Y \cLU^2 V$ since
$$
\left|\int_0^Y \cLU^2 V\right|= U \left|\int_0^Y \frac{\p_Y^2 \cLU V}{U^2}\right| \leq 2U \int_1^Y\frac{|\p_Y \cLU V| U_Y}{U^3} + \frac{|\p_Y \cLU V|}{U}
$$
For the term involving $(b_s + b^2)$ on the set $Y\gtrsim s^{\beta_1}$, we merely control $\p_Y^2 \cLU V$ by $E_1$.

Using Hardy inequalities and recalling that $\beta_2<\beta_1$, $m_2\gg m_1$, we infer that the problematic integral is bounded by
\begin{multline*}
H_2 s^{7\beta_1/2} D_1^{1/2} E_2^{1/2} D_2^{1/2} + \delta D_2 + s^{-P} D_1 + |b_s+b^2|^2 s^{-P} E_1
\\ \leq 2 \delta D_2 + \frac{H_2}{\delta} s^{(7+a)\beta_1} D_1 E_2 +  s^{-P} D_1+ |b_s+b^2|^2 s^{-P} E_1.
\end{multline*}
We will then choose $\beta_1$ so that $ 7 \beta_1<6-(11-a) \beta_1$, which is possible since $\beta_1<1/3$. We conclude that for all $\delta>0$, for $s_0\geq \max (S_0, J^4, C_0^8, \delta^{-p})$ for some $p>0$,
\begin{multline*}
\left| \int_0^\infty \p_Y^2 \cC \left[\p_Y^2 \cLU V + \frac{1}{2}(b_s+b^2) L_U Y\right]  \; \p_Y^2 \cLU^2 V \; w_2\right|\\
\leq \delta (bE_2 + D_2) + + \frac{H_2}{\delta} s^{(7+a)\beta_1} D_1 E_2 +  s^{-P} D_1+ |b_s+b^2|^2 s^{-P} E_1 + s^{-P}.
\end{multline*}

The next term coming from \eqref{dec:p2LU2V} is
$$
\int_0^\infty\p_Y^2 \cC \left[\frac{1}{2}(b_s+b^2) L_U Y\right] \p_Y^2 \cLU^2 V w_2.
$$
It turns out that the main order term in $\p_Y^2 \mathcal C [L_U Y]$ vanishes. More precisely, following the decomposition from Lemma \ref{lem:cR}, we write $\cD= -b Y/2 + \tilde \cD + \cD_{NL}= :\cD_0  + \tilde \cD + \cD_{NL}$, and we decompose the operator $\mathcal C$ into ${\mathcal C}_0 + \tilde{\mathcal C} + \mathcal C_{NL}$ accordingly. Concerning $\cC_0$, an explicit computation in Appendix A shows that
$
\p_Y^2 \cC_0 [L_U Y]= O_{\beta_1} (b^2Y),
$ so that, for $s\geq s_0$ sufficiently large,
\begin{multline*}
\left| \int_0^\infty\p_Y^2 \cC_0 \left[\frac{1}{2}(b_s+b^2) L_U Y\right] \p_Y^2 \cLU^2 V w_2\right|\\ \leq H_2 b^2 |b_s+ b^2| s^{(3-a)\beta_2/2} E_2^{1/2} \leq \delta b E_2 + b^2 (b_s + b^2)^2.
\end{multline*}
Concerning the terms involving the operator $\tilde \cC$, we use the fact that $\p_Y^k \tilde \cD=O_{\beta_1}(b^3 Y^{7-k} + b^4 Y^{10-k})$ for $Y\gg 1$ and for $k=0,1,2$. Therefore
$$
\p_Y^2 \tilde{\mathcal C}[L_U Y]= O_{\beta_1}(b^3 Y^4) - \p_Y^3\frac{\tilde \cD}{U} \frac{Y^2}{2}.
$$
Hence, integrating by parts the term involving $\p_Y^3 \tilde \cD$ and choosing $s_0$ large enough,
\begin{eqnarray*}
&&\left|\int_0^\infty\p_Y^2 \tilde \cC \left[\frac{1}{2}(b_s+b^2) L_U Y\right] \p_Y^2 \cLU^2 V w_2 \right|\\
&\leq&  b^3 |b_s + b^2| \int_0^\infty O_{\beta_1} (Y^4) | \p_Y^2 \cLU^2 V| w_2 +   b^3 |b_s + b^2|\int_0^\infty O_{\beta_1} (Y^5) | \p_Y^3 \cLU^2 V| w_2 \\
&\leq & \delta b E_2 + \delta D_2 + \frac{H_2}{\delta} (b_s + b^2)^2 (s^{-5+ 9 \beta_2} + s^{-6 + 13\beta_2}) \\
&\leq& \delta b E_2 + \delta D_2  +  b^2 (b_s + b^2)^2 .
\end{eqnarray*}
There remains to address the terms involving $\cD_{NL}$. Notice that $\cD_{NL}, \p_Y \cD_{NL}$ are bounded in $L^\infty$ (see Corollary \ref{cor:p3U}). As above, we integrate by parts the term involving $\p_Y^3 \cD_{NL}$. Eventually, we obtain, using Corollary \ref{cor:p3U}
\begin{eqnarray*}
&&\left|\int_0^\infty\p_Y^2  \cC_{NL} \left[\frac{1}{2}(b_s+b^2) L_U Y\right] \p_Y^2 \cLU^2 V w_2 \right|\\
&\leq & \delta b E_2  + \delta D_2 + \frac{H_2}{\delta}b^2 |b_s + b^2|^2 + s^{-P} D_1.
\end{eqnarray*}

Gathering all the estimates, we obtain, if $s_0\geq \max(S_0, J^4, C_0^8)$,
\[
\left|\int_0^\infty \p_Y^2 \mathcal C [\p_Y^2 \cLU V] \p_Y^2 \cLU^2 V  w_2\right|\leq \delta\left( bE_2 + D_2 \right) + \frac{H_2}{\delta } b^2 (b_s+ b^2)^2+  \frac{H_2}{\delta } s^{7\beta_1} D_1 E_2 + s^{-P} D_1 .
\]

We then address the second term in \eqref{commu-E_2}, for which we use the same type of decomposition as above, writing
$$
\p_Y^2 V = \left(\p_Y^2 V +\frac{1}{48} (b_s+b^2) L_U Y^4 \right) -  \frac{1}{48} (b_s+b^2) L_U Y^4=:\p_Y^2 \tilde V -  \frac{1}{48} (b_s+b^2) L_U Y^4.
$$
Using the formula for $\LU(Y^2)$ \eqref{LUY2} together with the bounds on $V$ stemming from $E_1$, notice that
$$
\p_Y \cLU^2 \tilde V_{|Y=0}=\p_Y \cLU^2  V_{|Y=0} + \frac{1}{4}\left(\p_Y \LU Y^2\right)_{|Y=0}=0,
$$
and that $$\p_Y^2 \cLU^2 \tilde V = \p_Y^2 \cLU^2 V + \frac14 (b_s + b^2) \p_Y^2 \LU Y^2 =\p_Y^2 \cLU^2 V  + O_{\beta_1} ((b_s + b^2) bY).$$

Concerning the term
\be\label{commu-E3-1}
\int_0^\infty  \p_Y^2 \cLU \mathcal C[\p_Y^2 \tilde V]\p_Y^2 \cLU^2 V w_2,
\ee
the computations are very similar to the ones  above, using the formula for $\p_Y^2 \mathcal C[\cdot]$ from Lemma \ref{lem:commu-formula} on the one hand, and the formula for $\p_Y^2 \LU$ from Lemma \ref{lem:p3LU} on the other hand. 
We leave the details of the estimates to the reader since they do not raise any additional difficulty. We end up with
$$
\eqref{commu-E3-1} \leq \delta \left( D_2 + b E_2\right) + b^2 (b_s + b^2)^2  + \frac{H_2}{\delta}\left(s^{-3} D_1 + s^{(7+a)\beta_1} D_1 E_2+ s^3(b_s + b^2)^2 E_2 + s^{-P}\right).
$$
We then consider
\be
\label{commu-E3-2}
\frac{1}{48} (b_s+b^2)\int_0^\infty  \p_Y^2 \cLU \mathcal C[ L_U Y^4]\p_Y^2 \cLU^2 V w_2.
\ee
A straightforward computation leads to
$$
\p_Y^2 \cLU \mathcal C[ L_U Y^4] = O_{\beta_1} \left(\frac{b}{(1+Y) U} \right) + \sum_{i=0}^4 O_{\beta_1} ((1+Y)^{i-1})\p_Y^i \int_0^Y \cLU^2 V. 
$$
We have
$$
\left| \int_0^\infty  O_{\beta_1} \left(\frac{b(b_s+b^2)}{(1+Y) U} \right)\p_Y^2 \cLU^2 V w_2\right| \leq \delta D_2 + \frac{H_2}{\delta} b^2 (b_s + b^2)^2.
$$
Expressing $\p_Y\cLU^2 V$ and $\cLU^2 V$ in terms of $\cLU V$ and using the expressions of $E_1$, $D_1$, it can be easily proved that the  terms of the form $O_{\beta_1} ((1+Y)^{i-1})\p_Y^i \int_0^Y \cLU^2 V$ give rise to integrals that can be bounded by
$$
\delta b E_2 + \delta D_2+  H_2 s^{-2} D_1 +  s^{-P}.
$$
Gathering all the terms, we obtain the estimate announced in  Lemma \ref{lem:commu-2}.

\subsubsection*{The remainder terms: proof of Lemma \ref{lem:reste-E2-E3}}

We now consider the remainder terms occurring in the right-hand side of the differential inequality for $E_2$, namely
$$
\int_0^\infty \p_Y^2 \cLU^2 V\; \p_Y^2 \cLU^2 \cR\; w_2.
$$
Following  the decomposition of $\cR$ from Remark \ref{rem:decomposition-D}, we will write $\cR$ as $\sum_{i=1}^4 \cR_i$, and study the contribution of each $\cR_i$ separately. We emphasize that the most important terms are $\cR_1$ and $\cR_2$: indeed, they dictate the  final convergence rate, whereas the terms involving $\cR_3$ and $\cR_4$ are small  perturbations of the main dissipation terms $D_2 + b E_2$.

$\bullet$\textbf{ Remainder stemming from $\cR_1 $:}

Using the decomposition \eqref{dec:R1} of the previous paragraph and using the fact that for $k\geq 8$ and $Y\gg 1$,
$$
\p_Y^2 \LU \p_Y^2 \LU \left[Y^k \chi\left( \frac{Y}{s^{2/7}}\right)\right] = O_{\beta_1}(Y^{k-8})+ O_{\beta_1}(Y^{k-4}) \p_Y^2 \cLU V + O_{\beta_1} (Y^{k-3}) \p_Y \cLU V,
$$
we find that
\begin{eqnarray}
\nonumber&&\p_Y \cLU^2 (\cR_1) \\
\label{eq:cR1}&=& (b_s + b^2) b\p_Y^2 \LU \p_Y^2 \LU \left[ \left(\frac{a_4}{2}Y^5 - \frac{61}{16} a_7 b Y^8\right)\chi\left(\frac{Y}{s^{2/7}}\right) \right]\\
\nonumber		&+& O_{\beta_1} (b^4  ) + O_{\beta_1}(b^5 Y^4) + O_{\beta_1} (b^4Y^5 + b^5 Y^9) \p_Y^2 \cLU V + O_{\beta_1} (b^4 Y^6 + b^5 Y^{10}) \p_Y \cLU V\\
\nonumber	&-& \frac{1}{2} (b_s+b^2) \p_Y \LU \p_Y^2 \LU L_V Y + \p_Y^2 \LU \left( \tilde P_1(s,Y) (1-\chi)\left(\frac{Y}{s^{2/7}}\right)\right).
\end{eqnarray}
Define
$$
\varphi(s,Y):= \p_Y^2 \LU \p_Y^2 \LU \left[ Y^5\chi\left(\frac{Y}{s^{2/7}}\right) \right].
$$
Then
\begin{eqnarray*}
\varphi(s,Y)&=& \p_Y^3 U\int_0^Y \left(\frac{1}{U^2}\p_Y^2 \LU \left[ Y^5\chi\left(\frac{Y}{s^{2/7}}\right) \right]\right) + \frac{U_{YY}}{U^2} \p_Y^2 \LU \left[ Y^5\chi\left(\frac{Y}{s^{2/7}}\right) \right]\\
&&- \frac{U_Y}{U^2} \p_Y^3 \LU \left[ Y^5\chi\left(\frac{Y}{s^{2/7}}\right) \right]+ \frac{1}{U}\p_Y^4 \LU \left[ Y^5\chi\left(\frac{Y}{s^{2/7}}\right) \right].
\end{eqnarray*}
Since we focus on the value of the above quantities for $Y\leq s^{\beta_1}$, we can replace $\chi(Y/s^{2/7})$ by 1; following by now usual arguments, the part of the integral bearing on $Y\geq s^{\beta_1}$ will be smaller than $s^{-P}$ for  $P>0$ arbitrarily large, provided $m_2\gg m_1$ is chosen large enough. Using Lemma \ref{lem:p3LU}, we have, for $Y\leq s^{\beta_1}$ and for $s\geq s_0$ large enough,
\begin{eqnarray}
\p_Y^2 \LU \left[ Y^5\chi\left(\frac{Y}{s^{2/7}}\right) \right]&=& \p_Y^3 U \int_0^Y \frac{Y^5}{U^2}\nonumber\\
&&+ \frac{Y^5 - 5(1+Y) Y^4 + 20 Y^3 (Y+ \frac{Y^2}{2}) + O (bY^7)}{U^2} \nonumber\\
&=& \frac{6Y^5 +15 Y^4 + O(bY^7)}{U^2} \geq\frac{5Y^5 +14 Y^4 }{U^2}>0\label{varphi-1}
\end{eqnarray}
while
\begin{eqnarray}
\p_Y^3 \LU \left[ Y^5\chi\left(\frac{Y}{s^{2/7}}\right) \right] &=&\p_Y^4 U \int_0^Y  \frac{Y^5}{U^2} + 2 \left(\frac{\p_Y^3 U}{U^2} - \frac{U_Y U_{YY}}{U^3}\right) Y^5\nonumber\\
&&+ 10 Y^4 \frac{U_Y^2}{U^3} - 40Y^3\frac{U_Y}{U^2} + 120\frac{Y^2}{U}\nonumber\\
&=&\frac{2Y^4 (9 Y^2 + 29 Y +45 )}{U^3} + O(bY^2)+ O(Y^2U ) \p_Y^2 \cLU V.\label{varphi-2}
\end{eqnarray}
A similar formula holds for $\p_Y^4 \LU [Y^4]$. It follows that
\[
\varphi(s,Y)=O_{\beta_1} ((1+Y)^{-3}) + O_{\beta_1}(Y) \p_Y^2 \cLU V + O_{\beta_1}(Y^2) \p_Y^3 \cLU V + O_{\beta_1}(bY)
\]
and
\begin{eqnarray*}
|\p_Y^2\cLU^2 \cR_1 |&=&|b_s + b^2| b | \varphi(s,Y)|+O_{\beta_1} (b^4 +b^5 Y^4 ) \\
&&+ O_{\beta_1}(b^4Y^6 + b^5 Y^{10} +(b_s+b^2) Y^2 ) \p_Y \cLU^2 V +\lot
\end{eqnarray*}
As a consequence, since $\beta_2<\beta_1$,
\begin{eqnarray*}
&&\left| \int_0^\infty\left[ (b_s + b^2) b\varphi(s,Y)\right]\; \left[ \p_Y^2 \cLU^2 V\right]\; w_2 \right|\\
&\leq & H_2 b |b_s+b^2|D_2^{1/2}\left( \int_0^\infty \frac{U^2}{(1+Y)^{6}} w_2\right)^{1/2} + H_2 b^2 |b_s + b^2| E_2^{1/2} \left(\int_0^\infty Y^{2} w_2\right)^{1/2}\\
&&+ H_2 b|b_s+b^2| E_2^{1/2} \left(s^{6/7} D_1^{1/2}+ s^{-P}\right).
\end{eqnarray*}
For any $\delta>0$, if $s\geq s_0$ large enough (depending on $\delta$), the right-hand side is bounded by
\begin{multline*}
\delta \left(b E_2 + D_2\right) + \frac{H_2}{\delta}b^2 |b_s+b^2|^2 +  \frac{H_2}{\delta} b |b_s+b^2|^2 s^{12/7} D_1 +  s^{-P} \\
\leq \delta \left(b E_2 + D_2\right) + \frac{H_2}{\delta}b^2 |b_s+b^2|^2 + s^{-2} D_1+  s^{-P}.
\end{multline*}
Gathering all the terms, it follows that
$$
\int_0^\infty  \left(\p_Y^2 \cLU^2 \cR_1\right) \left(\p_Y^2 \cLU^2 V \right) Y^{-a} w_2
\leq  \delta \left(b E_2 + D_2\right) + \frac{H_2}{\delta}b^2 |b_s+b^2|^2 +s^{-2}  D_1(s)+ s^{-P} 
$$

$\bullet$ \textbf{Remainder stemming from $\cR_2$:}
As announced in Remark \ref{rem:decomposition-D}, the second remainder $\cR_2$ will partly dictate the total size of the remainder. More precisely, we have, for $Y\gg 1$,
$$
\ba
\cLU^2 \cR_2=O_{\beta_1}(b^4 Y^3  ) ,\\
\p_Y \cLU^2 \cR_2= O_{\beta_1}(b^4 Y^2  ) + O_{\beta_1}(b^4Y^5)\cLU^2 V,\\
\p_Y^2 \cLU^2 \cR_2= O_{\beta_1}(b^4 Y  ) + O_{\beta_1}(b^4Y^5)\p_Y\cLU^2 V + O_{\beta_1}(b^4Y^4)\cLU^2 V .
\ea
$$
We infer that for all $\delta >0$ and for $s\geq s_0$ large enough (depending on $\delta$),
$$
\left|\int_0^\infty \p_Y^2 \cLU^2 \cR_2\;  \p_Y^2 \cLU^2 V w_2\right| \leq \delta b E_2 + \frac{H_2}{\delta } s^{-7+(3-a)\beta_2} +\frac{H_2}{\delta } s^{-17/4} D_1  .
$$

$\bullet$ \textbf{Remainder stemming from $\cR_3$:}

An easy computation leads to
$$
\ba
\cLU^2 (\LU L_V Y)&= O_{\beta_1}\left(1\right) \p_Y \cLU^2 V + \lot + U_Y \int_0^Y \frac{\p_Y^2\cLU\LU (L_V Y)}{U^2},\\
\p_Y \cLU^2 (\LU L_V Y)&= O_{\beta_1}\left(1\right) \p_Y^2 \cLU^2 V + O_{\beta_1}\left(\frac{1}{1+Y}\right) \p_Y \cLU^2 V + \lot\\ &+ U_{YY} \int_0^Y \frac{\p_Y^2\cLU\LU (L_V Y)}{U^2}.
\ea
$$

In order to estimate $\p_Y^2 \cLU^2 (\LU L_V Y)$, we use the same trick as in the commutator estimate and we replace $V$ by its asymptotic expansion close to $Y=0$. More precisely, we write
$$
V= \left( V + \frac{1}{2} (b_s + b^2) \cLU^{-2} Y\right) - \frac{1}{2}  (b_s + b^2) \cLU^{-2} Y=: V_0 - \frac{1}{2}  (b_s + b^2) \cLU^{-2} Y,
$$
with the convention $\p_Y^{-1}=\int_0^Y$.
Now, by definition $\p_Y \cLU^2 V_{0|Y=0}=0$ and $\p_Y^2 \cLU^2 V_0=\p_Y^2 \cLU^2 V$. Moreover, it can be easily checked that
\begin{eqnarray*}
&&\p_Y^2 \cLU^2 \LU \left(Y \cLU^{-2} Y - \frac{Y^2}{2} \p_Y\cLU^{-2} Y\right)\\&=& O_{\beta_1}\left(\frac{1}{Y(1+Y)^2}\right) + O_{1/3}(Y^5 ) \p_Y^2 \cLU^2 V + O_{1/3}(Y^4 )\p_Y\cLU^2 V + \lot,
\end{eqnarray*}
while
$$
\p_Y^2 \cLU^2 \LU \left(L_{V_0} Y\right)=  O_{\beta_1}\sum_{i=1}^3 O_{1/3} (Y^{3+i}) \p_Y^i \cLU^2 V+ \lot
$$
It follows that
\begin{eqnarray*}
&&\left| \int_0^\infty b \p_Y^2 \cLU^2 (\LU L_V Y) \p_Y^2 \cLU^2 V w_2\right|\\
&\leq & \delta D_2 + \frac{C}{\delta}b^2\int_0^\infty U O_{\beta_1}\left(\frac{Y^4}{U^2}\right) (\p_Y^2 \cLU^2 V)^2 w_2 + H_2b \int_0^\infty \frac{1}{1+Y} (\p_Y^2 \cLU^2 V)^2 w_2\\
&&+ H_2 b|b_s + b^2| \int_0^\infty O_{\beta_1}\left(\frac{1}{Y(1+Y)^2}\right) \left|\p_Y^2 \cLU^2 V\right| w_2 \\&& + H_2 b|b_s + b^2| \sum_{i=1}^3\int_0^\infty O_{1/3}(Y^{3+i} )|\p_Y^i\cLU^2 V|\;  \left|\p_Y^2 \cLU^2 V\right| w_2.
\end{eqnarray*}
Using the estimate on $\p_Y \cLU^2 V$ from Corollary \ref{cor:p3U}, we infer that the right-hand side is bounded by
$$\delta (D_2 + b E_2) + \frac{H_2}{\delta} b^2(b_s+b^2)^2 + H_2 s^{-3} D_1.$$

The same method and estimates apply to $b^3 \LU (L_V Y^7)$. At last, we consider
$$
b^3 \LU\left(\chi\left( \frac{Y}{s^{2/7}}\right) L_{-a_4 b Y^4 - a_7 b^2 Y^7 + a_{10} b^3 Y^{10} + a_{11} b^3 Y^{11}} Y^7\right)=: \LU (\zeta(s,Y)). 
$$
Note that $\zeta(s,\cdot)\in \mathcal C^\infty(Y)$ and that
$$
\p_Y^k \zeta(s,Y)=O(b^4 Y^{11-k})\quad \forall k \leq 11. 
$$
It follows that
\begin{eqnarray*}
\cLU^2 \LU \zeta&=& O(b^4 Y\ln Y) + O_{1/3}(s^{-10/3} Y^4 + s^{-4} Y^8) \p_Y \cLU^2 V + \lot \\
\p_Y \cLU^2 \LU \zeta&=& O(b^4 \ln Y) + O_{1/3}(s^{-10/3} Y^4 + s^{-4} Y^8) \p_Y^2 \cLU^2 V \\&&+  O_{1/3}(s^{-10/3} Y^3 + s^{-4} Y^7) \p_Y \cLU^2 V+\lot\\
\p_{YY}\cLU^2 \LU \zeta&= &O\left(\frac{b^4}{1+Y} \right)+ O_{1/3}(s^{-10/3} Y^2 + s^{-4} Y^6) \p_Y^3 \cLU^2 V\\&& +  O_{1/3}(s^{-10/3} Y^3 + s^{-4} Y^7) \p_Y \cLU^2 V+ \lot 
\end{eqnarray*}

We obtain, since $\beta_2<\beta_1<1/3$,
\begin{multline*}
\left| \int_0^\infty \left( \cLU^2 \LU \zeta \; \cLU^2 V \; \p_{YY} w_2 + \p_Y \cLU^2 \LU \zeta\; \p_Y \cLU^2 V \; w_2\right) \right|\\
\leq \delta b E_2 + \frac{H_2}{\delta} s^{-27/4} \left(\ln s\right)^2 + \frac{H_2}{\delta} s^{-7+(16+a)\beta_2} E_2
\end{multline*}
and, writing $\p_Y \cLU^2 V=\left( \p_Y \cLU^2 V + \frac{1}{2} (b_s + b^2)\right) - \frac{1}{2}(b_s + b^2)$,
$$
\left| \int_0^\infty\p_Y^2 \cLU^2 \LU \zeta\; \p_Y^2 \cLU^2 V w_2 \right|\\
\leq \delta b E_2 + \delta D_2+ \frac{H_2}{\delta}s^{-7}  + \frac{H_2}{\delta } s^{-7 + (13-a)\beta_2} (b_s + b^2)^2 + s^{-M } D_1.
$$

$\bullet$ \textbf{Remainder stemming from $\cR_4$:}

We recall that
$$
\cR_4 = P_1(s,Y) (1-\chi_1)\left(\frac{Y}{s^{2/7}}\right) + \LU \left( P_2(s,Y) (1-\chi_2)\left(\frac{Y}{s^{2/7}}\right) \right),
$$
and that for any $P>0$, $k\geq 0$, $i=1,2$, choosing $\beta_1<2/7$,
$$
\p_Y^k (P_i (1-\chi_i)(\frac{Y}{s^{2/7}}))= O_{\beta_1}(s^{-P}).
$$

Using the same type of computations as above, it follows that for any $M>0$,
$$
\ba
\cLU^2 \cR^4= O_{\beta_1}(s^{-P}) + O_{\beta_1} (s^{-P})\p_Y \cLU^2 V + \lot \text{ in } L^2(\p_{YY} w_2),\\
\p_Y \cLU^2 \LU \psi= O_{\beta_1}(s^{-P})+O_{\beta_1}(s^{-P}) \p_Y^2 \cLU^2 V + O_{\beta_1}(s^{-P})\p_Y \cLU^2 V+ \lot \text{ in } L^2(w_2)\\
\p_{YY}\cLU^2 \LU \psi= O_{\beta_1}(s^{-P})+ O_{\beta_1}(s^{-P}) \p_Y^3 \cLU^2 V +  O_{\beta_1}(s^{-P}) \p_Y \cLU^2 V+ \lot \text{ in } L^2(w_2).
\ea
$$
Thus, choosing $m_2$ and $m_2$ large enough,
$$
\ba
\left| \int_0^\infty (\cLU^2 \cR_4 \cLU^2 V \p_{YY} w_2 + \p_Y \cLU^2 \cR_4 \p_Y \cLU^2 V w_2)\right| \leq  \delta b E_2 +  s^{-P}  + s^{-P} E_2+ s^{-P} D_1,\\
\left| \int_0^\infty\p_Y^2 \cLU^2 \cR_4 \; \p_Y^2\cLU^2 V w_2 \right| \leq \delta b E_2 + \delta D_2 +  s^{-P}+ s^{-P} D_1.
\ea
$$

Gathering all the terms, we obtain the estimates announced in Lemma \ref{lem:reste-E2-E3}.

\section*{Acknowledgements}

A.-L. Dalibard is partially supported by the Agence Nationale de la Recherche, project Dyficolti, Grant ANR-13-BS01-0003-01. This project has received funding from the European Research Council (ERC) under the European Union's Horizon 2020 research and innovation program Grant agreement No 637653, project BLOC ``Mathematical Study of Boundary Layers in Oceanic Motion''. This work was initiated during A.-L. D.'s stay at the Courant Institute in 2013-2014, and she warmly thanks the Courant Institute for its hospitality.

\appendix

\section{Useful formulas}

$\bullet$  We will often need to transform derivatives of $V$ into quantities involving $\cLU V, \cLU^2 V$ and their derivatives. In order to do so, we start from the following observation
\be\label{d2V}
\p_Y^2 V = L_U \cLU V = U \cLU V - U_Y\int_0^ Y \cLU V,
\ee
from which it follows, applying the same idea to $\p_Y^2 \cLU V$,
\be\label{pkV}
\ba\p_Y^3 V = U \p_Y \cLU V - U_{YY}\int_0^Y \cLU V,\\
\p_Y^4 V = U^2 \cLU^2 V - U U_Y \int_0^Y \cLU^2 V + U_Y \p_Y \cLU V - U_{YY} \cLU V - (\p_Y^3 \Uapp + \p_Y^3 V) \int_0^Y \cLU V.\ea
\ee
Notice that for derivatives of $U$ of higher greater than or equal to two, we decompose $\p_Y^k U$ into $\p_Y^k \Uapp + \p_Y^k V$. This is related to the fact that we have pointwise estimates on $\p_Y^2 U$, but not on higher derivatives. Now, in the formula giving $\p_Y^4 V$, we can write $\p_Y^3 V$ in terms of $\cLU V$. Obviously, we can iterate this procedure. As a consequence, for any $k\geq 2$, we can express $\p_Y^k V$ in terms of $\cLU^l V$ for $l\leq k/2$.

$\bullet$ We will also need the explicit expression of $\p_Y^k \LU$, which is given in the following Lemma, whose proof is straightforward and left to the reader.
\begin{lemma}
For any function $W$ which  vanishes at a sufficiently high order near $Y=0$, we have
\begin{eqnarray*}
\p_Y \LU W&=&U_{YY}\int_0^Y  \frac{W}{U^2}+\frac{\p_Y W}{U},\\
\p_Y^2 \LU W &=& \p_Y^3 U \int_0^Y \frac{W}{U^2} + \p_Y^2 U \frac{W}{U^2} - \p_Y U\frac{\p_Y W}{U^2} + \frac{\p_Y^2 W}{U}\\
&=&  \p_Y^3 U \int_0^Y \frac{W}{U^2} + \p_Y^2 U \frac{W}{U^2} + \p_Y \frac{\p_Y W}{U}
\end{eqnarray*}
and
$$
\p_Y^3 \LU W= \p_Y^4 U \int_0^Y \frac{W}{U^2} + 2 \left(\frac{ U\p_Y^3 U-U_{YY} U_Y }{U^3} \right) W + 2 \frac{U_Y^2}{U^3} \p_Y W - 2 \frac{U_Y}{U^2} \p_{YY } W + \frac{\p_Y^3 W}{U}.
$$

\label{lem:p3LU}
\end{lemma}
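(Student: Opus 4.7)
The statement to be proved, Lemma \ref{lem:p3LU}, is a purely algebraic identity: it gives the explicit form of $\partial_Y^k L_U^{-1} W$ for $k=1,2,3$, starting from the definition
\[
L_U^{-1} W = U_Y \int_0^Y \frac{W}{U^2} + \frac{W}{U}.
\]
The plan is a direct computation by induction on $k$, using only the product rule and the fundamental theorem of calculus to differentiate $\int_0^Y W/U^2$. The assumption that $W$ vanishes at a sufficiently high order near $Y=0$ is just what is needed so that $W/U^2$ is integrable near zero and the integrals converge.

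For $k=1$, I would differentiate the two summands separately. The derivative of $U_Y \int_0^Y W/U^2$ produces $U_{YY}\int_0^Y W/U^2 + U_Y W/U^2$, while the derivative of $W/U$ produces $\partial_Y W/U - U_Y W/U^2$. The key observation is the cancellation of the two $U_Y W/U^2$ contributions, which leaves exactly the formula claimed. For $k=2$, I would then differentiate the expression already obtained for $\partial_Y L_U^{-1} W$. The term $U_{YY}\int_0^Y W/U^2$ gives $\partial_Y^3 U \int_0^Y W/U^2 + U_{YY} W/U^2$, and differentiating $\partial_Y W/U$ yields $\partial_Y^2 W/U - U_Y \partial_Y W/U^2$; combining these gives the first form of $\partial_Y^2 L_U^{-1}W$, and rewriting the last two terms as $\partial_Y(\partial_Y W/U)$ gives the second, more compact form.

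For $k=3$, I would differentiate the first form of $\partial_Y^2 L_U^{-1}W$ term by term, producing seven contributions. Grouping the two copies of $\partial_Y^3 U\, W/U^2$ (one coming from differentiating $\partial_Y^3 U \int_0^Y W/U^2$, the other from differentiating $U_{YY}W/U^2$) together with the $-2 U_{YY} U_Y W/U^3$ coefficient yields the combination $2(U \partial_Y^3 U - U_{YY}U_Y)W/U^3$ claimed. The $\partial_Y W$ contributions come from $U_{YY}\partial_Y W/U^2$ (from $U_{YY}W/U^2$), from $-U_{YY}\partial_Y W/U^2$ (from $-U_Y\partial_Y W/U^2$), and from $2 U_Y^2 \partial_Y W/U^3$; the first two cancel, leaving $2 U_Y^2 \partial_Y W/U^3$. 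The $\partial_Y^2 W$ contributions combine into $-2U_Y\partial_Y^2 W/U^2$, and the $\partial_Y^3 W$ term is simply $\partial_Y^3 W/U$. Matching terms against the statement concludes the proof.

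There is no conceptual obstacle here: the only thing one has to be careful about is bookkeeping, in particular tracking the signs of the terms proportional to $U_Y W/U^2$ (for $k=1$) and the $U_{YY}\partial_Y W/U^2$ pair (for $k=3$) so that the advertised cancellations are correctly identified. Since each step only uses the product rule and $\partial_Y \int_0^Y f/U^2 = f/U^2$, the entire proof is a few lines of elementary calculation.
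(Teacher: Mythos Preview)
Your proposal is correct and is exactly the computation the paper has in mind; indeed the paper explicitly says this lemma's ``proof is straightforward and left to the reader.'' Your bookkeeping of the cancellations (the $U_Y W/U^2$ pair for $k=1$ and the $U_{YY}\partial_Y W/U^2$ pair for $k=3$) matches the expected direct differentiation.
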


$\bullet$ Eventually, setting $\cD_0:=-bY/2$ and 
\[
\cC_0[W]:=2 \LU \left(\cD_0\frac{W}{U}\right) - \p_Y \left(\frac{\cD_0}{U} \int_0^Y \LU W\right),
\]
we need to compute $\p_Y^2 \cC_0[L_U Y]$. By definition, $L_U Y = Y U - U_Y Y^2/2$, so that
\[
\cC_0[L_U Y]= -\frac{b}{2}\left\{2 \LU (Y^2) - \LU \left(\frac{Y^3 U_Y}{U}\right) - \p_Y \left( \frac{Y^3}{2U}\right) \right\}.
\]
Now, integrating by parts,
\[
\LU \left(\frac{Y^3 U_Y}{U}\right)= \p_Y \left[ U \int_0^Y \frac{Y^3 U_Y}{U^2}\right]= - \p_Y \left( \frac{Y^3}{2U}\right) + \frac{3}{2} \LU (Y^2).
\]
Therefore
\[
\cC_0[L_U Y]= - \frac{b}{4} \LU (Y^2).
\]
Using the formula above for $\p_Y^2 \LU$, we obtain
\[
\p_Y^2 \cC_0[L_U Y]=- \frac{b}{4} \left\{\p_Y^3 U \int_0^Y \frac{Y^2}{U^2} + \frac{Y^2 U_{YY} - 2 Y U_Y + 2 U^2}{U^2}\right\} .
\]

\section{Estimate on the modulation rate}

\begin{lemma}
	Let $\gamma\in (0,5)$, and let $\varphi:[s_0, s_1]\to \R_+$ be such that
	$$
	\int_{s_0}^{s_1} s^\gamma \varphi(s)\:ds <+\infty.
	$$
	Assume that there exists a constant $\eps>0$ 
	such  that for all  $s\in [s_0, s_1]$ 
	$$
	\ba
	\left| b_s + b^2\right| \leq \sqrt{\varphi},\\
	\frac{1-\eps}{s}\leq b(s)\leq \frac{1+\eps}{s}.\ea
	$$

	Then for all $s\in [s_0, s_1]$,
	$$
	\left| b(s)- \frac{1}{s} \right| \leq \frac{1+\eps}{1-\eps}\left| \frac{1}{s_0} - b(s_0)\right| \frac{s_0^2}{s^2} + \frac{1+\eps}{(1-\eps)^2\sqrt{5-\gamma}} s^{\frac{1-\gamma}{2}} \left( \int_{s_0}^{s_1} t^\gamma \varphi(t) dt\right)^{1/2}.
	$$

	In particular:
	\begin{itemize}
		\item If $\varphi(s)= J s^{-4-\eta}$, the inequality becomes, with $\gamma=3+ \eta/2$,
		$$	\left| b(s)- \frac{1}{s} \right| \leq \frac{1+\eps}{1-\eps}\left| \frac{1}{s_0} - b(s_0)\right| \frac{s_0^2}{s^2} + \frac{J^{1/2}}{\sqrt{\eta}} \frac{1+\eps}{(1-\eps)^2} s^{-1 - \frac{\eta}{4}}.
		$$
		
		\item More generally, if $\gamma>3$, then there exists a constant $\eta>0$  ($\eta=(\gamma-3)/2$) such that for all $s\in [s_0, s_1]$,
		$$
		\left| b(s)- \frac{1}{s} \right| \leq \frac{1+\eps}{1-\eps}\left| \frac{1}{s_0} - b(s_0)\right| \frac{s_0^2}{s^2} +  \frac{(1+\eps)J^{1/2}}{(1-\eps)^2 \sqrt{2-2\eta}} s^{-1-\eta},
		$$
		where $J=\int_{s_0}^{s_1} t^\gamma \varphi(t) dt$.
		
	\end{itemize}

	\label{lem:b}
	
\end{lemma}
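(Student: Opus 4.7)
The plan is to reduce the nonlinear statement on the Riccati-type quantity $b_s+b^2$ to a linear Duhamel formula via the substitution $a(s) := 1/b(s)$. Since $a_s = -b_s/b^2$, one finds
\[
a_s - 1 = -\frac{b_s + b^2}{b^2},
\]
so integrating between $s_0$ and $s$ gives
\[
a(s) - s = a(s_0) - s_0 - \int_{s_0}^s \frac{b_t + b^2}{b^2}\, dt.
\]
Rewriting $a(s) - s = (1 - sb(s))/b(s) = -s(b(s)-1/s)/b(s)$, and likewise at $s_0$, and solving for $b(s) - 1/s$ yields the Duhamel-type identity
\[
b(s) - \frac{1}{s} = \frac{s_0\, b(s)}{s\, b(s_0)}\left(b(s_0) - \frac{1}{s_0}\right) + \frac{b(s)}{s}\int_{s_0}^s \frac{b_t + b^2}{b^2}\, dt.
\]

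I would then estimate both terms using the two-sided hypothesis $(1-\eps)/t \leq b(t) \leq (1+\eps)/t$. The prefactor of the first term is immediately bounded by
\[
\frac{s_0\, b(s)}{s\, b(s_0)} \leq \frac{s_0(1+\eps)/s}{s(1-\eps)/s_0} = \frac{1+\eps}{1-\eps}\,\frac{s_0^2}{s^2},
\]
which produces the homogeneous contribution in the claim. For the inhomogeneous term I use $b(s)/s \leq (1+\eps)/s^2$ together with $1/b(t)^2 \leq t^2/(1-\eps)^2$, reducing matters to an estimate on $\int_{s_0}^s t^2 |b_t + b^2|\, dt$. Applying Cauchy--Schwarz with the splitting $t^2 = t^{2-\gamma/2}\cdot t^{\gamma/2}$ and using the pointwise bound $|b_t+b^2|\leq \sqrt{\varphi}$ gives
\[
\int_{s_0}^s t^2|b_t+b^2|\, dt \leq \left(\int_{s_0}^s t^{4-\gamma}\, dt\right)^{1/2}\left(\int_{s_0}^{s_1} t^\gamma \varphi\, dt\right)^{1/2} \leq \frac{s^{(5-\gamma)/2}}{\sqrt{5-\gamma}}\left(\int_{s_0}^{s_1} t^\gamma \varphi\right)^{1/2},
\]
valid since $\gamma < 5$. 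Multiplying by the $(1+\eps)/((1-\eps)^2 s^2)$ prefactor yields exactly the second term of the claim. The two explicit corollaries follow by computing $\int_{s_0}^{s_1} t^\gamma \varphi$ for $\varphi(t) = Jt^{-4-\eta}$ with $\gamma = 3 + \eta/2$ (giving $\leq 2J/(\eta s_0^{\eta/2})$), and more generally by setting $\eta = (\gamma-3)/2$.

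The only genuinely delicate point---though not really an obstacle---is recognising that the substitution $a = 1/b$ is the right one. Working instead with $r := b - 1/s$ and the integrating factor $\mu(s) = s\exp\bigl(\int_{s_0}^s b\bigr)$ also closes the argument, but only yields the weaker decay $(s_0/s)^{2-\eps}$, missing the sharp constant $(1+\eps)/(1-\eps)$ in front of $s_0^2/s^2$. Working with $1/b$ is what produces the precise rate stated in the lemma; everything else is elementary integration and one application of Cauchy--Schwarz.
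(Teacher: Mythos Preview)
Your proof is correct and follows essentially the same approach as the paper: both work with $1/b$ (the paper writes $s - 1/b(s)$ directly and notes $\frac{d}{ds}(s-1/b) = 1 + b_s/b^2$), integrate, apply Cauchy--Schwarz with the same splitting $t^2 = t^{2-\gamma/2}\cdot t^{\gamma/2}$, and then multiply by $b(s)/s \leq (1+\eps)/s^2$. Your explicit Duhamel identity and the remark on why the substitution $a=1/b$ outperforms the integrating-factor approach are nice additions, but the core argument is identical to the paper's.
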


\begin{proof}
	The  assumption on $b$ entails
	$$
	\left| \frac{b_s}{b^2} + 1 \right| = \left| \frac{d}{ds}\left(s-\frac{1}{b}\right) \right| \leq \frac{\sqrt{\varphi}}{ b^2} \leq \frac{1}{(1-\eps)^2 } s^2 \sqrt{\varphi}.
	$$
	Integrating the above inequality between $s_0$ and $s$ and using a Cauchy-Schwarz inequality yields
	\begin{eqnarray*}
		\left| s -\frac{1}{b(s)} \right|& \leq& \left| s_0 -\frac{1}{b(s_0)} \right| + \frac{1}{(1-\eps)^2 } \left( \int_{s_0}^{s_1} t^\gamma \varphi(t) dt\right)^{1/2} \left(\int_{s_0}^s t^{4-\gamma}\:dt\right)^{1/2}\\
		&\leq&  \left| s_0 -\frac{1}{b(s_0)} \right| + \frac{1}{(1-\eps)^2 } \left( \int_{s_0}^{s_1} t^\gamma \varphi(t) dt\right)^{1/2} \frac{s^{\frac{5-\gamma}{2}}}{\sqrt{5-\gamma}}.
	\end{eqnarray*}
	Now, multiplying the above inequality by $b/s\leq (1+\eps) s^{-2}$, we obtain
	$$
	\left| b(s)- \frac{1}{s} \right| \leq (1+\eps) \frac{s_0}{b(s_0)}\left| \frac{1}{s_0} - b(s_0)\right| \frac{1}{s^2}+ \frac{1+\eps}{(1-\eps)^2}\left( \int_{s_0}^{s_1} t^\gamma \varphi(t) dt\right)^{1/2}  \frac{s^{\frac{1-\gamma}{2}}}{\sqrt{5-\gamma}}.
	$$
	Since $s_0/b(s_0)\leq (1-\eps)^{-1} s_0^2$, we obtain the inequality announced in the Lemma.
\end{proof}

Lemma \ref{lem:b} has in particular the following consequence:
\begin{corollary}
	Assume that $b$ satisfies the assumptions of Lemma \ref{lem:b} for some $\gamma \in ]3,4[$. For $s\geq s_0$, define $\tb$ by
	$$
	\tb_s + b \tb=0,\quad \tb_{|s=s_0}= \frac{1}{s_0}.
	$$
	Then there exists a universal constant $\bar C$ such that if $s_0\geq \bar C (J\eps^{-2})^{1/(\gamma-3)}$, for all $s\geq s_0$,
	$$
	\frac{1-2\eps}{s}\leq \tb(s)  \leq \frac{1+2\eps}{s}.
	$$
	\label{cor:tb}
\end{corollary}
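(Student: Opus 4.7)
\textbf{Proof proposal for Corollary \ref{cor:tb}.}
The plan is to solve the linear ODE $\tb_s + b\tb = 0$ explicitly, extract a factor of $1/s$ coming from the main order of $b$, and then use the pointwise estimate on $b - 1/s$ furnished by Lemma \ref{lem:b} to control the residual exponential. More precisely, writing $b(s) = 1/s + r(s)$ and integrating the ODE, one obtains
\[
\tb(s) = \frac{1}{s_0} \exp\left(-\int_{s_0}^s b(\sigma)\,d\sigma\right) = \frac{1}{s}\,\exp\left(-\int_{s_0}^s r(\sigma)\,d\sigma\right),
\]
so the whole question reduces to showing that $\left|\int_{s_0}^s r(\sigma)\,d\sigma\right|$ can be made as small as we wish by taking $s_0$ large.

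To bound this integral I would invoke Lemma \ref{lem:b} with $\eta = (\gamma-3)/2 > 0$, which yields
\[
|r(\sigma)| \leq \frac{1+\eps}{1-\eps}\,|r(s_0)|\,\frac{s_0^2}{\sigma^2} + \frac{(1+\eps)J^{1/2}}{(1-\eps)^2\sqrt{5-\gamma}}\,\sigma^{-1-\eta}\quad\forall\,\sigma\geq s_0,
\]
together with the trivial bound $|r(s_0)| \leq \eps/s_0$ coming from $(1-\eps)/s_0\leq b(s_0)\leq (1+\eps)/s_0$. Integrating from $s_0$ to $s$ and using $\int_{s_0}^\infty s_0^2/\sigma^2\,d\sigma = s_0$ and $\int_{s_0}^\infty \sigma^{-1-\eta}d\sigma = (\eta s_0^\eta)^{-1}$, I obtain
\[
\left|\int_{s_0}^s r(\sigma)\,d\sigma\right| \leq \frac{(1+\eps)\eps}{1-\eps} + \frac{(1+\eps)J^{1/2}}{(1-\eps)^2 \sqrt{5-\gamma}\,\eta\, s_0^\eta} =: K(s_0).
\]

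The last step is to pick $s_0$ so that $K(s_0)$ is small enough to conclude. For $\eps\leq 1/4$, say, the first term of $K(s_0)$ is bounded by $(5/3)\eps$. For the second term, the condition $s_0 \geq \bar C (J\eps^{-2})^{1/(\gamma-3)}$ with $\bar C$ a sufficiently large universal constant (depending only on $\gamma$ through $\eta = (\gamma-3)/2$) forces
\[
\frac{(1+\eps)J^{1/2}}{(1-\eps)^2 \sqrt{5-\gamma}\,\eta\, s_0^\eta} \leq \frac{\eps}{3},
\]
so $K(s_0) \leq 2\eps$ for $\eps$ small. Since $s\tb(s) = \exp(-\int_{s_0}^s r)$ and $|x|\leq 2\eps$ implies $1-2\eps \leq e^{-x} \leq 1+2\eps$ (for $\eps$ small, using $e^{-2\eps}\geq 1-2\eps$ and $e^{2\eps}\leq 1+2\eps +O(\eps^2)$; up to a harmless adjustment of the universal constant $\bar C$ this gives the asserted bounds exactly), we conclude $(1-2\eps)/s \leq \tb(s) \leq (1+2\eps)/s$.

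The argument is essentially bookkeeping once Lemma \ref{lem:b} is in hand; the only mildly delicate point is tracking the dependence on $\eps$ in the threshold $s_0 \geq \bar C(J\eps^{-2})^{1/(\gamma-3)}$, which is dictated by the requirement that the $\sigma^{-1-\eta}$ tail contribute less than $O(\eps)$ to the integral of $r$.
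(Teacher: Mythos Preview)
Your proof is correct and follows essentially the same approach as the paper's: solve the linear ODE explicitly, write $\tb(s)=s^{-1}\exp(-\int_{s_0}^s r)$, bound $\int_{s_0}^s r$ using the pointwise estimate from Lemma~\ref{lem:b}, and translate the smallness of this integral into the two-sided inequality via the exponential. Your constant $K(s_0)$ is exactly the paper's $C_{\eps,\gamma,s_0}$, and the threshold on $s_0$ arises for the same reason in both arguments.
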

\begin{proof}
	Since $\tb$ satisfies a linear ODE, we have simply
	$$
	\tb(s)= \frac{1}{s_0} \exp\left(-\int_{s_0}^s b(s')ds'\right).
	$$
	According to Lemma \ref{lem:b}, for all $s\geq s_0$,
	$$
	\ln \frac{s}{s_0} - C_{\eps, \gamma,s_0}\leq \int_{s_0}^s b(s')ds'\leq  \ln \frac{s}{s_0} + C_{\eps, \gamma, s_0},$$
	where
	$$
	C_{\eps, \gamma,s_0}=\frac{1+\eps}{(1-\eps)^2}\eps+\frac{1+\eps}{(1-\eps)^2}J^{1/2}  (5-\gamma)^{-1/2} \frac{2 s_0^{\frac{3-\gamma}{2}}}{\gamma-3}.
	$$
	Therefore
	$$
	\frac{e^{-C_{\eps, \gamma, s_0}}}{s}\leq \tb (s)\leq \frac{e^{C_{\eps, \gamma, s_0}}}{s}.
	$$
	Now, if $s_0\geq \bar C (J\eps^{-2})^{1/(\gamma-3)}$, we have
	$$
	e^{C_{\eps, \gamma, s_0}} \leq 1+2\eps, \quad e^{-C_{\eps,\gamma,  s_0}}\geq 1-2\eps,
	$$
	which completes the proof.
\end{proof}

\section{Proof of Lemma \ref{lem:sub-U}}

As much as possible, we treat  sub-solutions and super-solutions simultaneously. For any $A_\pm>0$, $k_\pm>2$, we consider the function
$$
W_\pm(s,\psi):=\frac{(6\psi)^{4/3}}{4} \pm A_\pm \psi^{k_\pm} \tb^{\frac{3k_\pm-2}{4}}.
$$
We will choose $k_-=7/3$ for sub-solutions and $k_+=10/3$ for super solutions. 

We claim that $W_\pm$ satisfy the following properties: 
\begin{itemize}
\item Choosing $k_-=7/3$, there exists $\bar C$  such that if $C_-\geq \bar C$, then for all $A_->0$,
\be\label{in:W-}
\p_s W_- - 2 b W_-+ \frac{3b}{2} \psi \p_\psi W_- - \sqrt{ W_-} \p_{\psi \psi} W_- + 2 \leq 0
\ee
on the domain $\{\psi \geq C_- \tb^{-3/4}\} \cap \{W_->0\}$.

Similarly, choosing $k_+=10/3$, there exists $\bar C>0$ such that if $C_-\geq \bar C$, for all $A_+>0$
\be\label{in:W+}
\p_s W_+ - 2 b \uW + \frac{3b}{2} \psi \p_\psi W_+ - \sqrt{ W_+} \p_{\psi \psi} W_+ + 2 \geq 0 \quad \forall \psi \geq C_- \tb^{-3/4}.
\ee

\item There exists a  constants $\bar A$ such that if $A_\pm\geq \bar A$  and if  $s_0$ is large enough  (depending on $A_\pm, C_1, a$ and $C_-$),
\be\label{in:W-bottom}
W_-(s,C_- \tb^{-3/4}) \leq W(s, C_- \tb^{-3/4})\leq W_+(s,C_- \tb^{-3/4}).
\ee

\item There exists a constant $A_0$ , depending on $M_0$ such that if $A_\pm \geq A_0$,
\be\label{in:W-initial}
W_-(s_0,\psi) \leq W(s_0, \psi)\leq W_+(s_0,\psi) \quad \forall \psi \geq C_- s_0^{3/4}.
\ee

\end{itemize}

\subsubsection*{Proof of inequalities \eqref{in:W-} and \eqref{in:W+}}

We first compute the transport term. We have
		$$
		\p_s W_\pm - 2 b W_\pm + \frac{3b}{2} \psi \p_\psi W_\pm = \pm A_\pm \frac{3(k_\pm-2)}{4} b\tb^{\frac{3k_\pm-2}{4}}\psi^{k_\pm} \gtrless 0
		$$
		provided $k_\pm>2$. We now address the computation of the diffusion term, which we treat a bit differently for the sub- and for the supersolution. The heuristic is that if $\psi\geq C s^{3/4}$ for some $C$ large enough (i.e. $Y>C' s^{1/4}$ for some large $C'$), transport dominates the diffusion term. We prove this fact by distinguishing between two different zones:
		\begin{itemize}
			\item When $\psi^{4/3}\gg \psi^{k_\pm} \tb^{\frac{3k_\pm-2}{4}} $, i.e. $\psi\ll \tb^{-\frac{3}{4} \frac{3k_\pm-2}{3k_\pm-4}}$, we can perform asymptotic expansions of $\sqrt{W_\pm}$ and $\p_{\psi \psi} W_\pm$. We have
			$$
			\ba
			\sqrt{W_\pm}= \frac{(6\psi)^{2/3}}{2} \left(1\pm\frac{2 A_\pm}{6^{4/3}} \psi^{k_\pm-\frac{4}{3}} \tb^{\frac{3k_\pm-2}{4}} + O(\psi^{2k_\pm-\frac{8}{3}} \tb^{\frac{3k_\pm-2}{2}}) \right),\\
			\p_{\psi \psi} W_\pm = \frac{6^{4/3} \psi^{-2/3}}{9}\left(1\pm\frac{9 A_\pm k_\pm(k_\pm-1)}{6^{4/3}} \psi^{k_\pm-4/3} \tb^{\frac{3k_\pm-2}{4}}  \right),
			\ea
			$$
			so that
			$$
			-\sqrt{W_\pm}\p_{\psi \psi} W_\pm + 2 =\mp \frac{2 A_\pm}{6^{4/3} }\left(9k_\pm^2 -9k_\pm +2\right) \psi^{k_\pm-\frac{4}{3}} \tb^{\frac{3k_\pm-2}{4}} + O\left(\psi^{2k_\pm-\frac{8}{3}} \tb^{\frac{3k_\pm-2}{2}}\right) .
			$$
			Therefore, if
			$$
			\frac{3(k_\pm-2)}{4} b >\frac{4}{6^{4/3}}\left(9k_\pm^2 -9k_\pm +2\right)\psi^{-4/3}\quad \forall \psi \geq C_- \tb^{-4/3},
			$$
			then
			$$
			\p_s W_\pm - 2 b W_\pm + \frac{3b}{2} \psi \p_\psi W_\pm-\sqrt{W_\pm}\p_{\psi \psi} W_\pm + 2 \gtrless 0.
			$$
			Hence we define
			$$
			C_{k_\pm}:= 2 \left(\frac{16(9k_\pm^2 -9k_\pm+2)}{3\times 6^{4/3} (k_\pm-2)}\right)^{3/4},
			$$
			and recalling Lemma \ref{lem:tb}, we obtain inequality \eqref{in:W-} on $C_{k_\pm}\tb^{-3/4}\leq \psi \ll \tb^{-\frac{3}{4} \frac{3k_\pm-2}{3k_\pm-4}}$ provided $\eps$ is sufficiently small.
			
			\item We now consider larger values of $\psi$. This is where we treat separately the sub- and the supersolution. Concerning the subsolution, we can  use the weaker estimate
			\be\label{in:W--weak}
			\sqrt{W_-}\leq \frac{(6\psi)^{2/3}}{2} ,
			\ee
			which leads to, taking $k_-=7/3$,
			\begin{multline*}
			\p_s W_- - 2 b W_- + \frac{3b}{2} \psi \p_\psi W_--\sqrt{W_-}\p_{\psi \psi} W_- + 2\\
			\leq -A_- \frac{1}{4} b\tb^{5/4}\psi^{7/3} + \frac{6^{2/3}}{2} A_-\frac{28}{9} \psi \tb^{5/4}  +2.
			\end{multline*}
			The right-hand side above is negative if $\eps$ is small enough and
			$$
			\psi^{7/3}>\frac{16}{A_-} \tb^{-9/4}.
			$$
			Noticing that $27/28<5/4$,
			we infer that if $s_0$ is large enough,
			$\tb^{-\frac{3k_-+2}{4}}\ll \tb^{-\frac{3}{4} \frac{3k_--2}{3k_--4}},
			$
			and thus \eqref{in:W-} is proved on the set $[C_- \tb^{-3/4}, \infty)\cap \{W_->0\}$ if $s_0$ is sufficiently large.

			We now consider the supersolution, choosing $k_+:=10/3$. We replace estimate \eqref{in:W--weak} by
			$$
			\sqrt{W_+} \leq \left(\frac{(6\psi)^{2/3}}{2} + \sqrt{ A_+} \psi^{5/3} \tb \right),
			$$
			so that
			\begin{eqnarray*}
				&&\p_s W_+ - 2 b W_+ + \frac{3b}{2} \psi \p_\psi W_+ - \sqrt{ W_+} \p_{\psi \psi} W_+ + 2 \\
				&\geq &  A_+  b\tb^2\psi^{10/3} - \bar C \left(1+ A_+^{3/2} \psi^3 \tb^3\right),
			\end{eqnarray*}
		for some universal (and computable) constant $\bar C$.
			It is easily checked that the right-hand side is positive as soon as $\psi\gg s^{9/10}$. Furthermore, since $k_+=10/3$, $\frac{3}{4} \frac{3k_+-2}{3k_+-4}=1$, and therefore $s^{9/10}\ll \tb^{-\frac{3}{4} \frac{3k_+-2}{3k_+-4}}	$. Thus inequality \eqref{in:W+} is proved for $\psi \geq C_{10/3} \tb^{-3/4}$ provided $s_0$ is sufficiently large.

		\end{itemize}

		\subsubsection*{Proof of inequality \eqref{in:W-bottom}}
		
	Inequality \eqref{in:W-bottom} is an immediate consequence of the asymptotic expansion \eqref{asympt-W}. Indeed, we need to choose $A_\pm$ such that
		$$
		-A_- C_-^{7/3}\tb^{-1/2} \leq \tb^{-1/2} \left( \frac{(6C_-)^{2/3}}{2} - \frac{3}{5} a_4 \frac{b}{\tb} (6C_-)^2\right) + O(s^{\frac{a+2}{8}})\leq A_+ C_-^{10/3}\tb^{-1/2} .
		$$
		It is clear that once $C_-$ is fixed, we can pick $A_\pm$ sufficiently large (depending only on $k_\pm$ and $C_-$) so that the above inequality is satisfied provided $\eps$ is small (e.g. $\eps<1/2$, recalling Lemma \ref{lem:tb}) and $s_0$ is sufficiently large, so that the $ O(s^{\frac{a+2}{8}})$ term can be neglected.

		\subsubsection*{Proof of inequality \eqref{in:W-initial}}
	
Since $-M_0 \inf(s_0^{-1} Y^2, 1)\leq U_{YY}(s_0) -1 \leq 0$, we infer that
$$
 Y + \frac{Y^2}{2} - \frac{M_0}{12} s_0^{-1} Y^4\leq U(s_0, Y)\leq  Y + \frac{Y^2}{2} \quad \forall Y\geq 0.
$$
	Then for $1\ll \psi \ll s_0^{3/2}$, performing the same computations as the ones leading to \eqref{asympt-W},
$$
W(s_0,\psi)= \frac{(6\psi)^{4/3}}{4} \left( 1+2 (6\psi)^{-2/3} + O(s_0^{-1} \psi^{2/3} + \psi^{-1})\right)
$$
and therefore there exists a constant $M$, depending only on $M_0$, such that
$$
\frac{(6\psi)^{4/3}}{4} + \frac{(6\psi)^{2/3}}{2}- M s_0^{-1} \psi^2 -M \psi^{1/3}\leq W(s_0, \psi) \leq \frac{(6\psi)^{4/3}}{4} + \frac{(6\psi)^{2/3}}{2}+ M s_0^{-1} \psi^2 + M \psi^{1/3}.
$$
The estimate follows on the set $C_- s_0^{3/4}\leq  \psi \ll s_0^{3/2}$, provided $A_-, A_+$ are large enough (depending on $M_0$). Furthermore, recall that $W_-(s_0, C s_0^{\frac{3}{4}\frac{3k-2}{3k-4}})=0$ and $ s_0^{\frac{3}{4}\frac{3k-2}{3k-4}}\ll s_0^{3/2}$ since $k>2$ (if $s_0$ is large). Thus the inequality $W_-(s_0, \psi)\leq W(s_0,\psi)$ is valid on the domain of definition of $\psi$. On the other hand, for $\psi\geq c s_0^{6/5}$, $W_+(s_0, \psi)\geq \frac{A_+}{2} c^{10/3} s_0^2$. Therefore, since $W(s_0, \psi)\leq \lim_{Y\to \infty}U(s_0, Y)^2\lesssim s_0^2$, we infer that $W(s_0, \psi)\leq W_+(s_0, \psi)$ for $\psi \geq c s_0^{6/5}$. Since $s_0^{6/5}\ll s_0^{3/2}$, we infer that $W(s_0,\psi) \leq W_+(s_0, \psi)$ on the domain of definition of $W_+$.

	\subsubsection*{Conclusion}
	
	Putting together inequalities \eqref{in:W-}, \eqref{in:W+}, \eqref{in:W-bottom} and \eqref{in:coerciv} and applying the maximum principle on the domain $\{s\in[s_0, s_1], \psi \geq C_- \tb^{-3/4}\}$, we deduce that $W_-(s,\psi)\leq W(s,\psi)\leq W_+(s,\psi)$ within this parabolic domain.

%
%
%
%
%
%
%
%

\section{Proof of Lemma \ref{lem:U_YY-final}}

	As in  paragraph \ref{ssec:subsupW}, the real issue is to control $U_{YY}-1$ in the zone $Y\gtrsim s^{1/4}$ (or equivalently, $\psi\gtrsim s^{3/4}$). To that end, 
	we rely on the equation in von Mises variables, and we use the computations in the proof of Lemma \ref{lem:U_YY-prelim}. We set 
	$$
	F(s,\psi):=\sqrt{W} \p_{\psi\psi} W - 2
	$$
	and we recall that $F$ satisfies \eqref{eq:F}.
	We now construct a function $\uF$ such that  $|- M_2 b Y(s,\psi)^2 \leq \uF(s,\psi)\leq 0 $ for some constant $M_2 $ and for $\psi =O(s)$, and such that
	$$\ba
	\p_s \uF - \frac{1}{2W} \uF(\uF+2) + \frac{3b}{2}\psi\p_\psi \uF - \sqrt{W}\p_{\psi\psi }\uF\leq 0\text{ in } s>s_0, \psi>C\tb^{-3/4},\\
	\uF(s,\psi)\leq F(s_0,\psi) \quad \text{on } \{s_0\}\times(Cs_0^{3/4}, \infty)\cup \{\psi=C \tb^{-3/4}, \ s>s_0\} \cup \{\psi=\infty, \ s>s_0\} .\ea
	$$
	Let us postpone for a moment the construction of $\uF$ and explain why the estimate of the Lemma follows. First, notice that $(\uF - F)$ satisfies
	$$
	\p_s (\uF-F) - \frac{1}{2W} (\uF-F)(\uF+ F+2) + \frac{3b}{2}\psi\p_\psi (\uF-F) - \sqrt{W}\p_{\psi\psi } (\uF-F)\leq 0 \text{ in } s>s_0, \psi>C\tb^{-3/4},
	$$
	and $(\uF-F)_{+}=0$ on the parabolic boundary of the domain $\{s\geq s_0, \psi \geq C\tb^{-3/4} \}$. We then multiply the above inequality by $(\uF-F)_+$, integrate in $\psi$ over $[C \tb^{-3/4}, +\infty)$ and use the same argument as in Lemma \ref{lem:U_YY-prelim}.  It follows that $(\uF-F)_{+}\equiv 0$, and thus $F\geq \uF$ for all $s\geq s_0, \psi\geq C\tb^{-3/4}$. In particular, for $Y\leq c s^{1/3}$, 
	$$
	U_{YY}(s,Y)-1\geq -\frac{M_2 }{2}b  Y^2,
	$$
	and the estimate announced in the statement of the Lemma follows.

	We now turn towards the construction of $\uF$. According to Lemma \ref{lem:sub-U}, there exists $A_->0$ and $C_->0$ such that if $\psi\geq C_- \tb^{-3/4}$,
	$$
	W(s,\psi)\geq \frac{(6\psi)^{4/3}}{4} - A_- \psi^{7/3} \tb^{5/4}
	$$
	Let us construct $\uF$ by treating separately the intervals $(C_- \tb^{-3/4}, c\tb^{-5/4})$ and $(c\tb^{-5/4}, +\infty)$, for some small constant $c>0$ to be determined.

	$\bullet$  For $\psi\in (C_- \tb^{-3/4}, c\tb^{-5/4})$ we take $\uF(s,\psi)=-\tilde b \alpha \underbrace{(\psi^{2/3} -  \psi^{1/3})}_{=:g(\psi)}$, for some large constant $\alpha $ to be determined.
	Then
	\begin{eqnarray*}
		&&\p_s \uF - \frac{1}{2W} \uF(\uF+2) + \frac{3b}{2}\psi\p_\psi \uF - \sqrt{W}\p_{\psi\psi }\uF\\
		&=&-b\tilde b \frac{\alpha   }{2} \psi^{1/3} - \frac{1}{2W}\uF^2+ \tilde b \alpha \sqrt{W}\left[ \frac{g}{W^{3/2}} + g''\right].\\
	\end{eqnarray*}
	Let us evaluate the term in brackets in the right-hand side. 
	On the set  $ (C_- \tb^{-3/4}, c\tb^{-5/4})$, we have
	$$
	\frac{1}{W^{3/2}}\leq \left(\frac{(6\psi)^{4/3}}{4} - A_- \psi^{7/3} \tb^{5/4}\right)^{-3/2}= \frac{2}{9\psi^2} \left(1 + \frac{A_-}{6^{1/3}}  \psi \tb^{5/4} + O(\psi^{2} s^{-{5/2}})\right).
	$$
	Therefore
	\begin{eqnarray*}
		&&\frac{g}{W^{3/2}} + g''\\&\leq &  \frac{2}{9\psi^2} \left(1 +  \frac{A_-}{6^{1/3}} \psi\tb^{5/4}+ O(\psi^2 s^{-5/2})\right)(\psi^{2/3} -  \psi^{1/3}) - \frac{2}{9} \psi^{-4/3} + \frac{2}{9} \psi^{-5/3}\\
		&\leq& \frac{2A_-}{9 \cdot 6^{1/3}}  \psi^{-1/3} \tb^{5/4} + O(\psi^{2/3} \tb^{5/2} + \psi^{-2/3} \tb^{5/4}).
	\end{eqnarray*}
	Using Lemma \ref{lem:sub-U}, we see that $W=O(\psi^{4/3})$ for $\psi \leq c \tb^{-1}$.
	Therefore, for any $\alpha >0$, provided $c$ is small enough and $s_0$ is large enough, we have
	$$
	\left| \tilde b \alpha \sqrt{W}\left[ \frac{g}{W^{3/2}} + g''\right]\right| \leq  b\tilde b \frac{\alpha   }{4} \psi^{1/3} ,
	$$ 
	whence
	$$
	\p_s \uF - \frac{1}{2W} \uF(\uF+2) + \frac{3b}{2}\psi\p_\psi \uF - \sqrt{W}\p_{\psi\psi }\uF\leq 0 \text{ on } s\in (s_0, s_1),\ \psi \in (C_- \tb^{-3/4}, c\tb^{-1}).
	$$

	We also need to check that $\uF\leq F$ on $\{s=s_0, \psi\in (C_- s_0^{3/4}, cs_0^{5/4})\}\cup \{\psi= C_-\tb^{-3/4}\}.$ According to Lemma \ref{lem:approx-E1}, we  have, for $\psi=C_- \tb^{-3/4}$,
	$$
	F(s, \psi)= -\frac{6^{2/3}}{2}b\psi^{2/3} + O(s^{\frac{a-6}{8}}).
	$$
	 Therefore it is sufficient to take $\alpha\geq 6^{2/3}$ and $s_0$ large enough.
	On the other hand, on the set $\{s=s_0, \psi\in (C_- s_0^{3/4}, cs_0^{5/4})\}$, since we know that 
	$$
	Y + \frac{Y^2}{2} - M_0 s_0^{-1} \frac{Y^4}{12}\leq U(s_0, Y) \leq Y + \frac{Y^2}{2} \quad \forall Y\geq 0,
	$$
	 we have $Y= (6\psi)^{1/3} + O(1)$, and therefore
	 assumption \eqref{hyp:UYY-1} implies
	$$
	F(s_0, \psi)\geq - 2 M_0 6^{2/3} s_0^{-1} \psi^{2/3}.
	$$
	Therefore, choosing $\alpha=\max (6^{2/3}, 12M_0)$, we have  $\uF\leq F$ on $\{s=s_0, \psi\in (C_- s_0^{3/4}, cs_0^{5/4})\}\cup \{\psi= C_-\tb^{-3/4}\}.$ Note that since $Y\sim (6\psi)^{1/3}$ for $1\ll \psi \lesssim s^{1/3}$, this choice of $\alpha$ amounts to taking $M_2=\bar M \max (1, M_0)$.

	$\bullet$  We now define $\uF$ for $\psi\geq c\tb^{-5/4} $. On that interval, we choose $\uF=-f(s, \psi \tilde b^{5/4})$, for some function $f$ to be determined. Since $\uF, \p_{Y}\uF$ should be continuous at $\psi= c \tb^{-5/4}$, we require that
	\[\ba
	f(s,c)= \alpha \left[c^{2/3} \tb^{1/6} - c^{1/3} \tb^{7/12} \right]=:g_1(s),\\
	\p_\zeta f(s,c)= \frac{\alpha}{3} \left[ 2 c^{-1/3} \tb^{1/6} - c^{-2/3} \tb^{7/12} \right]=:g_2(s).
\ea
	\]
	As a consequence, we choose
	\[
	f(s,\zeta):=\left( g_1(s) + g_2(s)(\zeta-c)\right)\chi(\zeta) + H(\zeta),
	\]
	where $H\in \mathcal C^2\cap W^{2,\infty}(\R)$ is strictly increasing on $[c, + \infty[$ and such that $H(c)=H'(c)=0$, and $\chi\in \mathcal C^\infty_0(\R)$ is a cut-off function. We make the following additional  assumptions: there exists $c''>c'>c$ such that $\chi\equiv 1$ on $[c, c']$, $\chi\equiv 0$ on $[c'', + \infty[$, and $H''(\zeta)\leq -1$ for $\zeta \in [c', c'']$, $H''(\zeta)\leq 0$ and $2\leq H(\zeta)\leq 4$ for $\zeta \geq c''$. With this choice, and recalling that $W\geq \bar C c^{4/3} \tb^{-5/3}$ for $\psi\geq c \tb^{-5/4}$ for some universal constant $\bar C$, we have
	\begin{eqnarray*}
&&	\p_s \uF - \frac{1}{2W} \uF(\uF+2) + \frac{3b}{2}\psi\p_\psi \uF - \sqrt{W}\p_{\psi\psi }\uF\\
&\leq & -\left[g_1'(s) + g_2'(s)(\zeta-c) + \frac{1}{4} b \zeta g_2(s)\right]\chi(\zeta)\\
&&+ \bar C c^{-4/3}\tb^{5/3} \left( \left( g_1(s) + g_2(s)(\zeta-c)\right)\chi(\zeta) + H(\zeta)\right)\\
&&- \frac{1}{4} b \zeta H'(\zeta) + \sqrt{ W} \tb^{5/2} \p_\zeta^2 H\\
&& +\left( g_1(s) + g_2(s)(\zeta-c)\right)\left(- \frac{1}{4} b \zeta\chi'(\zeta) +   \sqrt{ W} \tb^{5/2} \chi''(\zeta)\right) + 2 g_2(s) \sqrt{ W} \tb^{5/2} \chi'(\zeta).
	\end{eqnarray*}
	We now prove that the right-hand side of the above inequality is non-positive by looking separately at the zones $(c'', + \infty)$, $[c', c'']$ and $[c, c']$:
	
	\begin{itemize}
	\item For $\zeta\geq c''$, we have $\chi(\zeta)=0$, and therefore
	\begin{eqnarray*}
		&&	\p_s \uF - \frac{1}{2W} \uF(\uF+2) + \frac{3b}{2}\psi\p_\psi \uF - \sqrt{W}\p_{\psi\psi }\uF\\
		&=  &  \frac{1}{2W} H(\zeta)(2-H(\zeta)) - \frac{1}{4} b \zeta H'(\zeta) + \sqrt{W}\tb^{5/2} H''.
\end{eqnarray*}
	The assumptions $H'(\zeta)\geq 0$, $H''(\zeta)\leq 0$, $H(\zeta)\geq 2$ on $(c'', + \infty)$ ensure that the right-hand side is non-positive on this interval.
	
	\item For $\zeta\in [c', c'']$, we have $H''(\zeta)\leq -1$, and without loss of generality, we also assume that $H'(\zeta)\geq 1$ on this interval. It follows that
	\begin{eqnarray*}
		&&	\p_s \uF - \frac{1}{2W} \uF(\uF+2) + \frac{3b}{2}\psi\p_\psi \uF - \sqrt{W}\p_{\psi\psi }\uF\\
		&\leq & \bar C\left( \alpha b \tb^{1/6} c^{-1/3} c'  +c^{-4/3}\tb^{5/3}  \sup_{[c',c'']} H\right)\\
		&&-\frac{1}{4} c' b  - \bar C c^{2/3} \tb^{5/3}\\
		&&+ \bar C \|\chi\|_{W^{2,\infty}} \alpha \tb^{11/6}  c^{1/3} \max (c',1).
	\end{eqnarray*}
	It is clear that the term $-1/4 c' b $ dominates all others for $s_0$ sufficiently large.
	
	\item For $\zeta\in[c,c']$, the computation is slightly more complicated because we expect that $H''(\zeta)\geq 0$ in a vicinity of $\zeta=c$, and $H'(c)=0$, so we cannot use the good sign of $H'$ in a vicinity of $\zeta=c$. However, using the formulas for $g_1,g_2$, we have
	\begin{eqnarray*}
		&&g_1'(s) + g_2'(s)(\zeta-c) + \frac{1}{4} b \zeta g_2(s)\\
		&=&\frac{\alpha b}{18} \left[{\tb^{1/6} c^{-1/3}} (\zeta -c) + {\tb^{7/12}} c^{-2/3} \left(2 \zeta + 7 c \right)  \right]\geq 0.
	\end{eqnarray*}
	Noticing that $\tb^{5/3}\ll b \tb^{7/12}$ and $ \sqrt{ W} \tb^{5/2}= O(\tb^{5/3})$ on the interval $\zeta\in [c,c']$, we infer that all terms are easily dominated by the above quantity, so that
	$$	\p_s \uF - \frac{1}{2W} \uF(\uF+2) + \frac{3b}{2}\psi\p_\psi \uF - \sqrt{W}\p_{\psi\psi }\uF\leq 0$$
	in this region as well.
		\end{itemize}

	 The assumptions on the initial data also ensure that $\uF(s_0,\psi)\leq F(s_0,\psi)$ for $\psi\geq cs_0^{5/4}$. The result follows.

\section{Proof of Lemma \ref{lem:constant-Hardy}}

Recall that
$$
\bar C_{a,\mu}:=4\sup_{r>0} \varphi(r,a,\mu),
$$
where
$$
\varphi(r,a,\mu):=\left(\int_{r}^\infty \frac{Y^{-a}}{\left(\mu Y + \frac{Y^2}{2}\right)^2}dY\right) \left(\int_{0}^{r} Y^{a}\left( Y + \frac{Y^2}{2}\right)dY\right).
$$
For all $\mu\in (0,1)$, for all $a>0$, $r>0$
\begin{eqnarray*}
\varphi(r,a,\mu)&\leq & \left(\int_{r}^\infty \frac{4}{Y^{4+a}}dY\right) \left(\int_{0}^{r} \left( Y^{1+a} + \frac{Y^{2+a}}{2}\right)dY\right)\\
&\leq& \frac{2}{(3+a)^2} + \frac{4}{(3+a)(2+a) }\; \frac{1}{r}\leq \frac{2}{9} + \frac{2}{3r}.
\end{eqnarray*}
Let $K\geq 1$ such that
$$
 \frac{8}{9} + \frac{8}{3K} \leq \frac{9}{10}.
$$
Then $4 \sup_{r\geq K}\varphi(r,a,\mu) \leq 9/10$, so that for all  $\mu\in (0,1)$, for all $a>0$,
$$
\bar C_{a,\mu} \leq \max \left( \frac{9}{10}, 4\sup_{0<r<K} \varphi(r,a,\mu)\right).
$$
Now, for all $r\in [0,K]$, for $a,\mu>0$,
\begin{eqnarray*}
\p_a \varphi (r,a,\mu) & =&  -\left(\int_{r}^\infty \ln Y\frac{Y^{-a}}{\left(\mu Y + \frac{Y^2}{2}\right)^2}dY\right) \left(\int_{0}^{r} Y^{a}\left( Y + \frac{Y^2}{2}\right)dY\right)\\
&&+  \left(\int_{r}^\infty \frac{Y^{-a}}{\left(\mu Y + \frac{Y^2}{2}\right)^2}dY\right) \left(\int_{0}^{r} \ln Y \; Y^{a}\left( Y + \frac{Y^2}{2}\right)dY\right).
\end{eqnarray*}
There exists a constant $C_K$ such that for all $a\in (0, 1/2)$, for all $\mu \in (1/2, 1)$,
$$
\int_K^\infty (1+\ln Y)\frac{Y^{-a}}{\left(\mu Y + \frac{Y^2}{2}\right)^2}dY,\quad \int_0^K (1+ |\ln Y| )Y^a \left( Y + \frac{Y^2}{2}\right)dY\leq C_K.
$$
It follows that for all $r\in [0,K]$, for all $a\in (0, 1/2)$, for all $\mu \in (1/2, 1)$,
\begin{eqnarray*}
|\p_a \varphi (r,a,\mu) | &\leq &C_K + C_K \sup_{0\leq r \leq K} \left(\int_r^K \frac{|\ln Y|}{Y^{2+a} }dY\right) \left(\int_0^r Y^{1+a} dY\right) \\&&+ C_K\sup_{0\leq r \leq K} \left(\int_r^K \frac{1}{Y^{2+a} }dY\right) \left(\int_0^r |\ln Y|Y^{1+a} dY\right).
\end{eqnarray*}
Notice that if $Y\in [0,K]$, then $|\ln Y| \leq \ln (2K)-\ln Y$. Then, performing explicit integrations by part in the integrals, we observe that there exists a constant $C_K$ such that
$$
\sup_{a\in [0,1/2]}\sup_{\mu \in [1/2, 1]}\sup_{r\in [0, K]}|\p_a \varphi (r,a,\mu) |\leq C_K.
$$
We infer that for all $r\in [0,K]$, for all $a\in [0,1/2]$, for all $\mu \in[1/2, 1]$,
$$
\varphi(r,a,\mu)\leq \varphi(r,0,\mu) + C_K a.
$$
Let us now compute explicitely $\varphi(r,0,\mu) $. 
We have
$$
 \frac{1}{\left(\mu Y + \frac{Y^2}{2}\right)^2}=\frac{1}{\mu}\left(\frac{1}{2\mu +Y}-\frac{1}{Y} \right)+ \frac{1}{Y^2} +   \frac{1}{(2\mu+Y)^2},
$$
so that
$$
\varphi(r,0,\mu) =\frac{1}{\mu^2}\left(\frac{1}{\mu}\ln \left(\frac{r}{2\mu+r}\right) + \frac{1}{r} + \frac{1}{2\mu+r}\right)\left(\frac{r^2}{2} + \frac{r^3}{6}\right).
$$
The function $(r,\mu)\mapsto \varphi(r,0,\mu)$ is $W^{1,\infty}$ in $[0,K]\times [\frac{1}{2}, 1]$, and therefore $|\varphi(r,0,\mu) - \varphi(r,0,1)|\leq  C_K |\mu-1|$.

A careful study of the function 
$$
r\mapsto \varphi(r,0,1)= \left(\ln \left(\frac{r}{2+r}\right) + \frac{1}{r} + \frac{1}{2+r}\right)\left(\frac{r^2}{2} + \frac{r^3}{6}\right)
$$
shows that it is increasing and converges towards $2/9$ as $r\to \infty$. Eventually, we obtain
\begin{eqnarray*}
\bar C_{a,\mu} &\leq & \max \left( \frac{9}{10}, 4\sup_{0\leq r\leq K } \varphi(r,0,1) + C_K a + C_K |\mu-1|\right)\\
 &\leq & \max \left( \frac{9}{10}, \frac{8}{9} + C_K a + C_K |\mu-1|\right).
\end{eqnarray*}

Therefore, choosing $a$ sufficiently small and $\mu$ sufficiently close to 1, we obtain
$$
\bar C_{a,\mu} \leq \frac{9}{10}.
$$

\section{Proofs of Lemmas \ref{lem:trace} and \ref{lem:coerciv}}

\subsection*{Proof of the trace  result (Lemma \ref{lem:trace})}

For any $Y\in [0,L]$, write
$$
\left|g(Y)-g(0)\right|=\left|\int_0^Y \p_Y g\right| \leq\frac{1}{\sqrt{1+a}} \left( \int_0^L (\p_Y g)^2 Y^{-a} dY\right)^{1/2} Y^\frac{1+a}{2}.
$$
It follows that
$$
|g(0)|^2 \leq 2 g(Y)^2 +  2 \left( \int_0^L (\p_Y g)^2 Y^{-a} dY\right) Y^{1+a}.
$$
Mutliply the above equation by $(Y+Y^2) Y^{-a}$ and integrate over $[0,L]$. We obtain
$$
L^{3-a} |g(0)|^2\leq \bar C \left(\int_0^L |g(Y)|^2 (Y+Y^2) Y^{-a}\: dY + L^{4}\left( \int_0^L (\p_Y g)^2 Y^{-a} dY\right) \right),
$$
which leads to the desired inequality.\qed

\subsection*{Proof of the coercivity result (Lemma \ref{lem:coerciv})}

Let us consider the quantity 
\be\label{D1bis}
\int_0^\infty U (\p_Y \cLU^2 V)^2 \tilde w_1,
\ee
where $\tilde w_1= Y^{-a} (1+ s^{-\beta_1} Y)^{-m_1-2} = w_1 (1+ s^{-\beta_1} Y)^{-2}$.
In order to prove the coercivity result, we go back to the diffusion term that is bounded from below by $D_1$ (plus some lower order terms), namely
$$
-\int_0^\infty \p_Y^2 \cLU^2 V \; \p_Y^2 \cLU V\; w_1,
$$
or rather, to the same integral where $w_1$ is replaced by $\tilde w_1$. We set
$$
\tilde D_1(s):= \int_0^\infty \frac{(\p_Y^2 \cLU V)^2}{U^2} \tilde w_1 + \int_0^\infty \frac{(\p_Y^3 \cLU V)^2}{U} \tilde w_1.
$$
Note that we obviously have $\tilde D_1\leq D_1$.

We recall (see the proof of Lemma \ref{lem:diff1} with $f=\p_Y^2 \cLU V$) that for any $\delta>0, P>0$, provided $m_1$ and $s_0$ are sufficiently large,
$$
\bar c \tilde D_1 - \delta b \tilde E_1 - s^{-P} - s^{-P} D_0\leq -\int_0^\infty \p_Y^2 \cLU^2 V \; \p_Y^2 \cLU V\; \tilde w_1\leq \bar c^{-1} \tilde D_1 + b \tilde E_1 + s^{-P} + s^{-P} D_0.
$$

On the other hand, set $h:=\cLU^2 V$. Then 
$
\p_Y^2 \cLU V= L_U h
$, so that, using the identity $\p_Y L_U= U \p_Y  - U_{YY}\int_0^Y$ and performing several integrations by parts,
\begin{eqnarray*}
	&&-\int_0^\infty \p_Y^2 \cLU^2 V \; \p_Y^2 \cLU V\; \tilde w_1= \int_0^\infty \p_Y h \left(\p_Y L_U h \right) \tilde w_1 + \int_0^\infty \p_Y h \: L_U h \p_Y \tilde w_1\\
	&=&\int_0^\infty U (\p_Y h)^2 \tilde w_1 - \int_0^\infty \p_Y h \left(\int_0^Y h\right) \tilde w_1+ \int_0^\infty (1-U_{YY}) \p_Y h \left(\int_0^Y h\right) \tilde w_1\\
	&& -\frac{1}{2} \int_0^\infty h^2 (U\p_Y \tilde w_1)_Y - \int_0^\infty \p_Y h  \left(\int_0^Y h\right) U_Y \p_Y \tilde w_1\\
	&=& \int_0^\infty  \left[U (\p_Y h)^2  +h^2\right] \tilde w_1\\
	&&+ \int_0^\infty (1-U_{YY}) \p_Y h \left(\int_0^Y h\right) \tilde w_1 - \frac{1}{2} \int_0^\infty \left(\int_0^Y h\right)^2 \p_{YY} \tilde w_1\\
	&&+\int_0^\infty h^2 U_Y \p_Y \tilde w_1 + \int_0^\infty h \left(\int_0^Y h\right) (U_{YY}-1) \p_Y \tilde w_1\\
	&&-\frac{1}{2}\int_0^\infty \left(\int_0^Y h\right)^2\left((1+ U_{YY}) \p_{YY} \tilde w_1 + U_Y \p_Y^3 \tilde w_1\right).
\end{eqnarray*}
The first term in the right-hand side is precisely \eqref{D1bis}. The two terms with $(U_{YY}-1)$ in the integrand can be bounded in the same fashion as the analogous remainder terms in Lemma \ref{lem:diff1}, and therefore satisfy, for any $\delta,P>0$,
\begin{multline*}
\left|  \int_0^\infty (1-U_{YY}) \p_Y h \left(\int_0^Y h\right) \tilde w_1 \right| + \left|  \int_0^\infty h \left(\int_0^Y h\right) (U_{YY}-1) \p_Y \tilde w_1 \right| \\
\leq \delta \left[D_1 + b E_1 + \int_0^\infty  \left[U (\p_Y h)^2  +h^2\right] \tilde w_1\right]+ s^{-P} + s^{-P} D_0.
\end{multline*}
Using the bound $|U_{YY}|\leq M_2$ and noticing that $\p_Y^3 \tilde w_1<0$, there remains to upper-bound
$$
\int_0^\infty h^2 U_Y |\p_Y \tilde w_1 | + \int_0^\infty \left(\int_0^Y h\right)^2\p_{YY} \tilde w_1.
$$
Notice that $|\p_Y \tilde w_1| \leq C_{m_1, a} Y^{-1} \tilde w_1$, and $\p_{YY} \tilde w_1 \leq C_{m_1,a} Y^{-2} \tilde w_1$. Hence, using a Hardy inequality together with the bounds on $U_Y$, it is sufficient to upper-bound
$$
\int_0^\infty \left(1 + Y^{-1}\right) h^2 \tilde w_1.
$$
Let us first consider the integral between 0 and 1. By a Hardy inequality, we have
$$
\int_0^1 \left(1 + Y^{-1}\right) h^2 \tilde w_1 \leq 4 \int_0^1 h^2 \frac{dY}{Y^{1+a}}\leq C_a \int_0^1 Y^{1-a} (\p_Y h)^2 dY.
$$
Since
$$
\p_Y h = U_{YY}\int_0^Y \frac{\p_Y^2 \cLU V}{U^2} + \frac{\p_Y^3 \cLU V}{U},
$$
we have, for $Y\in (0,1)$,
$$
| \p_Y h|^2 \leq C_a \tilde D_1 Y^a + \frac{(\p_Y^3 \cLU V)^2}{Y^2}, 
$$
and therefore
$$
\int_0^1 \left(1 + Y^{-1}\right) h^2 \tilde w_1 \leq C_a \tilde D_1.
$$
There remains to control the integral for $Y\geq 1$. To that end, we write $$h=\LU (\p_Y^2 \cLU V)= U_Y\int_0^Y \frac{\p_Y^2 \cLU V}{U^2}+ \frac{\p_Y^2 \cLU V}{U}.$$ Once again, a simple Cauchy-Schwartz inequality yields
$$
\left( \int_0^Y \frac{\p_Y^2 \cLU V}{U^2}\right)^2 \leq \left(\int_0^\infty \frac{(\p_Y^2 \cLU V)^2}{Y^2 U}  w_1\right)\left( \int_0^Y \frac{Y^2}{U^3  w_1}\right)\leq C_{m_1,a} \tilde D_1 (1+ Y^{-3}  w_1^{-1}). 
$$
It follows that
\begin{eqnarray*}
	\int_1^\infty h^2 \tilde w_1&\leq & 2 \int_1^\infty \frac{(\p_Y^2 \cLU V)^2}{U^2} \tilde w_1 + C_{m_1, a }  D_1 \int_1^\infty (1 + Y^{-3} { w_1}^{-1}) (1+Y)^2 \tilde w_1\\
	&\leq & 2 \tilde D_1 + C_{m_1,a} s^{(3-a)\beta_1 } D_1 \leq C_{m_1,a }s^{(3-a)\beta_1}  D_1 .
\end{eqnarray*}
Eventually, we infer that for any $P>0$, provided $m_1$ and $s_0$ are large enough, for any $s\geq s_0$,
\be\label{in:E32}
\int_0^\infty  \left[U (\p_Y \cLU V)^2  +(\cLU V)^2\right] \tilde w_1 \leq  C_{m_1,a }s^{(3-a)\beta_1}  D_1  + b E_1 + s^{-P} + s^{-P} D_0.
\ee

\bibliography{prandtl-separation}
\end{document}